%% file: main.tex
\documentclass[a4paper]{amsart}
\usepackage{sty/preamble}

\begin{document}

\title[\texorpdfstring{$\nArtG \cong \nDualArtG$}{A isomorphic to dual A} for XXL Artin groups]{The dual Artin isomorphism for Artin groups of XXL type}

\author{Sean O'Brien}
\address{University of Glasgow}
\email{2812920O@student.gla.ac.uk}
\subjclass[2020]{20F36, 20F65, 20F55}

\input{sections/pre/abstract}

\maketitle

\input{sections/introduction/introduction_main}
\input{sections/hurwitz_groups/hurwitz_groups_main}

\input{sections/geometry_of_Q/geometry_of_Q_main}

\input{sections/lemma/lemma_main}

\input{sections/theorem/theorem_main}

\bibliographystyle{alpha}
\bibliography{bib/refs}

\typeout{get arXiv to do 4 passes: Label(s) may have changed. Rerun}
\end{document}

%% file: sections/pre/abstract.tex
\begin{abstract}
	We show that an Artin group $A_\Gamma$ of XXL type (with all defining integers satisfying $m_{ij}\geq 5$) is isomorphic to the corresponding dual Artin group for any choice of Coxeter element.
	Our proof involves the set of Hurwitz words~$Q$ which arise from the Hurwitz action on tuples of elements of the free group.
	We show that the canonical epimorphism from $A_\Gamma$ to the dual Artin group is an isomorphism if and only if the projection from $A_\Gamma$ to the Coxeter group is injective on the image of $Q$.
	Then, using geometric properties of $Q$ and the solution to the word problem for Coxeter groups, we show this to be the case when all $m_{ij} \geq 5$.
\end{abstract}

%% file: sections/introduction/introduction_main.tex
\section{Introduction}\label{sec:introduction}
\input{sections/introduction/introduction}

\input{sections/introduction/acknowledgements}

%% file: sections/introduction/introduction.tex
Artin groups are a broad family of groups that generalise braid groups and are closely related to Coxeter groups, yet remain poorly understood in comparison.
They are associated with several major open conjectures, such as the $K(\pi,1)$ conjecture and conjectured solvability of their word and conjugacy problems.
Many fundamental algebraic questions also remain unresolved in general.
For example, it is unknown when two Artin group presentations are isomorphic, whether all Artin groups are torsion-free, and under what conditions their centres are trivial.
See the recent survey \cite{boyd_introduction_2026} for further background.

Coxeter groups and Artin groups are determined by a Coxeter graph $\Gamma$, which encodes a number $m_{ij}$ for each pair of generators $\Set{\nCoxGenElt_i,\nCoxGenElt_j}$.
We denote the corresponding groups $\nCoxG$ and $\nArtG$ respectively.
Dual Artin groups are defined using certain presentations related to a Coxeter group $\nCoxG$ and are conjectured to be isomorphic to the corresponding Artin group $\nArtG$, as we discuss below.
See \Cref{sec:definitions_of_groups} for definitions.

In the last decade, dual Artin groups have played a central role in several breakthroughs concerning Artin groups.
In \cite{mccammond_sulway_artin_2017}, McCammond and Sulway showed that when~$\nCoxG$ is affine, the corresponding Artin group~$\nArtG$ is torsion-free, has solvable word problem, and has trivial centre.
Subsequently, in \cite{paolini_salvetti_proof_2021}, Paolini and Salvetti used dual Artin groups to prove the $K(\pi,1)$ conjecture for the same class of Artin groups.
They were also central in \cite{delucchi_etal_dual_2024}, where Delucchi, Paolini and Salvetti proved the $K(\pi,1)$ conjecture for $\nArtG$ whose associated Coxeter group~$\nCoxG$ acts on~$\Hyp^2$, completing all rank $3$ cases.
Building on work of Birman, Ko and Lee \cite{birman_etal_new_1998}, dual Artin groups were introduced for finite~$\nCoxG$ of rank three by Brady in \cite{brady_artin_2000}, then independently extended to all finite~$\nCoxG$ by Bessis in \cite{bessis_dual_2003} and Brady and Watt in \cite{brady_watt_kp_2002a}.

Given a Coxeter group $\nCoxG$ and Coxeter element $c \in \nCoxG$, we denote the corresponding dual Artin group $\nDualArtG(c)$.
There is a canonical epimorphism $\nArtG \to \nDualArtG(c)$, constructed in \Cref{sec:hurwitz_groups}.
This epimorphism is the precise sense in which dual Artin groups are conjectured to be isomorphic to Artin groups, and it is known to be an isomorphism in all established cases: finite $\nCoxG$ in \cite{brady_watt_kp_2002a} and \cite{bessis_dual_2003}, with a uniform proof by Chapuy and Douvropoulos in \cite{chapuy_douvropoulos_counting_2022} following \cite{bessis_finite_2015}, affine $\nCoxG$ in \cite{mccammond_sulway_artin_2017} and reproved in \cite{paolini_salvetti_proof_2021}, rank three $\nCoxG$ in \cite{delucchi_etal_dual_2024}, and certain combinations of these due to Resteghini in \cite{resteghini_free_2024}.
In all except \cite{resteghini_free_2024}, the results hold irrespective of the choice of Coxeter element $c$.

In the cases above, we say that the dual Artin group is \emph{canonically isomorphic} to the Artin group.
It is believed that this holds in full generality, but systematic approaches to this problem are limited.
Extending the cases for which this is known and developing new approaches will be the focus of this paper.
The main result is the following theorem, which is proven as part of \Cref{thm:final_theorem}.
{\renewcommand*{\thetheorem}{\Alph{theorem}}
\begin{theorem}
	\label{thm:intro_main_theorem}
	For any Coxeter group $\nCoxG$ with all $m_{ij} \geq 5$, and any Coxeter element $c \in \nCoxG$, the dual Artin group $\nDualArtG(c)$ is canonically isomorphic to the Artin group~$\nArtG$.
\end{theorem}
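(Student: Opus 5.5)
The plan follows the strategy outlined in the abstract. There are two main steps: a general reduction, valid for every Coxeter graph, and then a combinatorial verification that uses $m_{ij}\geq 5$.

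\textbf{Setup.} Fix a Coxeter element $c = \nCoxGenElt_1\cdots\nCoxGenElt_n$, after reordering the generators. Let $F_n$ be the free group on $x_1,\dots,x_n$. The braid group $B_n$ acts on $n$-tuples of elements of $F_n$ by the Hurwitz action $(g_i,g_{i+1})\mapsto(g_ig_{i+1}g_i^{-1},g_i)$. Let $Q$ be the set of entries of tuples in the Hurwitz orbit of $(x_1,\dots,x_n)$. Each element of $Q$ is a conjugate of some $x_i$; these are the Hurwitz words. The map $x_i\mapsto$ (Artin generator) sends $Q$ into $\nArtG$, and composing with the projection $\pi\colon\nArtG\to\nCoxG$ sends it onto the reflections appearing in reduced $T$-factorisations of $c$.

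\textbf{Reduction.} I would prove that the canonical epimorphism $\nArtG\to\nDualArtG(c)$ is an isomorphism if and only if $\pi$ is injective on the image of $Q$ in $\nArtG$.

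The dual Artin group is generated by the reflections $t\le_T c$, subject to the dual braid relations $tt'=t'(t'^{-1}tt')$ coming from length-two factorisations. Suppose $\pi$ is injective on the image of $Q$. Then there is a well-defined map $\nDualArtG(c)\to\nArtG$ sending $t$ to the unique element of the image of $Q$ lying over $t$. The dual relations hold in $\nArtG$, because they are witnessed by Hurwitz moves inside $F_n$. So this map is an inverse to the epimorphism.

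Conversely, if the epimorphism is injective, then two elements of the image of $Q$ lying over the same reflection have the same image in $\nDualArtG(c)$, and so they are equal in $\nArtG$.

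The main technical inputs here are standard Hurwitz transitivity facts, which I would set up in \Cref{sec:hurwitz_groups}.

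\textbf{Injectivity for $m_{ij}\geq5$.} This is the main obstacle. Take $q,q'\in Q$, written as $q=wx_iw^{-1}$ and $q'=w'x_jw'^{-1}$, with $\pi(q)=\pi(q')$. We must show that $q=q'$ in $\nArtG$.

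First I would establish geometric properties of the Hurwitz words, for \Cref{sec:geometry_of_Q}. Elements of $Q$ should admit a canonical form: a freely reduced palindromic-type conjugate $wx_iw^{-1}$ in which the letters of $w$ follow the cyclic order determined by $c$, with exponent pattern controlled by the orbit. I would try to prove this by induction on the number of Hurwitz moves, tracking how a move conjugates one entry by another. From this I expect a \emph{small cancellation / no-backtracking} property: in $w$, consecutive letters from the same pair $\{x_i,x_j\}$ alternate in runs of bounded length. Here "bounded" should mean at most about $2$ or $3$, well below $m_{ij}/2$ once $m_{ij}\geq5$.

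Next I would apply Tits' solution to the word problem in $\nCoxG$. For $\pi(q)=\pi(q')$, the word $\pi(w)s_i\pi(w)^{-1}\pi(w')s_j\pi(w')^{-1}$ must reduce to the identity using free cancellations $ss\to1$ and braid moves. A braid move requires an alternating subword of length $m_{ij}\geq5$. The run-length bound from the canonical form should forbid such subwords, except near the junction between the two words.

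The key lemma (for \Cref{sec:lemma}) would be: if $m_{ij}\geq5$ and $u$ is a word whose alternating runs are short, then $u$ is reduced in $\nCoxG$, and any equality between two such words is achieved by free cancellation together with the defining relations of $\nArtG$. I would attempt this by the standard argument that words with no alternating pair-subword longer than $m/2$ are geodesic in large-type Coxeter groups, in the style of Appel--Schupp. Comparing $\pi(q)$ with $\pi(q')$, this reduces the problem to local configurations at the junction, namely dihedral pieces. In those pieces the equality can be lifted verbatim to $\nArtG$, using the dihedral Artin relation together with the fact that the corresponding rank-two dual Artin isomorphism is already known.

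I expect the delicate part to be the case analysis at the junction. There one must check that the canonical forms exclude the configurations in which $m_{ij}=3$ or $4$ would allow a genuinely different reflection lift. This is exactly where the hypothesis $m_{ij}\geq5$ will be consumed.

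Finally, combining this injectivity with the reduction gives \Cref{thm:intro_main_theorem} for every $c$, since nothing above depended on the ordering chosen for $c$.
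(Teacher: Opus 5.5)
Your reduction step is sound. It is essentially the paper's criterion: the canonical epimorphism is an isomorphism if and only if $\pi_W(q_1)=\pi_W(q_2)$ implies $\pi_A(q_1)=\pi_A(q_2)$ for all $q_1,q_2\in Q$. You reach it through the interval-group presentation and Hurwitz transitivity, where the paper uses its Hurwitz-group formalism.

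The injectivity step, however, rests on a false premise: Hurwitz words do \emph{not} have alternating runs of bounded length. Acting on $(x_i,x_j,\ldots)$ repeatedly by $\sigma_1^{-1}$ produces, for every $p$, the Hurwitz word $(x_ix_jx_i\cdots)_{p+1}(x_ix_jx_i\cdots)_{p}^{-1}$, where the subscripts denote length. For example, $p=3$ gives $x_ix_jx_ix_jx_i^{-1}x_j^{-1}x_i^{-1}$. This is a single $(x_i,x_j)$-alternating word of length $2p+1$. Written as a conjugate $wxw^{-1}$ of a generator, its conjugator $w$ is itself an alternating run of length $p$, and its image in $W_\Gamma$ is non-reduced once $2p+1>m_{ij}$. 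So no small-cancellation or geodesic argument applies to $Q$ directly. Long runs are exactly where distinct elements of $Q$ become equal in $W_\Gamma$, and they are not confined to a junction between $q$ and $q'$, so the hypothesis $m_{ij}\ge 5$ cannot be used where you propose.

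Two things are missing. First, you need a way to move inside a fibre of $\pi_A$ so as to shorten such runs. The paper uses the subgroup of $B_n$ generated by the $\sigma_{ij}^{m_{ij}}$, acting on $Q=B_n\star x_1$ by the Artin action. The element $\sigma_{ij}^{\pm m_{ij}}$ inserts, at every crossing of the arc $\mu_{ij}$, a word such as $(x_ix_j\cdots)(x_jx_i\cdots)^{-1}$ with both factors of length $m_{ij}$, up to orientation and inversion. Such a word is trivial in $A_\Gamma$. The simple-loop geometry of $Q$ is what makes this work:
\begin{itemize}
\item each maximal alternating run has exactly one change of sign, near its middle;
\item all $(i,j)$-runs crossing $\mu_{ij}$ have the same handedness, and their lengths differ by at most $2$;
\item $x_ix_kx_i$ and $x_jx_kx_j$ with $i\ne j$ never both occur.
\end{itemize}
Together these guarantee that the action shortens every $(i,j)$-run simultaneously, until no run exceeds length $m_{ij}+1$, with an extra normalization when $m_{ij}=5$.

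Second, you need control of exponents when lifting Coxeter-word equalities back to $A_\Gamma$. Since $\pi_W$ forgets exponents, two normalized Hurwitz words whose Coxeter words differ by one Artin move need not have visibly equal images in $A_\Gamma$, so ``lifting verbatim'' is not well defined. The paper handles this in three steps:
\begin{itemize}
\item for $m_{ij}\ge 5$, the normalized Coxeter words differ only by commuting toggles of long alternating subwords;
\item membership in $Q$ forces a toggle on one maximal $(i,j)$-run to be applied to every such run, since handedness can be read off the Coxeter word; hence each toggle is realized by some $\sigma_{ij}^{\pm m_{ij}}$;
\item the induction is closed by Bessis's result that $Q$ injects into the universal Coxeter group.
\end{itemize}
Without these ingredients your argument does not go through.
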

}
Such Coxeter groups are referred to as \emph{XXL Coxeter groups}, and their corresponding Artin groups are \emph{Artin groups of XXL type}.
Such Artin groups are known to be CAT(0), as proved by Haettel \cite{haettel_xxl_2022}.

We are currently working on strengthening \Cref{thm:intro_main_theorem} to include $\nArtG$ with $m_{ij} \geq 4$ (XL type).
The assumption that all $m_{ij} \geq 5$ allows for simpler arguments concerning the combinatorics of word operations in the Coxeter group, and the complications involved when including some $m_{ij} = 4$ were significant enough that we decided to release this result first.
Beyond this, the complications involved when some $m_{ij} = 3$ (large type) appear intractable without a change in approach.

In \cite[Theorem 4.11]{mccammond_introduction_2005}, it is claimed that when all $m_{ij} \geq 6$, the dual Artin group $\nDualArtG(c)$ has a Garside structure for all Coxeter elements $c$.
However, a complete proof of this result has not yet appeared in the literature.
Assuming this holds, in combination with \Cref{thm:intro_main_theorem}, this would give a Garside structure to Artin groups with all $m_{ij} \geq 6$.
\subsection*{Outline of proof}
We start by defining the Hurwitz action, which is needed to state \Cref{thm:intro_Q_theorem}, a key step towards \Cref{thm:intro_main_theorem}.
With standard braid generators $\Set{\sigma_1,\ldots,\sigma_{\nRank - 1}} $, the Hurwitz action is defined as follows.
\[
	\sigma_i \cdot (g_1,\ldots,g_\nRank) \; = \; ( g_1,\ldots, g_{i-1}, \; g_{i+1} ,\; g_{i+1}^{-1}g_i g_{i+1}, \; g_{i+2},\ldots ,g_\nRank).
\]
This was introduced by Hurwitz in \cite{hurwitz_ueber_1891} for the symmetric group, and has since been extensively studied, for example by Brieskorn \cite{brieskorn_automorphic_1988}.

The combinatorics of the Hurwitz action relevant to us are encoded in the set of all elements that occur at any position somewhere in the orbit of the Hurwitz action, called \emph{Hurwitz elements}, and denoted as follows.
\[
	\hurelt(\nTupleG) \coloneq \Set{r^\prime \given \exists (r_1,\ldots,r^\prime,\ldots,r_\nRank) \in B_n \cdot (\nTupleG)}.
\]

In \cref{sec:hurwitz_groups}, we develop a generalisation of $\nDualArtG(c)$ based on the Hurwitz action.
This is used to give a presentation for $\nDualArtG(c)$ relating to the set of \emph{Hurwitz words}
\[
	Q \coloneq \hurelt(\nTupleFree),
\]
where $\Set{\nTupleFree}$ denotes the generating set of the free group $\nFree$.
We use this setup to state some known facts about the dual Artin group, including the following theorem.
Let $\Set{\nTupleCox}$ and $\Set{\nTupleArt}$ denote the standard generating sets for~$\nCoxG$ and~$\nArtG$ respectively and let $\nPiArt \colon \nFree \to \nArtG$ be the epimorphism respecting generator indexing.
The following is proved as part of \Cref{cor:main_hurwitz_results_applied_to_dual_artin}.
{\renewcommand*{\thetheorem}{\Alph{theorem}}
\begin{theorem}
	\label{thm:intro_Q_theorem}
	Given a Coxeter group $\nCoxG$, with Coxeter element $c \coloneq \nCoxGenElt_{1}\cdots \nCoxGenElt_{\nRank} \in \nCoxG$, the Artin group $\nArtG$ is canonically isomorphic to the dual Artin group $\nDualArtG(c)$ if and only if the epimorphism $\nArtG \to \nCoxG$ is injective on $\hurelt(\nTupleArt) = \nPiArt(Q)$.
\end{theorem}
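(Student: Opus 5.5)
The strategy is to realise both $\nArtG$ and $\nDualArtG(c)$ as quotients of the free group $\nFree$ and compare their kernels through the Hurwitz words $Q$.

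\textbf{Step 1: presentations.} The dual Artin group has generating set $T$, the reflections of $\nCoxG$. Its relations are the dual braid relations $ab = b\,(b^{-1}ab)$, taken over all reduced factorisations of $c$ (equivalently, over all ordered pairs $(a,b)$ occurring as consecutive letters in a reduced $T$-factorisation of $c$). By the transitivity of the Hurwitz action on reduced reflection factorisations of $c$, the reflections occurring in such factorisations are exactly the images in $\nCoxG$ of $\hurelt(\nTupleCox)$. This is the image of $Q$ under $\nFree \to \nCoxG$.

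\textbf{Step 2: the Hurwitz group.} Using the generalisation from \Cref{sec:hurwitz_groups}, define a group $H$ with generating set $Q$ itself. Its relations are the Hurwitz relations: whenever $q_i,q_{i+1}$ are consecutive entries of a tuple in the Hurwitz orbit of $(\nTupleFree)$, impose $q_iq_{i+1} = q_{i+1}\,(q_{i+1}^{-1}q_iq_{i+1})$. Show that $H \cong \nArtG$. The map $\nArtG \to H$ sends $\nArtGenElt_i$ to the word $x_i \in Q$. The inverse sends $q\in Q$ to $\nPiArt(q)$. The braid relations in $\nArtG$ are consequences of the Hurwitz relations, and conversely each Hurwitz relation holds in $\nArtG$ because it holds in $\nFree$. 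Each $q\in Q$ is a conjugate of some $x_i$ obtained by Hurwitz moves, so in $H$ it equals the corresponding conjugate word; hence $H$ is generated by $x_1,\dots,x_\nRank$.

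\textbf{Step 3: comparing the two presentations.} The canonical epimorphism $\nArtG\cong H\to \nDualArtG(c)$ is induced by the map of generating sets $\nPiArt(Q)\to T$, $q\mapsto$ its image in $\nCoxG$. Under this map the Hurwitz relations go to dual relations, and every dual relation arises this way since the Hurwitz orbits correspond.

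If $\nArtG\to\nCoxG$ is injective on $\nPiArt(Q)$, then the generating set $\nPiArt(Q)$ of $H$ is in bijection with the generating set of $\nDualArtG(c)$. Here one should present $H$ on the generators $\nPiArt(Q)$ rather than $Q$, which is legitimate because equal elements of $\nArtG$ impose the same relations. Under this bijection the relation sets correspond, so the two presentations are identical and the map is an isomorphism.

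Conversely, if the epimorphism $\nArtG\to\nDualArtG(c)$ is an isomorphism, then $\nDualArtG(c)\to\nCoxG$ is injective on the generating set $T$, since $T\subset\nCoxG$ tautologically. Composing, $\nArtG\to\nCoxG$ is injective on $\nPiArt(Q)$.

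\textbf{Main obstacle.} The delicate step is the bijection in Step 1 and Step 3: showing that the Hurwitz orbit of $(\nTupleCox)$ in $\nCoxG^\nRank$ coincides with the image of the orbit of $(\nTupleFree)$. In particular, one must ensure that every reduced $T$-factorisation of $c$ appears in this image. For this I would invoke the known transitivity of the Hurwitz action on reduced reflection factorisations of Coxeter elements (Baumeister–Dyer–Stump–Wegener). Orbits of the free tuple project onto orbits of the Coxeter tuple equivariantly, which gives surjectivity of the correspondence.
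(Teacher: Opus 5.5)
There is a genuine gap in Step~2. The group $H$ you define there has one generator for each element of $Q\subset F_n$ and relations coming from tuples in $B_n\cdot(x_1,\dots,x_n)$. It is the Hurwitz group of the \emph{free} tuple, and it is simply $F_n$. Every relation $q_i q_{i+1}=q_{i+1}(q_{i+1}^{-1}q_iq_{i+1})$ already holds in $F_n$, so $[q]\mapsto q$ is a well-defined epimorphism $H\to F_n$. The map $x_i\mapsto[x_i]$ is its inverse, because each $[q]$ equals the corresponding word in the $[x_i]$ (the fact you invoke at the end of Step~2).

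In particular, the braid relations are \emph{not} consequences of the Hurwitz relations in this group. Nor does $t_i\mapsto x_i$ define a homomorphism $A_\Gamma\to H$: the two alternating words of length $m_{ij}$ in $x_i,x_j$ are distinct elements of $F_n$.

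Your repair in Step~3 is circular. You re-present $H$ on $\pi_A(Q)$ ``because equal elements of $A_\Gamma$ impose the same relations''. But identifying the generators $q$ and $q'$ whenever $\pi_A(q)=\pi_A(q')$ passes to a proper quotient. That identification is exactly where the Artin relations come from, so it presupposes the isomorphism with $A_\Gamma$ you were trying to establish.

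The missing idea is to work from the start with the Hurwitz group $H(t_1,\dots,t_n)$ of the Artin tuple, whose generators are indexed by $\pi_A(Q)$. You then have to check the braid relations there explicitly; this is \Cref{lem:artin_group_is_hurwitz_group}. Write $\Pi_k(a,b)$ for the alternating product of length $k$ beginning with $a$, and set $f_p(a,b)=\Pi_{p+1}(a,b)\Pi_p(a,b)^{-1}$.
\begin{itemize}
\item For $i<j$, applying $\sigma_1^{-1}$ once to a tuple beginning $(x_i,x_j)$ gives a tuple beginning $(f_1,f_0)$.
\item The identity $f_pf_{p-1}f_p^{-1}=f_{p+1}$ gives $\sigma_1^{-1}\cdot(f_p,f_{p-1},\dots)=(f_{p+1},f_p,\dots)$. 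Hence every $f_p(x_i,x_j)$ lies in $Q$.
\item Since $\pi_A(f_{m_{ij}}(x_i,x_j))=t_i$, the Hurwitz words $f_{m_{ij}}(x_i,x_j)$ and $x_i$ give the \emph{same} generator of $H(t_1,\dots,t_n)$.
\item \Cref{lem:q_relations_in_hurwitz_group} then converts this into $f_{m_{ij}}([t_i],[t_j])=[t_i]$, which is equivalent to the braid relation.
\end{itemize}

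Once Step~2 is corrected this way, the rest of your argument goes through. Your Step~3 matches generators bijectively and matches relation sets via equivariance of the Hurwitz action under $A_\Gamma\to W_\Gamma$. This is a valid and slightly more direct substitute for the paper's route through the presentation on $x_1,\dots,x_n$ in \Cref{cor:equivalent_conditions_H_iso}. Your converse is the paper's $(1)\Rightarrow(3)$.
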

}
Though the statement of \Cref{thm:intro_Q_theorem} is new, related statements have previously appeared in the literature.
Resteghini \cite[Proposition 3.9]{resteghini_free_2024} gives a similar criterion involving two equivalence relations on the braid group, and Bessis \cite[Section~6, top of p.~13]{bessis_dual_2004} previously observed the Hurwitz group construction, given in \Cref{sec:hurwitz_groups} here, which leads to the proof.
Note that we cite the preprint~\cite{bessis_dual_2004}, not the published article, since Section~6 does not appear in the published version.

We now expand on how \Cref{thm:intro_Q_theorem} is used to prove \Cref{thm:intro_main_theorem}.
We define $\nPiCox \colon \nFree \to \nCoxG$ similarly to $\nPiArt$.
There is a commuting triangle of epimorphisms.
\begin{equation}
	\label{eqn:commuting_triangle}
	\begin{tikzcd}[row sep=scriptsize]
		               & \nFree \ar[dl, "\nPiArt"'] \ar[dr, "\nPiCox"] &        \\
		\nArtG \ar[rr] &                                               & \nCoxG
	\end{tikzcd}.
\end{equation}
Using \Cref{thm:intro_Q_theorem}, we see that $\nArtG$ is canonically isomorphic to $\nDualArtG(c)$ precisely when, for every pair of Hurwitz words $q_1,q_2 \in Q$, the equality $\nPiCox(q_1) = \nPiCox(q_2)$ implies $\nPiArt(q_1) = \nPiArt(q_2)$, see \Cref{cor:equality_in_W_implies_equality_in_A_implies_theorem}.
Furthermore, there is a solution to the word problem in~$\nCoxG$ due to Tits, which we use to control when $\nPiCox(q_1) = \nPiCox(q_2)$.

This control relies on the geometry of Hurwitz words~$Q$.
The Hurwitz action is closely related to the geometric action of $B_\nRank$ on $\nFree$ by automorphisms, denoted by $\star$, and this is used to show that $Q$ is equal to the orbit $B_\nRank \star \nFreeGenElt_1$ (the specific generator $\nFreeGenElt_1$ here is not important).
Then, realising $\nFree$ as the fundamental group of the disk with $\nRank$ punctures $\nPuncturedD$, we show that $Q$ consists of elements which correspond to simple loops (embeddings of $S^1$) which intersect the boundary of $\D$, and this is enough to restrict certain subwords relevant to the word problem in $\nCoxG$ from appearing in any $q \in Q$, which is used to control when $\nPiCox(q_1) = \nPiCox(q_2)$.
This is the main, novel idea of this paper, that relatively simple observations about simple loops drawn in $\nPuncturedD$ have relevance to the word problem in Coxeter groups, and that this can be applied to the dual Artin isomorphism problem.

For example, \Cref{lem:aba_cbc} states that if $i \neq j$, then $\nFreeGenElt_i^{\pm 1}\nFreeGenElt_k^{\pm 1}\nFreeGenElt_i^{\pm 1}$ and $\nFreeGenElt_j^{\pm 1}\nFreeGenElt_k^{\pm 1}\nFreeGenElt_j^{\pm 1}$ cannot coexist as two subwords of any Hurwitz word $q \in Q$.
This means that any two alternating subwords $u_1,u_2$ of $q$ with $\ell(u_1),\ell(u_2) \geq 4$ cannot have exactly one generator in common, which greatly restricts how these subwords can interact under word operations in $\nCoxG$.
This is used to show that if all $m_{ij} \geq 5$, certain pairs $q_1,q_2 \in Q$ are equal in~$\nCoxG$ only if they are related by a relatively simple set of commuting word operations.

Given a Coxeter graph $\Gamma$, we define the subgroup~$\nBraidSubgrpG \leq B_\nRank$ generated by all~$\sigma_{ij}^{m_{ij}}$, where $\sigma_{ij}$ denotes a Birman--Ko--Lee generator in $B_\nRank$.
We show that for all $\hat{\beta} \in \nBraidSubgrpG$ and $x \in \nFree$, we have $\nPiArt(x) = \nPiArt(\hat{\beta} \star x)$.
Each $\nBraidSubgrpG$ provides a canonical way in which two $q_1,q_2 \in Q$ can be equal in $\nArtG$, and study of how these subgroups sit inside $B_\nRank$ could be a fruitful approach to the dual Artin isomorphism problem.

Finishing our argument, with further restrictions on $Q$ derived from its geometry in $\nPuncturedD$, we show that if all $m_{ij} \geq 5$, the commuting word operations which equate a pair $q_1,q_2 \in Q$ in $\nCoxG$ must be realised by acting by elements of $\nBraidSubgrpG$ in $Q$, showing that if $q_1,q_2$ are equal in $\nCoxG$, then they are equal in $\nArtG$, and thus that $\nArtG$ is canonically isomorphic to $\nDualArtG$.

After a preliminary version of this paper appeared on the arXiv, the author was made aware of unpublished work of Kluitmann \cite{kluitmann_isotropy_1991} which contains arguments that overlap with some of those presented here.
Specifically, the results comprising \Cref{thm:summary_of_Q_restrictions} appear in Kluitmann's preprint, and they are applied similarly to show that certain word reductions can be achieved within $Q$, resulting in a more general version of \Cref{lem:applying_B_hat_results_in_5_tame_B_word} which applies to large type Coxeter groups.

%% file: sections/introduction/acknowledgements.tex
\subsection*{Acknowledgements}
I would like to thank my supervisors Jim Belk and Rachael Boyd for their invaluable guidance and insightful discussions.
I would also like to thank Michael Dougherty and Mark Powell for providing comments on drafts.
I also thank Michael Dougherty for bringing to my attention the unpublished preprint by Paul Kluitmann \cite{kluitmann_isotropy_1991} mentioned above.
I acknowledge that ChatGPT 5.5 was useful for eliminating typos and minor errors in this paper.

This work was supported by the EPSRC under the Centre for Doctoral Training in Algebra, Geometry and Quantum Fields.
Grant number EP/Y035232/1.

%% file: sections/hurwitz_groups/hurwitz_groups_main.tex
\section{Coxeter groups, Artin groups, Hurwitz groups and dual Artin groups}\label{sec:hurwitz_groups}
\input{sections/hurwitz_groups/preamble}
\input{sections/hurwitz_groups/definitions}
\input{sections/hurwitz_groups/hurwitz_groups}

%% file: sections/hurwitz_groups/preamble.tex
In this section we will define most of the groups used in this paper, most importantly dual Artin groups, which we view as a special case of the more general class of Hurwitz groups.
We will then show that Hurwitz groups admit a presentation that depends on the set of Hurwitz words $Q$, and using this presentation we will show that Hurwitz groups inherit any relations that occur in Artin groups.
Thus, we construct an epimorphism from the Artin group to the dual Artin group, which is seen as the image of $\nArtG \to \nCoxG$ under a functor.
Properties of this functor show this epimorphism is an isomorphism if and only if $\nArtG \to \nCoxG$ is injective on $\nPiArt(Q)$.

%% file: sections/hurwitz_groups/definitions.tex
\subsection{Definitions of relevant groups}
\label{sec:definitions_of_groups}
A \emph{Coxeter graph} $\Gamma$ is a simplicial graph with vertices labelled $\Set{1,\ldots,\nRank}$ and edge labellings $m_{ij} \in \Set{3,4,\ldots} \cup \Set{\infty}$.
We say $m_{ij} = 2$ for all disconnected vertices labelled $i$ and $j$ and call~$\nRank$ the \emph{rank} of $\Gamma$.
Given some $k \in \N$ and group elements $g$ and $h$, let $\nPi{g}{h}{k}$ denote the alternating product of $g$ and $h$ beginning with $g$ and of total length $k$.
For example, $\nPi{a}{b}{3} = aba$.
The \emph{Artin group} $\nArtG$ associated to the Coxeter graph $\Gamma$ is
\[
	\nArtG = \GroupPres{\nTupleArt \mid \nPi{\nArtGenElt_i}{\nArtGenElt_j}{m_{ij}} = \nPi{\nArtGenElt_j}{\nArtGenElt_i}{m_{ij}}\text{ for all $i \neq j$ with $m_{ij}\ne \infty$} }.
\]
The \emph{braid group} $B_\nRank$ is the Artin group $\nArtG$ where $\Gamma$ is $\nRank-1$ vertices connected in a line, with each edge labelled $m_{ij} = 3$.
The $\nRank - 1$ generators in this description are the \emph{standard braid generators}, denoted $\Set{\sigma_1,\ldots,\sigma_{\nRank -1 }}$.

The \emph{Coxeter group} $\nCoxG$ associated to $\Gamma$ is the group generated by $s_1,\dots,s_n$ with relations corresponding to those in $\nArtG$ as well as $s_i^2 = 1$ for all $i$.
There is a canonical epimorphism $\nArtG \to \nCoxG$ mapping $\nArtGenElt_i \mapsto \nCoxGenElt_i$ for all $i$.

Given a group $G$ and a tuple $(\nTupleG) \in G^\nRank$, recall that the Hurwitz elements $\hurelt(\nTupleG)$ are all $g \in G$ which occur at any position in an element in the Hurwitz action orbit $B_\nRank \cdot (\nTupleG)$.
Note that all $r\in \hurelt(\nTupleG)$ actually appear as the \textit{first} coordinate of some tuple in $B_\nRank \cdot (\nTupleG)$.
In particular, if $r$ is the $i$th coordinate of $\beta\cdot (\nTupleG)$, then it is the first coordinate of
\[
	\sigma_1\cdots \sigma_{i-1}\beta\cdot(\nTupleG).
\]

\begin{definition}
	\label{def:hurwitz_group}
	Let $G$ be a group, and let $(g_1,\ldots,g_n)\in G^n$.
	The associated \emph{Hurwitz group} $H(g_1,\ldots,g_n)$ is the group with one generator $[r]$ for each $r\in\hurelt(\nTupleG)$, and relations
	\[
		[r_2]^{-1}[r_1][r_2]=[r_2^{-1}r_1r_2]
	\]
	for all $(r_1,r_2,\ldots,r_n)\in B_n\cdot (g_1,\ldots,g_n)$.
\end{definition}
\begin{remark}
	\label{rmk:bring_pairs_to_start_of_tuple}
	It is not important that the elements involved in the relations in \Cref{def:hurwitz_group} occur at the \emph{start} of a tuple.
	If $i<j$ with $r$ and $r^\prime$ occurring at coordinates~$i$ and~$j$ of $\beta \cdot (\nTupleG)$, then $r$ and $r^\prime$ occur at coordinates~$1$ and~$2$ of
	\[
		(\sigma_2\cdots\sigma_{j-1})(\sigma_1\cdots\sigma_{i-1})\beta \cdot (\nTupleG).
	\]
\end{remark}
Let $\mathcal{S}_\nRank$ denote the symmetric group on $\nRank$ letters.
A \emph{Coxeter element} of $\nCoxG$ is an element of the form $c=\nCoxGenElt_{\mu(1)}\cdots\nCoxGenElt_{\mu(\nRank)}$ for any $\mu\in\mathcal{S}_\nRank$.

The following definition is not the standard one, but it is equivalent.
Usually, dual Artin groups are defined as interval groups associated to Coxeter elements \cite[Definitions 2.6 and 2.7]{mccammond_sulway_artin_2017}, which we compare with \Cref{def:hurwitz_group}.
The equivalence follows from \cite[Proposition 3.5]{mccammond_dual_2015} and the fact that the Hurwitz action on any reduced reflection factorisation $(\nCoxGenElt_{\mu(1)},\ldots,\nCoxGenElt_{\mu(\nRank)})$ is transitive on the set of reduced reflection factorisations of the corresponding Coxeter element, proved in the general case by Igusa and Schiffler in \cite{igusa_schiffler_exceptional_2010}, with a shortened proof by Baumeister, Dyer, Stump and Wegener in \cite{baumeister_etal_note_2014}.
\begin{definition}
	\label{def:dual_artin_group}
	Let $\nCoxG$ be a Coxeter group and let $c = \nCoxGenElt_{\mu(1)}\cdots\nCoxGenElt_{\mu(\nRank)} \in \nCoxG$ be a Coxeter element.
	We define the \emph{dual Artin group} $\nDualArtG(c)$ to be the Hurwitz group $H(s_{\mu(1)},\ldots,s_{\mu(\nRank)})$.
\end{definition}
A priori, the isomorphism class of any dual Artin group $\nDualArtG(c)$ depends on the choice of Coxeter element $c$.
However, in this paper, we will only consider the Coxeter element $\nCoxGenElt_1\cdots\nCoxGenElt_\nRank$, and accordingly drop the appearance of $c$ in our notation.
Now, any choice of Coxeter element is hidden in the arbitrary indexing of the generators, and choosing a different Coxeter element is equivalent to appropriately permuting the $m_{ij}$.
The arguments in this paper will not depend on any permutation of the $m_{ij}$, and thus hold for any choice of Coxeter element.

%% file: sections/hurwitz_groups/hurwitz_groups.tex
\subsection{Abstract properties of Hurwitz groups}
\label{sec:abstract_hurwitz_groups}
Consider the category $\cngrp$ of $n$-generated groups, whose objects are pairs $(G,(\nTupleG))$ with $G$ generated by $\Set{\nTupleG}$, and whose morphisms are group epimorphisms respecting the tuples.

The Hurwitz action commutes with homomorphisms, i.e.~if $\beta \cdot (g_1,\ldots,g_\nRank) = (r_1,\ldots,r_n)$ then $\beta \cdot (\phi(g_1),\ldots,\phi(g_\nRank)) = (\phi(r_1),\ldots,\phi(r_\nRank))$ for any relevant homomorphism~$\phi$.
This gives the braid group an action on $\cngrp$, i.e.~a homomorphism $B_\nRank \to \aut(\cngrp)$ mapping $\beta \mapsto F_\beta$, which is defined by
\[
	\begin{tikzcd}
		 & (G,(\nTupleG)) \ar[r, mapsto,"F_\beta"] & (G, \beta \cdot (\nTupleG)).
	\end{tikzcd}
\]
We refer to this observation as the \emph{functoriality of the Hurwitz action}.
\begin{lemma}
	\label{lem:hurwitz_action_on_hurwitz_group}
	Let $G$ be a group and let $(\nTupleG) \in G^\nRank$.
	For all $\beta \in B_\nRank$, if $\beta \cdot (\nTupleG) = (r_1,\ldots,r_\nRank)$, then $\beta \cdot (\nTupleSqG) = ([r_1],\ldots,[r_\nRank])$.
\end{lemma}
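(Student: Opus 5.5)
Since $B_\nRank$ is generated by $\sigma_1,\ldots,\sigma_{\nRank-1}$ and their inverses, the argument will be an induction on the length of a word representing $\beta$. Before starting, I note that the tuple $([g_1],\ldots,[g_\nRank])$ makes sense: each $g_i$ is the $i$th coordinate of the trivial element of the orbit, so $g_i \in \hurelt(\nTupleG)$ and $[g_i]$ is a generator of $H(\nTupleG)$. I will prove the stronger inductive claim that for every $\beta$, with $\beta\cdot(\nTupleG) = (r_1,\ldots,r_\nRank)$, every $r_k$ lies in $\hurelt(\nTupleG)$ (true by definition) and $\beta\cdot([g_1],\ldots,[g_\nRank]) = ([r_1],\ldots,[r_\nRank])$. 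The case $\beta = 1$ is trivial.

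For the inductive step, suppose the claim holds for $\beta$ with $\beta\cdot(\nTupleG) = (r_1,\ldots,r_\nRank)$. First consider $\sigma_i\beta$. By the definition of the Hurwitz action,
\[
\sigma_i\beta\cdot(\nTupleG) = (r_1,\ldots,r_{i+1},\, r_{i+1}^{-1}r_ir_{i+1},\ldots,r_\nRank).
\]
Acting on $([r_1],\ldots,[r_\nRank])$ instead gives the tuple with $[r_{i+1}]$ in position $i$ and $[r_{i+1}]^{-1}[r_i][r_{i+1}]$ in position $i+1$. It therefore suffices to show $[r_{i+1}]^{-1}[r_i][r_{i+1}] = [r_{i+1}^{-1}r_ir_{i+1}]$ in $H(\nTupleG)$. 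This is a defining relation of $H(\nTupleG)$ once the pair $(r_i,r_{i+1})$ is moved to coordinates $1$ and $2$ of some tuple in the orbit. By \Cref{rmk:bring_pairs_to_start_of_tuple}, applying $(\sigma_2\cdots\sigma_{i})(\sigma_1\cdots\sigma_{i-1})$ to $\beta\cdot(\nTupleG)$ produces such a tuple, so the relation holds.

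Next consider $\sigma_i^{-1}\beta$. The inverse action is
\[
\sigma_i^{-1}\cdot(\ldots,r_i,r_{i+1},\ldots) = (\ldots,\, r_ir_{i+1}r_i^{-1},\, r_i,\ldots).
\]
So I need $[r_i][r_{i+1}][r_i]^{-1} = [r_ir_{i+1}r_i^{-1}]$. Set $t = r_ir_{i+1}r_i^{-1}$. Then $t$ sits in coordinate $i$ of $\sigma_i^{-1}\beta\cdot(\nTupleG)$, with $r_i$ in coordinate $i+1$. The defining relation for this adjacent pair, again transported to the front by \Cref{rmk:bring_pairs_to_start_of_tuple}, reads $[r_i]^{-1}[t][r_i] = [r_i^{-1}tr_i] = [r_{i+1}]$. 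Rearranging gives $[t] = [r_i][r_{i+1}][r_i]^{-1}$, as required.

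The main obstacle is bookkeeping: the relations of \Cref{def:hurwitz_group} are stated only for the first two coordinates of orbit elements, whereas the inductive step needs them at arbitrary adjacent positions $i,i+1$. \Cref{rmk:bring_pairs_to_start_of_tuple} resolves this, but it must be checked that the particular braid it prescribes places $r_i$ in position $1$ and $r_{i+1}$ in position $2$ in that order, without conjugating either. I would verify this directly from the formula for $\sigma_j$: moving the entry at coordinate $j>1$ to the front only conjugates the entries it passes over. Finally, the inductive claim applied to $\beta$ itself is exactly the statement of the lemma.
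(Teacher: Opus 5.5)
Your proof is correct and follows the paper's argument: the same induction over words in the $\sigma_i^{\pm1}$, with \Cref{rmk:bring_pairs_to_start_of_tuple} used to obtain the relation $[r_{i+1}]^{-1}[r_i][r_{i+1}]=[r_{i+1}^{-1}r_ir_{i+1}]$ at adjacent positions. The only difference is in the $\sigma_i^{-1}$ case, where you actually derive $[r_i][r_{i+1}][r_i]^{-1}=[r_ir_{i+1}r_i^{-1}]$ from the defining relation for the tuple $\sigma_i^{-1}\beta\cdot(\nTupleG)$; the paper merely asserts that this relation holds, so your version is slightly more explicit.
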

\begin{proof}
	The first equation is in $G^\nRank$, and the second is in $H(\nTupleG)^\nRank$.
	We prove this by induction on $B_\nRank$.
	The base case where $\beta$ is the identity holds trivially.
	Now assume the claim for $\beta$, so $\beta \cdot (\nTupleG) = (r_1,\ldots,r_\nRank)$ and $\beta \cdot (\nTupleSqG) = ([r_1],\ldots,[r_\nRank])$.
	We see that
	\begin{align*}
		\sigma_i\beta \cdot (\nTupleG)   & = \left(r_1,\ldots,r_{i-1},\;r_{i+1},\;r_{i+1}^{-1}r_{i}r_{i+1},\;r_{i+2},\ldots,r_\nRank\right), \quad \text{and} \\
		\sigma_i\beta \cdot (\nTupleSqG) & = \left([r_1],\ldots,[r_{i-1}],\;[r_{i+1}],\;[r_{i+1}]^{-1}[r_{i}][r_{i+1}],\;[r_{i+2}],\ldots,[r_\nRank]\right).
	\end{align*}
	By \Cref{rmk:bring_pairs_to_start_of_tuple}, the relation $[r_{i+1}]^{-1}[r_{i}][r_{i+1}] = [r_{i+1}^{-1}r_{i}r_{i+1}]$	holds in $H(\nTupleG)$, so the claim holds for $\sigma_i\beta$.
	Relations of the form $[r_{i}][r_{i+1}][r_i]^{-1} = [r_{i}r_{i+1}r_i^{-1}]$ also hold in $H(\nTupleG)$, so the claim holds for~$\sigma_i^{-1}\beta$.
\end{proof}
The pair $(\nFree,(\nTupleFree))$ is initial in $\cngrp$, and for each $(G,(\nTupleG))$, we denote the relevant homomorphism as follows.
\[
	\pi_G \colon (\nFree,(\nTupleFree)) \to (G,(\nTupleG)).
\]
We may omit the tuples when they are clear from context.
Given a word $w\in \nFree$ we can denote its image $\pi_G(w)$ by $w(\nTupleG)$.
For example, $x_1x_2x_1^{-1}(g_1,g_2) = g_1g_2g_1^{-1}$.
\begin{lemma}
	\label{lem:q_relations_in_hurwitz_group}
	Let $G$ be a group and let $(\nTupleG) \in G^\nRank$.
	The relation
	\[
		q(\nTupleSqG) = [q(\nTupleG)]
	\]
	holds in $H(\nTupleG)$ for all $q \in Q$.
\end{lemma}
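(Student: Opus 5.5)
Since $Q = \hurelt(\nTupleFree)$, every $q \in Q$ is a coordinate of some tuple in the orbit $B_\nRank \cdot (\nTupleFree)$. The plan is to transport the statement along the Hurwitz action using functoriality and \Cref{lem:hurwitz_action_on_hurwitz_group}. We compare two descriptions of the same coordinate.

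Fix $q \in Q$ and choose $\beta \in B_\nRank$ and an index $i$ such that $q$ is the $i$th coordinate of $\beta \cdot (\nTupleFree)$; by the observation after the definition of $\hurelt$ we could take $i=1$, but this is not needed.

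First, apply functoriality of the Hurwitz action to the homomorphism $\pi_G \colon \nFree \to G$. This shows that the $i$th coordinate of $\beta \cdot (\nTupleG)$ is $\pi_G(q) = q(\nTupleG)$. In particular $q(\nTupleG) \in \hurelt(\nTupleG)$, so the generator $[q(\nTupleG)]$ of $H(\nTupleG)$ exists. By \Cref{lem:hurwitz_action_on_hurwitz_group}, the $i$th coordinate of $\beta \cdot (\nTupleSqG)$ is therefore $[q(\nTupleG)]$.

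Second, apply functoriality to the homomorphism $\pi_{H} \colon \nFree \to H(\nTupleG)$ sending $\nFreeGenElt_j \mapsto [g_j]$. This is legitimate since $H(\nTupleG)$ has these elements available; we need not know that they generate, only that $\pi_H$ is a homomorphism from the free group, which it is. Functoriality gives that the $i$th coordinate of $\beta \cdot (\nTupleSqG)$ equals $\pi_H(q) = q(\nTupleSqG)$.

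Comparing the two computations of the same coordinate yields $q(\nTupleSqG) = [q(\nTupleG)]$ in $H(\nTupleG)$.

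The only point needing care is the second step: functoriality was phrased on $\cngrp$, which requires epimorphisms. I would note that the commuting of the Hurwitz action with homomorphisms holds for any homomorphism, since each $\sigma_i^{\pm1}$ acts by a word-defined formula. Alternatively, one could restrict to the subgroup generated by the $[g_j]$. Otherwise the argument is a direct bookkeeping check with no real obstacle.
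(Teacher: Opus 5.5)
Your proof is correct and is essentially the paper's argument: you compute each coordinate of $\beta\cdot([g_1],\ldots,[g_n])$ once by applying functoriality to $\pi_H$, and once by applying functoriality to $\pi_G$ followed by \Cref{lem:hurwitz_action_on_hurwitz_group}, and then compare the two. Your caveat about epimorphisms is a valid minor tightening rather than an obstacle. The paper states that the Hurwitz action commutes with arbitrary homomorphisms, and that is all that is needed here, since surjectivity of $\pi_H$ is only established afterwards, in \Cref{cor:hurwitz_group_finite_gen_set}.
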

\begin{proof}
	Let $q \in Q$ with $\beta \in B_\nRank$ such that $q$ appears in $\beta \cdot (\nTupleFree) = (q_1, \ldots,q_\nRank)$.
	Considering the homomorphism $\pi_{H(\nTupleG)} \colon F_n \to H(\nTupleG)$, by the functoriality of the Hurwitz action we have
	\begin{align*}
		\beta \cdot \left( \nTupleSqG \right) = \left( q_1(\nTupleSqG),\ldots,q_\nRank(\nTupleSqG) \right).
	\end{align*}
	Similarly, considering the homomorphism $\pi_G \colon \nFree \to G$, we have $\beta \cdot (\nTupleG) = (q_1(\nTupleG),\ldots,q_\nRank(\nTupleG))$.
	Therefore, by \Cref{lem:hurwitz_action_on_hurwitz_group}, we have
	\[
		\beta \cdot (\nTupleSqG) = \left( [q_1(\nTupleG)],\ldots,[q_\nRank(\nTupleG)] \right),
	\]
	and combining this with the first equation proves the result.
\end{proof}
\begin{corollary}
	\label{cor:hurwitz_group_finite_gen_set}
	The Hurwitz group $H(\nTupleG)$ is generated by $\Set{\nTupleSqG}$.
\end{corollary}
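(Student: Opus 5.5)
The plan is to show that every generator $[r]$ of $H(\nTupleG)$, for $r \in \hurelt(\nTupleG)$, already lies in the subgroup generated by $\Set{\nTupleSqG}$. By \Cref{def:hurwitz_group}, $H(\nTupleG)$ is generated by all the symbols $[r]$, so this suffices.

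Fix $r \in \hurelt(\nTupleG)$. By definition there is some $\beta \in B_\nRank$ such that $r$ occurs as a coordinate, say the $i$th, of $\beta \cdot (\nTupleG)$. Apply the Hurwitz action of the same $\beta$ to the tuple $(\nTupleFree)$ of free generators in $\nFree$, and let $q$ be the $i$th coordinate of $\beta \cdot (\nTupleFree)$. Then $q \in Q$ by definition of $Q$. By the functoriality of the Hurwitz action applied to $\pi_G$, the $i$th coordinate of $\beta \cdot (\nTupleG)$ is $q(\nTupleG)$, so $r = q(\nTupleG)$.

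Now \Cref{lem:q_relations_in_hurwitz_group} gives $[r] = [q(\nTupleG)] = q(\nTupleSqG)$ in $H(\nTupleG)$. The right-hand side is a word in the elements $[g_1],\ldots,[g_\nRank]$ and their inverses, so $[r]$ lies in the subgroup they generate. This completes the argument.

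There is no real obstacle here. The only point needing care is that the same braid $\beta$ produces both $r$ in $G$ and $q$ in $\nFree$, which is exactly the content of functoriality (and was already used in the proof of \Cref{lem:q_relations_in_hurwitz_group}). Alternatively, one could apply \Cref{lem:hurwitz_action_on_hurwitz_group} directly: $[r]$ is a coordinate of $\beta \cdot (\nTupleSqG)$, and every coordinate of a Hurwitz orbit element is a product of conjugates of the original entries. An induction on the word length of $\beta$ would make that explicit.
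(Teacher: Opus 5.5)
Your proposal is correct and matches the paper's proof: you identify each generator $[r]$ with $[q(g_1,\ldots,g_n)]$ for some $q \in Q$ via functoriality of the Hurwitz action, then use \Cref{lem:q_relations_in_hurwitz_group} to rewrite it as a word in $[g_1],\ldots,[g_n]$. Your extra care in tracking that the same braid $\beta$ produces both $r$ and $q$ simply spells out the paper's remark that $[\hurelt(g_1,\ldots,g_n)] = [\pi_G(Q)]$.
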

\begin{proof}
	The defining generating set for $H(\nTupleG)$ is $[\hurelt(\nTupleG)]$, which by the functoriality of the Hurwitz action is $[\pi_G(Q)]$.
	\Cref{lem:q_relations_in_hurwitz_group} equates each element of $[\pi_G(Q)]$ to a word in  $\Set{\nTupleSqG}$.
\end{proof}
It is now simple to see that $H \colon \cngrp \to \cngrp$ defined as below is a functor.
\begin{equation*}
	\begin{tikzcd}[row sep=tiny]
		 & (G,(\nTupleG)) \ar[r, mapsto, "H"]          & (H(\nTupleG),(\nTupleSqG))  \\
		 & (g_i \mapsto \phi(g_i)) \ar[r, mapsto, "H"] & ([g_i] \mapsto [\phi(g_i)])
	\end{tikzcd}
\end{equation*}
We can interpret \Cref{lem:hurwitz_action_on_hurwitz_group} as establishing equivariance of this functor with respect to the Hurwitz action on $\cngrp$, i.e.~$H \circ F_\beta = F_\beta \circ H$ for all $\beta \in B_\nRank$.
\begin{lemma}
	\label{lem:hurwitz_group_alt_presentation}
	Let $G$ be a group and let $(\nTupleG) \in G^\nRank$.
	Identifying each~$[g_i]$ with~$\nFreeGenElt_i$, the Hurwitz group $H(\nTupleG)$ has presentation
	\[
		\GroupPres{\nTupleFree \relations q_1 = q_2 \text{ if $q_1,q_2 \in Q$ and $\pi_G(q_1) = \pi_G(q_2)$}}.
	\]
\end{lemma}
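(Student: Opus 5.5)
We construct mutually inverse homomorphisms between $H(\nTupleG)$ and the group $P$ defined by the presentation in the statement, $P = \GroupPres{\nTupleFree \relations q_1 = q_2 \text{ if } q_1,q_2\in Q,\ \pi_G(q_1)=\pi_G(q_2)}$. Throughout we use that $\hurelt(\nTupleG) = \pi_G(Q)$ by the functoriality of the Hurwitz action.

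\emph{Map $\Phi\colon P \to H(\nTupleG)$.} Send $\nFreeGenElt_i \mapsto [g_i]$. On words this is $w \mapsto w(\nTupleSqG)$. We must check the relations. Take $q_1,q_2\in Q$ with $\pi_G(q_1)=\pi_G(q_2)$. By \Cref{lem:q_relations_in_hurwitz_group},
\[
q_1(\nTupleSqG) = [\pi_G(q_1)] = [\pi_G(q_2)] = q_2(\nTupleSqG).
\]
So $\Phi$ is a well-defined homomorphism. It is surjective by \Cref{cor:hurwitz_group_finite_gen_set}.

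\emph{Map $\Psi\colon H(\nTupleG)\to P$.} For $r\in\hurelt(\nTupleG)$, choose $q\in Q$ with $\pi_G(q)=r$ and set $\Psi([r]) = q$, read in $P$. This is independent of the choice of $q$, because any two choices are identified by a defining relation of $P$.

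\emph{Main step: $\Psi$ respects the relations.} Let $(r_1,r_2,\ldots,r_\nRank)=\beta\cdot(\nTupleG)$. Lift this to $\beta\cdot(\nTupleFree)=(p_1,p_2,\ldots,p_\nRank)$, so that $\pi_G(p_i)=r_i$ and each $p_i\in Q$. Then $\sigma_1\beta\cdot(\nTupleFree)$ has second coordinate $p_2^{-1}p_1p_2$, which therefore lies in $Q$. Its image under $\pi_G$ is $r_2^{-1}r_1r_2$. Since $\Psi$ is independent of the choice of lift, we may compute with the $p_i$:
\[
\Psi([r_2])^{-1}\Psi([r_1])\Psi([r_2]) = p_2^{-1}p_1p_2 = \Psi([r_2^{-1}r_1r_2]),
\]
where the first equality is an identity of words in $\nFree$ and hence holds in $P$. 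So $\Psi$ is a well-defined homomorphism.

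\emph{Inverses.} $\Psi\Phi(\nFreeGenElt_i)=\Psi([g_i])=\nFreeGenElt_i$, choosing the lift $\nFreeGenElt_i\in Q$. Hence $\Psi\circ\Phi=\mathrm{id}_P$. Since $\Phi$ is surjective, $\Phi\circ\Psi=\mathrm{id}$ as well. This gives the isomorphism, under which $[g_i]$ corresponds to $\nFreeGenElt_i$.

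The only delicate point is the main step: the conjugation relation must lift to $Q$ with compatible choices of representatives. The independence of the choice of $q$, which is built into the presentation of $P$, takes care of this.
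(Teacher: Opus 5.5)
Your proof is correct and takes the same route as the paper. The map $x_i\mapsto[g_i]$ is an epimorphism by \Cref{lem:q_relations_in_hurwitz_group} and \Cref{cor:hurwitz_group_finite_gen_set}, and in the reverse direction the conjugation relations of $H(g_1,\ldots,g_n)$ are checked by lifting $\beta\cdot(g_1,\ldots,g_n)$ to $\beta\cdot(x_1,\ldots,x_n)$. Your version is more careful than the paper's. The paper says the relation ``holds freely'' without stating, as you do, that sending $[r]$ to a lift in $Q$ is well defined only because of the defining relations. It also does not spell out that the two maps are mutually inverse.
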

\begin{proof}
	Let $J$ denote the group with presentation as above.
	By \Cref{lem:q_relations_in_hurwitz_group} and \Cref{cor:hurwitz_group_finite_gen_set}, the map $x_i \mapsto [g_i]$ induces an epimorphism $J \to H(\nTupleG)$.

	In the other direction, let the relation $[r_2]^{-1}[r_1][r_2] = [r_2^{-1}r_1r_2]$ in $H(\nTupleG)$ correspond to $\beta \cdot (\nTupleG) = (r_1,\ldots,r_\nRank)$.
	Let $\beta \cdot (\nTupleFree) = (q_1,\ldots,q_\nRank)$ so that $[r_i] = [q_i(\nTupleG)]$ for each $i$.
	By \Cref{lem:q_relations_in_hurwitz_group}, we have $[r_i] = q_i(\nTupleSqG)$, so mapping $[g_i] \mapsto x_i$ maps the relation to an equation which freely holds in $J$.
\end{proof}
Mapping $[g_i] \mapsto g_i$ for all $i$ induces an epimorphism $H(\nTupleG) \to G$, and this describes a natural transformation $H \Rightarrow \id_{\cngrp}$.
The resulting epimorphisms $H(\nTupleG) \to G$ are injective on $[\hurelt(\nTupleG)]$, so by restricting, we get the following commuting squares for each $\phi \in \cngrp$ with bijective top and bottom maps.
\begin{equation*}
	\begin{tikzcd}
		{[\hurelt(\nTupleG)]} \ar[d, "H(\phi)"'] \ar[r, "\sim"] & {\hurelt(\nTupleG)}   \ar[d,"\phi"] \\
		{[\hurelt(\nTuplePhiG)]} \ar[r, "\sim"]                 & {\hurelt(\nTuplePhiG)}
	\end{tikzcd}
\end{equation*}
\begin{corollary}
	\label{cor:equivalent_conditions_H_iso}
	Let $\phi \colon (G,(\nTupleG)) \to (G^\prime,(\nTuplePhiG)) \in \cngrp$.
	Consider $H(\phi) \colon H(\nTupleG) \to H(\nTuplePhiG)$.
	The following are equivalent.
	\begin{enumerate}
		\item $H(\phi)$ is an isomorphism.
		\item $H(\phi)$ is injective on  $[\hurelt(\nTupleG)]$.
		\item $\phi$ is injective on  $\hurelt(\nTupleG)$.
	\end{enumerate}
\end{corollary}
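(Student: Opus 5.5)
The plan is to prove the cycle of implications (1)$\Rightarrow$(2)$\Rightarrow$(3)$\Rightarrow$(1), using the commuting square displayed just before the statement and the presentation of \Cref{lem:hurwitz_group_alt_presentation}.

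\textbf{(1)$\Rightarrow$(2).} This is immediate, since an isomorphism is injective on any subset.

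\textbf{(2)$\Leftrightarrow$(3).} These follow from the commuting square. Its horizontal maps $[\hurelt(\nTupleG)] \to \hurelt(\nTupleG)$ and $[\hurelt(\nTuplePhiG)] \to \hurelt(\nTuplePhiG)$ are bijections. By functoriality of the Hurwitz action, $\phi(\hurelt(\nTupleG)) = \hurelt(\nTuplePhiG)$. Hence $H(\phi)$ restricted to $[\hurelt(\nTupleG)]$ is conjugate, via these bijections, to $\phi$ restricted to $\hurelt(\nTupleG)$. One of these restrictions is injective exactly when the other is.

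\textbf{(3)$\Rightarrow$(1).} This is the substantive step. I will use \Cref{lem:hurwitz_group_alt_presentation} to present both groups on the same generators $\nTupleFree$:
\begin{itemize}
\item $H(\nTupleG)$ has relations $q_1 = q_2$ whenever $q_1,q_2\in Q$ and $\pi_G(q_1)=\pi_G(q_2)$;
\item $H(\nTuplePhiG)$ has relations $q_1=q_2$ whenever $\pi_{G'}(q_1)=\pi_{G'}(q_2)$.
\end{itemize}
Under these identifications $H(\phi)$ sends $x_i \mapsto x_i$. This holds because $H(\phi)$ maps $[g_i]\mapsto[\phi(g_i)]$, and $\phi(g_i)$ is the $i$th generator of the target tuple. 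So $H(\phi)$ is the map induced by the identity on $\nFree$.

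Since $\pi_{G'} = \phi\circ\pi_G$ by initiality, every relation of the first presentation is a relation of the second. To get an isomorphism, it suffices to show the reverse containment. Suppose $\pi_{G'}(q_1)=\pi_{G'}(q_2)$ for $q_1,q_2\in Q$. Then $\phi(\pi_G(q_1))=\phi(\pi_G(q_2))$. Here $\pi_G(q_i)\in\pi_G(Q)=\hurelt(\nTupleG)$, again by functoriality of the Hurwitz action applied to $\pi_G$. Injectivity of $\phi$ on $\hurelt(\nTupleG)$ then gives $\pi_G(q_1)=\pi_G(q_2)$. So the two relation sets coincide, and $H(\phi)$ is an isomorphism.

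The only point needing care is checking that $H(\phi)$ really corresponds to the identity on generators under the identifications of \Cref{lem:hurwitz_group_alt_presentation}. This follows directly from the definition of the functor $H$ on morphisms. Everything else is bookkeeping.
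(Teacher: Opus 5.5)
Your proof is correct and takes essentially the same route as the paper. In both, (1)$\Rightarrow$(2) is immediate, the link between (2) and (3) comes from the commuting square of bijections, and (3)$\Rightarrow$(1) holds because injectivity of $\phi$ on $\pi_G(Q)$ makes the relation sets of the two presentations from \Cref{lem:hurwitz_group_alt_presentation} coincide. Your additional checks (that $H(\phi)$ becomes the identity on the generators $x_i$, and that $\phi$ maps the Hurwitz elements onto those of the image tuple) only make explicit what the paper leaves implicit.
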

\begin{proof}
	$(1) \Rightarrow (2)$ is immediate and $(2) \Rightarrow (3)$ follows from the diagram above.
	Now, suppose $\phi \colon G \to G^\prime$ restricts to a bijection between $\hurelt(\nTupleG) = \pi_G(Q)$ and $\hurelt(\nTuplePhiG) = \pi_{G^\prime}(Q)$.
	Then, $\pi_{G^\prime}(q_1) = \pi_{G^\prime}(q_2)$ implies $\pi_{G}(q_1) = \pi_{G}(q_2)$, and $(3) \Rightarrow (1)$ follows by \Cref{lem:hurwitz_group_alt_presentation}.
\end{proof}
\subsection{Hurwitz groups applied to dual Artin groups}
We first show that any Artin group $\nArtG$ can be realised as a Hurwitz group, and then apply results of \Cref{sec:abstract_hurwitz_groups}.
Recall $\Set{\nTupleArt}$ denotes the defining generators of $\nArtG$.
\begin{lemma}
	\label{lem:artin_group_is_hurwitz_group}
	The epimorphism $H(\nTupleArt) \to \nArtG$ mapping $[t_i] \mapsto t_i$ is an isomorphism.
\end{lemma}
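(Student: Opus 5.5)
The plan is to build an inverse homomorphism $\nArtG \to H(\nTupleArt)$ sending $t_i \mapsto [t_i]$. By \Cref{cor:hurwitz_group_finite_gen_set}, $H(\nTupleArt)$ is generated by the $[t_i]$, so the map $[t_i]\mapsto t_i$ is an epimorphism. Once the inverse map is well defined, the two maps are mutually inverse on generators, so both are isomorphisms. It therefore suffices to check that each Artin relation $\nPi{[t_i]}{[t_j]}{m_{ij}} = \nPi{[t_j]}{[t_i]}{m_{ij}}$ holds in $H(\nTupleArt)$.

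Fix $i \neq j$ with $m = m_{ij} < \infty$. By \Cref{rmk:bring_pairs_to_start_of_tuple}, some braid $\beta$ brings $t_i, t_j$ to adjacent positions. I would take $\beta$ to be a product of $\sigma$'s that moves $t_j$ leftward next to $t_i$. This conjugates $t_j$ by the intermediate generators, which is unwanted. So I would instead move $t_i$ rightward using $\sigma_k^{-1}$ moves, whose action is $(g,h)\mapsto (g h g^{-1}, g)$. These also conjugate the other entries, but they leave the moving entry itself unchanged. In this way I obtain a tuple in the orbit containing $t_i, t_j$ as consecutive entries $(t_i, t_j)$ at positions $k, k+1$, with both unchanged.

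Next I apply powers of $\sigma_k$ to this adjacent pair. Write the successive pairs as $(a_0,a_1)=(t_i,t_j)$, with $\sigma_k$ sending $(a_\ell,a_{\ell+1}) \mapsto (a_{\ell+1}, a_{\ell+1}^{-1}a_\ell a_{\ell+1})$. Then the $a_\ell$ are the alternating conjugates $a_\ell = (\text{alternating word})^{-1} t_{i \text{ or } j}(\text{alternating word})$. The Artin relation $\nPi{t_i}{t_j}{m}=\nPi{t_j}{t_i}{m}$ in $\nArtG$ gives $a_m = t_i$ or $t_j$ according to parity. This is the dihedral-type periodicity, which I would verify by the standard computation for the rank-two Artin group; it is the main, though routine, obstacle.

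By \Cref{lem:hurwitz_action_on_hurwitz_group}, the corresponding sequence in $H(\nTupleArt)$ is $[a_\ell]$, and it is computed by the same conjugation recursion using the $[t_i],[t_j]$. So $[a_\ell]$ equals the same conjugation expression in $[t_i],[t_j]$, namely the relation from \Cref{lem:q_relations_in_hurwitz_group}. Since $a_m$ equals $t_i$ (or $t_j$) as elements of $\nArtG$, the generators $[a_m]$ and $[t_i]$ coincide; this is exactly where working with $H$ of the Artin tuple itself pays off. Equating the two expressions yields an identity of the form $\nPi{[t_j]}{[t_i]}{m-1}^{-1}\,[t_\ast]\,\nPi{[t_j]}{[t_i]}{m-1} = [t_{\ast'}]$ in $H(\nTupleArt)$. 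Rearranging it, as in the classical equivalence between the braid relation and this conjugation form, gives $\nPi{[t_i]}{[t_j]}{m} = \nPi{[t_j]}{[t_i]}{m}$. This completes the construction of the inverse and hence the proof.
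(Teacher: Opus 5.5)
Your proposal is correct and is essentially the paper's proof. The paper also obtains the relation from an $m$-fold Hurwitz move on an adjacent pair $(t_i,t_j)$, acting by $\sigma_1^{-1}$ rather than $\sigma_k$. It observes that the resulting orbit element equals $t_i$ in $\nArtG$, so that \Cref{lem:q_relations_in_hurwitz_group} gives a relation among the $[t_i]$ in $H(\nTupleArt)$, and it unwinds that relation to the braid relation.

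There are two harmless slips. First, moving $t_j$ leftward by positive $\sigma$'s leaves $t_j$ unchanged; only the entries it passes are conjugated, which is exactly \Cref{rmk:bring_pairs_to_start_of_tuple}. Second, $a_m=t_i$ for every $m$, not according to parity. For odd $m$ the expression that actually arises is $a_m=\nPi{t_i}{t_j}{m-1}^{-1}\,t_j\,\nPi{t_i}{t_j}{m-1}$, not your $\nPi{t_j}{t_i}{m-1}$ form; it still unwinds to the braid relation.
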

\begin{proof}
	It suffices to show that mapping $t_i \mapsto [t_i]$ induces a homomorphism $\nArtG \to H(\nTupleArt)$.
	Consider the function $\nFp{a}{b}{p} \coloneq \nPi{a}{b}{p+1}\nPi{a}{b}{p}^{-1} $.
	One can compute that
	\[
		\nFp{a}{b}{p}\nFp{a}{b}{p-1}\nFp{a}{b}{p}^{-1}=\nFp{a}{b}{p+1}.
	\]
	Assuming $i<j$, there exists a tuple in $B_n \cdot (\nTupleFree)$ which begins with  $x_i,x_j$.
	Acting once by $\sigma_1^{-1}$ on the pair $(\nFreeGenElt_i,\nFreeGenElt_j)$, we get the pair $(\nFp{\nFreeGenElt_i}{\nFreeGenElt_j}{1},\nFp{\nFreeGenElt_i}{\nFreeGenElt_j}{0})$.
	By repeatedly acting by $\sigma_1^{-1}$ we see that $\nFp{\nFreeGenElt_i}{\nFreeGenElt_j}{m_{ij}} \in Q$ for any $m_{ij}$.

	If $\nPi{\nArtGenElt_i}{\nArtGenElt_j}{m_{ij}} = \nPi{\nArtGenElt_j}{\nArtGenElt_i}{m_{ij}}$ in $\nArtG$, then by \Cref{lem:q_relations_in_hurwitz_group}, $\nFp{[\nArtGenElt_i]}{[\nArtGenElt_j]}{m_{ij}} = [\nArtGenElt_i]$ holds in $H(\nTupleArt)$, equivalent to $\nPi{[\nArtGenElt_i]}{[\nArtGenElt_j]}{m_{ij}} = \nPi{[\nArtGenElt_j]}{[\nArtGenElt_i]}{m_{ij}}$.
\end{proof}
We now highlight and apply some results to the groups $\nArtG$, $\nCoxG$ and $\nDualArtG$.
\begin{corollary}
	\label{cor:main_hurwitz_results_applied_to_dual_artin}
	Let $\Gamma$ be a Coxeter graph and consider the Artin group $\nArtG$, Coxeter group $\nCoxG$, and canonical epimorphism $p \colon \nArtG \to \nCoxG$.
	\begin{enumerate}
		\item Identifying each $[\nCoxGenElt_i]$ with $x_i$, the dual Artin group $\nDualArtG = H(\nTupleCox)$ has presentation
		      \[
			      \GroupPres{\nTupleFree \relations q_1 = q_2 \text{ if $q_1,q_2 \in Q$ and $\pi_W(q_1) = \pi_W(q_2)$}}.
		      \]
		\item Mapping $[s_i] \mapsto s_i$ induces an epimorphism $\phi \colon \nDualArtG \to \nCoxG$.
		      \smallskip
		\item Mapping $t_i \mapsto [s_i]$ induces an epimorphism $\psi \colon \nArtG \to \nDualArtG$ which is an isomorphism if and only if $p$ is injective on  $\nPiArt(Q)$.
	\end{enumerate}
\end{corollary}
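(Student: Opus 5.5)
Each of the three parts follows by specialising the abstract results of \Cref{sec:abstract_hurwitz_groups} to the objects $(\nArtG,(\nTupleArt))$ and $(\nCoxG,(\nTupleCox))$ of $\cngrp$. For (1), by \Cref{def:dual_artin_group} we have $\nDualArtG = H(\nTupleCox)$. We then apply \Cref{lem:hurwitz_group_alt_presentation} with $G = \nCoxG$ and $g_i = \nCoxGenElt_i$, so that $\pi_G = \nPiCox$, and this gives the stated presentation directly. For (2), we use the natural transformation $H \Rightarrow \id_{\cngrp}$ described before \Cref{cor:equivalent_conditions_H_iso}. Its component at $(\nCoxG,(\nTupleCox))$ is the epimorphism $[\nCoxGenElt_i] \mapsto \nCoxGenElt_i$. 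Concretely, every defining relation $[r_2]^{-1}[r_1][r_2] = [r_2^{-1}r_1r_2]$ maps to a true equation in $\nCoxG$, and the map is onto because the $\nCoxGenElt_i$ generate $\nCoxG$.

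For (3), we view $p \colon (\nArtG,(\nTupleArt)) \to (\nCoxG,(\nTupleCox))$ as a morphism in $\cngrp$. Applying the functor $H$ yields $H(p) \colon H(\nTupleArt) \to H(\nTupleCox) = \nDualArtG$, given by $[\nArtGenElt_i] \mapsto [\nCoxGenElt_i]$. By \Cref{lem:artin_group_is_hurwitz_group}, the map $[\nArtGenElt_i] \mapsto \nArtGenElt_i$ is an isomorphism $H(\nTupleArt) \cong \nArtG$. We define $\psi$ as $H(p)$ precomposed with the inverse of this isomorphism, so that $\psi(\nArtGenElt_i) = [\nCoxGenElt_i]$. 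This $\psi$ is an epimorphism because $H(p)$ is a morphism of $\cngrp$; alternatively, its image contains the generating set $\Set{[\nCoxGenElt_1],\ldots,[\nCoxGenElt_\nRank]}$ of \Cref{cor:hurwitz_group_finite_gen_set}. Since $\psi$ differs from $H(p)$ only by an isomorphism, $\psi$ is an isomorphism if and only if $H(p)$ is.

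By \Cref{cor:equivalent_conditions_H_iso}, $H(p)$ is an isomorphism if and only if $p$ is injective on $\hurelt(\nTupleArt)$. Finally, the functoriality of the Hurwitz action applied to $\nPiArt \colon \nFree \to \nArtG$ gives $\hurelt(\nTupleArt) = \nPiArt(\hurelt(\nTupleFree)) = \nPiArt(Q)$, which is the stated criterion.

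None of these steps is a real obstacle, since the substantive work was done in \Cref{lem:hurwitz_group_alt_presentation}, \Cref{cor:equivalent_conditions_H_iso} and \Cref{lem:artin_group_is_hurwitz_group}. The only care needed is bookkeeping: we must check that the composite of $H(p)$ with the isomorphism of \Cref{lem:artin_group_is_hurwitz_group} really sends $\nArtGenElt_i$ to $[\nCoxGenElt_i]$, so that $\psi$ is the canonical epimorphism. We must also check that the identification $\hurelt(\nTupleArt) = \nPiArt(Q)$ is applied inside $\nArtG$, where injectivity of $p$ is tested.
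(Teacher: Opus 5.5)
Your proposal is correct and is essentially the paper's proof: (1) is \Cref{lem:hurwitz_group_alt_presentation} specialised to $\nCoxG$, (2) is the component of the natural transformation $H \Rightarrow \id_{\cngrp}$, and (3) combines \Cref{lem:artin_group_is_hurwitz_group} with \Cref{cor:equivalent_conditions_H_iso} applied to $H(p)$. Your extra bookkeeping spells out what the paper leaves implicit, namely that $\psi$ is $H(p)$ precomposed with the inverse of the isomorphism $H(\nTupleArt)\cong\nArtG$, and that $\hurelt(\nTupleArt)=\nPiArt(Q)$.
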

\begin{proof}
	Statement~$(1)$ is \Cref{lem:hurwitz_group_alt_presentation}.
	Statement~$(2)$ is the morphism $H(\nTupleCox) \to \nCoxG$.
	By \Cref{lem:artin_group_is_hurwitz_group}, mapping $t_i \mapsto [t_i]$ induces an isomorphism $H(\nTupleArt) \cong \nArtG$, and statement~$(3)$ follows by \Cref{cor:equivalent_conditions_H_iso}.
\end{proof}
The epimorphism $\psi \colon \nArtG \to \nDualArtG$ from \Cref{cor:main_hurwitz_results_applied_to_dual_artin} is the candidate \emph{canonical} epimorphism that we wish to show is an isomorphism.
The dual Artin isomorphism problem consists of showing $\psi$ is injective, which we have shown is equivalent to showing that $\nArtG \to \nCoxG$ is injective on $\nPiArt(Q) = \hurelt(\nTupleArt)$.
We note this is also equivalent to $\psi$ being injective on~$\nPiArt(Q)$.
The maps from \Cref{cor:main_hurwitz_results_applied_to_dual_artin} fit into the following commutative diagram of epimorphisms, cf.~\eqref{eqn:commuting_triangle}.
\begin{equation}
	\label{eqn:commuting_triangles_with_dual}
	\begin{tikzcd}[row sep=scriptsize]
		       &  & {\nFree}                 \\
		       &  &                          \\
		\nArtG &  & {\nDualArtG} &  & \nCoxG
		\arrow["{\nPiArt}"', from=1-3, to=3-1]
		\arrow["{\nPiDualArt}", from=1-3, to=3-3]
		\arrow["{\nPiCox}", from=1-3, to=3-5]
		\arrow["\psi", from=3-1, to=3-3]
		\arrow["\phi", from=3-3, to=3-5]
		\arrow[from=3-1, to=3-5, bend right=20, "p"']
	\end{tikzcd}
\end{equation}
We finish with a simple, but very useful restatement of (3) from \Cref{cor:main_hurwitz_results_applied_to_dual_artin}.
\begin{corollary}
	\label{cor:equality_in_W_implies_equality_in_A_implies_theorem}
	The Artin group $\nArtG$ is canonically isomorphic to the dual Artin group $\nDualArtG$ if and only if
	\[
		\nPiCox(q_1) = \nPiCox(q_2) \implies \nPiArt(q_1) = \nPiArt(q_2)
	\]
	for all pairs of Hurwitz words $q_1,q_2 \in Q$.
\end{corollary}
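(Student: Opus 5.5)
This is a direct unpacking of statement~(3) of \Cref{cor:main_hurwitz_results_applied_to_dual_artin}. By that statement, the canonical epimorphism $\psi \colon \nArtG \to \nDualArtG$ is an isomorphism exactly when $p \colon \nArtG \to \nCoxG$ is injective on $\nPiArt(Q)$. So we only need to show that injectivity of $p$ on $\nPiArt(Q)$ is equivalent to the displayed implication holding for all $q_1,q_2 \in Q$.

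The main tool is the commuting triangle \eqref{eqn:commuting_triangle}, or equivalently diagram \eqref{eqn:commuting_triangles_with_dual}, which gives $p \circ \nPiArt = \nPiCox$.

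For the forward direction, suppose $p$ is injective on $\nPiArt(Q)$, and let $q_1,q_2 \in Q$ with $\nPiCox(q_1)=\nPiCox(q_2)$. Then
\[
p(\nPiArt(q_1)) = \nPiCox(q_1) = \nPiCox(q_2) = p(\nPiArt(q_2)).
\]
Both $\nPiArt(q_1)$ and $\nPiArt(q_2)$ lie in $\nPiArt(Q)$, so injectivity of $p$ there gives $\nPiArt(q_1)=\nPiArt(q_2)$.

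For the converse, suppose the implication holds for all $q_1,q_2 \in Q$. Take any two elements of $\nPiArt(Q)$ with the same image under $p$, and write them as $\nPiArt(q_1)$ and $\nPiArt(q_2)$ for some $q_1,q_2\in Q$. Commutativity of the triangle gives $\nPiCox(q_1)=\nPiCox(q_2)$. The hypothesis then yields $\nPiArt(q_1)=\nPiArt(q_2)$, so $p$ is injective on $\nPiArt(Q)$.

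There is no real obstacle here. The only point needing care is that every element of $\nPiArt(Q)$ really is of the form $\nPiArt(q)$ with $q\in Q$, which holds by definition. Alternatively, one could argue directly from the presentation in statement~(1) of \Cref{cor:main_hurwitz_results_applied_to_dual_artin}: the displayed implication says precisely that every defining relation $q_1=q_2$ of $\nDualArtG$ already holds in $\nArtG$, so $\psi$ has an inverse $x_i\mapsto t_i$. The route through statement~(3) is shorter, so I would use that.
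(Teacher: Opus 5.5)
Your proposal is correct and follows the paper's route: the paper presents this corollary as a restatement of statement~(3) of \Cref{cor:main_hurwitz_results_applied_to_dual_artin}, proved using the commuting triangle \eqref{eqn:commuting_triangle}. Your two directions simply write out the relation $p \circ \nPiArt = \nPiCox$ that the paper's one-line proof leaves implicit.
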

\begin{proof}
	This follows by the commutative triangle \eqref{eqn:commuting_triangle}.
\end{proof}

%% file: sections/geometry_of_Q/geometry_of_Q_main.tex
\section{The geometry of Hurwitz words}
\label{sec:bessis_Q_description}
\input{sections/geometry_of_Q/preamble}
\input{sections/geometry_of_Q/hurwitz_action_and_artin_action}
\input{sections/geometry_of_Q/restrictions_on_Q}

%% file: sections/geometry_of_Q/preamble.tex
Given \Cref{cor:equality_in_W_implies_equality_in_A_implies_theorem}, an understanding of $Q$ and when $\nPiCox(q_1) = \nPiCox(q_2)$ would be useful to make progress with the dual Artin isomorphism problem.
To that end, in this section we explore the geometry of elements of $Q$, considered as homotopy class representatives in $\pi_1(\nPuncturedD,-i) \cong \nFree$.
A thorough treatment of reduced reflection factorisations in the free group is given in \cite{bessis_dual_2006}.
There, Bessis also shows that Hurwitz words are \emph{non-crossing}, a property of words in $\nFree$ (generated by $\Set{\nTupleFree}$ here) which we will define in this section.
However, most of what is needed for this paper can be shown using classical results for the braid group, and we make those arguments here.

Let $\D$ be the closed unit disk in $\C$ centred at the origin.
Let
\[
	-1<p_1<\cdots <p_n<1
\]
be points on the real axis, and let $P_n=\{p_1,\ldots,p_n\}$.
We know that $\nFree$ is isomorphic to $\pi_1(\nPuncturedD, -i)$.
For each $\nPoint_j$, let $\lambda_j$ be the line segment from $\nPoint_j$ to $+i$.
A picture of this setup is below.
\[
	\begin{tikzpicture}[,every node/.append style={scale=1}, inner sep=0pt, outer sep=0pt]
		\input{figs/points_on_C_simplified.tex}

	\end{tikzpicture}%

\]
Let $\gamma \colon S^1 \to \nPuncturedD$ be a loop based at $-i$ such that all crossings with the $\lambda_j$'s are transverse and $\gamma(S^1) \cap \partial \D = -i$.
We can define the \emph{word representing $\gamma$} by the following procedure.
As we traverse $\gamma$, we may encounter the lines $\lambda_j$.
Every time we cross $\lambda_j$ going left to right, this counts for a factor of $\nFreeGenElt_j$ in the word.
Crossing going right to left would count for a factor of $\nFreeGenElt_j^{-1}$.
We traverse all of $\gamma$, appending elements of $\Set{\nTupleFree}$ or inverses, until we arrive back at $-i$.
Given a word $w$, we define a \emph{loop representing $w$} to be any $\gamma$ as above with representing word~$w$.
\begin{remark}
	These definitions give an isomorphism $\pi_1(\nPuncturedD, -i) \xrightarrow{\sim} \nFree$ which is opposite to the orientation in $\C$, i.e.~the generators of $\nFree$ correspond to clockwise loops, and opposite to that used in \cite{bessis_dual_2006}.
	We made this choice to improve left to right readability.
\end{remark}
With this setup, we say a word $w$ over $\Set{\nTupleFree} $ is \emph{non-crossing} if there exists a loop $\gamma \colon S^1 \to \nPuncturedD$ based at $-i$ which is \emph{simple} ($\gamma$ is injective), and whose corresponding word is $w$.
Similarly, we say some $w \in \nFree$ is non-crossing if the reduced word corresponding to $w$ is non-crossing.

%% file: figs/points_on_C_simplified.tex
\definecolor{c819d43}{RGB}{129,157,67}
\definecolor{ca2a2a2}{RGB}{162,162,162}
\definecolor{c979797}{RGB}{151,151,151}

  \path[draw=c819d43,line width=0.02cm] (5.6203, 22.8362) -- (4.7126, 24.3302);

  \path[draw=c819d43,line width=0.02cm] (3.7777, 22.8362) -- (4.7126, 24.3302);

  \path[draw=c819d43,line width=0.02cm] (4.4304, 22.8362) -- (4.7126, 24.3302);

  \path[draw=ca2a2a2,line width=0.02cm,dash pattern=on 0.06cm off 0.02cm,cm={ 0.0,1.0,1.0,0.0,(-29.7, 29.7)}] (-6.8616, 34.4126) ellipse (1.4918cm and 1.492cm);

  \node[text=black,line width=0.005cm,xscale=1.0001,yscale=0.9999,anchor=south west] (text27967) at (5.5024, 22.5391){$\scriptstyle \nPoint_\nRank$};

  \node[text=black,line width=0.005cm,xscale=1.0001,yscale=0.9999,anchor=south west] (text57948) at (4.2454, 22.5391){$\scriptstyle \nPoint_2$};

  \node[text=black,line width=0.005cm,xscale=1.0001,yscale=0.9999,anchor=south west] (text57988) at (4.4858, 21.5323){$\scriptstyle -i$};

  \node[text=black,line width=0.005cm,xscale=1.0001,yscale=0.9999,anchor=south west] (text57948-6) at (3.6434, 22.5391){$\scriptstyle \nPoint_1$};

  \node[text=black,line width=0.005cm,xscale=1.0001,yscale=0.9999,anchor=south west] (text66017) at (4.7852, 22.8156){$\ldots$};

  \node[text=black,line width=0.005cm,xscale=1.0001,yscale=0.9999,anchor=south west] (text75234) at (4.1563, 23.1156){$\scriptscriptstyle\lambda_2$};

  \node[text=black,line width=0.005cm,xscale=1.0001,yscale=0.9999,anchor=south west] (text75238) at (5.514, 23.1155){$\scriptscriptstyle\lambda_\nRank$};

  \node[text=black,line width=0.005cm,xscale=1.0001,yscale=0.9999,anchor=south west] (text75242) at (3.6269, 23.1149){$\scriptscriptstyle\lambda_1$};

  \path[draw=black,fill=c979797,line width=0.0487cm,cm={ 0.0,1.0,1.0,0.0,(-29.7, 29.7)}] (-6.8638, 34.1304) circle (0.0458cm);

  \path[draw=black,fill=c979797,line width=0.0487cm,cm={ 0.0,1.0,1.0,0.0,(-29.7, 29.7)}] (-6.8638, 33.4777) circle (0.0458cm);

  \path[draw=black,fill=c979797,line width=0.0487cm,cm={ 0.0,1.0,1.0,0.0,(-29.7, 29.7)}] (-6.869, 35.3288) circle (0.0458cm);

  \path[draw=black,fill=c979797,line width=0.0487cm,cm={ 0.0,1.0,1.0,0.0,(-29.7, 29.7)}] (-8.3535, 34.4126) circle (0.0458cm);

%% file: sections/geometry_of_Q/hurwitz_action_and_artin_action.tex
\subsection{Hurwitz words are non-crossing}
\label{sec:hurwitz_action_and_artin_action}
We now consider the braid group as the mapping class group of $\nPuncturedD$.
Let $\mu_i$ be the horizontal line segment from~$\nPoint_i$ to~$\nPoint_{i+1}$.
Let $\hat{\mu}_i$ be a loop which follows close to $\mu_i$ and which surrounds $\nPoint_i$ and $\nPoint_{i+1}$.
In the braid group action on $\nPuncturedD$, each generator $\sigma_i$ acts as a clockwise half twist about the loop $\hat{\mu}_i$.
This induces an action of $B_\nRank$ on $\nFree$ by post-composing mapping class representatives in $B_\nRank$ to homotopy class representatives in $\pi_1(\nPuncturedD, -i)$.
This action on $\nFree$, which we denote by $\star$, is described by the following equations.
\[
	\sigma_i \star \nFreeGenElt_j =
	\begin{cases}
		\nFreeGenElt_{j}\nFreeGenElt_{j+1}\nFreeGenElt_{j}^{-1} & \text{ if } j=i    \\
		\nFreeGenElt_{j-1}                                      & \text{ if } j=i+1  \\
		\nFreeGenElt_j                                          & \text{ otherwise.}
	\end{cases}
\]
We call this the \emph{Artin action} \cite{artin_theorie_1925}.
The following lemma relates the Artin action on the generators to the Hurwitz action, and note that it only holds for $(\nTupleFree)$, where the implicit ordering of the generators of $\nFree$ is important and encodes elements of $\Hom(\nFree,\nFree)$.
This is also proven in \cite[Corollary 4.7]{resteghini_free_2024}.
\begin{lemma}
	\label{lem:artin_vs_hurwitz_action}
	Let $\star$ and $\cdot$ respectively denote the Artin action on $\nFree$ and Hurwitz action on $\nFree^\nRank$.
	For any $\beta \in B_\nRank$, we have
	\[
		(\beta \star \nFreeGenElt_1,\ldots,\beta \star \nFreeGenElt_\nRank) = \beta^{-1} \cdot (\nFreeGenElt_1,\ldots,\nFreeGenElt_\nRank) .
	\]
\end{lemma}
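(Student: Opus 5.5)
The plan is to reduce the statement to a check on the standard generators, using two facts: $\star$ is an action of $B_\nRank$ by automorphisms of $\nFree$, and the Hurwitz action commutes with homomorphisms (the functoriality of the Hurwitz action from \Cref{sec:abstract_hurwitz_groups}).

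For $\beta \in B_\nRank$, write $\phi_\beta \in \aut(\nFree)$ for $w \mapsto \beta \star w$, and set
\[
T(\beta) \coloneq (\beta \star \nFreeGenElt_1,\ldots,\beta\star\nFreeGenElt_\nRank) = (\phi_\beta(\nFreeGenElt_1),\ldots,\phi_\beta(\nFreeGenElt_\nRank)).
\]
Since $\star$ is a left action, we have $\phi_{\beta\gamma} = \phi_\beta\circ\phi_\gamma$. Hence $T(\beta\gamma)$ is obtained by applying $\phi_\beta$ coordinatewise to $T(\gamma)$.

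The key step is a cocycle-type identity. Suppose $T(\gamma) = \gamma^{-1}\cdot(\nTupleFree)$. Applying the homomorphism $\phi_\beta$ coordinatewise and using functoriality gives
\[
T(\beta\gamma) = \phi_\beta\bigl(\gamma^{-1}\cdot(\nTupleFree)\bigr) = \gamma^{-1}\cdot(\phi_\beta(\nFreeGenElt_1),\ldots,\phi_\beta(\nFreeGenElt_\nRank)) = \gamma^{-1}\cdot T(\beta).
\]
To run the induction, I would apply this identity with $\gamma$ a generator $s \in \{\sigma_i^{\pm1}\}$, which yields $T(\beta s) = s^{-1}\cdot T(\beta)$. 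The induction is on word length in the generators $\sigma_i^{\pm1}$. If $T(\beta)=\beta^{-1}\cdot(\nTupleFree)$, then
\[
T(\beta s) = s^{-1}\beta^{-1}\cdot(\nTupleFree) = (\beta s)^{-1}\cdot(\nTupleFree).
\]
The base case $\beta = 1$ is trivial. It therefore remains to verify $T(\sigma_i^{\pm 1}) = \sigma_i^{\mp 1}\cdot(\nTupleFree)$.

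For $\sigma_i$, first invert the Hurwitz move. Since $\sigma_i$ sends $(a,b)$ in positions $i,i+1$ to $(b,b^{-1}ab)$, its inverse $\sigma_i^{-1}$ sends $(a,b)$ to $(aba^{-1},a)$. Thus $\sigma_i^{-1}\cdot(\nTupleFree)$ has $\nFreeGenElt_i\nFreeGenElt_{i+1}\nFreeGenElt_i^{-1}$ in position $i$, $\nFreeGenElt_i$ in position $i+1$, and is unchanged elsewhere. This matches the defining formulas for $\sigma_i\star\nFreeGenElt_j$ exactly.

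For $\sigma_i^{-1}$, no separate computation is needed. Apply the identity with $\beta = \sigma_i^{-1}$ and $\gamma = \sigma_i$:
\[
(\nTupleFree) = T(1) = T(\sigma_i^{-1}\sigma_i) = \sigma_i^{-1}\cdot T(\sigma_i^{-1}).
\]
Acting by $\sigma_i$ then gives $T(\sigma_i^{-1}) = \sigma_i\cdot(\nTupleFree)$.

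The only real obstacle is bookkeeping of conventions. One must confirm that $\star$, defined by post-composing mapping classes, really satisfies $\phi_{\beta\gamma}=\phi_\beta\circ\phi_\gamma$ rather than the reverse order. If the order were reversed, the natural statement would involve $\beta$ rather than $\beta^{-1}$. I would settle this by checking that the stated formulas for $\sigma_i\star\nFreeGenElt_j$ define a left action: $\star$ is described as a left action, and the generator computation above is consistent with the claimed inverse. Beyond that, the argument is purely formal.
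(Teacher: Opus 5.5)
Your proof is correct. It proves the lemma by a different packaging of the same underlying fact as the paper.

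The paper identifies $F_n^n$ with $\mathrm{Hom}(F_n,F_n)$ and notes two left actions of $B_n$ on it. The first is precomposition, $(\beta\cdot f)(x)=f(\beta^{-1}\star x)$, which a generator check shows is the Hurwitz action. The second is postcomposition, $(\beta\star f)(x)=\beta\star f(x)$, which is coordinatewise Artin action. Evaluating both at $f=\mathrm{id}$ gives $\beta\star\mathrm{id}=\phi_\beta=\beta^{-1}\cdot\mathrm{id}$ at once, with no induction.

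You never identify the Hurwitz action as a precomposition. Instead you use functoriality, which is exactly the statement that the Hurwitz action commutes with postcomposition, to derive the cocycle identity $T(\beta\gamma)=\gamma^{-1}\cdot T(\beta)$. You then check generators only at the identity tuple, get $\sigma_i^{-1}$ from $\sigma_i$ via that identity, and induct on word length.

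The paper's route yields for free the stronger statement that $\beta$ acts on an arbitrary tuple $f$ as $f\circ\phi_{\beta^{-1}}$. Yours reuses a fact already available from the section on abstract Hurwitz groups, and makes explicit the induction hidden in the paper's ``one can verify using the standard generators''. The two are interchangeable: functoriality plus your lemma gives $\beta\cdot f=f(\beta\cdot(x_1,\ldots,x_n))=f\circ\phi_{\beta^{-1}}$.

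On the convention you flagged: checking the generator formulas cannot settle it. The braid relations are invariant under reversing words, so the same formulas extend both to a left and to a right action. That $\phi_{\beta\gamma}=\phi_\beta\circ\phi_\gamma$ is part of the definition of $\star$, as postcomposition of mapping classes. The paper's proof relies on the same fact, since otherwise $f\mapsto f\circ\phi_{\beta^{-1}}$ would not be a left action. Under the opposite convention, your argument would give $T(\beta)=\bar\beta\cdot(x_1,\ldots,x_n)$, where $\bar\beta$ replaces each $\sigma_i^{\pm1}$ in a word for $\beta$ by $\sigma_i^{\mp1}$. That is the mirror image of $\beta$, not $\beta$ itself as you suggested.
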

\begin{proof}
	We identify $\nFree^\nRank$ with the set of homomorphisms $f \colon \nFree \to \nFree$, denoted $\Hom(\nFree, \nFree)$.
	Let $\beta \in B_\nRank$.
	The Artin action on $\nFree$ gives $B_\nRank$ a left action on $\Hom(\nFree, \nFree)$ by
	\[
		(\beta \cdot f)(x) = f(\beta^{-1} \star x).
	\]
	There is another action, given by
	\[
		(\beta \star f)(x) = \beta \star f(x),
	\]
	where the $\star$ on the right-hand side is the Artin action.
	One can verify using the standard generators for $B_\nRank$ that the action denoted $\beta \cdot f$ is the Hurwitz action of $B_\nRank$ on $\nFree^\nRank$, and the action denoted $\beta \star f$ is application of the Artin action at each coordinate in $\nFree^\nRank$.
	We prove the result by taking $f$ to be the identity in $\Hom(\nFree,\nFree)$, corresponding to the tuple $(\nTupleFree)$.
\end{proof}
\begin{corollary}
	\label{cor:hurref_is_image_of_artin_action}
	Let $\star$ denote the Artin action on $\nFree$.
	Then $Q = B_\nRank \star \nFreeGenElt_1$.
\end{corollary}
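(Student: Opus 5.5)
The plan is to prove the two inclusions $Q \subseteq B_\nRank \star \nFreeGenElt_1$ and $B_\nRank \star \nFreeGenElt_1 \subseteq Q$ separately, both resting on \Cref{lem:artin_vs_hurwitz_action}.

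First, every element of $Q$ is, by definition, the $i$th coordinate of $\beta\cdot(\nTupleFree)$ for some $\beta\in B_\nRank$ and some $i$. As remarked after the definition of $\hurelt$, it is then also the \emph{first} coordinate of $\beta'\cdot(\nTupleFree)$, where $\beta' = \sigma_1\cdots\sigma_{i-1}\beta$. So
\[
	Q = \Set{\text{first coordinate of } \beta\cdot(\nTupleFree) \given \beta\in B_\nRank}.
\]
By \Cref{lem:artin_vs_hurwitz_action} applied to $\beta^{-1}$, the first coordinate of $\beta\cdot(\nTupleFree)$ equals $\beta^{-1}\star\nFreeGenElt_1$. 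As $\beta$ ranges over $B_\nRank$, so does $\beta^{-1}$. Hence $Q = \Set{\gamma\star\nFreeGenElt_1 \given \gamma\in B_\nRank} = B_\nRank\star\nFreeGenElt_1$, which gives both inclusions at once.

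The only points needing care are these.
\begin{enumerate}
	\item The Artin action $\star$ is a genuine left group action, so that the correspondence $\beta\leftrightarrow\beta^{-1}$ between the two actions is consistent. This is implicit in the proof of \Cref{lem:artin_vs_hurwitz_action}, since $\star$ is induced by the mapping class group action.
	\item The move-to-first-coordinate step, which is already recorded in the paper.
\end{enumerate}
There is no real obstacle here; the statement is essentially a direct corollary once \Cref{lem:artin_vs_hurwitz_action} is available.
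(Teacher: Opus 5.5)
Your proposal is correct and follows the paper's proof: reduce $Q$ to the set of first coordinates of $\beta\cdot(\nTupleFree)$, then apply \Cref{lem:artin_vs_hurwitz_action} to $\beta^{-1}$. Your caveat (1) is not actually needed, since the lemma already holds for every $\beta\in B_\nRank$ and substituting $\beta^{-1}$ requires nothing beyond that.
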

\begin{proof}
	All elements of $Q$ occur as the first coordinate of $\beta \cdot (\nTupleFree)$ for some $\beta \in B_\nRank$.
	The result follows by \Cref{lem:artin_vs_hurwitz_action}.
\end{proof}
\begin{corollary}
	\label{cor:Q_is_noncrossing}
	Every $q \in Q$ is non-crossing.
\end{corollary}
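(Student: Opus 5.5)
By \Cref{cor:hurref_is_image_of_artin_action}, every $q \in Q$ has the form $q = \beta \star \nFreeGenElt_1$ for some $\beta \in B_\nRank$. The plan is to realise $q$ by an explicit simple loop, obtained by applying a homeomorphism of $\nPuncturedD$ representing $\beta$ to a simple loop representing $\nFreeGenElt_1$.

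First, I would exhibit a simple loop $\gamma_1$ based at $-i$ with representing word $\nFreeGenElt_1$. Take $\gamma_1$ to leave $-i$, travel up to the left of $\nPoint_1$, cross $\lambda_1$ once from left to right above $\nPoint_1$, and return to $-i$ passing between $\nPoint_1$ and $\nPoint_2$. It meets no other $\lambda_j$, is injective, and meets $\partial\D$ only at $-i$.

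Next, I would use the mapping class group description. Choose a homeomorphism $h$ of $\D$ representing $\beta$, fixing $\partial \D$ pointwise and permuting $P_\nRank$. By definition of the Artin action, $h \circ \gamma_1$ is a loop based at $-i$ whose homotopy class in $\pi_1(\nPuncturedD,-i)$ is $\beta \star \nFreeGenElt_1 = q$. Since $h$ is a homeomorphism of $\nPuncturedD$, the loop $h\circ\gamma_1$ is still simple and still meets $\partial\D$ only at $-i$.

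Finally, I must pass from "some simple loop in the homotopy class of $q$" to "a simple loop whose representing word is the reduced word of $q$". This is the main obstacle. The loop $h\circ\gamma_1$ may cross the arcs $\lambda_j$ non-transversally or give an unreduced word. To handle this, I would first perturb $h\circ\gamma_1$ by a small isotopy, keeping it embedded, so that all crossings with the $\lambda_j$ are transverse and finite. If the resulting word is not reduced, it contains a cancelling pair $\nFreeGenElt_j^{\pm1}\nFreeGenElt_j^{\mp1}$. The two consecutive crossings of $\lambda_j$ then cut off a bigon between the loop and $\lambda_j$, and this bigon contains no puncture, because $\lambda_j$ ends at $\nPoint_j$ and the bigon's sides do not cross other arcs. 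Choose an innermost such bigon, meaning one whose interior contains no other arc of the loop; the standard bigon-removal argument shows one exists. Isotoping across it removes the two crossings, preserves simplicity, and does not change the homotopy class. Each such move strictly decreases the number of crossings, so by induction we reach a simple loop with transverse crossings whose word is reduced and equals $q$. Hence $q$ is non-crossing.
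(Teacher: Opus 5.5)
Your proof is correct and is essentially the paper's argument: write $q=\beta\star x_1$ using $Q=B_n\star x_1$, and observe that a homeomorphism of the punctured disk representing $\beta$ sends a simple loop for $x_1$ to a simple loop in the homotopy class of $q$. Your bigon-removal step makes explicit the passage to a simple loop whose word is the reduced word of $q$, which the paper leaves implicit and later attributes to the Farb--Margalit bigon argument. Your stated reason that the bigon contains no puncture is slightly off: the operative fact is that every $\lambda_k$ runs out to the boundary point $+i$, which lies outside the bigon, and $\lambda_k$ cannot cross the bigon's boundary.
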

\begin{proof}
	The generator $\nFreeGenElt_1$ is non-crossing.
	The action of $B_\nRank$ on $\nPuncturedD$ which defines~$\star$ is by homeomorphisms, so if $w \in \nFree$ is non-crossing, then $\beta \star w$ is non-crossing for all $\beta \in B_\nRank$.
	The result follows by \Cref{cor:hurref_is_image_of_artin_action}.
\end{proof}

%% file: sections/geometry_of_Q/restrictions_on_Q.tex
\subsection{Restrictions imposed on Hurwitz words by the non-crossing property}
\label{sec:aba_cbc}
In this subsection, we will use \Cref{cor:Q_is_noncrossing} and plane drawings to impose restrictions on elements of $Q$.
We define an \emph{alternating word} to be any word whose letters alternate between two generators.
If $w$ is an alternating word in which the generators $\nFreeGenElt_i$ and $\nFreeGenElt_j$ appear, then we say that $w$ is an \emph{$(\nFreeGenElt_i,\nFreeGenElt_j)$-alternating word}.
For example, both $\nFreeGenElt_i\nFreeGenElt_j\nFreeGenElt_i^{-1}\nFreeGenElt_j^{-1}$ and $\nFreeGenElt_j\nFreeGenElt_i\nFreeGenElt_j\nFreeGenElt_i^{-1}$ are $(\nFreeGenElt_i,\nFreeGenElt_j)$-alternating words.
We say that an alternating subword $u$ is \textit{maximal} if it is not contained in any longer alternating subword.
A \emph{change of sign} occurs after the $k$th letter of an $(\nFreeGenElt_i,\nFreeGenElt_j)$-alternating word if the $k$th and $(k+1)$st letters are $\nFreeGenElt_i^{\pm 1}\nFreeGenElt_j^{\mp 1}$ or $\nFreeGenElt_j^{\pm 1}\nFreeGenElt_i^{\mp 1}$.

The main results are summarised in the following theorem.
Statement~(1) is proven in \Cref{lem:aba_cbc}, statement (2) is proven in \Cref{lem:crossing_diagonals_and_one_change_of_sign} and \Cref{lem:change_sign_close_to_middle}, and statement~(3) is proven in \Cref{lem:possible_lengths_other_crossings}.
A version of the following theorem is obtained in \cite{kluitmann_isotropy_1991}.
\begin{theorem}
	\label{thm:summary_of_Q_restrictions}
	Let $q \in Q$.
	\begin{enumerate}
		\item If $\nFreeGenElt_i^{\pm 1}\nFreeGenElt_k^{\pm 1}\nFreeGenElt_i^{\pm 1}$ and  $\nFreeGenElt_j^{\pm 1}\nFreeGenElt_k^{\pm 1}\nFreeGenElt_j^{\pm 1}$ both appear as subwords in $q$, then $i = j$.
		      \smallskip
		\item All maximal alternating subwords $u$ of $q$ with $\ell(u) \geq 3$ have exactly one change of sign and:
		      \begin{itemize}
			      \item If $\ell(u) = 2n$, the change of sign occurs after the $n$th letter in  $u$.
			      \item If $\ell(u) = 2n + 1$, the change of sign occurs after the $n$th or $(n+1)$st letter in $u$.
		      \end{itemize}
		      \smallskip
		\item If  $u$ and  $v$ are two distinct maximal alternating subwords in  $q$  consisting of the same generators and  $\ell(u),\ell(v) \geq 3$, then $\Abs{\ell(u) - \ell(v)} \leq 2$.
		      Moreover, $\Abs{\ell(u) - \ell(v)} \leq 1$ if either $\ell(u)$ or $\ell(v)$ is even.
	\end{enumerate}
\end{theorem}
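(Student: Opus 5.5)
The plan is to work entirely with \Cref{cor:Q_is_noncrossing}. For $q\in Q$, fix a simple loop $\gamma$ based at $-i$ whose word is the reduced word of $q$. After an isotopy in $\nPuncturedD$ we can also take $\gamma$ to meet the $\lambda_j$ transversely and minimally, so that consecutive crossings never cancel. Then each letter of $q$ is a crossing of $\gamma$ with some $\lambda_j$. A subword $x_a^{\pm1}x_b^{\pm1}$ is an arc of $\gamma$ running from $\lambda_a$ to $\lambda_b$ inside the region cut out by the segments $\lambda_1,\ldots,\lambda_\nRank$ and $\partial\D$. The components of $\D\setminus\bigcup_j\lambda_j$ are: the sectors between $\lambda_j$ and $\lambda_{j+1}$, and the outer region containing $-i$, which is bounded by $\lambda_1$, $\lambda_\nRank$ and $\partial\D$. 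So an arc between $\lambda_a$ and $\lambda_b$ with $a\neq b$ forces $|a-b|=1$ (it passes through a sector near $+i$ or around the punctures), or else it lies in the outer region with $\{a,b\}=\{1,\nRank\}$. Note this combinatorial model forbids no adjacent letters except via the reducedness of $q$. The real content is that disjoint arcs in the same sector must be nested or parallel, because $\gamma$ is simple.

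For statement (1), a subword $x_i^{\pm1}x_k^{\pm1}x_i^{\pm1}$ is an arc that leaves $\lambda_i$, crosses $\lambda_k$ and returns to $\lambda_i$. Together with a segment of $\lambda_i$, it bounds a disk region containing the portion of $\lambda_k$ between the crossing point and, according to the signs, either the puncture $p_k$ or the point $+i$. A second arc $x_j x_k x_j$ with $j\neq i$ also crosses $\lambda_k$. Since $\gamma$ is simple, it cannot cross the first arc. So it must enter and leave that bounded region through $\lambda_i$, which is impossible if it only meets $\lambda_j$. Alternatively, both arcs lie on the same side of $\lambda_k$, and the nested order along $\lambda_k$ forces one of them to cross the other. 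I will do this case analysis on whether each arc turns around $p_k$ or around $+i$. This comes down to the Jordan curve theorem applied to the loop formed by an arc together with a piece of $\lambda_i$.

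For statement (2), a maximal $(x_i,x_j)$-alternating subword with $|i-j|=1$ (or the outer case) is a strand of $\gamma$ that spirals in the sector between $\lambda_i$ and $\lambda_j$. A sign change corresponds to the spiral turning back: it goes around $p$ (or around $+i$) in the other direction. A simple curve can reverse direction only once while staying in the alternating pattern, so there is exactly one sign change. Once it has reversed, the strand must retrace the spiral outward, nested inside the inward part. Hence the number of crossings after the turn equals the number before, up to one crossing depending on parity. This places the sign change in the middle, as stated.

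Statement (3) follows by a similar nesting argument. Two distinct maximal $(x_i,x_j)$-spirals of $\gamma$ are disjoint, so the nesting along $\lambda_i$ and $\lambda_j$ forces their depths to differ by at most one turn. Counting crossings then gives $|\ell(u)-\ell(v)|\le 2$, and $|\ell(u)-\ell(v)|\le 1$ when one length is even, since in that case its symmetric turn is rigid. The main obstacle I expect is making the spiral and nesting picture in (2) and (3) rigorous, in particular the boundary effects from where the strand enters and exits and the outer-region case $\{1,\nRank\}$. I would first try to reduce everything to counting intersections with a single segment $\lambda_i$ and apply the Jordan curve theorem repeatedly.
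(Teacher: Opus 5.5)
Your argument for (1) is the paper's argument. The arc of $x_i^{\pm1}x_k^{\pm1}x_i^{\pm1}$ and the segment $\rho\subset\lambda_i$ between its two $\lambda_i$-crossings bound a region. A second such region built on $\lambda_j$ has disjoint perimeter, so the two regions are nested and the inner $\lambda$ is trapped. One correction: the region never contains $+i$. The curve $\gamma_u\cup\rho$ lies in the interior of $\D$ and meets $\lambda_k$ exactly once, so it separates $p_k$ from $+i\in\partial\D$ and always encloses $p_k$. That common point $p_k$ is what forces nesting, so you should drop the $p_k$ versus $+i$ case split.

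The real problem is your setup for (2) and (3). The complement of $\bigcup_j\lambda_j$ does not split into sectors plus an outer region. The $\lambda_j$ form a fan meeting $\partial\D$ only at $+i$, and the triangle between $\lambda_j$ and $\lambda_{j+1}$ opens downward through the gap between $p_j$ and $p_{j+1}$. So the complement is connected, indeed simply connected, and any two letters can be adjacent, as your own next sentence says. For example, when $n\geq 4$, $\sigma_{13}^2\star x_1=x_1x_3x_1x_3^{-1}x_1^{-1}\in Q$ is a maximal $(x_1,x_3)$-alternating word with neither $|1-3|=1$ nor $\{1,3\}=\{1,n\}$. So ``a spiral in the sector between $\lambda_i$ and $\lambda_j$'' is the wrong object. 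An $(x_i,x_j)$-alternating strand winds around the pair $p_i,p_j$ parallel to the arc $\mu_{ij}$, passing below the intermediate punctures and never around $+i$. A change of sign is exactly a crossing of $\mu_{ij}$. To make this exact you need $\gamma$ in minimal position with respect to the arcs $\mu_{ij}$ as well as the $\lambda_j$.

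Even with the right picture, the sentences carrying (2) and (3) are asserted, not proved, and the idea that proves them is missing. These are the claims that the strand ``can reverse direction only once'', that it ``must retrace the spiral outward'', that two spirals' ``depths differ by at most one turn'', and that an even-length turn is ``rigid''. The paper proves them as follows.

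\begin{itemize}
\item \emph{Existence of a sign change.} If $u$ begins $x_ix_kx_i$ with equal signs on both $x_i$, one end of that arc lies in the region bounded by it and $\rho\subset\lambda_i$. It can leave only through $\rho$, and so only with the opposite sign.
\item \emph{Uniqueness.} This comes from handedness. Subwords from $Z_+=\{x_ix_j^{-1},x_jx_i^{-1}\}$ and from $Z_-=\{x_i^{-1}x_j,x_j^{-1}x_i\}$ cannot both occur in $q$, because the corresponding arcs would cross. This dictates every letter of $u$ on either side of its sign change.
\item \emph{Middle position.} Split $u$ into $u_+$ and $u_-$ at its $\mu_{ij}$-crossing, then trap an end of one half between a $\lambda$ and the other half.
\item \emph{Statement (3).} You additionally need three facts. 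First, every crossing of $\mu_{ij}$ is the sign change of some maximal $(x_i,x_j)$-alternating $v$; this uses $\ell(u)\geq 4$. Second, $u$ and $v$ have the same handedness, again by $Z_\pm$-exclusivity. Third, after normalising which crossing lies further left, there are nesting inequalities such as $\ell(u_+)-2\leq\ell(v_+)\leq\ell(u_+)$ and $\ell(v_-)-2\leq\ell(u_-)\leq\ell(v_-)$. Combined with (2) for both $u$ and $v$, these give $|\ell(u)-\ell(v)|\leq 2$, and $|\ell(u)-\ell(v)|\leq 1$ when either length is even.
\end{itemize}

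Without the handedness compatibility, your depth comparison has no fixed direction, and the parity refinement has no justification.
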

Note that all of the $\pm$ signs are independent in statement~(1), e.g. the subwords $x_5x_3^{-1}x_5^{-1}$ and $x_2^{-1}x_3x_2^{-1}$ cannot both occur in an element of~$Q$.

We now define some notation and assert some simple results that allow us to make useful drawings of simple loops corresponding to elements of $Q$.
For all $j\ne k$, let $\mu_{jk}$ denote the arc that connects the points $p_j$ and $p_k$ and passes below any intermediate punctures.
Note that for each $j$, we have $\mu_{j,j+1}=\mu_j$, defined in \Cref{sec:hurwitz_action_and_artin_action}.
\Cref{fig:upside_down_ice_cream_cone} shows all of the arcs $\lambda_i$ and $\mu_{jk}$ in the case where~$n=5$.
\begin{figure}[t]
	\begin{tikzpicture}[baseline=(baseline_text.center),every node/.append style={scale=1}, inner sep=0pt, outer sep=0pt]
		\input{figs/all_arcs.tex}
	\end{tikzpicture}%

	\caption{All $\lambda_{i}$ (green) and  $\mu_{ij}$ (black) line segments.}
	\label{fig:upside_down_ice_cream_cone}
\end{figure}
Let~$w$ be a non-crossing free word.
We define a \emph{minimal simple loop representing~$w$} to be any simple loop representing~$w$ which crosses each $\lambda_j$ and $\mu_{j,k}$ arc a minimal number of times in its isotopy class.
By a similar argument to that in \cite[Section 1.2]{farb_margalit_primer_2011}, a minimal simple representing loop can always be constructed from a simple representing loop.
So, by \Cref{cor:Q_is_noncrossing}, there is a minimal simple loop representing every $q \in Q$.

In order to pass freely between subwords of $q \in Q$ and sub-arcs of minimal simple loops representing $q$, we define the following notation.
Let $\gamma$ be a minimal simple loop representing $q$.
We can associate points on the image of $\gamma$ to letters in~$q$ and given any subword $u$ of $q$, we can also associate the corresponding sub-arc of~$\gamma$.
We denote this sub-arc by~$\gamma_u$.

We now prove a simple but very useful property of words in $Q$, also observed in \cite[Lemma 3.4]{bessis_dual_2006}.
We say a word is \emph{square-free} if it is free of squares of any generator or inverse generator.
An element of $\nFree$ is square-free if it is square-free as a reduced word.
The proof here is typical of the kind of arguments one can make about $Q$ using its geometry in $\nPuncturedD$.
\begin{lemma}
	\label{lem:Q_square_free}
	Every element of $Q$ is square-free.
\end{lemma}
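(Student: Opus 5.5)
We argue by contradiction using \Cref{cor:Q_is_noncrossing} and a minimal simple loop. Suppose some $q \in Q$, written as a reduced word, contains a subword $\nFreeGenElt_j^{\epsilon}\nFreeGenElt_j^{\epsilon}$ with $\epsilon = \pm 1$. Let $\gamma$ be a minimal simple loop representing $q$, and let $u = \nFreeGenElt_j^{\epsilon}\nFreeGenElt_j^{\epsilon}$. Then $\gamma$ crosses $\lambda_j$ at two consecutive points $a$ and $b$, in the same direction, and the sub-arc $\gamma_u$ from $a$ to $b$ meets no other $\lambda_k$.

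The key step is to describe the region of $\D$ that $\gamma_u$ can traverse. The arcs $\lambda_1,\ldots,\lambda_\nRank$ cut $\nPuncturedD$ into sectors: two neighbouring arcs $\lambda_k,\lambda_{k+1}$ bound a sector, and the outermost arcs together with $\partial\D$ bound the sector containing $-i$. Leaving $\lambda_j$ from one side and returning to it without crossing any other $\lambda_k$, the arc $\gamma_u$ stays in a single sector $S$ adjacent to $\lambda_j$. For the two crossings to have the same direction, $\gamma_u$ must enter $\lambda_j$ from the side opposite to the one it left from. The only way to do this inside $S$ is to wind around an endpoint of $\lambda_j$. The endpoint $+i$ lies on $\partial \D$ and $\gamma$ avoids $\partial\D$ except at $-i$, so $\gamma_u$ cannot pass around it. Hence $\gamma_u$ must encircle the puncture $\nPoint_j$: it leaves $\lambda_j$ on one side, goes once around $\nPoint_j$, and comes back on the other side.

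Now use simplicity. The segment $[a,b]$ of $\lambda_j$ together with $\gamma_u$ bounds a closed curve $C$ whose interior contains $\nPoint_j$; we may assume $a$ is the crossing closer to $\nPoint_j$. The rest of $\gamma$, that is $\gamma \setminus \gamma_u$, joins $b$ back through the basepoint $-i$ to $a$. Since $\gamma$ is injective, this remaining arc cannot cross $\gamma_u$. It may cross $\lambda_j$ between $a$ and $b$, but only at points other than $a$ and $b$. Near $a$, $\gamma$ arrives from one side, while near $b$ the outgoing direction leads outside $C$ because $\gamma_u$ spirals once. Tracking sides shows that the remaining arc must pass from outside $C$ to inside $C$ or the reverse, which forces it either to intersect $\gamma_u$, contradicting simplicity, or to enclose $\nPoint_j$ again in a way that creates a spiral. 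In the latter case the spiral can be isotoped off to reduce crossings with $\lambda_j$ or some $\mu_{jk}$, contradicting minimality, or it produces a crossing with $\partial\D$.

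I expect this last step to be the main obstacle: making the Jordan-curve and sides argument rigorous. The cleanest route is probably to observe that a simple loop based on $\partial\D$ which passes through a small disk around $\nPoint_j$ separates that disk into a side containing $\nPoint_j$ and a side that does not. Consequently every time the loop crosses $\lambda_j$ near $\nPoint_j$, the crossings must alternate in sign along the loop, whereas $\nFreeGenElt_j^{\epsilon}\nFreeGenElt_j^{\epsilon}$ would give two consecutive crossings of the same sign. The role of minimality is to guarantee that the word read from $\gamma$ is already reduced, so the subword of $q$ really corresponds to consecutive crossings of $\gamma$.
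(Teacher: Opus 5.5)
Your setup matches the paper's: a minimal simple loop $\gamma$, the closed curve $C$ formed by $\gamma_u$ and the segment $[a,b]$ of $\lambda_j$, and the observation that one end of $\gamma_u$ is trapped. With $a$ nearer $p_j$, the part of $\gamma$ arriving at $a$ lies inside $C$, and the part leaving $b$ lies outside. However, the step that actually produces the contradiction is missing, and neither mechanism you offer for it works.

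A second winding around $p_j$ cannot be ``isotoped off to reduce crossings''. Locally it is just the pattern $x_j^{\epsilon}x_j^{\epsilon}x_j^{\epsilon}$, which is freely reduced. Minimality only guarantees that the word read from $\gamma$ is reduced, so it says nothing against such a spiral.

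Your ``cleanest route'' asserts that crossings of $\lambda_j$ alternate in sign in the order along the loop. That is false: $\sigma_1^2\star x_1 = x_1x_2x_1x_2^{-1}x_1^{-1}\in Q$ crosses $\lambda_1$ with signs $+,+,-$ along the loop. Simplicity does give alternation in the order along $\lambda_j$, since $+i$ lies outside $\gamma$. But $a$ and $b$ need not be adjacent on $\lambda_j$; indeed the rest of $\gamma$ is forced to cross between them. Restricting to crossings ``near $p_j$'' has no invariant meaning, and making it precise is exactly the content of the lemma.

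A minor point: the $\lambda_k$ do not cut $\D$ into sectors. Their union is a tree meeting $\partial\D$ only at $+i$, so its complement is connected. Your conclusion that $\gamma_u$ must go around $p_j$ is nevertheless correct.

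The missing ingredient is that no $\lambda_k$ with $k\neq j$ meets the inside of $C$, since it would have to cross $\gamma_u$, which crosses only $\lambda_j$. So everything $\gamma$ does inside $C$ contributes only letters $x_j^{\pm 1}$. Follow $\gamma$ backwards from $a$. To reach $-i$ it must leave $C$, and by simplicity it can only do so through the open segment $(a,b)$, first at some point $c$.

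The interior of $C$ lies on the side of $(a,b)$ from which $\gamma$ arrives at $a$ and $b$. So, read forwards, the crossing at $c$ has sign $-\epsilon$. The subword of $q$ from $c$ to $a$ is therefore a word in $x_j^{\pm1}$ alone that begins with $x_j^{-\epsilon}$ and ends with $x_j^{\epsilon}$. Such a word contains an adjacent pair $x_j^{\pm1}x_j^{\mp1}$, contradicting that the word read from a minimal loop is the reduced word $q$.

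The paper runs the same observation one letter at a time. The letter next to the trapped end must be $x_j^{\pm1}$, and reducedness forces it to be $x_j^{\epsilon}$. This yields a new square with a strictly smaller trapped region, and finiteness of $q$ ends the descent.
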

\begin{proof}
	Let $q \in Q$ and let $\gamma$ be a minimal simple loop representing $q$.
	Now, suppose there is a factor $u = x_i^{2}$ in $q$.
	One of the possible pictures for $\gamma_u$ follows.
	\[
	\begin{tikzpicture}[,every node/.append style={scale=1}, inner sep=0pt, outer sep=0pt]
		\input{figs/square_impossible.tex}
	\end{tikzpicture}%

	\]
	The region $R$ shaded in grey contains the head of  $\gamma_u$.
	Since $\gamma$ is based at a point of~$\partial \D$, the region $R$ does not contain the base point.
	So, $\gamma$ must next cross a $\lambda$ line, and it can only cross $\lambda_i$.
	Since $q$ is reduced, the letter after $u$ is not $x_i^{-1}$, so we have shown~$q$ contains the factor $x_i^3$.
	We can repeat this argument for the new~$x_i^2$ factor which is a suffix of the $x_i^3$ factor, and can repeat this indefinitely, but $q$ is finite.
	The same argument works for the other pictures and for the factor $x_i^{-2}$.
\end{proof}
\Cref{lem:Q_square_free} tells us that Hurwitz words are determined by a sequence of non-repeating generators, each with exponent $\pm 1$, which is the context of the statement of the following lemma, which is statement (1) of \Cref{thm:summary_of_Q_restrictions}.
\begin{lemma}
	\label{lem:aba_cbc}
	Let $q \in Q$.
	If $q$ has subwords $\nFreeGenElt_i^{\pm 1}\nFreeGenElt_k^{\pm 1}\nFreeGenElt_i^{\pm 1}$ and $\nFreeGenElt_j^{\pm 1}\nFreeGenElt_k^{\pm 1}\nFreeGenElt_j^{\pm 1}$, then $i=j$.
\end{lemma}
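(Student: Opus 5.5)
We argue geometrically with a minimal simple loop $\gamma$ representing $q$, which exists by \Cref{cor:Q_is_noncrossing}, in the same spirit as the proof of \Cref{lem:Q_square_free}. Suppose $u = \nFreeGenElt_i^{\pm 1}\nFreeGenElt_k^{\pm 1}\nFreeGenElt_i^{\pm 1}$ and $v = \nFreeGenElt_j^{\pm 1}\nFreeGenElt_k^{\pm 1}\nFreeGenElt_j^{\pm 1}$ are subwords of $q$ with $i \neq j$. Note that $i,j \neq k$ by \Cref{lem:Q_square_free}.

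\textbf{Step 1: the subarc $\gamma_u$ encloses one of the two rays.} Between consecutive crossings of $\lambda$ lines, $\gamma$ travels in a complementary region of $\D \setminus \bigcup_m \lambda_m$. These regions are the sectors between consecutive $\lambda$ lines, together with the outer region containing $-i$ lying below all punctures. Consider the subarc $\gamma_u$ from its first crossing of $\lambda_i$, through its crossing of $\lambda_k$, to its second crossing of $\lambda_i$.

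By minimality, the segment of $\gamma$ between the two crossings of $\lambda_i$ together with a subsegment of $\lambda_i$ bounds a disk. Concretely, we close up $\gamma_u$ by the piece of $\lambda_i$ between its two crossing points. This gives a simple closed curve $C_u$. Since $\gamma_u$ crosses $\lambda_k$ exactly once, $C_u$ separates $p_k$ from $+i$ along $\lambda_k$. Hence exactly one endpoint of $\lambda_k$ lies inside $C_u$, and by minimality (no bigons) this should be the puncture $p_k$.

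The interior of $C_u$ cannot contain $-i$, since $\gamma$ meets $\partial\D$ only at $-i$. It also contains no other puncture $p_m$, since otherwise $\gamma_u$ would have to cross $\lambda_m$, adding a letter to $u$. The exception is $p_i$, which may lie on the boundary segment. So $C_u$ is essentially a small loop around $p_k$ anchored on $\lambda_i$. Likewise $C_v$ is a loop around $p_k$ anchored on $\lambda_j$.

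\textbf{Step 2: the two loops around $p_k$ cannot coexist.} Both $C_u$ and $C_v$ enclose $p_k$, and $\gamma_u$, $\gamma_v$ are disjoint arcs of the simple curve $\gamma$ (they cannot share the letter $\nFreeGenElt_k$ unless $u = v$, which forces $i = j$). Since they are disjoint curves each enclosing $p_k$, one disk is nested inside the other, say $D_v \subset D_u$. Then $\gamma_v$ lies inside $D_u$, and its endpoints on $\lambda_j$ lie inside $D_u$. So $\lambda_j$ meets the interior of $D_u$. But $\lambda_j$ runs from $p_j$ to $+i$, and neither of these lies in $D_u$. Hence $\lambda_j$ must cross $\partial D_u$ at least twice, via $\gamma_u$ since $\lambda_j$ is disjoint from $\lambda_i$ away from $+i$. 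That forces additional $\nFreeGenElt_j$ letters inside $u$, which is a contradiction.

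\textbf{Main obstacle.} The hardest step is making Step 1 rigorous: showing that $D_u$ contains $p_k$ and no other puncture or endpoint. This means handling the sign cases ($x_i x_k x_i$, $x_i x_k^{-1} x_i$, $x_i x_k x_i^{-1}$, and so on) and excluding the possibility that $C_u$ encloses $+i$ instead. That would mean the disk is on the outside, which then contains $-i$ and all other punctures. I expect to dispose of this via a case picture as in \Cref{lem:Q_square_free}, together with minimality of $\gamma$ to rule out bigons with $\lambda_k$.
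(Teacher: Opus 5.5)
Your argument is correct and is essentially the paper's proof: you close $\gamma_u$ and $\gamma_v$ off with segments of $\lambda_i$ and $\lambda_j$ to get two Jordan curves around $p_k$. These are disjoint (since $\gamma_u$ misses $\lambda_j$, $\gamma_v$ misses $\lambda_i$, and $\lambda_i\cap\lambda_j=\{+i\}$), hence nested, so the inner curve's $\lambda$-line must escape through the outer $\gamma$-arc and create a forbidden letter; the paper does exactly this with the roles of $u$ and $v$ swapped. The step you flag as the main obstacle needs no case pictures or minimality: $C_u$ lies in the open disk, so $+i\in\partial\D$ is automatically outside it and the single transverse crossing with $\lambda_k$ puts $p_k$ inside; also, $p_i$ can never lie on the segment of $\lambda_i$ you use, but it can lie inside $D_u$ (for example when $u=x_ix_kx_i$), which is harmless because you only need $+i$ outside.
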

\begin{proof}
	Let $\gamma$ be a minimal simple loop representing $q$.
	Let $u$ be the subword $\nFreeGenElt_i^{\pm 1} \nFreeGenElt_k^{\pm 1} \nFreeGenElt_i^{\pm 1}$ in $q$.
	We have that $\gamma_u$ crosses $\lambda_i$ twice and $\lambda_k$ once.
	Let $\rho$ denote the part of $\lambda_i$ between the distinct points where $\gamma_u$ crosses $\lambda_i$.
	The union of~$\gamma_u$ and~$\rho$ makes a closed loop, whose boundary crosses $\lambda_k$ once.
	This loop bounds a region, denoted $R$, which contains $\nPoint_k$.
	We call the union of $\rho$ and $\gamma_u$ the \emph{perimeter} of $R$.
	Two pictures (of the possible four) of this setup are given in \Cref{fig:alternating_length_3_possibilities}, where~$\rho$ is a dashed green line and $R$ is shaded in grey.
	\begin{figure}[t]
		\centering
		\begin{subfigure}{.5\textwidth}
			\centering
	\begin{tikzpicture}[baseline=(current bounding box.center),every node/.append style={scale=1}, inner sep=0pt, outer sep=0pt]
		\input{figs/iji_turn_01.tex}
	\end{tikzpicture}%

			\caption{The subword $(\nFreeGenElt_i\nFreeGenElt_k\nFreeGenElt_i^{-1})^{\pm 1}$}
		\end{subfigure}%
		\begin{subfigure}{.5\textwidth}
			\centering
	\begin{tikzpicture}[baseline=(current bounding box.center),every node/.append style={scale=1}, inner sep=0pt, outer sep=0pt]
		\input{figs/iji_turn_02.tex}
	\end{tikzpicture}%

			\caption{The subword $\left(\nFreeGenElt_i\nFreeGenElt_k\nFreeGenElt_i\right)^{\pm 1}$}
			\label{fig:alternating_length_3_possibility}
		\end{subfigure}
		\caption{Two possible pictures of loops corresponding to alternating words of length 3.}
		\label{fig:alternating_length_3_possibilities}
	\end{figure}

	Now suppose that there is also the subword $u^\prime \coloneq \nFreeGenElt_j^{\pm 1} \nFreeGenElt_k^{\pm 1} \nFreeGenElt_j^{\pm 1}$ somewhere in $q$, and that $j \neq i$.
	By the same argument, $\gamma_{u^\prime}$ and a section of $\lambda_j$ would enclose a region containing $\nPoint_k$.
	Denote this region $R^\prime$.
	Since $\lambda_i \neq \lambda_j$, no part of the perimeter of $R$ is shared with any of the perimeter of $R^\prime$.
	Furthermore, $\gamma$ is non-crossing, $\lambda_i$ does not cross $\lambda_j$, $\lambda_i$ does not cross the $\gamma_{u^\prime}$ section of the perimeter of $R^\prime$ and $\lambda_j$ does not cross the $\gamma_u$ section of the perimeter of $R$, so the perimeters of $R$ and $R^\prime$ do not intersect.
	The only way two regions can enclose the same point ($\nPoint_k$ in our case) and have non-intersecting perimeters is if they are concentric.
	Now assume that $R$ and its perimeter are entirely contained inside $R^\prime$.
	That means that a part of~$\lambda_i$ is inside~$R^\prime$, but at least one end of~$\lambda_i$ must exit $R^\prime$.
	It can only exit $R^\prime$ through~$\gamma_{u^\prime}$, but then $\gamma$ would be crossing $\lambda_i$ where it should only be crossing $\lambda_j$ or $\lambda_k$.
	We obtain a similar contradiction if we assume $R^\prime$ is inside $R$.
\end{proof}
We begin to work on statements (2) and (3) of \Cref{thm:summary_of_Q_restrictions}.
Again, by using regions made by simple loops representing Hurwitz words $q \in Q$.
\begin{lemma}
	\label{lem:exists_change_of_sign}
	Let $q \in Q$.
	Any maximal alternating subword $u$ of $q$ with $\ell(u) \geq 3$ contains a change of sign.
\end{lemma}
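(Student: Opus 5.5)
The plan is to argue by contradiction, in the same spiralling style as the proof of \Cref{lem:Q_square_free}. Suppose $q \in Q$ has a maximal $(\nFreeGenElt_i,\nFreeGenElt_k)$-alternating subword $u$ with $\ell(u) \geq 3$ and no change of sign. Then every letter of $u$ has the same sign. Inverting $q$ if necessary (this preserves $Q$, since reversing the orientation of a simple loop based at $-i$ gives a simple loop), I would assume that all letters of $u$ are positive. In particular, the last three letters of $u$ form a subword $v = \nFreeGenElt_a\nFreeGenElt_b\nFreeGenElt_a$ with $\Set{a,b} = \Set{i,k}$. I will show that such a $v$ is always followed in $q$ by a further letter $\nFreeGenElt_b$. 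That contradicts the maximality of $u$, or alternatively produces an infinite word after iterating.

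Fix a minimal simple loop $\gamma$ representing $q$, which exists by \Cref{cor:Q_is_noncrossing}. Look at $\gamma_v$, which is the picture in \Cref{fig:alternating_length_3_possibility}. Since $\gamma_v$ crosses $\lambda_a$ twice in the same direction and crosses $\lambda_b$ once in between, it winds once around $\nPoint_b$. So the arc $\gamma_v$ from its second $\lambda_b$-side crossing onwards, together with a segment of a $\lambda$ arc, bounds a region. The key step is to identify this region precisely. Let $\gamma'$ be the sub-arc of $\gamma_v$ from its crossing of $\lambda_b$ to the end of $v$, and let $\tau$ be the segment of $\lambda_b$ between the crossing point and the point where $\lambda_b$ next meets $\gamma_v$. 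I expect the head of $\gamma_v$ to lie in a region $R$ whose boundary consists only of pieces of $\gamma$ together with a segment of $\lambda_b$, and which does not contain the base point $-i$ or any part of $\partial\D$. This is the exact analogue of the shaded region in the proof of \Cref{lem:Q_square_free}.

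Given this region, the rest is mechanical. The loop $\gamma$ must leave $R$, because it returns to $-i \in \partial \D$. It cannot cross itself, since it is simple, so it must exit through the segment of $\lambda_b$ on $\partial R$. By minimality, $\gamma$ cannot first cross some other $\lambda$ line inside $R$ and return, because this would create a bigon that could be removed. Moreover, the direction of that exit is forced: it is the same direction as the earlier crossing of $\lambda_b$ in $v$. Hence the next letter is $\nFreeGenElt_b$, not $\nFreeGenElt_b^{-1}$, which is consistent with the fact that $q$ is reduced and square-free (\Cref{lem:Q_square_free}). The extended word $u\nFreeGenElt_b$ is therefore still an alternating word with no change of sign, contradicting maximality of $u$. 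Equivalently, the new suffix $\nFreeGenElt_b\nFreeGenElt_a\nFreeGenElt_b$ has the same form as $v$, so the argument repeats indefinitely, which is impossible since $q$ is finite.

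The main obstacle is the planar step: showing rigorously that the region $R$ containing the head of $\gamma_v$ is bounded only by $\gamma$ and a single segment of $\lambda_b$, and that the exit through that segment must have the same orientation. I would handle this by case analysis on the relative order of $a$ and $b$, that is, whether $\nPoint_b$ is to the left or right of $\nPoint_a$. In each case I would use the fact that $\lambda_a$ and $\lambda_b$ meet only at $+i$, and that the positive crossings force $\gamma_v$ to turn consistently (clockwise) around $\nPoint_b$. The same analysis, with signs reversed, covers the all-negative case if one prefers not to invert $q$.
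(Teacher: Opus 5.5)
Your overall strategy of trapping an end of $\gamma_v$ in a planar region and forcing how the loop escapes is the right one, and it is what the paper does. However, the planar step you flag as the main obstacle is not merely unproven: as stated it is wrong.

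Since $v=x_ax_bx_a$ contains a single $x_b$, the arc $\gamma_v$ meets $\lambda_b$ only once. So there is no segment of $\lambda_b$ between two points of $\gamma_v$, and your $\tau$ is undefined. The enclosed region, the one in the figure you cite, is bounded by $\gamma_v$ together with the segment $\rho\subset\lambda_a$ joining the two $\lambda_a$-crossings. Inside it, $\lambda_b$ appears only as a slit from $p_b$ to $\partial R$, which the arc may cross in either direction without leaving $R$. The only way out is through $\rho$, and that crossing runs against the two crossings at the endpoints of $\rho$, so it is a letter $x_a^{-1}$.

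Your intermediate claim is in fact false. The element $\sigma_1^{2}\star x_1=x_1x_2x_1x_2^{-1}x_1^{-1}$ lies in $Q$ and contains the all-positive triple $x_1x_2x_1$ followed by $x_2^{-1}$. Your geometric reasoning uses only the shape of $v$, so it would ``prove'' this false statement.

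Moreover, the enclosed end need not be the head. The element $\sigma_1^{-3}\star x_1=x_2^{-1}x_1^{-1}x_2x_1x_2$ lies in $Q$ and ends with the all-positive triple $x_2x_1x_2$. The head of that triple is the base point, so there it is the start of $\gamma_v$ that is trapped.

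The repair is the paper's argument:
\begin{enumerate}
\item Take a length-$3$ subword $u'=x_a^{\epsilon}x_b^{\epsilon}x_a^{\epsilon}$ of $u$ (otherwise $u$ already changes sign), and let $R$ be the region bounded by $\gamma_{u'}\cup\rho$.
\item One end of $\gamma_{u'}$, head or tail, lies in $R$.
\item No puncture other than $p_a,p_b$ lies in $R$, since its $\lambda$-arc would have to cross $\gamma_{u'}$ or $\lambda_a$. So, by reducedness and square-freeness, every letter read from that end until the loop leaves $R$ continues the same alternating subword. Hence these letters lie in the maximal subword $u$.
\item Leaving through $\rho$ then puts a letter $x_a^{-\epsilon}$ into $u$, which gives the change of sign directly.
\end{enumerate}
Maximality of $u$ is used only to absorb these letters, not to derive a contradiction from a single next letter.

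Finally, $Q$ is not closed under inversion: $x_1^{-1}\notin Q$, since every element of $Q$ is conjugate to a positive generator. Your reduction to the all-positive case is harmless only because the argument uses just the non-crossing property and square-freeness, both of which are invariant under inversion.
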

\begin{proof}
	Let $\gamma$ be a minimal simple loop representing $q$.
	We can assume $u$ is an $(\nFreeGenElt_i,\nFreeGenElt_k)$-alternating word which begins with $\nFreeGenElt_i$ or $\nFreeGenElt_i^{-1}$.
	So, we need to show that $u$ is not $\nFreeGenElt_i\nFreeGenElt_k\nFreeGenElt_i\ldots$ or $\nFreeGenElt_i^{-1}\nFreeGenElt_k^{-1}\nFreeGenElt_i^{-1}\ldots$.
	Since $\ell(u) \geq 3$, there are two letters in $u$ which correspond to the generator $\nFreeGenElt_i$.
	If those two letters are of opposite sign then we are done.
	So we can assume they are of the same sign.
	With this assumption, if $i < k$, there is a length 3 prefix $u^\prime$ of $u$ such that $\gamma_{u^\prime}$ looks like \Cref{fig:alternating_length_3_possibility}, and we can make a symmetric picture if $k < i$.

	As in that figure, there is a section of $\lambda_i$, denoted $\rho$, between the two intersections of $\gamma_{u^\prime}$ with $\lambda_i$.
	The region $R$, coloured grey, is enclosed by $\gamma_{u^\prime}$ and $\rho$.
	There is one end of $\gamma_{u^\prime}$ which is inside $R$.
	Inside $R$, the relevant sub-arc of $\gamma$ can only cross $\lambda_i$ or $\lambda_k$, so everything inside $R$ contributes to~$u$.
	The only way this end can escape $R$ (which it must) is by passing through $\rho$.
	Doing so would correspond to a letter $\nFreeGenElt_i^{\pm 1}$ with sign opposite to the sign of the two crossings at the ends of $\rho$.
\end{proof}
\begin{lemma}
	\label{lem:crossing_diagonals_and_one_change_of_sign}
	Let $q \in Q$.
	\begin{enumerate}
		\item For all $i \neq j$ the sets $Z_+ = \Set{\nFreeGenElt_i\nFreeGenElt_j^{-1}, \nFreeGenElt_j\nFreeGenElt_i^{-1}}$ and $Z_- = \Set{\nFreeGenElt_i^{-1}\nFreeGenElt_j, \nFreeGenElt_j^{-1}\nFreeGenElt_i}$ are mutually exclusive as subwords of $q$, i.e.~if any two-letter subword of $q$ is in $Z_+$, then no two-letter subword of $q$ is in $Z_-$, and vice versa.
		      \smallskip
		\item All maximal, length $\geq 3$, alternating subwords of $q$ have exactly one change of sign.
	\end{enumerate}
\end{lemma}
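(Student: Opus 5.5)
Part (2) follows from part (1) together with \Cref{lem:exists_change_of_sign}, so the plan is to prove (1) first.

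\textbf{Setting up part (1).} Fix $i<j$; the case $j<i$ is the mirror image. Let $\gamma$ be a minimal simple loop representing $q$. The key observation is local. A two-letter subword $u=\nFreeGenElt_i\nFreeGenElt_j^{-1}$ (or $\nFreeGenElt_j\nFreeGenElt_i^{-1}$) means that $\gamma_u$ crosses $\lambda_i$ and $\lambda_j$ in opposite directions. In between, it crosses none of the other $\lambda$ arcs, nor $\partial\D$. So $\gamma_u$ runs from one of these lines to the other through the region between them without looping around either puncture.

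Concretely, I will cut $\D$ along all the $\lambda$ arcs into sectors. Between two consecutive crossings, $\gamma$ lies in one sector. The letters $\nFreeGenElt_i\nFreeGenElt_j^{-1}$ say that $\gamma$ enters some region from the left of $\lambda_i$ and leaves to the left of $\lambda_j$. Using minimality (no bigons with the $\lambda$'s or with the $\mu_{ij}$ arcs), the sub-arc therefore passes \emph{above} $\nPoint_i,\ldots,\nPoint_j$ or \emph{below} them, between the two lines.

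\textbf{The main step.} I expect the hardest point to be fixing this dichotomy precisely: deciding which of the two regions the arc uses, and showing that $Z_+$ forces one side and $Z_-$ forces the other. I would try to show the following. A $Z_+$ word makes an arc joining $\lambda_i$ to $\lambda_j$ through the region separating $\nPoint_i,\ldots,\nPoint_j$ from the base point $-i$. A $Z_-$ word makes an arc through the upper part, near $+i$ where the $\lambda$'s meet. If this sign bookkeeping works out, a $Z_+$ arc and a $Z_-$ arc for the same pair $\{i,j\}$ are forced to cross. Each one, together with segments of $\lambda_i$ and $\lambda_j$, encloses a region. One of these regions contains the punctures $\nPoint_i,\ldots,\nPoint_j$ (or $+i$) and the other contains $-i$. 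Nesting arguments as in \Cref{lem:aba_cbc} then give a contradiction: the end of the appropriate $\lambda$ segment would have to exit through $\gamma$ at a letter that is not allowed, or $\gamma$ would self-intersect. If the clean dichotomy fails, my fallback is a case analysis of the four configurations, drawn as in \Cref{fig:alternating_length_3_possibilities}.

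\textbf{Part (2).} Let $u$ be a maximal $(\nFreeGenElt_i,\nFreeGenElt_k)$-alternating subword with $\ell(u)\geq 3$. \Cref{lem:exists_change_of_sign} gives at least one change of sign. Suppose there were two.

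\begin{enumerate}
\item By \Cref{lem:Q_square_free}, each letter has exponent $\pm1$, so the sign pattern of $u$ is a sequence in which the changes of sign are exactly the two-letter subwords lying in $Z_+\cup Z_-$.
\item Consecutive changes of sign alternate between $Z_+$ and $Z_-$. Indeed, between two changes the sign is constant, so a change from $+$ to $-$ is followed by a change from $-$ to $+$. A $+$ to $-$ change is $\nFreeGenElt_i\nFreeGenElt_k^{-1}$ or $\nFreeGenElt_k\nFreeGenElt_i^{-1}$, which lies in $Z_+$, and a $-$ to $+$ change lies in $Z_-$.
\item So two changes of sign produce subwords from both $Z_+$ and $Z_-$ for the pair $\{i,k\}$, contradicting (1).
\end{enumerate}

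Hence there is exactly one change of sign.
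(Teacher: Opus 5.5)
\paragraph{Verdict.} Your derivation of (2) from (1) and \Cref{lem:exists_change_of_sign} is correct and is essentially the paper's argument: consecutive changes of sign in an alternating word alternate between $Z_+$ and $Z_-$. The gap is in (1), at exactly the step you flagged, and it is not a matter of sign bookkeeping: the dichotomy itself is false.

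\paragraph{Why the dichotomy fails.} For $x_ix_j^{-1}$ (and equally $x_jx_i^{-1}$), the sub-arc between the two crossings meets \emph{both} $\lambda_i$ and $\lambda_j$ from their right-hand sides. With $i<j$, the right side of $\lambda_i$ faces into the wedge between the two lines, while the right side of $\lambda_j$ faces away from it. So the arc cannot stay between the lines, above or below the punctures. It must cross $\mu_{ij}$ and hook under $p_j$, and it separates $p_{i+1},\ldots,p_j$, but not $p_i$, from $-i$. Symmetrically, a $Z_-$ arc meets both lines from the left and hooks under $p_i$.

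The two shapes you propose belong to something else:
\begin{itemize}
\item An arc separating all of $p_i,\ldots,p_j$ from $-i$ meets $\lambda_i$ from the left and $\lambda_j$ from the right, so it comes from $x_i^{-1}x_j^{-1}$.
\item An arc in the wedge near $+i$ comes from $x_ix_j$.
\end{itemize}
These are same-sign pairs, and their arcs are nested rather than forced to cross. So the contradiction you hope for does not arise from them.

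\paragraph{The missing idea.} Your picture of $\D\setminus\bigcup_k\lambda_k$ as cut into sectors is also off. The $\lambda_k$ form a tree that meets $\partial\D$ only at $+i$, so the complement is a single open disc, and this is exactly what proves (1).

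Let $\lambda_k^{\mathrm{L}}$ and $\lambda_k^{\mathrm{R}}$ be the two sides of $\lambda_k$ on the boundary of this disc. For $i<j$, going around the boundary gives the cyclic order $\ldots,\lambda_j^{\mathrm{R}},\lambda_j^{\mathrm{L}},\ldots,\lambda_i^{\mathrm{R}},\lambda_i^{\mathrm{L}},\ldots$. A $Z_+$ subword gives a sub-arc of $\gamma$ in the disc joining $\lambda_i^{\mathrm{R}}$ to $\lambda_j^{\mathrm{R}}$. A $Z_-$ subword gives one joining $\lambda_i^{\mathrm{L}}$ to $\lambda_j^{\mathrm{L}}$. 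The endpoints interleave, so the two sub-arcs meet, contradicting that $\gamma$ is simple.

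This also covers overlapping subwords such as $x_ix_j^{-1}x_i$, since the shared crossing gives two distinct boundary points of the disc. No minimality is needed. This is what the paper's ``crossing diagonals'' picture shows: the solid $Z_+$ arc hooks around $p_j$, the dashed $Z_-$ arc hooks around $p_i$, and the two cannot be disjoint.
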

\begin{proof}
	Statement (1) follows by picture.
	\begin{equation*}
	\begin{tikzpicture}[baseline=(current bounding box.center),every node/.append style={scale=1}, inner sep=0pt, outer sep=0pt]
		\input{figs/crossing_diagonals.tex}
	\end{tikzpicture}%

	\end{equation*}
	Without specifying direction, the solid line corresponds to the subwords in $Z_+$ and the dashed line to those in $Z_-$.

	For (2), let $u$ denote such a subword, which we may assume is an $(\nFreeGenElt_i,\nFreeGenElt_j)$-alternating subword of $q$.
	Assume the first letter of $u$ has exponent $+1$.
	\Cref{lem:exists_change_of_sign} tells us there exists a change of sign in $u$.
	We are free to choose $i$ and $j$ such that this first change of sign is $\nFreeGenElt_i\nFreeGenElt_j^{-1}$.
	If the alternating word continues, then by (1), the next letter after $\nFreeGenElt_i\nFreeGenElt_j^{-1}$ is not $\nFreeGenElt_i$.
	So the subword continues $\nFreeGenElt_i\nFreeGenElt_j^{-1}\nFreeGenElt_i^{-1}$.
	If the alternating sequence continues still, the next letter after $\nFreeGenElt_i\nFreeGenElt_j^{-1}\nFreeGenElt_i^{-1}$ is not $\nFreeGenElt_i$, and so on.
	We make a symmetric argument if the first letter of $u$ has exponent $-1$.
\end{proof}
The $\pm$ in $Z_\pm$ above refer to the exponent of the first letter in those changes of sign.
We will retain the names of these sets as part of our nomenclature.
For instance, a change of sign as in $Z_+$ is a \emph{$Z_+$ change of sign}, and equivalently for $Z_-$.
Owing to the chiral nature of the picture above, we call this property of a change of sign, the \emph{handedness} of the change of sign.

Recall that for every pair of points $\nPoint_i,\nPoint_j$ in $\Set{\nPoint_1,\ldots,\nPoint_\nRank}$, there is an arc~$\mu_{ij}$, connecting those points that passes beneath other points in $\Set{\nPoint_1,\ldots,\nPoint_\nRank}$.
\begin{corollary}
	\label{cor:single_crossing_of_line_in_alternating_subword}
	Let $q \in Q$ and let $u$ be a maximal alternating subword with $\ell(u) \geq 3$.
	Let $\gamma$ be a minimal simple loop representing $q$.
	The arc $\gamma_u$ crosses $\mu_{ij}$ exactly once, corresponding to where $u$ changes sign.
\end{corollary}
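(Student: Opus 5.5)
The plan is to use the geometry of the arcs $\lambda_i$, $\lambda_j$ and $\mu_{ij}$, and to show that each consecutive pair of letters of $u$ forces a specific pattern of crossings with $\mu_{ij}$. Assume $i<j$; the case $j<i$ is symmetric. Consider the closed region $T$ bounded by $\lambda_i$, $\lambda_j$ and $\mu_{ij}$. This is a topological triangle with vertices $p_i$, $p_j$ and $+i$. Any other $\lambda_k$ meeting its interior would do so only by passing through $\mu_{ij}$, which we can arrange to avoid by choosing $\mu_{ij}$ close to the segment between $p_i$ and $p_j$ while keeping it below the intermediate punctures. More to the point, $\gamma_u$ crosses only $\lambda_i$ and $\lambda_j$ along its interior, so between two consecutive letters the arc travels from one side of $\lambda_i$ or $\lambda_j$ to another without crossing any other $\lambda$ line.

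The first key step is a local analysis of each two-letter subword $ab$ of $u$, with $a,b \in \Set{\nFreeGenElt_i^{\pm1},\nFreeGenElt_j^{\pm1}}$ on different generators. A letter $\nFreeGenElt_i$ means crossing $\lambda_i$ left to right. After crossing $\lambda_i$ or $\lambda_j$, the sub-arc of $\gamma$ up to the next crossing lies in a complementary region of $\bigcup_k \lambda_k \cup \partial\D$. For the next crossing to be with the other line, that region must be the one adjacent to both lines. I will check that the same-sign pairs $\nFreeGenElt_i\nFreeGenElt_j$ and $\nFreeGenElt_j^{-1}\nFreeGenElt_i^{-1}$ occur in the region between the lines on the side lying over $\mu_{ij}$ in a way that, after isotopy, needs no crossing of $\mu_{ij}$. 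The arc goes "around" the top through $T$ or through the outside near $\partial\D$, and by minimality it crosses $\mu_{ij}$ an even number of times, hence zero. A change of sign $\nFreeGenElt_i\nFreeGenElt_j^{-1}$ instead enters through $\lambda_i$ into $T$ and exits through $\lambda_j$ in the opposite direction relative to orientation. The connecting sub-arc must pass from the region just right of $\lambda_i$ to the region just right of $\lambda_j$. With $p_i$ and $p_j$ lying at the bottom of $T$, this is possible without crossing other $\lambda$'s only by going below $p_i$ or $p_j$, that is, by crossing $\mu_{ij}$ an odd number of times, and minimality gives exactly once. This parity argument compares which side of the closed curve $\lambda_i\cup\mu_{ij}\cup\lambda_j$ the arc lies on just after a crossing. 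It is the cleanest route: each crossing of $\lambda_i$ or $\lambda_j$ flips the side relative to $\partial T$, and the sign determines whether the arc ends up inside or outside $T$.

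The second step combines the local counts over $u$. By \Cref{lem:crossing_diagonals_and_one_change_of_sign}(2), $u$ has exactly one change of sign, so summing gives an odd number of crossings with $\mu_{ij}$ located in that single gap. Minimality of $\gamma$, together with the bigon criterion of \cite[Section 1.2]{farb_margalit_primer_2011}, reduces this to exactly one crossing and zero elsewhere. Finally, I must ensure no crossing of $\mu_{ij}$ happens on $\gamma_u$ at the start or end, meaning before the first letter or after the last. By convention $\gamma_u$ runs from the first crossing to the last, so this holds.

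The main obstacle is making the parity and side argument rigorous while also ensuring that minimal position gives at most one crossing, not merely an odd number. Odd parity follows from the Jordan curve argument. To rule out three crossings I would use minimality: two extra crossings of $\mu_{ij}$ in the same gap, with no $\lambda$ crossings between them, would bound a bigon with $\mu_{ij}$ containing no puncture. A puncture inside the bigon would force a $\lambda$ crossing, since every puncture's $\lambda$ line leaves toward $+i$. Removing the bigon contradicts minimality.
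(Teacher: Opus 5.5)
Your proposal is correct and follows the paper's route: exactly one change of sign from the one-change-of-sign lemma, plus minimal position (no bigons) of $\gamma$ with $\mu_{ij}$. It also makes the paper's ``by picture'' step rigorous, since the parity of crossings of $\partial T=\lambda_i\cup\mu_{ij}\cup\lambda_j$ in each gap of $\gamma_u$ forces an odd count only at the change of sign, and your bigon argument then cuts every gap down to at most one crossing.

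One small slip: for $i<k<j$ the line $\lambda_k$ does lie in the interior of $T$, so your claim that no other $\lambda_k$ meets it is false. Nothing depends on that claim, because the bigon step only needs that no $\lambda_k$ meets $\mu_{ij}$ away from $p_i,p_j$ and that a gap of $\gamma_u$ crosses no $\lambda$ line.
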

\begin{proof}
	\Cref{lem:crossing_diagonals_and_one_change_of_sign} tells us there is exactly one change of sign in $u$.
	Recall that $\gamma$ crosses $\mu_{ij}$ minimally and thus forms no bigons with $\mu_{ij}$.
	It is simple to see by picture that $\gamma_u$ should not cross $\mu_{ij}$ except exactly at the change of sign, where it must cross $\mu_{ij}$.
\end{proof}
\begin{definition}
	\label{def:+-_factorisation}
	Let $q \in Q$ and let $\gamma$ be a minimal simple loop representing $q$.
	Let $u$ be an $(\nFreeGenElt_i,\nFreeGenElt_j)$-alternating subword of $q$ with $\ell(u) \geq 3$.
	We factor $u$ into two parts, splitting at the unique change of sign in $u$, which corresponds to the unique crossing of $\mu_{ij}$ in $u$.
	The factor corresponding to the sub-arc of $\gamma_u$ which emerges above $\mu_{ij}$ is denoted $u_+$, the other factor which emerges below $\mu_{ij}$ is denoted $u_-$.
\end{definition}
\begin{remark}
	In the above definition, we do not know if $u = u_+u_-$ or if $u=u_-u_+$, but one of these factorisations is valid.
	This depends on the direction of $\gamma$ at the relevant crossing of $\mu_{ij}$, as demonstrated in the following pictures.
	\begin{equation*}
	\begin{tikzpicture}[,every node/.append style={scale=1}, inner sep=0pt, outer sep=0pt]
		\input{figs/change_of_sign_demo_01.tex}
	\end{tikzpicture}%

		\quad
	\begin{tikzpicture}[,every node/.append style={scale=1}, inner sep=0pt, outer sep=0pt]
		\input{figs/change_of_sign_demo_02.tex}
	\end{tikzpicture}%

		\quad
	\begin{tikzpicture}[,every node/.append style={scale=1}, inner sep=0pt, outer sep=0pt]
		\input{figs/change_of_sign_demo_03.tex}
	\end{tikzpicture}%

		\quad
	\begin{tikzpicture}[,every node/.append style={scale=1}, inner sep=0pt, outer sep=0pt]
		\input{figs/change_of_sign_demo_04.tex}
	\end{tikzpicture}%

	\end{equation*}
\end{remark}
The following lemma restricts \emph{where} in a maximal alternating subword (of some $q \in Q$) a change of sign can occur.
The change of sign occurs at the halfway point of the maximal subword if the maximal subword has even length, and approximately at the halfway point if the length is odd.
\begin{lemma}
	\label{lem:change_sign_close_to_middle}
	Let $q \in Q$ with maximal alternating subword $u$ with $\ell(u) \geq 3$.
	\begin{enumerate}
		\item If $\ell(u)= 2n$ is even, then $u$ changes sign after the $n$th letter.
		      \smallskip
		\item If $\ell(u)= 2n + 1$ is odd, then  $u$ changes sign after the $n$th or $(n+1)$st letter.
	\end{enumerate}
\end{lemma}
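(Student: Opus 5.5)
By \Cref{lem:crossing_diagonals_and_one_change_of_sign}, $u$ has exactly one change of sign. Split $u$ there as $u = ab$, and write $\ell(a) = \alpha$ and $\ell(b) = \beta$. Inside $a$ all exponents agree, and inside $b$ all exponents agree and are opposite to those of $a$. The claim to prove is $\Abs{\alpha - \beta} \leq 1$. For $\ell(u)=2n$ this forces $\alpha=\beta=n$. For $\ell(u)=2n+1$ it forces $\alpha \in \Set{n,n+1}$.

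Take a minimal simple loop $\gamma$ representing $q$. By \Cref{cor:single_crossing_of_line_in_alternating_subword}, $\gamma_u$ crosses $\mu_{ij}$ exactly once, at the change of sign. By \Cref{def:+-_factorisation}, one of $a,b$ is $u_+$, the part of $\gamma_u$ lying above $\mu_{ij}$, and the other is $u_-$, the part lying below.

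The picture of each half is as follows. Above $\mu_{ij}$, a sub-arc crossing only $\lambda_i$ and $\lambda_j$, all in one direction, must spiral around $\nPoint_i$ and $\nPoint_j$. The same holds below. Consecutive turns of the spiral are nested. Because $\gamma$ meets $\mu_{ij}$ only once in $\gamma_u$, the two spirals are joined by a single crossing of $\mu_{ij}$. So $\gamma_u$ is a single spiral whose turns alternate between the upper and lower half-planes relative to $\mu_{ij}$. The length difference of the two halves then counts how many more times one end winds than the other.

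The key step is to rule out $\Abs{\alpha-\beta}\geq 2$. Assume without loss of generality that $\alpha \geq \beta+2$. Then the innermost part of the longer half makes a full extra turn: there are letters $x_i^{\epsilon}x_j^{\epsilon}x_i^{\epsilon}$, or the same with $i$ and $j$ swapped, at the end of $a$ away from the change of sign. These letters lie strictly inside a region $R$ bounded by the shorter half together with a segment of $\lambda_i$ or $\lambda_j$, as in the argument of \Cref{lem:exists_change_of_sign}. The inner end of $\gamma_u$ continues into the rest of $\gamma$, and it must eventually leave $R$ to reach the boundary point $-i$. The boundary of $R$ consists of pieces of $\gamma$, which $\gamma$ cannot cross because it is simple, and a segment $\rho$ of some $\lambda$. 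Crossing $\rho$ would add another letter $x_i^{\pm1}$ or $x_j^{\pm1}$ to $u$. If that letter's sign continues the spiral, it contradicts maximality of $u$. If its sign is opposite, it gives a second change of sign, contradicting \Cref{lem:crossing_diagonals_and_one_change_of_sign}; by part (1) of that lemma, its handedness would also be wrong. Symmetric cases, such as $i>j$ or reversed orientation, are handled by reflection or by reversing $\gamma$.

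I expect the main obstacle to be making the region $R$ precise. One must check that, when $\alpha\geq\beta+2$, the inner end really is trapped by $\gamma_b$ together with a segment of $\lambda$. This amounts to checking the first few small configurations by picture, for example $\beta=1$, $\alpha=3$ and $\beta=2$, $\alpha=4$; the general case then follows because only the two innermost turns matter.
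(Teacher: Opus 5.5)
Your reduction to $|\alpha-\beta|\le 1$ and the idea of an end of $\gamma_u$ trapped behind a segment $\rho$ of some $\lambda$ line match the paper. However, your key step has the geometry backwards.

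The crossing of $\mu_{ij}$ at the change of sign is the \emph{innermost} point of $\gamma_u$. The two halves leave it as two interleaved arms, not as a single spiral. Take any arc of either half between two consecutive crossings of the same $\lambda$ line, and close it up by the segment of that line. The resulting loop encircles $p_i$ and $p_j$, and hence all of $\mu_{ij}$, which $\gamma_u$ meets only once. So it encircles the change of sign and everything closer to it. This is already visible in the proof of \Cref{lem:exists_change_of_sign}: there, the end of a same-sign run of length $3$ that lies inside its own turn is the end leading towards the change of sign. Following the first few crossings shows more: the crossings of the two halves with $\lambda_i$, and likewise with $\lambda_j$, alternate between the halves and move strictly outward, further up the $\lambda$ lines, as one moves away from the change of sign.

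Hence the letters of the longer half farthest from the change of sign form its \emph{outermost} turn. They cannot lie inside a region bounded by the shorter half and a piece of a $\lambda$ line. In fact, when $\beta\le 2$ the shorter half crosses each of $\lambda_i,\lambda_j$ at most once, so your region $R$ does not exist at all. That covers both of your test cases, $(\beta,\alpha)=(1,3)$ and $(2,4)$. For the same reason, it is the outermost turns, not the innermost, that control the general case.

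The repair is to swap the roles, which is exactly what the paper does. Suppose $\alpha\ge\beta+2$, and take the last two crossings of the longer half with one line, say $\lambda_i$. Together with the segment $\rho\subset\lambda_i$ between them, they bound a region whose only punctures are $p_i$ and $p_j$. The free end of the \emph{shorter} half lies inside this region, and it must get out to reach $-i$.
\begin{itemize}
\item It cannot cross $\gamma$.
\item Re-crossing the line it has just crossed gives a free cancellation.
\item When $\beta=1$ it could detour across $\mu_{ij}$, but that only produces a square or a bigon with $\mu_{ij}$.
\item Leaving through $\rho$ appends to $u$ another letter of the same sign, contradicting maximality.
\end{itemize}
With this correction, your escape bookkeeping goes through.
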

\begin{proof}
	Let $\gamma$ be a minimal simple loop representing $q$.
	\Cref{cor:single_crossing_of_line_in_alternating_subword} tells us there is a unique place where $u$ changes sign and $\gamma_u$ crosses  $\mu_{ij}$.
	Let $u$ be factored as $u = u_+ u_-$ or $u =u_-u_+$ with respect to this crossing, as in \Cref{def:+-_factorisation}.
	The lemma we are trying to prove is equivalent to $\Abs{\ell(u_+) - \ell(u_-)} \leq 1$.

	Let $u$ be an $(\nFreeGenElt_i,\nFreeGenElt_j)$-alternating word.
	Assume the change of sign in $u$ is of $Z_+$ type.
	Below, we draw $\gamma_u$ in the case where $\ell(u_+) = 4$, and $\ell(u_-)$ ranges through $\Set{3,4,5}$.
	We draw $\gamma_{u_+}$ in blue, $\gamma_{u_-}$ in red and $\mu_{ij}$ is the horizontal, dashed line.
	\begin{equation}
		\label{eqn:triple_close_to_middle_diagram}
	\begin{tikzpicture}[baseline={($(baseline_text.center)+(0,0.85cm)$)},every node/.append style={scale=1}, inner sep=0pt, outer sep=0pt]
		\input{figs/change_of_sign_close_to_middle_3_crossing.tex}
	\end{tikzpicture}%

		\qquad
	\begin{tikzpicture}[baseline={($(baseline_text.center)+(0,0.85cm)$)},every node/.append style={scale=1}, inner sep=0pt, outer sep=0pt]
		\input{figs/change_of_sign_close_to_middle_4_crossing.tex}
	\end{tikzpicture}%

		\qquad
	\begin{tikzpicture}[baseline={($(baseline_text.center)+(0,0.85cm)$)},every node/.append style={scale=1}, inner sep=0pt, outer sep=0pt]
		\input{figs/change_of_sign_close_to_middle_5_crossing.tex}
	\end{tikzpicture}%

	\end{equation}
	We see that if we were to fix $\ell(u_+) = 4$, then $\ell(u_-)$ can only take values in $\Set{3,4,5}$.
	The case $\ell(u_-) \leq 2$ is not possible, otherwise an end of $\gamma_{u_-}$ would be trapped in a region bounded by $\lambda_j$ and $\gamma_{u_+}$.
	The case $\ell(u_-) \geq 6$ is not possible, otherwise an end of $\gamma_{u_+}$ would be trapped in a region bounded by $\lambda_i$ and $\gamma_{u_-}$.
	We draw the cases where $\ell(u_-) = 2$ and $\ell(u_-) = 6$ below to demonstrate this.
	\begin{equation}
		\label{eqn:double_close_to_middle_diagram}
	\begin{tikzpicture}[baseline={($(baseline_text.center)+(0,0.9cm)$)},every node/.append style={scale=1}, inner sep=0pt, outer sep=0pt]
		\input{figs/change_of_sign_close_to_middle_2_crossing.tex}
	\end{tikzpicture}%

		\qquad
	\begin{tikzpicture}[baseline={($(baseline_text.center)+(0,0.9cm)$)},every node/.append style={scale=1}, inner sep=0pt, outer sep=0pt]
		\input{figs/change_of_sign_close_to_middle_6_crossing.tex}
	\end{tikzpicture}%

	\end{equation}
	The only part of \eqref{eqn:triple_close_to_middle_diagram} and \eqref{eqn:double_close_to_middle_diagram} which is relevant to this argument is the outer part, where the highest crossings of $\lambda_i$ and $\lambda_j$ happen.
	Changing the value of $\ell(u_+)$ only adds more inner crossings, so the same argument as above can be made for any value of $\ell(u_+)$.
	The same argument can be made regardless of the handedness of change of sign in $u$.
\end{proof}
\begin{lemma}
	\label{lem:possible_lengths_other_crossings}
	Let $q \in Q$ and let $\gamma$ be a minimal simple loop representing $q$.
	Let $u$ be a maximal $(\nFreeGenElt_i,\nFreeGenElt_j)$-alternating subword with $\ell(u) \geq 4$.
	All points where $\gamma$ crosses $\mu_{ij}$ occur at the change of sign of some maximal $(\nFreeGenElt_i,\nFreeGenElt_j)$-alternating subword $v$ with $\ell(v) \geq 3$.
	Furthermore, for any such $v$, we have $\Abs{\ell(u) - \ell(v)} \leq 2$ and if either of~$\ell(u)$ or~$\ell(v)$ is even, then $\Abs{\ell(u) - \ell(v)} \leq 1$.
\end{lemma}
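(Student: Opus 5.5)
The plan is to argue in two stages, both using pictures of minimal simple loops near the arc $\mu_{ij}$. This is the same style as \Cref{lem:change_sign_close_to_middle}.

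\textbf{Stage one: every crossing of $\mu_{ij}$ is a change of sign.} Let $\gamma$ cross $\mu_{ij}$ at a point $z$. Since $\gamma$ is minimal, it forms no bigons with $\mu_{ij}$, and it is simple.

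Consider the two ends of a short sub-arc of $\gamma$ through $z$, one emerging above $\mu_{ij}$ and one below. The region near $\mu_{ij}$ is cut out by $\lambda_i$, $\lambda_j$ and the arcs $\mu_{ik},\mu_{kj}$ to intermediate punctures. The key input is $u$ itself: by \Cref{cor:single_crossing_of_line_in_alternating_subword}, $\gamma_u$ crosses $\mu_{ij}$ once, and by \Cref{lem:change_sign_close_to_middle} it winds around $\nPoint_i$ and $\nPoint_j$ with $\ell(u_\pm)\ge 2$ on each side. So $\gamma_u$ together with segments of $\lambda_i$ and $\lambda_j$ bounds nested regions around $\mu_{ij}$. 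These are the outermost crossings, as in \eqref{eqn:triple_close_to_middle_diagram}.

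Any other crossing point $z$ of $\mu_{ij}$ lies inside or outside these regions. Because $\gamma$ cannot cross $\gamma_u$, the arc through $z$ must leave that region. It can only do so across $\lambda_i$ or $\lambda_j$, and it must cross each of them before reaching any other $\lambda_k$. In particular it crosses the segment of $\lambda_i$ or $\lambda_j$ lying between the innermost and outermost strands of $\gamma_u$.

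Reading off letters as the arc leaves, on each side we get at least one $\nFreeGenElt_i^{\pm1}$ and one $\nFreeGenElt_j^{\pm1}$ adjacent to the crossing. Their signs flip across $\mu_{ij}$, by the same orientation argument as in \Cref{lem:crossing_diagonals_and_one_change_of_sign}. This gives a change of sign inside an $(\nFreeGenElt_i,\nFreeGenElt_j)$-alternating subword $v$ of length at least $3$. I expect this to be the main obstacle: making rigorous that the strand through $z$ cannot escape to other punctures through the gap near $\mu_{ij}$ before picking up letters on both sides. I would first try the trapped-region argument of \Cref{lem:Q_square_free} and \Cref{lem:exists_change_of_sign}, using the region bounded by $\gamma_u$ and $\lambda_i$ or $\lambda_j$.

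\textbf{Stage two: the length comparison.} Now let $v$ be as above, factored as $v_+v_-$ (or $v_-v_+$) per \Cref{def:+-_factorisation}, and write $u$ likewise. Since $\gamma_u$ and $\gamma_v$ are disjoint and both cross $\mu_{ij}$ once, they are nested strands. Each alternates around $\nPoint_i$ and $\nPoint_j$, and crossings of $\lambda_i,\lambda_j$ interleave between strands.

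Suppose $\gamma_v$ lies inside $\gamma_u$. On the upper side, every crossing of $\lambda_i$ or $\lambda_j$ by $\gamma_v$ lies strictly below a crossing by $\gamma_u$ on the same line. But the outermost crossings of $\gamma_v$ must reach nearly as far as those of $\gamma_u$. Otherwise an end of $\gamma_u$ would be trapped by $\gamma_v$ and $\lambda_i$ or $\lambda_j$, exactly as in \eqref{eqn:double_close_to_middle_diagram}; symmetrically, an end of $\gamma_v$ would be trapped by $\gamma_u$. This gives $|\ell(u_\pm)-\ell(v_\pm)|\le1$ on each side.

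Combining this with $|\ell(u_+)-\ell(u_-)|\le1$ and $|\ell(v_+)-\ell(v_-)|\le1$ from \Cref{lem:change_sign_close_to_middle} gives $|\ell(u)-\ell(v)|\le 2$. When one of the lengths is even, its two halves are equal. Parity of where the last crossings land on $\lambda_i$ versus $\lambda_j$ then forces the two side differences to have opposite signs or to vanish, which improves the bound to $|\ell(u)-\ell(v)|\le1$. I would check this improvement by drawing the finitely many outer configurations, since as in \Cref{lem:change_sign_close_to_middle} only the outermost crossings matter.
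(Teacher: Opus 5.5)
Your Stage one is essentially the paper's argument. The other crossing's two ends are trapped in the region cut off by $\gamma_u$ and a piece of $\lambda_i$, and the only bigon-free escape spells $x_i x_j x_i^{-1}$ or its inverse. Note that one side may contribute only a single letter, so you should not claim an $x_i^{\pm1}$ and an $x_j^{\pm1}$ on \emph{each} side.

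The genuine gap is in Stage two: its geometric picture is wrong. Two disjoint strands through $\mu_{ij}$ are not nested as wholes. Their four halves interleave as a four-armed spiral, and the nesting is \emph{opposite} on the two sides. Take $\gamma_u$ crossing $\mu_{ij}$ to the left of $\gamma_v$, with both changes of sign of type $Z_+$; they must agree by \Cref{lem:crossing_diagonals_and_one_change_of_sign}(1), which you never invoke here. Then $\gamma_{v_+}$ runs outside $\gamma_{u_+}$ while $\gamma_{u_-}$ runs outside $\gamma_{v_-}$, as in \Cref{fig:uv_+-_diagram}. The trapping argument of \Cref{lem:change_sign_close_to_middle} then gives one-sided bounds with slack two. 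An inner half is at least as long as the outer one, otherwise its end is trapped. It is at most two letters longer, i.e.\ one further crossing of each of $\lambda_i,\lambda_j$; beyond that the outer end is trapped. That is,
\[
\ell(u_+)-2\le \ell(v_+)\le \ell(u_+), \qquad \ell(v_-)-2\le \ell(u_-)\le \ell(v_-).
\]
Your symmetric bound $\lvert \ell(u_\pm)-\ell(v_\pm)\rvert\le 1$ does not follow from trapping. Nothing you wrote prevents the inner half from making one extra full turn.

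The missing idea is exactly this sign asymmetry, and your ``parity'' step is where it is needed. Put $a=\ell(u_+)-\ell(v_+)$ and $b=\ell(v_-)-\ell(u_-)$. Then $a,b\in\{0,1,2\}$ and $\ell(u)-\ell(v)=a-b$, so $\lvert\ell(u)-\ell(v)\rvert\le 2$. If $\ell(u)=2n$, then $\ell(u_\pm)=n$ by \Cref{lem:change_sign_close_to_middle}, so $\ell(v_+)\le n\le \ell(v_-)$. Combined with $\lvert\ell(v_+)-\ell(v_-)\rvert\le 1$, this gives $\lvert\ell(u)-\ell(v)\rvert\le 1$; the case $\ell(v)$ even is symmetric. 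The other choices (crossing order, $Z_-$ type) just swap the roles of $u$ and $v$.

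In your picture, with $\gamma_v$ inside $\gamma_u$ on both sides, the two side differences could have the same sign. For example, $\ell(u_\pm)=n$ and $\ell(v_\pm)=n+1$ satisfy every constraint you list, yet give $\ell(v)-\ell(u)=2$ with both lengths even. An appeal to where the last crossings land does not rule this out; the opposite nesting does.
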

\begin{proof}
	\Cref{cor:single_crossing_of_line_in_alternating_subword} tells us that $\gamma_u$ crosses $\mu_{ij}$ exactly once, at the change of sign in  $u$.
	Below, we draw $\gamma_u$ where $\ell(u) = 4$, and $u$ has a $Z_+$ change of sign.
	We also draw another sub-arc of $\gamma$ which crosses $\mu_{ij} $ to the right of where $\gamma_u$ crosses $\mu_{ij}$.
	\begin{equation*}
	\begin{tikzpicture}[,every node/.append style={scale=1}, inner sep=0pt, outer sep=0pt]
		\input{figs/4_crossing_and_other_mu_ij_01.tex}
	\end{tikzpicture}%

	\end{equation*}
	We see that the crossing of $\mu_{ij}$ not in $\gamma_u$ must be part of $\gamma_v$, where $v$ is either the word $\nFreeGenElt_i\nFreeGenElt_j\nFreeGenElt_i^{-1}$ or its inverse.
	This is because the two ends of the other crossing are inside a region bounded by $\gamma_u$ and a part of $\lambda_i$.
	The only way the ends can escape this region without forming bigons is by making a path which corresponds to one of those words.
	We can make a similar argument if $u$ has a $Z_-$ change of sign, if the other crossing of $\mu_{ij}$ is to the left of the $\gamma_u$ crossing, and for any $\ell(u) \geq 4$.
	We have proved the first claim.

	According to \Cref{def:+-_factorisation}, we factorise $u$ and $v$ with respect to the crossings of~$\mu_{ij}$.
	We draw $\gamma_u$ and $\gamma_v$ with colours corresponding to this factorisation in \Cref{fig:uv_+-_diagram}.
	\begin{figure}[t]
	\begin{tikzpicture}[baseline=(baseline_text.center),every node/.append style={scale=1}, inner sep=0pt, outer sep=0pt]
		\input{figs/change_of_sign_u_and_v.tex}
	\end{tikzpicture}%

		\caption{A possible drawing of two crossings of $\mu_{ij}$.
			Blue corresponds to the $u_+$ and $v_+$ factors, and red corresponds to the $u_-$ and $v_-$ factors.
		}
		\label{fig:uv_+-_diagram}
	\end{figure}
	Statement (1) of \Cref{lem:crossing_diagonals_and_one_change_of_sign} tells us that the handedness of the changes of sign in $u$ and $v$ must be compatible, i.e.~either they are both of $Z_+$ type, or they are both of $Z_-$ type.

	By a very similar argument to that in the proof of \Cref{lem:change_sign_close_to_middle}, we have the following inequalities for $\ell(u)$ and $\ell(v)$.
	\begin{align*}
		\ell(u_+) - 2 \leq \ell(v_+) \leq \ell(u_+) \\
		\ell(v_-) -2 \leq \ell(u_-) \leq \ell(v_-)
	\end{align*}
	\Cref{lem:change_sign_close_to_middle} tells us $\Abs{\ell(u_+) - \ell(u_-)} \leq 1$ and $\Abs{\ell(v_+) - \ell(v_-)} \leq 1$.
	We combine these into a system of inequalities.
	\begin{flalign}
		\begin{aligned}
			\Abs{\ell(u_+) - \ell(u_-)}  & \leq 1         \\
			\Abs{\ell(v_+) - \ell(v_-)}  & \leq 1         \\
			\ell(u_+) - 2 \leq \ell(v_+) & \leq \ell(u_+) \\
			\ell(v_-) -2 \leq \ell(u_-)  & \leq \ell(v_-)
		\end{aligned}
		\label{eqn:uv_+-_system_of_inequalities}
	\end{flalign}
	All solutions to \eqref{eqn:uv_+-_system_of_inequalities} are shown in \Cref{tab:solutions_to_inequalities_01} and equivalently in \Cref{tab:solutions_to_inequalities_02}.

	We made two assertions before drawing \Cref{fig:uv_+-_diagram} that, had we made a different choice, would have affected the system of inequalities \eqref{eqn:uv_+-_system_of_inequalities}.
	\begin{enumerate}
		\item That the crossing of $\gamma_u$ with $\mu_{ij}$ is to the left of where $\gamma_v$ crosses $\mu_{ij}$.
		      \smallskip
		\item That the changes of sign in $u$ and  $v$ are of  $Z_+$ type.
	\end{enumerate}
	If we were to make the opposite choice for either of these, then the resulting system of inequalities would correspond to swapping the variables $\ell(u_\pm) \leftrightarrow \ell(v_\pm)$ in \eqref{eqn:uv_+-_system_of_inequalities}.
	Reading \Cref{tab:solutions_to_inequalities_01,tab:solutions_to_inequalities_02} with this ambiguity completes the lemma.
	\begin{table}[ht]
		\centering
		\begin{tabular}{l|l}
			\multicolumn{1}{c|}{$\ell(u_-) = $} &
			\multicolumn{1}{c}{$(\ell(v_+),\ell(v_-)) \in $}                                                    \\
			\hline
			$n-1$                               & $\Set{(n-2, n-1), (n-1,n-1), (n-1,n),(n,n-1),(n,n),(n,n+1)} $ \\
			$n$                                 & $\Set{(n-1, n), (n,n), (n,n+1)}$                              \\
			$n+1$                               & $\Set{(n,n+1)} $                                              \\
		\end{tabular}
		\caption{Possible solutions to the system of inequalities \eqref{eqn:uv_+-_system_of_inequalities} where $n \coloneq \ell(u_+)$.
			This shows the same information as \Cref{tab:solutions_to_inequalities_02}.
		}
		\label{tab:solutions_to_inequalities_01}
	\end{table}
	\begin{table}[ht]
		\centering
		\begin{tabular}{l|l}
			\multicolumn{1}{c|}{$\ell(v_-) = $} &
			\multicolumn{1}{c}{$(\ell(u_+),\ell(u_-)) \in $}                                                    \\
			\hline
			$n-1$                               & $\Set{(n,n-1)} $                                              \\
			$n$                                 & $\Set{(n+1, n), (n,n), (n,n-1)}$                              \\
			$n+1$                               & $\Set{(n+2, n+1), (n+1,n+1), (n+1,n),(n,n+1),(n,n),(n,n-1)} $ \\
		\end{tabular}
		\caption{Possible solutions to the system of inequalities \eqref{eqn:uv_+-_system_of_inequalities} where $n \coloneq \ell(v_+)$.
			This shows the same information as \Cref{tab:solutions_to_inequalities_01}.
		}
		\label{tab:solutions_to_inequalities_02}
	\end{table}
\end{proof}

%% file: figs/all_arcs.tex
\definecolor{c819d43}{RGB}{129,157,67}
\definecolor{ca2a2a2}{RGB}{162,162,162}
\definecolor{c979797}{RGB}{151,151,151}

  \path[draw=black,fill=black,line width=0.02cm,shift={(0.0049, -0.0001)}] (8.1993, 17.3074) -- (8.7657, 17.3068);

  \path[draw=black,fill=black,line width=0.02cm] (10.4536, 17.3073) -- (9.8908, 17.3068);

  \path[draw=black,line width=0.02cm] (8.2042, 17.3073).. controls (8.2224, 17.1836) and (8.2827, 17.0665) .. (8.3729, 16.9799).. controls (8.4631, 16.8933) and (8.5825, 16.8377) .. (8.7068, 16.8246).. controls (8.8511, 16.8093) and (9.0006, 16.8518) .. (9.1152, 16.9407).. controls (9.2298, 17.0296) and (9.3082, 17.1638) .. (9.3293, 17.3073);

  \path[draw=black,line width=0.02cm,shift={(0.5724, 0.0052)}] (8.1934, 17.3016).. controls (8.2138, 17.1695) and (8.2825, 17.0455) .. (8.3836, 16.958).. controls (8.4847, 16.8706) and (8.6174, 16.8205) .. (8.751, 16.8194).. controls (8.8863, 16.8182) and (9.0215, 16.8672) .. (9.1246, 16.9548).. controls (9.2277, 17.0424) and (9.2978, 17.1679) .. (9.3185, 17.3016);

  \path[draw=c819d43,line width=0.02cm] (10.4534, 17.3068) -- (9.3255, 18.8354);

  \path[draw=black,line width=0.02cm,shift={(1.1417, -0.0)}] (8.1878, 17.3104).. controls (8.2077, 17.174) and (8.2793, 17.0458) .. (8.3849, 16.9572).. controls (8.4905, 16.8686) and (8.6293, 16.8205) .. (8.7671, 16.8246).. controls (8.8987, 16.8285) and (9.0284, 16.8799) .. (9.1269, 16.9672).. controls (9.2255, 17.0546) and (9.2922, 17.1771) .. (9.3119, 17.3073);

  \path[draw=c819d43,line width=0.02cm] (8.2032, 17.3068) -- (9.3231, 18.8389);

  \path[draw=black,line width=0.02cm] (8.2042, 17.3073).. controls (8.2202, 17.1049) and (8.312, 16.9093) .. (8.4576, 16.7677).. controls (8.6031, 16.626) and (8.8011, 16.5396) .. (9.0039, 16.5291).. controls (9.2212, 16.5179) and (9.4417, 16.5945) .. (9.6053, 16.7379).. controls (9.7689, 16.8813) and (9.8736, 17.09) .. (9.8908, 17.3068);

  \path[draw=c819d43,line width=0.02cm] (8.7657, 17.3068) -- (9.3255, 18.8354);

  \path[draw=black,line width=0.02cm,shift={(0.5833, -0.0119)}] (8.1873, 17.3186).. controls (8.2033, 17.0903) and (8.317, 16.8704) .. (8.4941, 16.7253).. controls (8.6711, 16.5802) and (8.909, 16.5119) .. (9.1361, 16.541).. controls (9.3277, 16.5656) and (9.5098, 16.6583) .. (9.6424, 16.7988).. controls (9.7749, 16.9393) and (9.8569, 17.1265) .. (9.8703, 17.3192);

  \path[draw=c819d43,line width=0.02cm] (9.3283, 17.3068) -- (9.3255, 18.8354);

  \path[draw=black,line width=0.02cm] (8.2042, 17.3073).. controls (8.1939, 17.007) and (8.31, 16.7044) .. (8.5185, 16.488).. controls (8.727, 16.2716) and (9.0251, 16.1444) .. (9.3256, 16.1435).. controls (9.6271, 16.1426) and (9.9269, 16.2691) .. (10.1367, 16.4855).. controls (10.3465, 16.702) and (10.4636, 17.0055) .. (10.4534, 17.3068);

  \path[draw=black,fill=black,line width=0.02cm] (8.765, 17.3094) -- (9.3295, 17.3104) -- (8.765, 17.3094).. controls (8.9532, 17.3105) and (9.1413, 17.3109) .. (9.3295, 17.3104).. controls (9.5166, 17.31) and (9.7037, 17.3088) .. (9.8908, 17.3068);

  \path[draw=c819d43,line width=0.02cm] (9.8908, 17.3068) -- (9.3255, 18.8354);

  \path[draw=ca2a2a2,line width=0.02cm,dash pattern=on 0.06cm off 0.02cm,cm={ 0.0,1.0,1.0,0.0,(-29.7, 29.7)}] (-12.3901, 39.0255) circle (1.5255cm);

  \path[draw=black,fill=c979797,line width=0.0487cm,cm={ 0.0,1.0,1.0,0.0,(-29.7, 29.7)}] (-12.3932, 40.1534) circle (0.0528cm);

  \path[draw=black,fill=c979797,line width=0.0487cm,cm={ 0.0,1.0,1.0,0.0,(-29.7, 29.7)}] (-12.3932, 37.9032) circle (0.0528cm);

  \path[draw=black,fill=c979797,line width=0.0487cm,cm={ 0.0,1.0,1.0,0.0,(-29.7, 29.7)}] (-12.3932, 38.4657) circle (0.0528cm);

  \path[draw=black,fill=c979797,line width=0.0487cm,cm={ 0.0,1.0,1.0,0.0,(-29.7, 29.7)}] (-12.3932, 39.0283) circle (0.0528cm);

  \path[draw=black,fill=c979797,line width=0.0487cm,cm={ 0.0,1.0,1.0,0.0,(-29.7, 29.7)}] (-12.3932, 39.5908) circle (0.0528cm);

  \node[anchor=south west,line width=0.02cm,dash pattern=on 0.02cm off 0.02cm] (baseline_text) at (8.2032, 17.3068){};

%% file: figs/square_impossible.tex
\definecolor{ce6e6e6}{RGB}{230,230,230}
\definecolor{c819d43}{RGB}{129,157,67}
\definecolor{c979797}{RGB}{151,151,151}

  \path[fill=ce6e6e6,line width=0.02cm] (4.2325, 23.2062).. controls (4.0308, 23.2318) and (3.8672, 23.3274) .. (3.7662, 23.479).. controls (3.7299, 23.5334) and (3.7007, 23.6036) .. (3.6854, 23.6728).. controls (3.6755, 23.7182) and (3.6746, 23.8269) .. (3.6838, 23.8691).. controls (3.7175, 24.0224) and (3.8112, 24.1344) .. (3.9391, 24.1746).. controls (3.9755, 24.186) and (4.0087, 24.1901) .. (4.0645, 24.1901).. controls (4.1158, 24.1901) and (4.1159, 24.19) .. (4.1159, 24.1712).. controls (4.1159, 24.1613) and (4.1183, 24.1619) .. (4.1441, 24.1784) -- (4.1624, 24.1901) -- (4.2119, 24.1901) -- (4.2614, 24.1901) -- (4.2614, 23.9789) -- (4.2614, 23.7678) -- (4.2484, 23.7638).. controls (4.1999, 23.7493) and (4.1845, 23.6798) .. (4.2223, 23.6456).. controls (4.2497, 23.6206) and (4.293, 23.6206) .. (4.3205, 23.6456).. controls (4.3577, 23.6794) and (4.3437, 23.7477) .. (4.2963, 23.7634) -- (4.2846, 23.7672) -- (4.2846, 23.9786) -- (4.2846, 24.1901) -- (4.3458, 24.1901) -- (4.407, 24.1901) -- (4.407, 24.2016) -- (4.407, 24.2132) -- (4.3458, 24.2132) -- (4.2846, 24.2132) -- (4.2846, 24.4398) -- (4.2846, 24.6663) -- (4.3491, 24.6663) -- (4.4136, 24.6663) -- (4.4136, 24.6531).. controls (4.4136, 24.6458) and (4.4145, 24.6398) .. (4.4156, 24.6398).. controls (4.4168, 24.6398) and (4.4262, 24.645) .. (4.4366, 24.6513) -- (4.4555, 24.6628) -- (4.4883, 24.6579).. controls (4.7897, 24.6129) and (5.0248, 24.3568) .. (5.0652, 24.0296).. controls (5.0703, 23.9885) and (5.0695, 23.8918) .. (5.0638, 23.8459).. controls (5.0257, 23.5419) and (4.8159, 23.3006) .. (4.5248, 23.226).. controls (4.5009, 23.2199) and (4.4658, 23.2124) .. (4.4467, 23.2094).. controls (4.3984, 23.2018) and (4.2813, 23.2) .. (4.2325, 23.2062) -- cycle;

  \path[draw=c819d43,line width=0.02cm] (4.2726, 23.6972) -- (4.2726, 25.1969);

  \node[text=black,line width=0.005cm,xscale=1.0003,yscale=0.9997,anchor=south west] (text75238) at (3.9145, 25.3178){$\scriptscriptstyle\lambda_i$};

  \path[draw=black,line width=0.0185cm,cm={ 1.0367,-0.0,-0.0,1.1231,(-0.3675, -1.95)}] (4.6029, 23.2852) -- (4.258, 23.2852).. controls (4.1833, 23.2874) and (4.1081, 23.2649) .. (4.0469, 23.222).. controls (3.9625, 23.1628) and (3.9071, 23.0658) .. (3.8928, 22.9637).. controls (3.8784, 22.8616) and (3.9038, 22.7554) .. (3.9574, 22.6673).. controls (4.0109, 22.5791) and (4.0916, 22.5088) .. (4.1834, 22.4618).. controls (4.2752, 22.4148) and (4.3778, 22.3906) .. (4.4808, 22.3848).. controls (4.6089, 22.3777) and (4.7391, 22.3991) .. (4.8561, 22.4516).. controls (4.9732, 22.5041) and (5.0767, 22.5882) .. (5.1479, 22.6949).. controls (5.2373, 22.8288) and (5.2734, 22.9971) .. (5.2477, 23.156).. controls (5.222, 23.3149) and (5.135, 23.4629) .. (5.0093, 23.5635).. controls (4.8949, 23.655) and (4.7494, 23.7069) .. (4.6029, 23.7085) -- (4.258, 23.7085);

  \node[text=black,line width=0.005cm,xscale=1.0003,yscale=0.9997,anchor=south west] (text6) at (4.9822, 24.5433){$\scriptscriptstyle\gamma_u$};

  \node[text=black,line width=0.005cm,xscale=1.0003,yscale=0.9997,anchor=south west] (text7) at (4.5316, 23.4405){$\scriptscriptstyle R$};

  \path[draw=black,fill=black,line width=0.015cm,cm={ -0.3791,0.4627,-0.4629,-0.3789,(15.3076, 27.2897)}] (8.1902, 16.8198) -- (8.0991, 16.7825) -- (8.1124, 16.8801) -- cycle;

  \path[draw=black,fill=black,line width=0.015cm,cm={ -0.3791,0.4627,-0.4629,-0.3789,(15.0097, 26.8132)}] (8.1902, 16.8198) -- (8.0991, 16.7825) -- (8.1124, 16.8801) -- cycle;

  \path[draw=black,fill=c979797,line width=0.0487cm,cm={ 0.0,1.0,1.0,0.0,(-29.7, 29.7)}] (-6.0028, 33.9714) circle (0.0458cm);

%% file: figs/iji_turn_01.tex
\definecolor{ce6e6e6}{RGB}{230,230,230}
\definecolor{c819d43}{RGB}{129,157,67}
\definecolor{c979797}{RGB}{151,151,151}

  \path[fill=ce6e6e6,line width=0.02cm] (2.9026, 25.3399).. controls (2.7763, 25.3511) and (2.6542, 25.4074) .. (2.5617, 25.497).. controls (2.5099, 25.5472) and (2.4766, 25.5951) .. (2.4147, 25.7089).. controls (2.353, 25.8223) and (2.3145, 25.8808) .. (2.2723, 25.9255).. controls (2.1993, 26.0025) and (2.1033, 26.0491) .. (1.9967, 26.0592).. controls (1.9806, 26.0608) and (1.8745, 26.0617) .. (1.7168, 26.0617) -- (1.4629, 26.0617) -- (1.4629, 26.3399) -- (1.4629, 26.6181) -- (2.8567, 26.6174).. controls (4.173, 26.6168) and (4.2521, 26.6165) .. (4.2778, 26.6126).. controls (4.361, 26.5999) and (4.4163, 26.5801) .. (4.4707, 26.5436).. controls (4.5488, 26.4911) and (4.6009, 26.4216) .. (4.6317, 26.3285).. controls (4.6615, 26.2386) and (4.6674, 26.1703) .. (4.6654, 25.9386).. controls (4.6641, 25.7917) and (4.6625, 25.7642) .. (4.652, 25.7054).. controls (4.6253, 25.5568) and (4.5476, 25.4478) .. (4.4249, 25.387).. controls (4.3671, 25.3583) and (4.3063, 25.3447) .. (4.2127, 25.3397).. controls (4.1605, 25.3368) and (2.9348, 25.337) .. (2.9026, 25.3399) -- cycle;

  \node[text=black,line width=0.005cm,anchor=south west] (text57948-9-3-0) at (2.861, 25.5202){$\scriptscriptstyle R$};

  \node[text=black,line width=0.005cm,anchor=south west] (text27967) at (3.8961, 25.4838){$\scriptstyle \nPoint_k$};

  \node[text=black,line width=0.005cm,anchor=south west] (baseline_text) at (1.3256, 25.524){$\scriptstyle \nPoint_i$};

  \begin{scope}[shift={(-6.5562, 15.6416)}]
    \path[draw=c819d43,line width=0.02cm] (10.4872, 11.4701) -- (10.4872, 10.2123);

    \node[text=black,line width=0.005cm,anchor=south west] (text17-9) at (10.1785, 11.583){$\scriptscriptstyle\lambda_{k}$};

  \end{scope}
  \path[draw=black,fill=c979797,line width=0.0487cm,cm={ 0.0,1.0,1.0,0.0,(-29.7, 29.7)}] (-3.846, 33.631) circle (0.0458cm);

  \begin{scope}[shift={(-9.1767, 15.7287)}]
    \path[draw=c819d43,line width=0.02cm] (10.6278, 11.383) -- (10.6278, 10.9011);

    \node[text=black,line width=0.005cm,anchor=south west] (text17) at (10.3191, 11.4959){$\scriptscriptstyle\lambda_{i}$};

    \path[draw=c819d43,line width=0.02cm,dash pattern=on 0.02cm off 0.04cm] (10.6278, 10.9011) -- (10.6278, 10.3214);

    \path[draw=c819d43,line width=0.02cm] (10.6278, 10.3214) -- (10.6278, 10.1253);

  \end{scope}
  \path[draw=black,fill=c979797,line width=0.0487cm,cm={ 0.0,1.0,1.0,0.0,(-29.7, 29.7)}] (-3.846, 31.1511) circle (0.0458cm);

  \path[draw=black,line width=0.02cm,shift={(-3.3645, 9.9865)}] (4.5124, 16.0636) -- (5.3218, 16.0636).. controls (5.4245, 16.0644) and (5.527, 16.0255) .. (5.6033, 15.9568).. controls (5.6641, 15.902) and (5.7073, 15.8307) .. (5.7467, 15.759).. controls (5.7861, 15.6872) and (5.8231, 15.6134) .. (5.8746, 15.5498).. controls (5.9795, 15.4203) and (6.144, 15.3412) .. (6.3107, 15.3403) -- (7.4947, 15.3403).. controls (7.5781, 15.3403) and (7.6629, 15.3426) .. (7.742, 15.3688).. controls (7.8211, 15.395) and (7.8925, 15.445) .. (7.9423, 15.5118).. controls (7.9921, 15.5786) and (8.0199, 15.66) .. (8.0317, 15.7425).. controls (8.0436, 15.825) and (8.0408, 15.9088) .. (8.0407, 15.9922).. controls (8.0407, 16.0755) and (8.0435, 16.1593) .. (8.0316, 16.2417).. controls (8.0196, 16.3242) and (7.9918, 16.4056) .. (7.942, 16.4723).. controls (7.8922, 16.5391) and (7.8208, 16.589) .. (7.7417, 16.6152).. controls (7.6626, 16.6414) and (7.5778, 16.6436) .. (7.4945, 16.6436) -- (4.5124, 16.6432);

  \node[text=black,line width=0.005cm,anchor=south west] (baseline_text-3) at (1.1336, 26.3176){$\scriptstyle \rho$};

%% file: figs/iji_turn_02.tex
\definecolor{c819d43}{RGB}{129,157,67}
\definecolor{ce6e6e6}{RGB}{230,230,230}
\definecolor{c979797}{RGB}{151,151,151}

  \path[draw=c819d43,line width=0.02cm] (8.4228, 13.9723) -- (8.4228, 13.7144);

  \path[draw=c819d43,line width=0.02cm,dash pattern=on 0.02cm off 0.04cm] (8.4228, 14.4901) -- (8.4228, 13.9723);

  \path[fill=ce6e6e6,line width=0.02cm] (8.2662, 13.2022).. controls (8.1365, 13.2175) and (8.0428, 13.3057) .. (8.0011, 13.4516).. controls (7.9469, 13.6412) and (8.022, 13.856) .. (8.1676, 13.9283).. controls (8.2101, 13.9493) and (8.253, 13.9576) .. (8.3359, 13.9605) -- (8.4096, 13.9631) -- (8.4107, 13.8389) -- (8.4118, 13.7147) -- (8.4232, 13.7134) -- (8.4346, 13.7121) -- (8.4346, 13.837) -- (8.4346, 13.9618) -- (8.6028, 13.9618) -- (8.771, 13.9618) -- (8.771, 13.9722) -- (8.771, 13.9826) -- (8.6028, 13.9826) -- (8.4346, 13.9826) -- (8.4346, 14.2297) -- (8.4346, 14.4769) -- (9.8434, 14.4756).. controls (11.1511, 14.4744) and (11.2545, 14.4738) .. (11.2833, 14.4672).. controls (11.4535, 14.4285) and (11.5633, 14.3275) .. (11.6089, 14.1674).. controls (11.6326, 14.0844) and (11.6341, 14.0649) .. (11.6341, 13.8393).. controls (11.6341, 13.6641) and (11.633, 13.6252) .. (11.6271, 13.5901).. controls (11.599, 13.4227) and (11.5251, 13.3128) .. (11.3971, 13.2484).. controls (11.3617, 13.2305) and (11.3258, 13.2189) .. (11.2751, 13.2088) -- (11.2315, 13.2002) -- (9.7635, 13.1995).. controls (8.9561, 13.1991) and (8.2823, 13.2003) .. (8.2662, 13.2022) -- cycle;

  \node[text=black,line width=0.005cm,anchor=south west] (text27967-3) at (10.872, 13.363){$\scriptstyle \nPoint_k$};

  \path[draw=c819d43,line width=0.02cm] (10.9027, 14.9721) -- (10.9027, 13.7144);

  \node[text=black,line width=0.005cm,anchor=south west] (text17-9-6) at (10.594, 15.085){$\scriptscriptstyle\lambda_{k}$};

  \path[draw=black,fill=c979797,line width=0.0487cm,cm={ 0.0,1.0,1.0,0.0,(-29.7, 29.7)}] (-15.9856, 40.6027) circle (0.0458cm);

  \path[draw=c819d43,line width=0.02cm] (8.4228, 14.9721) -- (8.4228, 14.4901);

  \node[text=black,line width=0.005cm,anchor=south west] (text17-8) at (8.1141, 15.085){$\scriptscriptstyle\lambda_{i}$};

  \path[draw=black,fill=c979797,line width=0.0487cm,cm={ 0.0,1.0,1.0,0.0,(-29.7, 29.7)}] (-15.9856, 38.1228) circle (0.0458cm);

  \path[fill=black,line width=0.0185cm,shift={(3.6072, -2.1531)}] (4.7681, 15.3403) -- (7.4943, 15.3406);

  \path[draw=black,line width=0.02cm,shift={(3.6072, -2.1531)}] (4.5124, 16.6432) -- (7.4943, 16.6432).. controls (7.5776, 16.6432) and (7.6624, 16.641) .. (7.7415, 16.6148).. controls (7.8206, 16.5885) and (7.892, 16.5386) .. (7.9418, 16.4717).. controls (7.9916, 16.4049) and (8.0194, 16.3235) .. (8.0312, 16.241).. controls (8.0431, 16.1585) and (8.0403, 16.0747) .. (8.0403, 15.9914).. controls (8.0403, 15.908) and (8.0431, 15.8242) .. (8.0312, 15.7417).. controls (8.0194, 15.6592) and (7.9916, 15.5779) .. (7.9418, 15.511).. controls (7.892, 15.4442) and (7.8206, 15.3942) .. (7.7415, 15.368).. controls (7.6624, 15.3418) and (7.5776, 15.3396) .. (7.4943, 15.3396) -- (4.7459, 15.3395).. controls (4.7108, 15.3395) and (4.6754, 15.3399) .. (4.6408, 15.3456).. controls (4.6061, 15.3513) and (4.5722, 15.3624) .. (4.5414, 15.3795).. controls (4.48, 15.4137) and (4.4345, 15.4719) .. (4.4057, 15.536).. controls (4.3504, 15.6591) and (4.3504, 15.8058) .. (4.4057, 15.9289).. controls (4.4345, 15.993) and (4.48, 16.0512) .. (4.5414, 16.0854).. controls (4.5722, 16.1025) and (4.6061, 16.1136) .. (4.6408, 16.1193).. controls (4.6754, 16.125) and (4.7108, 16.1253) .. (4.7459, 16.1253) -- (5.1602, 16.1253);

  \node[text=black,line width=0.005cm,anchor=south west] (text57948-9-3-0-2) at (9.5642, 13.3212){$\scriptscriptstyle R$};

  \node[text=black,line width=0.005cm,anchor=south west] (text1-7) at (8.295, 13.4045){$\scriptstyle \nPoint_i$};

  \node[text=black,line width=0.005cm,anchor=south west] (text3) at (8.1582, 14.1726){$\scriptstyle \rho$};

%% file: figs/crossing_diagonals.tex
\definecolor{c819d43}{RGB}{129,157,67}
\definecolor{c979797}{RGB}{151,151,151}

  \path[draw=c819d43,line width=0.02cm] (10.8047, 11.3113) -- (10.8047, 10.2123);

  \path[draw=black,fill=c979797,line width=0.0487cm] (10.8047, 10.2123) circle (0.0458cm);

  \node[text=black,line width=0.005cm,anchor=south west] (text57948-7) at (10.1785, 11.4242){$\scriptscriptstyle\lambda_{j}$};

  \node[text=black,line width=0.005cm,anchor=south west] (text21055) at (10.4343, 10.1503){$\scriptstyle \nPoint_j$};

  \path[draw=c819d43,line width=0.02cm] (8.688, 11.3113) -- (8.688, 10.2123);

  \path[draw=black,fill=c979797,line width=0.0487cm] (8.688, 10.2123) circle (0.0458cm);

  \node[text=black,line width=0.005cm,anchor=south west] (text17) at (8.3793, 11.4242){$\scriptscriptstyle\lambda_{i}$};

  \node[text=black,line width=0.005cm,anchor=south west] (text18) at (8.8538, 10.1503){$\scriptstyle \nPoint_i$};

  \path[draw=black,line width=0.02cm,shift={(3.603, -5.5829)}] (4.8299, 16.4869) -- (5.4277, 16.4869).. controls (5.5635, 16.4902) and (5.7, 16.4525) .. (5.815, 16.3801).. controls (5.9686, 16.2833) and (6.0778, 16.131) .. (6.1772, 15.979).. controls (6.2765, 15.827) and (6.3723, 15.6687) .. (6.5096, 15.5498).. controls (6.6715, 15.4097) and (6.8872, 15.3333) .. (7.1012, 15.3403) -- (7.2301, 15.3403).. controls (7.2847, 15.3404) and (7.3398, 15.3413) .. (7.3927, 15.3545).. controls (7.4456, 15.3677) and (7.4958, 15.3931) .. (7.5359, 15.4301).. controls (7.576, 15.4671) and (7.6055, 15.5149) .. (7.6232, 15.5665).. controls (7.6409, 15.6181) and (7.6472, 15.673) .. (7.6472, 15.7276).. controls (7.6472, 15.7821) and (7.6409, 15.8371) .. (7.6232, 15.8887).. controls (7.6055, 15.9403) and (7.576, 15.9882) .. (7.5358, 16.0251).. controls (7.4957, 16.0621) and (7.4455, 16.0875) .. (7.3926, 16.1006).. controls (7.3396, 16.1137) and (7.2845, 16.1145) .. (7.2299, 16.1144) -- (6.9466, 16.114);

  \path[draw=black,line width=0.02cm,dash pattern=on 0.02cm off 0.02cm,cm={ -1.0,-0.0,-0.0,1.0,(15.9313, -5.5829)}] (4.8299, 16.4869) -- (5.4277, 16.4869).. controls (5.5635, 16.4902) and (5.7, 16.4525) .. (5.815, 16.3801).. controls (5.9686, 16.2833) and (6.0778, 16.131) .. (6.1772, 15.979).. controls (6.2765, 15.827) and (6.3723, 15.6687) .. (6.5096, 15.5498).. controls (6.6715, 15.4097) and (6.8872, 15.3333) .. (7.1012, 15.3403) -- (7.2301, 15.3403).. controls (7.2847, 15.3404) and (7.3398, 15.3413) .. (7.3927, 15.3545).. controls (7.4456, 15.3677) and (7.4958, 15.3931) .. (7.5359, 15.4301).. controls (7.576, 15.4671) and (7.6055, 15.5149) .. (7.6232, 15.5665).. controls (7.6409, 15.6181) and (7.6472, 15.673) .. (7.6472, 15.7276).. controls (7.6472, 15.7821) and (7.6409, 15.8371) .. (7.6232, 15.8887).. controls (7.6055, 15.9403) and (7.576, 15.9882) .. (7.5358, 16.0251).. controls (7.4957, 16.0621) and (7.4455, 16.0875) .. (7.3926, 16.1006).. controls (7.3396, 16.1137) and (7.2845, 16.1145) .. (7.2299, 16.1144) -- (6.9466, 16.114);

%% file: figs/change_of_sign_demo_01.tex
\definecolor{c819d43}{RGB}{129,157,67}
\definecolor{c484537}{RGB}{72,69,55}
\definecolor{c979797}{RGB}{151,151,151}

  \path[draw=c819d43,line width=0.02cm] (8.688, 10.8814) -- (8.688, 10.2123);

  \path[draw=c819d43,line width=0.02cm] (10.4872, 10.8814) -- (10.4872, 10.2123);

  \path[draw=black,fill=c484537,line width=0.015cm,dash pattern=on 0.015cm off 0.06cm] (8.6811, 10.2112) -- (10.4872, 10.2123);

  \path[draw=black,fill=c979797,line width=0.0487cm] (8.688, 10.2123) circle (0.0458cm);

  \path[draw=black,fill=c979797,line width=0.0487cm] (10.4872, 10.2123) circle (0.0458cm);

  \path[draw=black,line width=0.02cm] (8.3793, 10.2112).. controls (8.3822, 10.2587) and (8.4, 10.3051) .. (8.4294, 10.3425).. controls (8.4595, 10.3807) and (8.5011, 10.409) .. (8.5461, 10.4276).. controls (8.5911, 10.4461) and (8.6395, 10.4554) .. (8.688, 10.4599).. controls (8.8359, 10.4735) and (8.985, 10.4433) .. (9.1251, 10.3941).. controls (9.2652, 10.3448) and (9.3977, 10.2768) .. (9.5312, 10.2118).. controls (9.681, 10.1389) and (9.8333, 10.0693) .. (9.9936, 10.0238).. controls (10.1538, 9.9783) and (10.3232, 9.9574) .. (10.4878, 9.9832).. controls (10.5379, 9.991) and (10.588, 10.0034) .. (10.6325, 10.0279).. controls (10.6769, 10.0524) and (10.7155, 10.09) .. (10.7336, 10.1374).. controls (10.7427, 10.1612) and (10.7464, 10.187) .. (10.7444, 10.2123);

  \node[text=black,anchor=south west,line width=0.02cm] (baseline_text) at (9.2078, 9.5499){$\scriptstyle{u=u_+u_-}$};

  \path[draw=black,fill=black,line width=0.0118cm,cm={ -0.6728,-0.3549,-0.3519,0.6804,(20.6941, 1.7605)}] (8.1902, 16.8198) -- (8.0991, 16.7825) -- (8.1124, 16.8801) -- cycle;

%% file: figs/change_of_sign_demo_02.tex
\definecolor{c819d43}{RGB}{129,157,67}
\definecolor{c484537}{RGB}{72,69,55}
\definecolor{c979797}{RGB}{151,151,151}

  \path[draw=c819d43,line width=0.02cm] (8.688, 10.8814) -- (8.688, 10.2123);

  \path[draw=c819d43,line width=0.02cm] (10.4872, 10.8814) -- (10.4872, 10.2123);

  \path[draw=black,fill=c484537,line width=0.015cm,dash pattern=on 0.015cm off 0.06cm] (8.6811, 10.2112) -- (10.4872, 10.2123);

  \path[draw=black,fill=c979797,line width=0.0487cm] (8.688, 10.2123) circle (0.0458cm);

  \path[draw=black,fill=c979797,line width=0.0487cm] (10.4872, 10.2123) circle (0.0458cm);

  \path[draw=black,line width=0.02cm] (8.3793, 10.2112).. controls (8.3822, 10.2587) and (8.4, 10.3051) .. (8.4294, 10.3425).. controls (8.4595, 10.3807) and (8.5011, 10.409) .. (8.5461, 10.4276).. controls (8.5911, 10.4461) and (8.6395, 10.4554) .. (8.688, 10.4599).. controls (8.8359, 10.4735) and (8.985, 10.4433) .. (9.1251, 10.3941).. controls (9.2652, 10.3448) and (9.3977, 10.2768) .. (9.5312, 10.2118).. controls (9.681, 10.1389) and (9.8333, 10.0693) .. (9.9936, 10.0238).. controls (10.1538, 9.9783) and (10.3232, 9.9574) .. (10.4878, 9.9832).. controls (10.5379, 9.991) and (10.588, 10.0034) .. (10.6325, 10.0279).. controls (10.6769, 10.0524) and (10.7155, 10.09) .. (10.7336, 10.1374).. controls (10.7427, 10.1612) and (10.7464, 10.187) .. (10.7444, 10.2123);

  \node[text=black,anchor=south west,line width=0.02cm] (baseline_text) at (9.2078, 9.5499){$\scriptstyle{u=u_-u_+}$};

  \path[draw=black,fill=black,line width=0.0118cm,cm={ 0.6449,0.4033,0.4008,-0.6528,(-2.1555, 17.7821)}] (8.1902, 16.8198) -- (8.0991, 16.7825) -- (8.1124, 16.8801) -- cycle;

%% file: figs/change_of_sign_demo_03.tex
\definecolor{c819d43}{RGB}{129,157,67}
\definecolor{c484537}{RGB}{72,69,55}
\definecolor{c979797}{RGB}{151,151,151}

  \path[draw=c819d43,line width=0.02cm] (8.688, 10.8814) -- (8.688, 10.2123);

  \path[draw=c819d43,line width=0.02cm] (10.4872, 10.8814) -- (10.4872, 10.2123);

  \path[draw=black,fill=c484537,line width=0.015cm,dash pattern=on 0.015cm off 0.06cm] (8.6811, 10.2112) -- (10.4872, 10.2123);

  \path[draw=black,fill=c979797,line width=0.0487cm] (8.688, 10.2123) circle (0.0458cm);

  \path[draw=black,fill=c979797,line width=0.0487cm] (10.4872, 10.2123) circle (0.0458cm);

  \node[text=black,anchor=south west,line width=0.02cm] (baseline_text) at (9.2078, 9.5499){$\scriptstyle{u=u_+u_-}$};

  \path[draw=black,line width=0.02cm,cm={ 1.0,-0.0,-0.0,-1.0,(0.0, 20.4351)}] (8.3793, 10.2112).. controls (8.3822, 10.2587) and (8.4, 10.3051) .. (8.4294, 10.3425).. controls (8.4595, 10.3807) and (8.5011, 10.409) .. (8.5461, 10.4276).. controls (8.5911, 10.4461) and (8.6395, 10.4554) .. (8.688, 10.4599).. controls (8.8359, 10.4735) and (8.985, 10.4433) .. (9.1251, 10.3941).. controls (9.2652, 10.3448) and (9.3977, 10.2768) .. (9.5312, 10.2118).. controls (9.681, 10.1389) and (9.8333, 10.0693) .. (9.9936, 10.0238).. controls (10.1538, 9.9783) and (10.3232, 9.9574) .. (10.4878, 9.9832).. controls (10.5379, 9.991) and (10.588, 10.0034) .. (10.6325, 10.0279).. controls (10.6769, 10.0524) and (10.7155, 10.09) .. (10.7336, 10.1374).. controls (10.7427, 10.1612) and (10.7464, 10.187) .. (10.7444, 10.2123);

  \path[draw=black,fill=black,line width=0.0118cm,cm={ 0.6728,-0.3549,0.3519,0.6804,(-1.658, 1.7569)}] (8.1902, 16.8198) -- (8.0991, 16.7825) -- (8.1124, 16.8801) -- cycle;

%% file: figs/change_of_sign_demo_04.tex
\definecolor{c819d43}{RGB}{129,157,67}
\definecolor{c484537}{RGB}{72,69,55}
\definecolor{c979797}{RGB}{151,151,151}

  \path[draw=c819d43,line width=0.02cm] (8.688, 10.8814) -- (8.688, 10.2123);

  \path[draw=c819d43,line width=0.02cm] (10.4872, 10.8814) -- (10.4872, 10.2123);

  \path[draw=black,fill=c484537,line width=0.015cm,dash pattern=on 0.015cm off 0.06cm] (8.6811, 10.2112) -- (10.4872, 10.2123);

  \path[draw=black,fill=c979797,line width=0.0487cm] (8.688, 10.2123) circle (0.0458cm);

  \path[draw=black,fill=c979797,line width=0.0487cm] (10.4872, 10.2123) circle (0.0458cm);

  \node[text=black,anchor=south west,line width=0.02cm] (baseline_text) at (9.2078, 9.5499){$\scriptstyle{u=u_-u_+}$};

  \path[draw=black,line width=0.02cm,cm={ 1.0,-0.0,-0.0,-1.0,(0.0, 20.4351)}] (8.3793, 10.2112).. controls (8.3822, 10.2587) and (8.4, 10.3051) .. (8.4294, 10.3425).. controls (8.4595, 10.3807) and (8.5011, 10.409) .. (8.5461, 10.4276).. controls (8.5911, 10.4461) and (8.6395, 10.4554) .. (8.688, 10.4599).. controls (8.8359, 10.4735) and (8.985, 10.4433) .. (9.1251, 10.3941).. controls (9.2652, 10.3448) and (9.3977, 10.2768) .. (9.5312, 10.2118).. controls (9.681, 10.1389) and (9.8333, 10.0693) .. (9.9936, 10.0238).. controls (10.1538, 9.9783) and (10.3232, 9.9574) .. (10.4878, 9.9832).. controls (10.5379, 9.991) and (10.588, 10.0034) .. (10.6325, 10.0279).. controls (10.6769, 10.0524) and (10.7155, 10.09) .. (10.7336, 10.1374).. controls (10.7427, 10.1612) and (10.7464, 10.187) .. (10.7444, 10.2123);

  \path[draw=black,fill=black,line width=0.0118cm,cm={ -0.6728,0.3549,-0.3519,-0.6804,(20.6708, 18.6643)}] (8.1902, 16.8198) -- (8.0991, 16.7825) -- (8.1124, 16.8801) -- cycle;

%% file: figs/change_of_sign_close_to_middle_3_crossing.tex
\definecolor{c819d43}{RGB}{129,157,67}
\definecolor{c484537}{RGB}{72,69,55}
\definecolor{c979797}{RGB}{151,151,151}
\definecolor{cd40000}{RGB}{212,0,0}
\definecolor{navy}{RGB}{0,0,128}

  \path[draw=c819d43,line width=0.02cm] (8.688, 11.3047) -- (8.688, 10.2123);

  \node[text=black,line width=0.005cm,anchor=south west] (text17) at (8.3793, 11.4242){$\scriptscriptstyle\lambda_{i}$};

  \path[draw=c819d43,line width=0.02cm] (10.4872, 11.3047) -- (10.4872, 10.2123);

  \path[draw=black,fill=c484537,line width=0.015cm,dash pattern=on 0.015cm off 0.06cm] (8.6811, 10.2112) -- (10.4872, 10.2123);

  \path[draw=black,fill=c979797,line width=0.0487cm] (8.688, 10.2123) circle (0.0458cm);

  \path[draw=black,fill=c979797,line width=0.0487cm] (10.4872, 10.2123) circle (0.0458cm);

  \node[text=black,line width=0.005cm,anchor=south west] (text57948-7) at (10.1785, 11.4242){$\scriptscriptstyle\lambda_{j}$};

  \path[draw=cd40000,line width=0.02cm] (9.581, 10.2099).. controls (9.6433, 10.1903) and (9.7056, 10.1704) .. (9.7677, 10.1502).. controls (9.9084, 10.1045) and (10.0492, 10.057) .. (10.195, 10.0319).. controls (10.2873, 10.016) and (10.3812, 10.0116) .. (10.4749, 10.0107) -- (10.5871, 10.0097).. controls (10.6082, 10.0095) and (10.6293, 10.0096) .. (10.6501, 10.0129).. controls (10.6708, 10.0162) and (10.6911, 10.0228) .. (10.7094, 10.0332).. controls (10.7461, 10.0538) and (10.7729, 10.089) .. (10.7898, 10.1275).. controls (10.8223, 10.2015) and (10.8226, 10.2892) .. (10.7898, 10.363).. controls (10.7728, 10.4014) and (10.7457, 10.4362) .. (10.7091, 10.4568).. controls (10.6908, 10.4671) and (10.6706, 10.4737) .. (10.6499, 10.4771).. controls (10.6292, 10.4806) and (10.6081, 10.4808) .. (10.5871, 10.4808) -- (8.5758, 10.4814).. controls (8.5303, 10.4814) and (8.4841, 10.4798) .. (8.4409, 10.4657).. controls (8.3976, 10.4515) and (8.3583, 10.4256) .. (8.3285, 10.3912).. controls (8.2739, 10.3281) and (8.253, 10.2419) .. (8.2508, 10.1584).. controls (8.2485, 10.0723) and (8.2659, 9.9822) .. (8.3195, 9.9147).. controls (8.3492, 9.8774) and (8.3896, 9.8488) .. (8.4346, 9.8331).. controls (8.4796, 9.8174) and (8.5281, 9.8155) .. (8.5758, 9.8155) -- (10.5819, 9.8158).. controls (10.6206, 9.8159) and (10.6594, 9.8162) .. (10.6976, 9.8225).. controls (10.7358, 9.8288) and (10.7731, 9.8408) .. (10.8069, 9.8596).. controls (10.8746, 9.8971) and (10.9249, 9.961) .. (10.9564, 10.0317).. controls (11.0167, 10.1669) and (11.0148, 10.3273) .. (10.9564, 10.4634).. controls (10.9292, 10.5268) and (10.8892, 10.5863) .. (10.8334, 10.627).. controls (10.7878, 10.6603) and (10.7332, 10.6799) .. (10.6777, 10.6903).. controls (10.6479, 10.6959) and (10.6175, 10.6977) .. (10.5871, 10.6977) -- (10.1139, 10.6977);

  \path[draw=navy,line width=0.02cm] (9.5817, 10.2095).. controls (9.4682, 10.2449) and (9.3536, 10.2769) .. (9.2381, 10.3053).. controls (9.1487, 10.3273) and (9.0588, 10.3472) .. (8.9679, 10.3616).. controls (8.8754, 10.3763) and (8.7817, 10.382) .. (8.688, 10.3828) -- (8.5758, 10.3838).. controls (8.5547, 10.384) and (8.5336, 10.3839) .. (8.5129, 10.3806).. controls (8.4921, 10.3773) and (8.4718, 10.3707) .. (8.4535, 10.3604).. controls (8.4168, 10.3397) and (8.39, 10.3045) .. (8.3731, 10.2661).. controls (8.3406, 10.1921) and (8.3403, 10.1044) .. (8.3731, 10.0305).. controls (8.3901, 9.9922) and (8.4172, 9.9573) .. (8.4538, 9.9368).. controls (8.4721, 9.9265) and (8.4923, 9.9199) .. (8.513, 9.9164).. controls (8.5337, 9.913) and (8.5548, 9.9128) .. (8.5758, 9.9127) -- (10.608, 9.9121).. controls (10.6502, 9.9121) and (10.6933, 9.9141) .. (10.7329, 9.9288).. controls (10.7726, 9.9435) and (10.8075, 9.9697) .. (10.8344, 10.0023).. controls (10.8874, 10.0666) and (10.9093, 10.1519) .. (10.9121, 10.2351).. controls (10.915, 10.3213) and (10.8972, 10.4114) .. (10.8434, 10.4788).. controls (10.8137, 10.5161) and (10.7733, 10.5447) .. (10.7283, 10.5604).. controls (10.6833, 10.5762) and (10.6348, 10.578) .. (10.5871, 10.578) -- (8.5758, 10.5777).. controls (8.5374, 10.5777) and (8.4989, 10.5773) .. (8.4611, 10.571).. controls (8.4233, 10.5647) and (8.3864, 10.5525) .. (8.353, 10.5337).. controls (8.2863, 10.496) and (8.2371, 10.4321) .. (8.2065, 10.3619).. controls (8.1474, 10.2263) and (8.1498, 10.0667) .. (8.2065, 9.9302).. controls (8.2363, 9.8583) and (8.2837, 9.7916) .. (8.3504, 9.7517).. controls (8.3838, 9.7318) and (8.4212, 9.7188) .. (8.4595, 9.712).. controls (8.4978, 9.7053) and (8.5369, 9.7048) .. (8.5758, 9.7048) -- (10.5819, 9.7048).. controls (10.7088, 9.6992) and (10.8373, 9.744) .. (10.9332, 9.8273).. controls (10.9814, 9.8691) and (11.0214, 9.9202) .. (11.0511, 9.9767).. controls (11.1012, 10.0718) and (11.1213, 10.1813) .. (11.1154, 10.2886).. controls (11.1104, 10.3818) and (11.0858, 10.4744) .. (11.0394, 10.5555).. controls (10.9877, 10.6459) and (10.9089, 10.7208) .. (10.8152, 10.7663).. controls (10.7445, 10.8007) and (10.6657, 10.8184) .. (10.5871, 10.8175) -- (10.1209, 10.8175);

  \node[text=black,anchor=south west,line width=0.02cm] (baseline_text) at (9.2078, 9.1265){$\scriptstyle\ell(u_-)=3$};

%% file: figs/change_of_sign_close_to_middle_4_crossing.tex
\definecolor{c819d43}{RGB}{129,157,67}
\definecolor{c484537}{RGB}{72,69,55}
\definecolor{c979797}{RGB}{151,151,151}
\definecolor{cd40000}{RGB}{212,0,0}
\definecolor{navy}{RGB}{0,0,128}

  \path[draw=c819d43,line width=0.02cm] (6.9249, 17.5159) -- (6.9249, 16.4235);

  \node[text=black,line width=0.005cm,anchor=south west] (text17) at (6.6162, 17.6354){$\scriptscriptstyle\lambda_{i}$};

  \path[draw=c819d43,line width=0.02cm] (8.7241, 17.5159) -- (8.7241, 16.4235);

  \path[draw=black,fill=c484537,line width=0.015cm,dash pattern=on 0.015cm off 0.06cm,shift={(-1.7631, 6.2112)}] (8.6811, 10.2112) -- (10.4872, 10.2123);

  \path[draw=black,fill=c979797,line width=0.0487cm] (6.9249, 16.4235) circle (0.0458cm);

  \path[draw=black,fill=c979797,line width=0.0487cm] (8.724, 16.4235) circle (0.0458cm);

  \node[text=black,line width=0.005cm,anchor=south west] (text57948-7) at (8.4153, 17.6354){$\scriptscriptstyle\lambda_{j}$};

  \path[draw=cd40000,line width=0.02cm,shift={(-1.7631, 6.2112)}] (9.581, 10.2099).. controls (9.6433, 10.1903) and (9.7056, 10.1704) .. (9.7677, 10.1502).. controls (9.9084, 10.1045) and (10.0492, 10.057) .. (10.195, 10.0319).. controls (10.2873, 10.016) and (10.3812, 10.0116) .. (10.4749, 10.0107) -- (10.5871, 10.0097).. controls (10.6082, 10.0095) and (10.6293, 10.0096) .. (10.6501, 10.0129).. controls (10.6708, 10.0162) and (10.6911, 10.0228) .. (10.7094, 10.0332).. controls (10.7461, 10.0538) and (10.7729, 10.089) .. (10.7898, 10.1275).. controls (10.8223, 10.2015) and (10.8226, 10.2892) .. (10.7898, 10.363).. controls (10.7728, 10.4014) and (10.7457, 10.4362) .. (10.7091, 10.4568).. controls (10.6908, 10.4671) and (10.6706, 10.4737) .. (10.6499, 10.4771).. controls (10.6292, 10.4806) and (10.6081, 10.4808) .. (10.5871, 10.4808) -- (8.5758, 10.4814).. controls (8.5303, 10.4814) and (8.4841, 10.4798) .. (8.4409, 10.4657).. controls (8.3976, 10.4515) and (8.3583, 10.4256) .. (8.3285, 10.3912).. controls (8.2739, 10.3281) and (8.253, 10.2419) .. (8.2508, 10.1584).. controls (8.2485, 10.0723) and (8.2659, 9.9822) .. (8.3195, 9.9147).. controls (8.3492, 9.8774) and (8.3896, 9.8488) .. (8.4346, 9.8331).. controls (8.4796, 9.8174) and (8.5281, 9.8155) .. (8.5758, 9.8155) -- (10.5819, 9.8158).. controls (10.6206, 9.8159) and (10.6594, 9.8162) .. (10.6976, 9.8225).. controls (10.7358, 9.8288) and (10.7731, 9.8408) .. (10.8069, 9.8596).. controls (10.8746, 9.8971) and (10.9249, 9.961) .. (10.9564, 10.0317).. controls (11.0167, 10.1669) and (11.0148, 10.3273) .. (10.9564, 10.4634).. controls (10.9292, 10.5268) and (10.8892, 10.5863) .. (10.8334, 10.627).. controls (10.7878, 10.6603) and (10.7332, 10.6799) .. (10.6777, 10.6903).. controls (10.6479, 10.6959) and (10.6175, 10.6977) .. (10.5871, 10.6977) -- (8.5758, 10.6977);

  \path[draw=navy,line width=0.02cm,shift={(-1.7631, 6.2112)}] (9.5817, 10.2095).. controls (9.4682, 10.2449) and (9.3536, 10.2769) .. (9.2381, 10.3053).. controls (9.1487, 10.3273) and (9.0588, 10.3472) .. (8.9679, 10.3616).. controls (8.8754, 10.3763) and (8.7817, 10.382) .. (8.688, 10.3828) -- (8.5758, 10.3838).. controls (8.5547, 10.384) and (8.5336, 10.3839) .. (8.5129, 10.3806).. controls (8.4921, 10.3773) and (8.4718, 10.3707) .. (8.4535, 10.3604).. controls (8.4168, 10.3397) and (8.39, 10.3045) .. (8.3731, 10.2661).. controls (8.3406, 10.1921) and (8.3403, 10.1044) .. (8.3731, 10.0305).. controls (8.3901, 9.9922) and (8.4172, 9.9573) .. (8.4538, 9.9368).. controls (8.4721, 9.9265) and (8.4923, 9.9199) .. (8.513, 9.9164).. controls (8.5337, 9.913) and (8.5548, 9.9128) .. (8.5758, 9.9127) -- (10.608, 9.9121).. controls (10.6502, 9.9121) and (10.6933, 9.9141) .. (10.7329, 9.9288).. controls (10.7726, 9.9435) and (10.8075, 9.9697) .. (10.8344, 10.0023).. controls (10.8874, 10.0666) and (10.9093, 10.1519) .. (10.9121, 10.2351).. controls (10.915, 10.3213) and (10.8972, 10.4114) .. (10.8434, 10.4788).. controls (10.8137, 10.5161) and (10.7733, 10.5447) .. (10.7283, 10.5604).. controls (10.6833, 10.5762) and (10.6348, 10.578) .. (10.5871, 10.578) -- (8.5758, 10.5777).. controls (8.5374, 10.5777) and (8.4989, 10.5773) .. (8.4611, 10.571).. controls (8.4233, 10.5647) and (8.3864, 10.5525) .. (8.353, 10.5337).. controls (8.2863, 10.496) and (8.2371, 10.4321) .. (8.2065, 10.3619).. controls (8.1474, 10.2263) and (8.1498, 10.0667) .. (8.2065, 9.9302).. controls (8.2363, 9.8583) and (8.2837, 9.7916) .. (8.3504, 9.7517).. controls (8.3838, 9.7318) and (8.4212, 9.7188) .. (8.4595, 9.712).. controls (8.4978, 9.7053) and (8.5369, 9.7048) .. (8.5758, 9.7048) -- (10.5819, 9.7048).. controls (10.7088, 9.6992) and (10.8373, 9.744) .. (10.9332, 9.8273).. controls (10.9814, 9.8691) and (11.0214, 9.9202) .. (11.0511, 9.9767).. controls (11.1012, 10.0718) and (11.1213, 10.1813) .. (11.1154, 10.2886).. controls (11.1104, 10.3818) and (11.0858, 10.4744) .. (11.0394, 10.5555).. controls (10.9877, 10.6459) and (10.9089, 10.7208) .. (10.8152, 10.7663).. controls (10.7445, 10.8007) and (10.6657, 10.8184) .. (10.5871, 10.8175) -- (10.1209, 10.8175);

  \node[text=black,anchor=south west,line width=0.02cm] (baseline_text) at (7.3849, 15.3291){$\scriptstyle\ell(u_-)=4$};

%% file: figs/change_of_sign_close_to_middle_5_crossing.tex
\definecolor{c819d43}{RGB}{129,157,67}
\definecolor{c484537}{RGB}{72,69,55}
\definecolor{c979797}{RGB}{151,151,151}
\definecolor{navy}{RGB}{0,0,128}
\definecolor{cd40000}{RGB}{212,0,0}

  \path[draw=c819d43,line width=0.02cm] (5.484, 12.9256) -- (5.484, 11.8333);

  \node[text=black,line width=0.005cm,anchor=south west] (text17) at (5.1753, 13.0452){$\scriptscriptstyle\lambda_{i}$};

  \path[draw=c819d43,line width=0.02cm] (7.2831, 12.9256) -- (7.2831, 11.8333);

  \path[draw=black,fill=c484537,line width=0.015cm,dash pattern=on 0.015cm off 0.06cm,shift={(-3.204, 1.6209)}] (8.6811, 10.2112) -- (10.4872, 10.2123);

  \path[draw=black,fill=c979797,line width=0.0487cm] (5.484, 11.8333) circle (0.0458cm);

  \path[draw=black,fill=c979797,line width=0.0487cm] (7.2831, 11.8333) circle (0.0458cm);

  \node[text=black,line width=0.005cm,anchor=south west] (text57948-7) at (6.9744, 13.0452){$\scriptscriptstyle\lambda_{j}$};

  \path[draw=navy,line width=0.02cm,shift={(-3.204, 1.6209)}] (9.5817, 10.2095).. controls (9.4682, 10.2449) and (9.3536, 10.2769) .. (9.2381, 10.3053).. controls (9.1487, 10.3273) and (9.0588, 10.3472) .. (8.9679, 10.3616).. controls (8.8754, 10.3763) and (8.7817, 10.382) .. (8.688, 10.3828) -- (8.5758, 10.3838).. controls (8.5547, 10.384) and (8.5336, 10.3839) .. (8.5129, 10.3806).. controls (8.4921, 10.3773) and (8.4718, 10.3707) .. (8.4535, 10.3604).. controls (8.4168, 10.3397) and (8.39, 10.3045) .. (8.3731, 10.2661).. controls (8.3406, 10.1921) and (8.3403, 10.1044) .. (8.3731, 10.0305).. controls (8.3901, 9.9922) and (8.4172, 9.9573) .. (8.4538, 9.9368).. controls (8.4721, 9.9265) and (8.4923, 9.9199) .. (8.513, 9.9164).. controls (8.5337, 9.913) and (8.5548, 9.9128) .. (8.5758, 9.9127) -- (10.608, 9.9121).. controls (10.6502, 9.9121) and (10.6933, 9.9141) .. (10.7329, 9.9288).. controls (10.7726, 9.9435) and (10.8075, 9.9697) .. (10.8344, 10.0023).. controls (10.8874, 10.0666) and (10.9093, 10.1519) .. (10.9121, 10.2351).. controls (10.915, 10.3213) and (10.8972, 10.4114) .. (10.8434, 10.4788).. controls (10.8137, 10.5161) and (10.7733, 10.5447) .. (10.7283, 10.5604).. controls (10.6833, 10.5762) and (10.6348, 10.578) .. (10.5871, 10.578) -- (8.5758, 10.5777).. controls (8.5374, 10.5777) and (8.4989, 10.5773) .. (8.4611, 10.571).. controls (8.4233, 10.5647) and (8.3864, 10.5525) .. (8.353, 10.5337).. controls (8.2863, 10.496) and (8.2371, 10.4321) .. (8.2065, 10.3619).. controls (8.1474, 10.2263) and (8.1498, 10.0667) .. (8.2065, 9.9302).. controls (8.2363, 9.8583) and (8.2837, 9.7916) .. (8.3504, 9.7517).. controls (8.3838, 9.7318) and (8.4212, 9.7188) .. (8.4595, 9.712).. controls (8.4978, 9.7053) and (8.5369, 9.7048) .. (8.5758, 9.7048) -- (10.5819, 9.7048).. controls (10.7088, 9.6992) and (10.8373, 9.744) .. (10.9332, 9.8273).. controls (10.9814, 9.8691) and (11.0214, 9.9202) .. (11.0511, 9.9767).. controls (11.1012, 10.0718) and (11.1213, 10.1813) .. (11.1154, 10.2886).. controls (11.1104, 10.3818) and (11.0858, 10.4744) .. (11.0394, 10.5555).. controls (10.9877, 10.6459) and (10.9089, 10.7208) .. (10.8152, 10.7663).. controls (10.7445, 10.8007) and (10.6657, 10.8184) .. (10.5871, 10.8175) -- (10.1209, 10.8175);

  \path[draw=cd40000,line width=0.02cm,shift={(0.0794, 1.1906)}] (5.2924, 11.128).. controls (5.1994, 11.1287) and (5.1064, 11.1038) .. (5.0262, 11.0568).. controls (4.9667, 11.0219) and (4.9142, 10.9749) .. (4.874, 10.9189).. controls (4.7903, 10.8025) and (4.7633, 10.6526) .. (4.7745, 10.5097).. controls (4.7825, 10.4066) and (4.8104, 10.3028) .. (4.8707, 10.2187).. controls (4.9108, 10.1629) and (4.9643, 10.1172) .. (5.0241, 10.0834).. controls (5.1045, 10.0379) and (5.1966, 10.0135) .. (5.2889, 10.0133) -- (7.3198, 10.0113).. controls (7.3852, 10.0112) and (7.4513, 10.0144) .. (7.5135, 10.0342).. controls (7.564, 10.0502) and (7.6112, 10.0756) .. (7.6545, 10.1061).. controls (7.7909, 10.2022) and (7.8883, 10.3505) .. (7.9279, 10.5125).. controls (7.9605, 10.6458) and (7.955, 10.7879) .. (7.9151, 10.9192).. controls (7.8813, 11.0304) and (7.8223, 11.1351) .. (7.7372, 11.2142).. controls (7.6542, 11.2913) and (7.5481, 11.3421) .. (7.4368, 11.363).. controls (7.3619, 11.377) and (7.2852, 11.3776) .. (7.209, 11.3776) -- (6.8375, 11.3776)(6.2976, 10.6402).. controls (6.3599, 10.6206) and (6.4222, 10.6007) .. (6.4843, 10.5805).. controls (6.625, 10.5348) and (6.7658, 10.4874) .. (6.9116, 10.4622).. controls (7.0039, 10.4463) and (7.0978, 10.4419) .. (7.1915, 10.441) -- (7.3037, 10.44).. controls (7.3248, 10.4398) and (7.3459, 10.4399) .. (7.3667, 10.4432).. controls (7.3874, 10.4465) and (7.4077, 10.4532) .. (7.426, 10.4635).. controls (7.4627, 10.4841) and (7.4895, 10.5193) .. (7.5064, 10.5578).. controls (7.5389, 10.6318) and (7.5392, 10.7195) .. (7.5064, 10.7933).. controls (7.4894, 10.8317) and (7.4624, 10.8666) .. (7.4258, 10.8871).. controls (7.4075, 10.8974) and (7.3872, 10.904) .. (7.3665, 10.9075).. controls (7.3458, 10.9109) and (7.3247, 10.9111) .. (7.3037, 10.9111) -- (5.2924, 10.9102).. controls (5.247, 10.9102) and (5.201, 10.9086) .. (5.1578, 10.8948).. controls (5.1146, 10.881) and (5.0751, 10.8556) .. (5.0451, 10.8215).. controls (4.99, 10.7588) and (4.9695, 10.6722) .. (4.9674, 10.5887).. controls (4.9652, 10.5026) and (4.9825, 10.4125) .. (5.0361, 10.3451).. controls (5.0658, 10.3077) and (5.1061, 10.2791) .. (5.1512, 10.2634).. controls (5.1962, 10.2477) and (5.2447, 10.2459) .. (5.2924, 10.2459) -- (7.2985, 10.2462).. controls (7.3372, 10.2462) and (7.376, 10.2466) .. (7.4142, 10.2528).. controls (7.4524, 10.2591) and (7.4897, 10.2711) .. (7.5235, 10.2899).. controls (7.5912, 10.3274) and (7.6415, 10.3914) .. (7.673, 10.462).. controls (7.7333, 10.5972) and (7.7314, 10.7577) .. (7.673, 10.8937).. controls (7.6458, 10.9571) and (7.6058, 11.0166) .. (7.55, 11.0573).. controls (7.5044, 11.0906) and (7.4498, 11.1102) .. (7.3943, 11.1206).. controls (7.3645, 11.1262) and (7.3341, 11.1281) .. (7.3037, 11.1281) -- (5.2924, 11.1281);

  \node[text=black,anchor=south west,line width=0.02cm] (baseline_text) at (5.9702, 10.722){$\scriptstyle\ell(u_-)=5$};

%% file: figs/change_of_sign_close_to_middle_2_crossing.tex
\definecolor{c819d43}{RGB}{129,157,67}
\definecolor{c484537}{RGB}{72,69,55}
\definecolor{c979797}{RGB}{151,151,151}
\definecolor{cd40000}{RGB}{212,0,0}
\definecolor{navy}{RGB}{0,0,128}

  \path[draw=c819d43,line width=0.02cm] (8.688, 11.3047) -- (8.688, 10.2123);

  \node[text=black,line width=0.005cm,anchor=south west] (text17) at (8.3793, 11.4242){$\scriptscriptstyle\lambda_{i}$};

  \path[draw=c819d43,line width=0.02cm] (10.4872, 11.3047) -- (10.4872, 10.2123);

  \path[draw=black,fill=c484537,line width=0.015cm,dash pattern=on 0.015cm off 0.06cm] (8.6811, 10.2112) -- (10.4872, 10.2123);

  \path[draw=black,fill=c979797,line width=0.0487cm] (8.688, 10.2123) circle (0.0458cm);

  \path[draw=black,fill=c979797,line width=0.0487cm] (10.4872, 10.2123) circle (0.0458cm);

  \node[text=black,line width=0.005cm,anchor=south west] (text57948-7) at (10.1785, 11.4242){$\scriptscriptstyle\lambda_{j}$};

  \path[draw=cd40000,line width=0.02cm] (9.581, 10.2099).. controls (9.6433, 10.1903) and (9.7056, 10.1704) .. (9.7677, 10.1502).. controls (9.9084, 10.1045) and (10.0492, 10.057) .. (10.195, 10.0319).. controls (10.2873, 10.016) and (10.3812, 10.0116) .. (10.4749, 10.0107) -- (10.5871, 10.0097).. controls (10.6082, 10.0095) and (10.6293, 10.0096) .. (10.6501, 10.0129).. controls (10.6708, 10.0162) and (10.6911, 10.0228) .. (10.7094, 10.0332).. controls (10.7461, 10.0538) and (10.7729, 10.089) .. (10.7898, 10.1275).. controls (10.8223, 10.2015) and (10.8226, 10.2892) .. (10.7898, 10.363).. controls (10.7728, 10.4014) and (10.7457, 10.4362) .. (10.7091, 10.4568).. controls (10.6908, 10.4671) and (10.6706, 10.4737) .. (10.6499, 10.4771).. controls (10.6292, 10.4806) and (10.6081, 10.4808) .. (10.5871, 10.4808) -- (8.5758, 10.4814).. controls (8.5303, 10.4814) and (8.4841, 10.4798) .. (8.4409, 10.4657).. controls (8.3976, 10.4515) and (8.3583, 10.4256) .. (8.3285, 10.3912).. controls (8.2739, 10.3281) and (8.253, 10.2419) .. (8.2508, 10.1584).. controls (8.2485, 10.0723) and (8.2659, 9.9822) .. (8.3195, 9.9147).. controls (8.3492, 9.8774) and (8.3896, 9.8488) .. (8.4346, 9.8331).. controls (8.4796, 9.8174) and (8.5281, 9.8155) .. (8.5758, 9.8155) -- (10.5819, 9.8158).. controls (10.6206, 9.8159) and (10.6594, 9.8162) .. (10.6976, 9.8225).. controls (10.7358, 9.8288) and (10.7731, 9.8409) .. (10.8069, 9.8596).. controls (10.8745, 9.8972) and (10.9248, 9.9611) .. (10.9564, 10.0317).. controls (11.0169, 10.1669) and (11.0158, 10.3277) .. (10.9564, 10.4634).. controls (10.9288, 10.5266) and (10.8883, 10.5853) .. (10.8334, 10.627).. controls (10.7883, 10.6613) and (10.734, 10.6834) .. (10.6777, 10.6903);

  \path[draw=navy,line width=0.02cm] (9.5817, 10.2095).. controls (9.4682, 10.2449) and (9.3536, 10.2769) .. (9.2381, 10.3053).. controls (9.1487, 10.3273) and (9.0588, 10.3472) .. (8.9679, 10.3616).. controls (8.8754, 10.3763) and (8.7817, 10.382) .. (8.688, 10.3828) -- (8.5758, 10.3838).. controls (8.5547, 10.384) and (8.5336, 10.3839) .. (8.5129, 10.3806).. controls (8.4921, 10.3773) and (8.4718, 10.3707) .. (8.4535, 10.3604).. controls (8.4168, 10.3397) and (8.39, 10.3045) .. (8.3731, 10.2661).. controls (8.3406, 10.1921) and (8.3403, 10.1044) .. (8.3731, 10.0305).. controls (8.3901, 9.9922) and (8.4172, 9.9573) .. (8.4538, 9.9368).. controls (8.4721, 9.9265) and (8.4923, 9.9199) .. (8.513, 9.9164).. controls (8.5337, 9.913) and (8.5548, 9.9128) .. (8.5758, 9.9127) -- (10.608, 9.9121).. controls (10.6502, 9.9121) and (10.6933, 9.9141) .. (10.7329, 9.9288).. controls (10.7726, 9.9435) and (10.8075, 9.9697) .. (10.8344, 10.0023).. controls (10.8874, 10.0666) and (10.9093, 10.1519) .. (10.9121, 10.2351).. controls (10.915, 10.3213) and (10.8972, 10.4114) .. (10.8434, 10.4788).. controls (10.8137, 10.5161) and (10.7733, 10.5447) .. (10.7283, 10.5604).. controls (10.6833, 10.5762) and (10.6348, 10.578) .. (10.5871, 10.578) -- (8.5758, 10.5777).. controls (8.5374, 10.5777) and (8.4989, 10.5773) .. (8.4611, 10.571).. controls (8.4233, 10.5647) and (8.3864, 10.5525) .. (8.353, 10.5337).. controls (8.2863, 10.496) and (8.2371, 10.4321) .. (8.2065, 10.3619).. controls (8.1474, 10.2263) and (8.1498, 10.0667) .. (8.2065, 9.9302).. controls (8.2363, 9.8583) and (8.2837, 9.7916) .. (8.3504, 9.7517).. controls (8.3838, 9.7318) and (8.4212, 9.7188) .. (8.4595, 9.712).. controls (8.4978, 9.7053) and (8.5369, 9.7048) .. (8.5758, 9.7048) -- (10.5819, 9.7048).. controls (10.7088, 9.6992) and (10.8373, 9.744) .. (10.9332, 9.8273).. controls (10.9814, 9.8691) and (11.0214, 9.9202) .. (11.0511, 9.9767).. controls (11.1012, 10.0718) and (11.1213, 10.1813) .. (11.1154, 10.2886).. controls (11.1104, 10.3818) and (11.0858, 10.4744) .. (11.0394, 10.5555).. controls (10.9877, 10.6459) and (10.9089, 10.7208) .. (10.8152, 10.7663).. controls (10.7445, 10.8007) and (10.6657, 10.8184) .. (10.5871, 10.8175) -- (10.1209, 10.8175);

  \node[text=black,anchor=south west,line width=0.02cm] (baseline_text) at (9.1113, 9.1105){$\scriptstyle\ell(u_-)=2$};

%% file: figs/change_of_sign_close_to_middle_6_crossing.tex
\definecolor{c819d43}{RGB}{129,157,67}
\definecolor{c484537}{RGB}{72,69,55}
\definecolor{c979797}{RGB}{151,151,151}
\definecolor{navy}{RGB}{0,0,128}
\definecolor{cd40000}{RGB}{212,0,0}

  \path[draw=c819d43,line width=0.02cm] (5.4046, 11.735) -- (5.4046, 10.6426);

  \node[text=black,line width=0.005cm,anchor=south west] (text17) at (5.0959, 11.8545){$\scriptscriptstyle\lambda_{i}$};

  \path[draw=c819d43,line width=0.02cm] (7.2038, 11.735) -- (7.2038, 10.6426);

  \path[draw=black,fill=c484537,line width=0.015cm,dash pattern=on 0.015cm off 0.06cm,shift={(-3.2834, 0.4303)}] (8.6811, 10.2112) -- (10.4872, 10.2123);

  \path[draw=black,fill=c979797,line width=0.0487cm] (5.4046, 10.6426) circle (0.0458cm);

  \path[draw=black,fill=c979797,line width=0.0487cm] (7.2038, 10.6426) circle (0.0458cm);

  \node[text=black,line width=0.005cm,anchor=south west] (text57948-7) at (6.8951, 11.8545){$\scriptscriptstyle\lambda_{j}$};

  \path[draw=navy,line width=0.02cm,shift={(-3.2834, 0.4303)}] (9.5817, 10.2095).. controls (9.4682, 10.2449) and (9.3536, 10.2769) .. (9.2381, 10.3053).. controls (9.1487, 10.3273) and (9.0588, 10.3472) .. (8.9679, 10.3616).. controls (8.8754, 10.3763) and (8.7817, 10.382) .. (8.688, 10.3828) -- (8.5758, 10.3838).. controls (8.5547, 10.384) and (8.5336, 10.3839) .. (8.5129, 10.3806).. controls (8.4921, 10.3773) and (8.4718, 10.3707) .. (8.4535, 10.3604).. controls (8.4168, 10.3397) and (8.39, 10.3045) .. (8.3731, 10.2661).. controls (8.3406, 10.1921) and (8.3403, 10.1044) .. (8.3731, 10.0305).. controls (8.3901, 9.9922) and (8.4172, 9.9573) .. (8.4538, 9.9368).. controls (8.4721, 9.9265) and (8.4923, 9.9199) .. (8.513, 9.9164).. controls (8.5337, 9.913) and (8.5548, 9.9128) .. (8.5758, 9.9127) -- (10.608, 9.9121).. controls (10.6502, 9.9121) and (10.6933, 9.9141) .. (10.7329, 9.9288).. controls (10.7726, 9.9435) and (10.8075, 9.9697) .. (10.8344, 10.0023).. controls (10.8874, 10.0666) and (10.9093, 10.1519) .. (10.9121, 10.2351).. controls (10.915, 10.3213) and (10.8972, 10.4114) .. (10.8434, 10.4788).. controls (10.8137, 10.5161) and (10.7733, 10.5447) .. (10.7283, 10.5604).. controls (10.6833, 10.5762) and (10.6348, 10.578) .. (10.5871, 10.578) -- (8.5758, 10.5777).. controls (8.5374, 10.5777) and (8.4989, 10.5773) .. (8.4611, 10.571).. controls (8.4233, 10.5647) and (8.3864, 10.5525) .. (8.353, 10.5337).. controls (8.2863, 10.496) and (8.2371, 10.4321) .. (8.2065, 10.3619).. controls (8.1474, 10.2263) and (8.1498, 10.0667) .. (8.2065, 9.9302).. controls (8.2363, 9.8583) and (8.2837, 9.7916) .. (8.3504, 9.7517).. controls (8.3838, 9.7318) and (8.4212, 9.7188) .. (8.4595, 9.712).. controls (8.4978, 9.7053) and (8.5369, 9.7048) .. (8.5758, 9.7048) -- (10.5819, 9.7048).. controls (10.7088, 9.6992) and (10.8373, 9.744) .. (10.9332, 9.8273).. controls (10.9814, 9.8691) and (11.0214, 9.9202) .. (11.0511, 9.9767).. controls (11.1012, 10.0718) and (11.1213, 10.1813) .. (11.1154, 10.2886).. controls (11.1104, 10.3818) and (11.0858, 10.4744) .. (11.0394, 10.5555).. controls (10.9877, 10.6459) and (10.9089, 10.7208) .. (10.8152, 10.7663).. controls (10.7445, 10.8007) and (10.6657, 10.8184) .. (10.5871, 10.8175) -- (10.1209, 10.8175);

  \path[draw=cd40000,line width=0.02cm] (5.2924, 11.128).. controls (5.1994, 11.1287) and (5.1064, 11.1038) .. (5.0262, 11.0568).. controls (4.9667, 11.0219) and (4.9142, 10.9749) .. (4.874, 10.9189).. controls (4.7903, 10.8025) and (4.7633, 10.6526) .. (4.7745, 10.5097).. controls (4.7825, 10.4066) and (4.8104, 10.3028) .. (4.8707, 10.2187).. controls (4.9108, 10.1629) and (4.9643, 10.1172) .. (5.0241, 10.0834).. controls (5.1045, 10.0379) and (5.1966, 10.0135) .. (5.2889, 10.0133) -- (7.3198, 10.0113).. controls (7.3852, 10.0112) and (7.4513, 10.0144) .. (7.5135, 10.0342).. controls (7.564, 10.0502) and (7.6112, 10.0756) .. (7.6545, 10.1061).. controls (7.7909, 10.2022) and (7.8883, 10.3505) .. (7.9279, 10.5125).. controls (7.9605, 10.6458) and (7.955, 10.7879) .. (7.9151, 10.9192).. controls (7.8813, 11.0304) and (7.8223, 11.1351) .. (7.7372, 11.2142).. controls (7.6542, 11.2913) and (7.5481, 11.3421) .. (7.4368, 11.363).. controls (7.3619, 11.377) and (7.2852, 11.3776) .. (7.209, 11.3776) -- (5.2924, 11.3776)(6.2976, 10.6402).. controls (6.3599, 10.6206) and (6.4222, 10.6007) .. (6.4843, 10.5805).. controls (6.625, 10.5348) and (6.7658, 10.4874) .. (6.9116, 10.4622).. controls (7.0039, 10.4463) and (7.0978, 10.4419) .. (7.1915, 10.441) -- (7.3037, 10.44).. controls (7.3248, 10.4398) and (7.3459, 10.4399) .. (7.3667, 10.4432).. controls (7.3874, 10.4465) and (7.4077, 10.4532) .. (7.426, 10.4635).. controls (7.4627, 10.4841) and (7.4895, 10.5193) .. (7.5064, 10.5578).. controls (7.5389, 10.6318) and (7.5392, 10.7195) .. (7.5064, 10.7933).. controls (7.4894, 10.8317) and (7.4624, 10.8666) .. (7.4258, 10.8871).. controls (7.4075, 10.8974) and (7.3872, 10.904) .. (7.3665, 10.9075).. controls (7.3458, 10.9109) and (7.3247, 10.9111) .. (7.3037, 10.9111) -- (5.2924, 10.9102).. controls (5.247, 10.9102) and (5.201, 10.9086) .. (5.1578, 10.8948).. controls (5.1146, 10.881) and (5.0751, 10.8556) .. (5.0451, 10.8215).. controls (4.99, 10.7588) and (4.9695, 10.6722) .. (4.9674, 10.5887).. controls (4.9652, 10.5026) and (4.9825, 10.4125) .. (5.0361, 10.3451).. controls (5.0658, 10.3077) and (5.1061, 10.2791) .. (5.1512, 10.2634).. controls (5.1962, 10.2477) and (5.2447, 10.2459) .. (5.2924, 10.2459) -- (7.2985, 10.2462).. controls (7.3372, 10.2462) and (7.376, 10.2466) .. (7.4142, 10.2528).. controls (7.4524, 10.2591) and (7.4897, 10.2711) .. (7.5235, 10.2899).. controls (7.5912, 10.3274) and (7.6415, 10.3914) .. (7.673, 10.462).. controls (7.7333, 10.5972) and (7.7314, 10.7577) .. (7.673, 10.8937).. controls (7.6458, 10.9571) and (7.6058, 11.0166) .. (7.55, 11.0573).. controls (7.5044, 11.0906) and (7.4498, 11.1102) .. (7.3943, 11.1206).. controls (7.3645, 11.1262) and (7.3341, 11.1281) .. (7.3037, 11.1281) -- (5.2924, 11.1281);

  \node[text=black,anchor=south west,line width=0.02cm] (baseline_text) at (5.8185, 9.5568){$\scriptstyle\ell(u_-)=6$};

%% file: figs/4_crossing_and_other_mu_ij_01.tex
\definecolor{c819d43}{RGB}{129,157,67}
\definecolor{c484537}{RGB}{72,69,55}
\definecolor{c979797}{RGB}{151,151,151}

  \path[draw=c819d43,line width=0.02cm] (8.4882, 11.4475) -- (8.4882, 10.1396);

  \node[text=black,line width=0.005cm,xscale=0.9997,yscale=1.0003,anchor=south west] (text17) at (8.1216, 11.5968){$\scriptscriptstyle\lambda_{i}$};

  \path[draw=c819d43,line width=0.02cm] (10.482, 11.4475) -- (10.482, 10.1396);

  \path[draw=black,fill=c484537,line width=0.0125cm,dash pattern=on 0.0125cm off 0.0501cm,cm={ 1.1964,-0.0,-0.0,1.1973,(-1.9065, -2.0883)}] (8.6811, 10.2112) -- (10.3545, 10.2123);

  \node[text=black,line width=0.005cm,xscale=0.9997,yscale=1.0003,anchor=south west] (text57948-7) at (10.275, 11.5968){$\scriptscriptstyle\lambda_{j}$};

  \node[anchor=south west,line width=0.02cm] (baseline_text) at (8.688, 10.2123){};

  \path[draw=black,fill=c979797,line width=0.0487cm] (8.4882, 10.1396) circle (0.0458cm);

  \path[draw=black,fill=c979797,line width=0.0487cm] (10.482, 10.1396) circle (0.0458cm);

  \node[anchor=south west,line width=0.02cm,dash pattern=on 0.02cm off 0.02cm] (text35) at (8.4882, 10.1396){};

  \path[draw=black,line width=0.02cm,shift={(0.0, 0.0362)}] (10.2547, 10.9011) -- (10.4958, 10.9011).. controls (10.5454, 10.9011) and (10.5953, 10.9005) .. (10.6441, 10.8917).. controls (10.693, 10.883) and (10.7405, 10.8661) .. (10.783, 10.8404).. controls (10.8679, 10.7891) and (10.9275, 10.7038) .. (10.967, 10.6128).. controls (11.031, 10.4651) and (11.0519, 10.2999) .. (11.0339, 10.14).. controls (11.0168, 9.9875) and (10.9633, 9.8363) .. (10.8638, 9.7195).. controls (10.822, 9.6702) and (10.7709, 9.6277) .. (10.7111, 9.6031).. controls (10.6513, 9.5786) and (10.5854, 9.5747) .. (10.5208, 9.5747) -- (8.5438, 9.5754).. controls (8.4958, 9.5754) and (8.4473, 9.5763) .. (8.4003, 9.5862).. controls (8.3532, 9.5961) and (8.3078, 9.6147) .. (8.2686, 9.6427).. controls (8.2256, 9.6734) and (8.1904, 9.7151) .. (8.1674, 9.7628).. controls (8.1347, 9.8253) and (8.1185, 9.8964) .. (8.1209, 9.967).. controls (8.1232, 10.0375) and (8.1441, 10.1074) .. (8.1809, 10.1676).. controls (8.2011, 10.2008) and (8.2264, 10.2313) .. (8.2576, 10.2544).. controls (8.2991, 10.2852) and (8.3507, 10.302) .. (8.4023, 10.3015) -- (8.6708, 10.3016).. controls (8.7481, 10.3016) and (8.8256, 10.3057) .. (8.9027, 10.2997).. controls (8.9412, 10.2967) and (8.9798, 10.2904) .. (9.0158, 10.2762).. controls (9.0517, 10.2619) and (9.0851, 10.2392) .. (9.1071, 10.2075).. controls (9.1301, 10.1744) and (9.1394, 10.1336) .. (9.1425, 10.0934).. controls (9.1457, 10.0533) and (9.1432, 10.0129) .. (9.145, 9.9727).. controls (9.1472, 9.924) and (9.1565, 9.8737) .. (9.1857, 9.8347).. controls (9.2142, 9.7966) and (9.2609, 9.7728) .. (9.3084, 9.7722) -- (10.5208, 9.7722).. controls (10.6065, 9.7729) and (10.6914, 9.8095) .. (10.7508, 9.8714).. controls (10.7879, 9.9102) and (10.815, 9.9581) .. (10.8325, 10.0089).. controls (10.8512, 10.0631) and (10.8591, 10.1204) .. (10.8624, 10.1776).. controls (10.8672, 10.2601) and (10.8622, 10.3438) .. (10.8389, 10.4231).. controls (10.8157, 10.5023) and (10.7733, 10.5771) .. (10.7111, 10.6315).. controls (10.6735, 10.6644) and (10.6289, 10.6891) .. (10.5809, 10.7027).. controls (10.5329, 10.7163) and (10.4824, 10.718) .. (10.4325, 10.718) -- (8.2906, 10.7187);

  \path[draw=black,line width=0.02cm,dash pattern=on 0.02cm off 0.02cm] (9.6865, 10.3635) -- (9.6865, 10.1472).. controls (9.6865, 10.0941) and (9.687, 10.041) .. (9.6895, 9.988);

%% file: figs/change_of_sign_u_and_v.tex
\definecolor{c819d43}{RGB}{129,157,67}
\definecolor{c484537}{RGB}{72,69,55}
\definecolor{c979797}{RGB}{151,151,151}
\definecolor{cd40000}{RGB}{212,0,0}

  \path[draw=c819d43,line width=0.02cm] (8.4882, 11.4475) -- (8.4882, 10.1396);

  \node[text=black,line width=0.005cm,xscale=0.9997,yscale=1.0003,anchor=south west] (text17) at (8.1216, 11.5968){$\scriptscriptstyle\lambda_{i}$};

  \path[draw=c819d43,line width=0.02cm] (10.482, 11.4475) -- (10.482, 10.1396);

  \path[draw=black,fill=c484537,line width=0.0125cm,dash pattern=on 0.0125cm off 0.0501cm,cm={ 1.1964,-0.0,-0.0,1.1973,(-1.9065, -2.0883)}] (8.6811, 10.2112) -- (10.3545, 10.2123);

  \node[text=black,line width=0.005cm,xscale=0.9997,yscale=1.0003,anchor=south west] (text57948-7) at (10.275, 11.5968){$\scriptscriptstyle\lambda_{j}$};

  \node[anchor=south west,line width=0.02cm] (baseline_text) at (8.688, 10.2123){};

  \path[draw=black,fill=c979797,line width=0.0487cm] (8.4882, 10.1396) circle (0.0458cm);

  \path[draw=black,fill=c979797,line width=0.0487cm] (10.482, 10.1396) circle (0.0458cm);

  \node[anchor=south west,line width=0.02cm,dash pattern=on 0.02cm off 0.02cm] (text35) at (8.4882, 10.1396){};

  \path[draw=cd40000,line width=0.02cm] (9.2404, 10.1375) -- (9.2391, 9.94).. controls (9.2383, 9.9172) and (9.2459, 9.8941) .. (9.2601, 9.8763).. controls (9.2717, 9.8619) and (9.2874, 9.851) .. (9.3045, 9.8441).. controls (9.3246, 9.8358) and (9.3466, 9.8329) .. (9.3683, 9.8316).. controls (9.39, 9.8304) and (9.4118, 9.8306) .. (9.4336, 9.8306) -- (10.522, 9.8306).. controls (10.561, 9.8306) and (10.6002, 9.831) .. (10.6387, 9.8373).. controls (10.6772, 9.8435) and (10.7149, 9.8556) .. (10.7491, 9.8744).. controls (10.8174, 9.912) and (10.8685, 9.9763) .. (10.8998, 10.0478).. controls (10.9426, 10.1456) and (10.9526, 10.2557) .. (10.9401, 10.3617).. controls (10.9342, 10.4114) and (10.9234, 10.4609) .. (10.9038, 10.507).. controls (10.8841, 10.553) and (10.8553, 10.5957) .. (10.8169, 10.628).. controls (10.7656, 10.6711) and (10.6996, 10.6936) .. (10.6333, 10.7042).. controls (10.567, 10.7147) and (10.4996, 10.7144) .. (10.4325, 10.7144) -- (8.2906, 10.7151);

  \path[draw=blue,line width=0.02cm] (9.2404, 10.1375) -- (9.2404, 10.2159).. controls (9.2412, 10.2457) and (9.2288, 10.2757) .. (9.2071, 10.2963).. controls (9.1933, 10.3093) and (9.1761, 10.3185) .. (9.1581, 10.3246).. controls (9.1401, 10.3307) and (9.1212, 10.3338) .. (9.1023, 10.3356).. controls (9.0644, 10.339) and (9.0263, 10.3379) .. (8.9883, 10.3378) -- (8.4023, 10.3378).. controls (8.3134, 10.3296) and (8.2294, 10.2788) .. (8.1809, 10.2039).. controls (8.1524, 10.16) and (8.1359, 10.1084) .. (8.1336, 10.0562).. controls (8.1312, 10.0039) and (8.143, 9.9511) .. (8.1674, 9.9048).. controls (8.1904, 9.8572) and (8.2256, 9.8155) .. (8.2686, 9.7848).. controls (8.3078, 9.7568) and (8.3532, 9.7382) .. (8.4003, 9.7283).. controls (8.4473, 9.7184) and (8.4958, 9.7176) .. (8.5438, 9.7175) -- (10.5208, 9.7168).. controls (10.603, 9.7168) and (10.6867, 9.7197) .. (10.7645, 9.7465).. controls (10.8422, 9.7732) and (10.9123, 9.8227) .. (10.9621, 9.8882).. controls (11.0228, 9.968) and (11.0525, 10.0675) .. (11.0667, 10.1669).. controls (11.0887, 10.322) and (11.0732, 10.488) .. (10.9904, 10.621).. controls (10.94, 10.7019) and (10.8647, 10.7673) .. (10.7767, 10.8037).. controls (10.6886, 10.84) and (10.5911, 10.8447) .. (10.4958, 10.8447) -- (10.2547, 10.8447);

  \path[draw=blue,line width=0.02cm,dash pattern=on 0.02cm off 0.02cm] (10.2547, 10.9779) -- (10.5552, 10.9779).. controls (10.6722, 10.9796) and (10.7895, 10.9516) .. (10.8931, 10.8973).. controls (10.9691, 10.8575) and (11.0383, 10.803) .. (11.0893, 10.734).. controls (11.1329, 10.675) and (11.1623, 10.6067) .. (11.1835, 10.5365).. controls (11.2036, 10.4695) and (11.2165, 10.4003) .. (11.2208, 10.3305).. controls (11.2264, 10.2389) and (11.2171, 10.1465) .. (11.1944, 10.0576).. controls (11.1702, 9.9627) and (11.1302, 9.8709) .. (11.0703, 9.7934).. controls (11.0137, 9.7202) and (10.9382, 9.6609) .. (10.8518, 9.6277).. controls (10.7654, 9.5946) and (10.671, 9.5901) .. (10.5785, 9.5902) -- (8.6072, 9.5909).. controls (8.5229, 9.5909) and (8.4378, 9.5925) .. (8.356, 9.6128).. controls (8.2742, 9.6331) and (8.1962, 9.6719) .. (8.1347, 9.7295).. controls (8.0551, 9.8041) and (8.0048, 9.9096) .. (7.9982, 10.0186).. controls (7.9916, 10.1276) and (8.0289, 10.2387) .. (8.1006, 10.321).. controls (8.1759, 10.4073) and (8.2883, 10.4601) .. (8.4027, 10.463) -- (9.5192, 10.4632).. controls (9.5337, 10.4632) and (9.5482, 10.4628) .. (9.5624, 10.4601).. controls (9.5767, 10.4574) and (9.5904, 10.4527) .. (9.6038, 10.447).. controls (9.6355, 10.4336) and (9.6657, 10.4149) .. (9.687, 10.3878).. controls (9.7109, 10.3576) and (9.7221, 10.3176) .. (9.7174, 10.2793) -- (9.7174, 10.1373);

  \path[draw=cd40000,line width=0.02cm,dash pattern=on 0.02cm off 0.02cm] (9.7174, 10.1373) -- (9.7174, 10.0878).. controls (9.7201, 10.0655) and (9.7289, 10.0441) .. (9.7425, 10.0263).. controls (9.7617, 10.0012) and (9.7898, 9.9839) .. (9.8197, 9.9736).. controls (9.8495, 9.9632) and (9.8812, 9.9592) .. (9.9128, 9.9577).. controls (9.9759, 9.9547) and (10.0392, 9.9585) .. (10.1024, 9.9588) -- (10.527, 9.9604).. controls (10.552, 9.9605) and (10.5771, 9.9609) .. (10.6018, 9.9651).. controls (10.6264, 9.9694) and (10.6505, 9.9776) .. (10.6722, 9.99).. controls (10.7156, 10.0149) and (10.7475, 10.0564) .. (10.7694, 10.1014).. controls (10.816, 10.1967) and (10.8255, 10.31) .. (10.7932, 10.4111).. controls (10.7843, 10.4388) and (10.7724, 10.4656) .. (10.756, 10.4896).. controls (10.7397, 10.5136) and (10.7188, 10.5346) .. (10.6944, 10.5504).. controls (10.67, 10.5662) and (10.6423, 10.5766) .. (10.6138, 10.582).. controls (10.5852, 10.5874) and (10.556, 10.5877) .. (10.5269, 10.5877) -- (8.2906, 10.5884);

%% file: sections/lemma/lemma_main.tex
\section{A lemma controlling certain Coxeter words}
\label{sec:coxeter_word_reduction_lemma}
\input{sections/lemma/preamble}
\input{sections/lemma/word_problem_in_cox_groups}
\input{sections/lemma/property_A}
\input{sections/lemma/property_B}
\input{sections/lemma/Q_coxeter_words}

%% file: sections/lemma/preamble.tex
This section concerns controlling word operations which relate certain Coxeter words representing the same group element.
This will be applied to words from $Q$.

We use the terms \emph{Coxeter word} and \emph{free word} to specify words belonging to either a Coxeter group or free group.
Since Coxeter generators are involutions, we will only consider positive Coxeter words.
Given a Coxeter graph $\Gamma$, if the word operations relevant to a Coxeter word $w$ come from the group $\nCoxG$, then we will introduce $w$ as a \emph{Coxeter word in $\nCoxG$}.
Similarly to \Cref{sec:aba_cbc}, an \emph{alternating} Coxeter word alternates between two generators, the term \emph{$(\nCoxGenElt_i,\nCoxGenElt_j)$-alternating word} specifies the two generators, and a \emph{maximal alternating subword} is not contained in any longer alternating subword.
We may refer to the length of an alternating subword relative to $m_{ij}$.
In this case, $i$ and $j$ should be understood to take values such that the subword is an $(\nCoxGenElt_i,\nCoxGenElt_j)$-alternating subword.

%% file: sections/lemma/word_problem_in_cox_groups.tex
\subsection{The word problem in Coxeter groups}
We begin by stating the solution to the word problem for Coxeter groups.
We then define toggling word operations and make some basic observations about those.
\Cref{thm:tits_algorithm} is a classical result of Tits.
It and \Cref{def:M_operations} are covered in \cite[Section 3.4]{davis_geometry_2025}.
\begin{definition}
	\label{def:M_operations}
	Let $\varepsilon$ denote the empty word.
	Consider the following word operations on Coxeter words.
	\begin{enumerate}
		\item $\nCoxGenElt^2_i \leadsto \varepsilon $.
		      \smallskip
		\item $\nPi{\nCoxGenElt_i}{\nCoxGenElt_j}{m_{ij}} \leadsto \nPi{\nCoxGenElt_j}{\nCoxGenElt_i}{m_{ij}} \qquad (m_{ij} \neq \infty)$.
	\end{enumerate}
	We call operations of type $(2)$ \emph{Artin operations}.
\end{definition}
We say a Coxeter word is \emph{reduced} if it is of minimum length among all words representing the same group element.
\begin{theorem}
	\label{thm:tits_algorithm}
	Let $\Gamma$ be a Coxeter graph.
	\begin{enumerate}
		\item A Coxeter word in $\nCoxG$ is reduced if and only if it is of minimum length with respect to the operations defined in \Cref{def:M_operations}.
		      \smallskip
		\item Any two reduced Coxeter words $w$ and  $w^\prime$, which correspond to the same group element in $\nCoxG$ are related by a sequence of Artin operations.
	\end{enumerate}
\end{theorem}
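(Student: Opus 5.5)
Since \Cref{thm:tits_algorithm} is classical, the proof would reconstruct the standard argument. The two parts are proved by a joint induction on word length, and all the genuine input comes from the \emph{exchange condition}. That condition reads: if $w = \nCoxGenElt_{i_1}\cdots\nCoxGenElt_{i_k}$ is reduced and $\ell(\nCoxGenElt w) < \ell(w)$ for a generator $\nCoxGenElt$, then $\nCoxGenElt w = \nCoxGenElt_{i_1}\cdots\widehat{\nCoxGenElt_{i_p}}\cdots\nCoxGenElt_{i_k}$ for some $p$, and the mirror statement holds on the right.

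\textbf{Step 1: the exchange condition.} I would realise $\nCoxG$ through its Tits (geometric) representation on $V = \bigoplus \R e_i$, using the bilinear form $B(e_i,e_j) = -\cos(\pi/m_{ij})$ (taken to be $-1$ when $m_{ij}=\infty$). The key lemma is that for every $w$ and every $i$, the vector $w(e_i)$ is either a nonnegative or a nonpositive combination of the $e_j$, and that it is negative exactly when $\ell(w\nCoxGenElt_i)<\ell(w)$. This is proved by induction on $\ell(w)$, reducing to the rank-two parabolic subgroup $\langle \nCoxGenElt_i,\nCoxGenElt_j\rangle$, where it is an explicit dihedral computation. The exchange condition then follows by tracking the unique positive root $e_i$ that is sent negative along the word. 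I expect this to be the main obstacle, because it is the only place where one needs more than the presentation.

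\textbf{Step 2: part (2), Matsumoto's theorem.} Suppose $w = \nCoxGenElt_i u$ and $w' = \nCoxGenElt_j u'$ are reduced words for the same element $g$, of length $k$, and assume part (2) holds for all lengths less than $k$.
\begin{enumerate}
	\item If $i=j$, then $u$ and $u'$ are reduced words for the same element of length $k-1$, so induction relates them by Artin operations.
	\item If $i\neq j$, both $\nCoxGenElt_i$ and $\nCoxGenElt_j$ are left descents of $g$. Repeated use of the exchange condition shows that every element of $\langle \nCoxGenElt_i,\nCoxGenElt_j\rangle$ is a left prefix of $g$. This forces $m_{ij}<\infty$, and gives a reduced word $\nPi{\nCoxGenElt_i}{\nCoxGenElt_j}{m_{ij}}\, v$ for $g$.
	\item By the first case (induction), $w$ is related to $\nPi{\nCoxGenElt_i}{\nCoxGenElt_j}{m_{ij}}\, v$ by Artin operations. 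Likewise $w'$ is related to $\nPi{\nCoxGenElt_j}{\nCoxGenElt_i}{m_{ij}}\, v$.
	\item These two words differ by a single Artin operation, which joins the two chains.
\end{enumerate}

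\textbf{Step 3: part (1).} One direction is immediate: a reduced word is of minimal length, so in particular it cannot be shortened by the operations of \Cref{def:M_operations}. For the converse, let $w=\nCoxGenElt_{i_1}\cdots\nCoxGenElt_{i_k}$ be non-reduced. Take the least $p$ such that the prefix $\nCoxGenElt_{i_1}\cdots\nCoxGenElt_{i_p}$ is not reduced. Then $\nCoxGenElt_{i_1}\cdots\nCoxGenElt_{i_{p-1}}$ is reduced and has $\nCoxGenElt_{i_p}$ as a right descent. By the exchange condition it has a reduced word of the form $v\,\nCoxGenElt_{i_p}$. By part (2) at length $p-1<k$, Artin operations turn $\nCoxGenElt_{i_1}\cdots\nCoxGenElt_{i_{p-1}}$ into $v\,\nCoxGenElt_{i_p}$. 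The resulting square $\nCoxGenElt_{i_p}^2$ is then deleted, which shortens $w$.

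Iterating this step shows that any word can be brought down to a reduced word using operations of types (1) and (2). Artin operations preserve length, so a word that cannot be shortened by these operations is already reduced. The induction is consistent because Step 3 at length $k$ uses part (2) only at lengths below $k$.
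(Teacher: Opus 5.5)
The paper does not prove this statement. It quotes it as a classical theorem of Tits from \cite[Section 3.4]{davis_geometry_2025}, and your proposal is a correct reconstruction of the standard proof: the exchange condition from the Tits representation, then Matsumoto's theorem via the lemma that two distinct left descents $s_i,s_j$ force $m_{ij}<\infty$ and a reduced word beginning with the alternating word of length $m_{ij}$, then shortening at the first non-reduced prefix.

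You leave two inputs implicit, and both come out of your Step 1. The first is faithfulness of the Tits representation, which turns the root identity into the conjugation relation in $W$ when you derive the exchange condition. The second is that $s_is_j$ has order exactly $m_{ij}$ in $W$, which rules out the exchange condition deleting a letter inside the alternating prefix when you iterate it in Step 2.
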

\begin{definition}
	\label{def:toggling_operation}
	Let $w$ be a Coxeter word.
	We say that a word $w'$ is obtained from~$w$ by a \emph{toggling operation} if $w'$ is obtained from $w$ by replacing some $\nPi{\nCoxGenElt_i}{\nCoxGenElt_j}{m_{ij} + k}$ subword by $\nPi{\nCoxGenElt_j}{\nCoxGenElt_i}{m_{ij} -k}$, for some integer $k$.
\end{definition}
We may say that a subword is \emph{toggled} if a toggling operation is applied.
We note that Artin operations are toggling operations, so we may refer to their application as \emph{toggling} if this is obvious from context.
\begin{lemma}
	\label{lem:square_free_maximal_is_conserved}
	Let $w$ be a square-free Coxeter word.
	Let $u$ be a maximal alternating subword of $w$ which is toggled to $u^\prime$, resulting in the word $w^\prime$.
	Then $w^\prime$ is square-free and $u^\prime$ is a maximal alternating subword of $w^\prime$.
\end{lemma}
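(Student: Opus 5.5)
The argument is a direct check of what happens at the two ends of $u$. The key observation is that maximality of $u$, together with square-freeness of $w$, forces the letters adjacent to $u$ in $w$ to lie outside $\Set{\nCoxGenElt_i,\nCoxGenElt_j}$. Once we have this, both conclusions follow at once from the fact that $u^\prime$ is again an $(\nCoxGenElt_i,\nCoxGenElt_j)$-alternating word.

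Write $w = a\,u\,b$, where $u = \nPi{\nCoxGenElt_i}{\nCoxGenElt_j}{m_{ij}+k}$ and $a,b$ are (possibly empty) words. The toggled word is then $w^\prime = a\,u^\prime\,b$ with $u^\prime = \nPi{\nCoxGenElt_j}{\nCoxGenElt_i}{m_{ij}-k}$. We assume, as is implicit in the statement, that $u^\prime$ is nonempty, i.e.~$m_{ij}-k \geq 1$. If $u^\prime$ were empty, the last letter of $a$ and the first letter of $b$ could coincide, and the statement would fail.

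\emph{Step 1.} We show that the last letter $x$ of $a$ (if $a$ is nonempty) is not in $\Set{\nCoxGenElt_i,\nCoxGenElt_j}$. The word $u$ begins with $\nCoxGenElt_i$. If $x = \nCoxGenElt_i$, then $w$ contains the square $\nCoxGenElt_i^2$, which contradicts square-freeness. If $x = \nCoxGenElt_j$, then $x\,u$ is a longer $(\nCoxGenElt_i,\nCoxGenElt_j)$-alternating subword, which contradicts maximality of $u$. By the same argument, the first letter $y$ of $b$ (if $b$ is nonempty) is not in $\Set{\nCoxGenElt_i,\nCoxGenElt_j}$.

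Two degenerate cases need a brief remark. First, $u$ might have length $1$; then the alternating pair is not determined by $u$ alone. In this case, any neighbouring letter other than $\nCoxGenElt_i$ would extend $u$ to a length $2$ alternating word, and $\nCoxGenElt_i$ itself is excluded by square-freeness. So $w = u$, and the claim is trivial. Second, $u$ might sit at the start or end of $w$; then the corresponding neighbour simply does not exist, and there is nothing to check on that side.

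\emph{Step 2.} We show that $w^\prime$ is square-free. Any square $zz$ in $w^\prime = a\,u^\prime\,b$ lies in one of three places.
\begin{itemize}
	\item It lies entirely in $a$ or entirely in $b$. These are unchanged subwords of the square-free word $w$, so this is impossible.
	\item It lies inside $u^\prime$. This is impossible because consecutive letters of an alternating word are distinct.
	\item It straddles a boundary, so it is $x$ followed by the first letter of $u^\prime$, or the last letter of $u^\prime$ followed by $y$. This is impossible because $x,y \notin \Set{\nCoxGenElt_i,\nCoxGenElt_j}$ by Step 1, while every letter of $u^\prime$ lies in this set.
\end{itemize}

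\emph{Step 3.} We show that $u^\prime$ is a maximal alternating subword of $w^\prime$. Suppose some alternating subword of $w^\prime$ strictly contains $u^\prime$. If $u^\prime$ has length at least $2$, its generators are $\nCoxGenElt_i$ and $\nCoxGenElt_j$, so the longer word must contain $x$ or $y$ and would require $x$ or $y$ to be in $\Set{\nCoxGenElt_i,\nCoxGenElt_j}$, contradicting Step 1. If $u^\prime$ has length $1$, then $u^\prime$ together with a neighbour $x$ or $y$ forms a length $2$ alternating word. In this edge case, maximality should be read relative to $(\nCoxGenElt_i,\nCoxGenElt_j)$-alternating subwords, and with that reading the same argument applies.

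I expect no real obstacle here. The only delicate point is settling these degenerate length conventions (empty or length-one $u^\prime$, and $u$ at the boundary of $w$), so that the statement is read in the intended sense.
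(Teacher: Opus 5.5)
Your proof is correct and follows the paper's own argument: maximality of $u$ and square-freeness of $w$ force the letters adjacent to $u$ to lie outside $\{s_i,s_j\}$, which rules out both a square and an alternating extension at each junction with $u^\prime$. Your handling of the degenerate cases (empty or length-one $u^\prime$) refines what the paper passes over silently; the omission is harmless there, because every later toggle is an Artin operation or an $(m_{ij}\pm1)$-reduction/expansion with $m_{ij}\geq 5$, so $u^\prime$ always has length at least $4$.
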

\begin{proof}
	We can assume $u$ is a $(\nCoxGenElt_i,\nCoxGenElt_j)$-alternating word.
	It is sufficient to prove that:
	\begin{enumerate}
		\item The first letter of $u^\prime$ and the preceding letter in  $w^\prime$ do not make a square.
		      Similarly, the last letter of $u^\prime$ and the following letter in $w^\prime$ do not make a square.
		      \smallskip
		\item If the first letter of $u^\prime$ is $\nCoxGenElt_i$, then the preceding letter in  $w^\prime$ is not  $\nCoxGenElt_j$.
		      If the first letter of $u^\prime$ is $\nCoxGenElt_j$, then the preceding letter in $w^\prime$ is not $\nCoxGenElt_i$.
		      Similarly, for the last letter of $u^\prime$ and the following letter in $w^\prime$.
	\end{enumerate}
	Any contradiction to (1) would contradict that $u$ was a maximal alternating subword.
	Any contradiction to (2) would contradict that $w$ was square-free.
\end{proof}
\begin{remark}
	The above lemma does not say that \emph{all} maximal alternating subwords of $w$ are maximal alternating subwords of $w^\prime$.
	It is possible that another alternating subword which is not a subword of $u$ makes a longer alternating subword of $w^\prime$ by interacting with the first or last letter of $u^\prime$.
\end{remark}
\begin{remark}
	\label{rmk:maximal_implies_overlap_at_most_1_and_share_exactly_1_letter}
	Given a Coxeter word $w$ and a maximal alternating subword $u$ of $w$, if there is another alternating subword $u^\prime$ which overlaps but is not a subword of $u$, then $u$ and $u^\prime$ overlap over one letter and have exactly one generator in common.
\end{remark}

%% file: sections/lemma/property_A.tex
\subsection{Sufficient conditions for a Coxeter word to be reduced}
Supposing we have a square-free Coxeter word $w$, if we were able to control what Artin operations do to $w$, without introducing squares, then \Cref{thm:tits_algorithm} tells us the Coxeter word would be reduced.
We will consider the case where all $m_{ij}\geq 5$, so we will need to worry about the application of Artin operations creating new alternating subwords of length $5$.
There are two situations where this can occur:
\begin{enumerate}
	\item An $(\nCoxGenElt_i,\nCoxGenElt_j)$-alternating subword $a$ with $\ell(a) = m_{ij}$ is adjacent to an $(\nCoxGenElt_i,\nCoxGenElt_k)$-alternating subword $u$ with $\ell(u) = 4$.
	      In this case, toggling $a$ might create a new alternating $(\nCoxGenElt_i,\nCoxGenElt_k)$-subword $u^\prime$ with $\ell(u^\prime)=5$.
	      \smallskip
	\item An $(\nCoxGenElt_i,\nCoxGenElt_j)$-alternating subword $a$ with $\ell(a) = m_{ij}$ is immediately followed by a subword $u = \nCoxGenElt_k\nCoxGenElt_i\nCoxGenElt_k$, which is then immediately followed by an alternating $(\nCoxGenElt_k,\nCoxGenElt_l)$-subword $a^\prime$ with $\ell(a^\prime) = m_{kl}$.
	      In this case, some combination of toggling $a$ and $a^\prime$ might create an alternating subword $u^\prime = \nCoxGenElt_i\nCoxGenElt_k\nCoxGenElt_i\nCoxGenElt_k\nCoxGenElt_i$ with $\ell(u^\prime) = 5$.
\end{enumerate}
We can avoid both of these situations by forbidding any alternating word $a$ with $\ell(a) = m_{ij}$ from having any adjacent neighbour $u$ where $\ell(u) \geq 3$ and $u$ has generators in common with $a$.
We call such a $u$ a \emph{$3$-neighbour} of $a$.
\crefname{enumi}{property}{properties}
\Crefname{enumi}{Property}{Properties}
\begin{definition}
	\label{def:factorisation_prop_F}
	Suppose $w$ is a square-free Coxeter word, and let $a_1,\ldots,a_m$ be any collection of maximal alternating subwords of $w$ with all $\ell(a_k) \geq 2$.
	We say that  $a_1,\ldots,a_m$ have \emph{property~$\nFactPropF$} if:
	\begin{enumerate}[label={$\nFactPropF$}.\arabic*, ref={$\nFactPropF$}.\arabic*]
		\item Every pair $a_i,a_j$ has either  $0$ or  $2$ generators in common.\label{prop:F_0_or_2_gens_in_common}
		      \smallskip
		\item No  $a_k$ has a  $3$-neighbour which is disjoint from the other $a_{k^\prime}$ subwords.\label{prop:F_3_neighbour}
	\end{enumerate}
\end{definition}
\begin{remark}
	\label{rmk:F_a_subwords_disjoint}
	Since the $a_1,\ldots,a_m$ subwords are assumed to be maximal, by \Cref{rmk:maximal_implies_overlap_at_most_1_and_share_exactly_1_letter} and \cref{prop:F_0_or_2_gens_in_common}, these subwords are also all disjoint.
\end{remark}
\begin{lemma}
	\label{lem:sep_length_4_adjacent_shares_no_gens}
	Let $\Gamma$ be a Coxeter graph and let $w$ be a square-free Coxeter word in~$\nCoxG$.
	Suppose the collection of maximal alternating subwords $a_1,\ldots,a_m$ has property~$\nFactPropF$.
	Then the following hold.
	Note, a generic subword of $w$ need not respect the $a_k$ subwords.
	\begin{enumerate}
		\item If $u$ is an alternating, length $\geq 4$ subword of $w$  which is adjacent to any $a_k$ subword, then $u$ and $a_k$ have 0 generators in common.
		      \smallskip
		\item If any alternating subword of $w$ of length $\geq 5$ overlaps some $a_k$, then it is a subword of  $a_k$.
	\end{enumerate}
\end{lemma}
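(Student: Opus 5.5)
The plan is to prove statement~(1) directly from \cref{prop:F_0_or_2_gens_in_common} and \cref{prop:F_3_neighbour}, and then deduce statement~(2) from statement~(1) by trimming one letter.

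For statement~(1), let $u$ be an alternating subword of length $\geq 4$ immediately adjacent to $a_k$; say $u$ immediately precedes $a_k$, the other case being symmetric. Let $x$ be the last letter of $u$ and $y$ the first letter of $a_k$.
\begin{itemize}
	\item Square-freeness of $w$ gives $x \neq y$.
	\item If $u$ and $a_k$ had both generators in common, then $u a_k$ would be a longer alternating subword, contradicting maximality of $a_k$.
	\item So, assuming for contradiction that they share a generator, they share exactly one.
\end{itemize}
Then $u$ has length $\geq 3$, is adjacent to $a_k$ and shares a generator with it, so $u$ is a $3$-neighbour of $a_k$. By \cref{prop:F_3_neighbour}, $u$ must meet some other $a_{k'}$.

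Two cases arise for how $u$ meets $a_{k'}$.
\begin{itemize}
	\item If $u$ is a subword of $a_{k'}$, then $a_{k'}$ has the same generator pair as $u$. Hence $a_{k'}$ shares exactly one generator with $a_k$, contradicting \cref{prop:F_0_or_2_gens_in_common}.
	\item Otherwise, by \Cref{rmk:maximal_implies_overlap_at_most_1_and_share_exactly_1_letter} applied to the maximal word $a_{k'}$, the overlap is a single letter at an end of $u$. By \Cref{rmk:F_a_subwords_disjoint}, $a_{k'}$ is disjoint from $a_k$ and has length $\geq 2$. So this letter cannot be $x$, since then $a_{k'}$ would run into $a_k$. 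It must therefore be the first letter of $u$.
\end{itemize}

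In the second case, delete that first letter to get $u'$, an alternating word of length $\geq 3$. It is still adjacent to $a_k$, still shares a generator with it, and is disjoint from $a_{k'}$. We check that $u'$ meets no other $a_{k''}$.
\begin{itemize}
	\item If $u' \subseteq a_{k''}$, the same generator-counting argument as above contradicts \cref{prop:F_0_or_2_gens_in_common}.
	\item Otherwise the overlap is a single end letter of $u'$. It cannot be the end adjacent to $a_k$, by disjointness from $a_k$. It also cannot be the other end: $a_{k''}$ would then have to contain the deleted letter, since it has length $\geq 2$, and so would meet $a_{k'}$, contradicting disjointness.
\end{itemize}
Thus $u'$ is a $3$-neighbour of $a_k$ disjoint from all other $a_{k''}$, contradicting \cref{prop:F_3_neighbour}. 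This is exactly where $\ell(u)\geq 4$ is needed: after trimming one letter we still have length $\geq 3$. The main obstacle is this bookkeeping, namely showing the trimmed word avoids every $a$-subword, using disjointness together with $\ell(a_{k''}) \geq 2$.

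For statement~(2), let $v$ be alternating of length $\geq 5$ overlapping $a_k$ but not contained in it. By \Cref{rmk:maximal_implies_overlap_at_most_1_and_share_exactly_1_letter}, $v$ and $a_k$ overlap in exactly one letter, at an end of $a_k$, and share exactly one generator. Removing that letter from $v$ gives an alternating subword of length $\geq 4$, immediately adjacent to $a_k$, which still shares that generator with $a_k$. This contradicts statement~(1).
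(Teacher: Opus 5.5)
Your proof is correct and follows the paper's route. For (1), you trim the single letter of $u$ shared with another $a_{k'}$ to get a $3$-neighbour of $a_k$ disjoint from all other $a$-subwords, contradicting \cref{prop:F_3_neighbour}; for (2), you remove the one overlapping letter and reduce to (1). You also supply bookkeeping the paper leaves implicit: the case $u \subseteq a_{k'}$ via \cref{prop:F_0_or_2_gens_in_common}, why the shared letter cannot be the one next to $a_k$, and why the trimmed word meets no further $a_{k''}$.
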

\begin{proof}
	For (1), by \Cref{rmk:maximal_implies_overlap_at_most_1_and_share_exactly_1_letter}, such a subword can overlap some other $a_{k^\prime}$ factor over at most one letter.
	The remaining $3$ letters of $u$ would be a $3$-neighbour of $a_k$ disjoint from all the other $a_{k^\prime}$ subwords.

	For (2), suppose the contrary.
	Let~$u$ be such a subword of $w$ with $\ell(u) \geq 5$, such that $u$ overlaps, but is not a subword of, some $a_k$ subword.
	By \Cref{rmk:maximal_implies_overlap_at_most_1_and_share_exactly_1_letter},~$u$ overlaps~$a_k$ over exactly one letter.
	The remaining $\geq 4$ letters of~$u$ form an alternating subword of $w$ which is adjacent to~$a_k$ and has 1 generator in common with~$a_k$, contradicting (1).
\end{proof}
\begin{definition}
	\label{def:word_prop_A}
	Let $\Gamma$ be a Coxeter graph with all $m_{ij} \geq 5$.
	Let $w$ be a square-free Coxeter word in $W_\Gamma$, and let $a_1,\ldots,a_m$ be all alternating subwords of $w$ of length at least $m_{ij}$, for all $i$ and $j$.
	We say $w$ \emph{has property $\nWordPropA$} if $a_1,\ldots,a_m$ are all maximal and have property $\nFactPropF$.
\end{definition}
\begin{remark}
	In the definition above, the subwords being maximal implies that all alternating subwords of $w$ are of length at most $m_{ij}$, since an alternating word of length $m_{ij}+1$ has two non-maximal subwords of length $m_{ij}$.
\end{remark}
\begin{remark}
	\label{rmk:A_l_emptyword_implies_adjacent_as_have_0_gens_in_common}
	Given a square-free Coxeter word $w$ and a collection of maximal alternating subwords $a_1,\ldots,a_m$ with property $\nFactPropF$, if any pair $a_i,a_j$ are immediately adjacent in $w$, then $a_i$ and $a_j$ must have 0 generators in common.
	This follows from \cref{prop:F_0_or_2_gens_in_common}.
\end{remark}
\begin{remark}
	\label{rmk:subset_preserves_F}
	Given a Coxeter word $w$ and a collection of maximal alternating subwords $a_1,\ldots,a_m$ with property $\nFactPropF$, any subset of alternating subwords $A \subseteq \Set{a_1,\ldots,a_m}$ also has property $\nFactPropF$.
\end{remark}
\begin{lemma}
	\label{lem:toggles_applied_to_F_remains_F}
	Let $w$ be a square-free Coxeter word and let $a_1,\ldots,a_m$ be a collection of maximal alternating subwords of $w$ with property $\nFactPropF$.
	If we apply a toggling operation to any number of the $a_k$ factors, then the resulting word is square-free and the resulting collection of subwords $a^\prime_1,\ldots,a^\prime_m$ has property $\nFactPropF$.
\end{lemma}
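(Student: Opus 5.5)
We proceed by induction on the number of $a_k$ factors that are toggled, so it suffices to handle toggling a single factor $a_k$ to $a_k'$. Because toggling leaves the other $a_{k'}$ untouched, and these are disjoint from $a_k$ by \Cref{rmk:F_a_subwords_disjoint}, the new collection $a_1,\ldots,a_k',\ldots,a_m$ is well defined. Each induction step then reuses the single-toggle statement, with the updated word and collection as the new hypothesis.

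\textbf{Square-freeness and maximality.} By \Cref{lem:square_free_maximal_is_conserved}, the new word $w'$ is square-free and $a_k'$ is a maximal alternating subword of $w'$. We must also check that every other $a_{k'}$ stays maximal in $w'$. By the remark following \Cref{lem:square_free_maximal_is_conserved}, the only way $a_{k'}$ could fail to be maximal is if it became extendable across a letter of $a_k'$. That requires $a_{k'}$ to be immediately adjacent to $a_k'$, and then \Cref{rmk:A_l_emptyword_implies_adjacent_as_have_0_gens_in_common} says $a_k$ and $a_{k'}$ share $0$ generators. Since $a_k'$ uses the same generators as $a_k$, no extension across the boundary is possible. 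If $a_{k'}$ is not adjacent to $a_k$, then the letter next to $a_{k'}$ is unchanged, so maximality is unaffected. One degenerate case must be noted: if the toggle has $k=m_{ij}$, then $a_k'$ is empty and $w'$ may acquire new adjacencies. Either the definition of toggling implicitly keeps $\ell(a_k')\geq 2$ (the lemma requires $\ell(a_k)\geq 2$), or this case needs a separate argument. I would restrict to toggles with $\ell(a_k')\geq 2$, and if needed handle the shorter results by noting that deleting $a_k$ joins its two neighbours.

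\textbf{\Cref{prop:F_0_or_2_gens_in_common}.} This is immediate, since $a_k'$ has the same generator set as $a_k$ and all other factors are unchanged.

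\textbf{\Cref{prop:F_3_neighbour}.} This is the main obstacle. Suppose that in $w'$ some $a_{k''}'$ has a $3$-neighbour $u$ that is disjoint from the other factors. We split into cases according to whether $u$ meets the new letters of $a_k'$.

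If $u$ lies entirely in the unchanged part of the word and $k''\neq k$, we may have $u$ adjacent to $a_k'$ but not adjacent to $a_{k''}$ in a new way. The neighbour relation only involves letters immediately adjacent to $a_{k''}$, and these are unchanged unless $a_{k''}$ is adjacent to $a_k$. In that adjacent case, $u$ would have to overlap $a_k'$, which contradicts disjointness. So $u$ was already a $3$-neighbour of $a_{k''}$ in $w$, a contradiction.

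If $u$ overlaps $a_k'$, then disjointness forces $u$ to be the $3$-neighbour of $a_k'$ itself, so $k''=k$. Then $u$ lies outside $a_k'$ and is adjacent to it. It shares a generator with $a_k'$, hence with $a_k$, and it consists of letters of $w$ that do not belong to the new letters. For $u$ to count as a $3$-neighbour of $a_k$ in $w$, its boundary letter must still be adjacent to $a_k$. This holds because toggling replaces $a_k$ in place.

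The delicate point is that the end letters of $a_k'$ may differ from those of $a_k$. This affects only which generators $u$ shares at the junction, not the set $\{s_i,s_j\}$, and the definition of a $3$-neighbour refers only to generators in common. So $u$ was already a $3$-neighbour of $a_k$ disjoint from the others, contradicting the hypothesis. I expect carefully justifying ``adjacent'' and ``disjoint'' across the replacement, especially when $\ell(a_k')$ is small, to be the part needing most care.
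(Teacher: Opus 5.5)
Your proposal is correct and follows the same route as the paper. Both induct on the number of toggled factors, obtain square-freeness of $w'$ and maximality of $a_k'$ from \Cref{lem:square_free_maximal_is_conserved}, keep the other factors maximal via \Cref{rmk:A_l_emptyword_implies_adjacent_as_have_0_gens_in_common}, and then observe that both conditions of property $\nFactPropF$ carry over. The paper calls the $3$-neighbour condition trivial, whereas you check it explicitly; your second case is really the case $k''=k$ with $u$ adjacent to (not overlapping) $a_k'$, since a $3$-neighbour never overlaps its factor.
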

\begin{proof}
	We claim that after toggling some $a_k$, the resulting word $w^\prime$ is square-free and the factors $a_1,\ldots,a^\prime_k,\ldots,a_m$ have property $\nFactPropF$.
	The lemma then follows by induction.

	We prove the claim.
	We first note that by \Cref{lem:square_free_maximal_is_conserved}, $a^\prime_k$ is maximal in $w^\prime$ and $w^\prime$ is square-free.
	Of the remaining $\Set{a_1,\ldots,a_m}\setminus \Set{a_k}$, if any factor $a_i$ was immediately adjacent to $a_k$, then by \Cref{rmk:A_l_emptyword_implies_adjacent_as_have_0_gens_in_common}, $a_i$ would share $0$ generators with $a_k$, so $a_i$ would still be maximal in $w^\prime$.
	We have shown that each of $a_1,\ldots,a_k^\prime,\ldots,a_m$ is maximal in $w^\prime$.
	\Cref{prop:F_0_or_2_gens_in_common,prop:F_3_neighbour} trivially hold for this collection.
\end{proof}
\begin{lemma}
	\label{lem:prop_A_is_conserved}
	Let $\Gamma$ be a Coxeter graph with all $m_{ij} \geq 5$.
	Let $w$ be a square-free word in $\nCoxG$ that has property $\nWordPropA$.
	After applying any valid Artin operation to $w$, the resulting word is still square-free and still has property $\nWordPropA$.
\end{lemma}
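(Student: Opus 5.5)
The approach is to show that any valid Artin operation on $w$ is a toggle of one of the distinguished subwords $a_k$. Once that is established, \Cref{lem:toggles_applied_to_F_remains_F} and \Cref{lem:sep_length_4_adjacent_shares_no_gens} should give everything else.

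First, let the Artin operation replace a subword $\nPi{\nCoxGenElt_i}{\nCoxGenElt_j}{m_{ij}}$ of $w$ by $\nPi{\nCoxGenElt_j}{\nCoxGenElt_i}{m_{ij}}$. This subword is alternating of length $m_{ij}$. By \Cref{def:word_prop_A}, it is therefore one of the $a_k$, and it is maximal. So the operation is exactly a toggling operation (with $k=0$) applied to the single factor $a_k$, producing $a_k^\prime$ with $\ell(a_k^\prime)=\ell(a_k)=m_{ij}$. By \Cref{lem:toggles_applied_to_F_remains_F}, the resulting word $w^\prime$ is square-free, and the collection $a_1,\ldots,a_k^\prime,\ldots,a_m$ consists of maximal alternating subwords of $w^\prime$ with property $\nFactPropF$.

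It remains to check that this collection is precisely the set of all alternating subwords of $w^\prime$ of length at least the relevant $m$-value; that is, no new long alternating subword has been created. Let $v$ be an alternating subword of $w^\prime$ with $\ell(v)\geq m\geq 5$, where $m$ is the value for its pair of generators. There are two cases.
\begin{enumerate}
	\item If $v$ overlaps some $a_l$ (or $a_k^\prime$), then \Cref{lem:sep_length_4_adjacent_shares_no_gens}(2), applied to $w^\prime$ with the collection above, shows that $v$ is a subword of that factor. Each factor has length exactly its $m$-value, since by the remark after \Cref{def:word_prop_A} no alternating subword of $w$ exceeds it. Hence $v$ equals the factor.
	\item If $v$ is disjoint from every factor, in particular from $a_k^\prime$, then $v$ lies entirely in the unchanged prefix or suffix of $w^\prime$. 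So $v$ is also an alternating subword of $w$ of length at least $m$, and hence is some $a_l$ with $l\neq k$. That contradicts disjointness.
\end{enumerate}
Thus the long alternating subwords of $w^\prime$ are exactly $a_1,\ldots,a_k^\prime,\ldots,a_m$, which are maximal and have property $\nFactPropF$. Therefore $w^\prime$ has property $\nWordPropA$.

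The step needing most care is the disjoint case: we must argue that a subword of $w^\prime$ avoiding $a_k^\prime$ really is literally a subword of $w$. This holds because the toggle only rewrites the letters of $a_k$ and preserves length. We also rely on the fact that, after the toggle, \Cref{lem:sep_length_4_adjacent_shares_no_gens} applies to $w^\prime$; this is legitimate precisely because \Cref{lem:toggles_applied_to_F_remains_F} has already supplied square-freeness and property $\nFactPropF$ for $w^\prime$.
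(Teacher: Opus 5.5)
Your proof is correct and follows the paper's argument. You identify the Artin operation as a toggle of one of the $a_k$, invoke \Cref{lem:toggles_applied_to_F_remains_F} for square-freeness and property $\nFactPropF$, and rule out new long alternating subwords with statement (2) of \Cref{lem:sep_length_4_adjacent_shares_no_gens}. Your overlapping/disjoint case split spells out the step the paper compresses into ``it must be that $u$ overlaps $a_k^\prime$'', and you handle $m_{ij}=\infty$ vacuously rather than in a separate paragraph.
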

\begin{proof}
	Let $w^\prime$ be the word resulting from the Artin operation.
	Let $a_1,\ldots,a_m$ be the corresponding collection of length $m_{ij}$ alternating subwords of $w$, and suppose the Artin operation is applied to the factor $a_k$, resulting in the subword $a_k^\prime$ of $w^\prime$.
	By \Cref{lem:toggles_applied_to_F_remains_F}, $w^\prime$ is square-free and $a_1,\ldots,a_k^\prime,\ldots,a_m$ has property $\nFactPropF$.

	It remains to show that there are no length $m_{ij}$ subwords in $w^\prime$ other than those in $a_1,\ldots,a^\prime_k,\ldots,a_m$.
	Suppose the contrary, that $u$ is a length $m_{ij}$ alternating subword which is not a subword of one of $a_1,\ldots,a^\prime_k,\ldots,a_m$.
	It must be that $u$ overlaps $a^\prime_k$, but since $u$ has length $\geq 5$, this contradicts statement (2) of \Cref{lem:sep_length_4_adjacent_shares_no_gens}.

	If $m_{ij} = \infty$ for some $i$ and $j$, then no $a_k$ subword is an $(\nCoxGenElt_i,\nCoxGenElt_j)$-alternating word.
	\Cref{prop:F_3_neighbour} is the only property which is relevant to parts of $w^\prime$ not in the $a_k$ subwords and this property does not concern  $m_{ij}$.
\end{proof}
\begin{corollary}
	\label{cor:sep_is_reduced_and_control_on_reduced_forms}
	Let $\Gamma$ be a Coxeter graph with all $m_{ij} \geq 5$.
	Let $w$ be a Coxeter word in $\nCoxG$ having property $\nWordPropA$, with $a_1,\ldots,a_m$ all alternating subwords of length at least $m_{ij}$.
	The following hold:
	\begin{enumerate}
		\item The Coxeter word $w$ is reduced.
		      \smallskip
		\item The set of reduced Coxeter words for this group element has cardinality $2^m$.
		      We achieve each of these by applying (or not) Artin operations to each of the $a_k$ subwords.
		      \smallskip
		\item These Artin operations can be applied in any order, i.e.~Artin operations which can be applied to $w$ commute.
	\end{enumerate}
\end{corollary}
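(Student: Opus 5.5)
The plan is to run Tits' solution to the word problem (\Cref{thm:tits_algorithm}) on the set of words reachable from $w$. The key tool is the invariance statement \Cref{lem:prop_A_is_conserved}. Let $\mathcal{R}(w)$ be the set of Coxeter words obtained from $w$ by finite sequences of Artin operations. Since $w$ has property $\nWordPropA$, induction on the length of the sequence with \Cref{lem:prop_A_is_conserved} shows that every $w' \in \mathcal{R}(w)$ is square-free and has property $\nWordPropA$. Artin operations preserve length, so every such $w'$ has length $\ell(w)$ and contains no subword $\nCoxGenElt_i^2$. Hence no operation of type (1) in \Cref{def:M_operations} ever applies, and $\ell(w)$ cannot be decreased by the operations of \Cref{def:M_operations}. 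By part (1) of \Cref{thm:tits_algorithm}, $w$ is reduced, which gives statement (1).

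For statements (2) and (3), I will describe $\mathcal{R}(w)$ exactly. First I check that the only Artin operations applicable to any $w' \in \mathcal{R}(w)$ are toggles of its distinguished subwords $a'_1,\ldots,a'_m$. An Artin operation acts on an alternating subword of length exactly $m_{ij}$. By property $\nWordPropA$ of $w'$, such a subword is one of the $a'_k$, since all alternating subwords of length at least $m_{ij}$ are listed and are maximal, hence of length exactly $m_{ij}$. Toggling $a_k$ a second time returns it to its original form. The $a_k$ are pairwise disjoint by \Cref{rmk:F_a_subwords_disjoint}, and toggling one of them leaves the positions and contents of the others unchanged. Therefore $\mathcal{R}(w)$ consists exactly of the words $w_S$, for $S \subseteq \Set{1,\ldots,m}$, obtained by toggling the $a_k$ with $k \in S$. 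Each $w_S$ is independent of the order of the toggles, which is statement (3).

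It remains to show that the $2^m$ words $w_S$ are pairwise distinct and that they exhaust the reduced words for the element. Exhaustion follows from part (2) of \Cref{thm:tits_algorithm}, since every reduced word for this element is connected to $w$ by Artin operations and so lies in $\mathcal{R}(w)$. For distinctness, the $a_k$ occupy fixed disjoint position intervals. Toggling $a_k$ swaps $\nPi{\nCoxGenElt_i}{\nCoxGenElt_j}{m_{ij}}$ with $\nPi{\nCoxGenElt_j}{\nCoxGenElt_i}{m_{ij}}$, which changes the first letter in that interval. Hence $S$ can be read off from $w_S$.

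The main obstacle is the bookkeeping claim that, after a toggle, the distinguished subwords of the new word are exactly the toggled collection and no new length-$m_{ij}$ alternating subword appears. \Cref{lem:prop_A_is_conserved} provides this: its proof identifies the length-$m_{ij}$ subwords of $w'$ as $a_1,\ldots,a_k',\ldots,a_m$. I will cite that identification explicitly rather than reprove it.
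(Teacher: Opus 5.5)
Your proposal is correct and follows essentially the same route as the paper. Statement (1) comes from \Cref{thm:tits_algorithm} combined with the fact that Artin operations preserve square-freeness and the defining property (\Cref{lem:prop_A_is_conserved}). Statements (2) and (3) come from the identification, made in the proof of that lemma, of the length-$m_{ij}$ alternating subwords after a toggle as exactly the toggled collection. Your explicit check that the $2^m$ words $w_S$ are pairwise distinct, by reading off the first letter of each fixed interval, is a detail the paper leaves implicit and is the right way to justify the exact cardinality.
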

\begin{proof}
	Statement (1) follows from \Cref{thm:tits_algorithm}, \Cref{lem:prop_A_is_conserved} and the fact that words with property $\nWordPropA$ are square-free.
	As for statement (2), after applying an Artin operation to $w$, every length $m_{ij}$ alternating subword of $w^\prime$ is one of the $\Set{a_1,\ldots,a^\prime_k,\ldots,a_m}$ factors.
	So, any further Artin operation on $w^\prime$ can only change one of the other full alternating $a_k$ factors, or revert $a^\prime_k$ back to $a_k$.
	Statement (2) now follows from \Cref{thm:tits_algorithm} and statement (3) also follows from these observations.
\end{proof}

%% file: sections/lemma/property_B.tex
\subsection{Controlling certain non-reduced Coxeter words}
\Cref{cor:sep_is_reduced_and_control_on_reduced_forms} gives good control over Coxeter words which have property $\nWordPropA$ in a context where all $m_{ij} \geq 5$.
However, property $\nWordPropA$ will turn out to be too restrictive.
We will modify its definition and allow length $m_{ij} + 1$ alternating subwords.
Such subwords are always reducible, as in the following example where $m_{ij} = 5$.
\[
	(\nCoxGenElt_i\nCoxGenElt_j\nCoxGenElt_i\nCoxGenElt_j\nCoxGenElt_i)\nCoxGenElt_j \rightsquigarrow
	(\nCoxGenElt_j\nCoxGenElt_i\nCoxGenElt_j\nCoxGenElt_i\nCoxGenElt_j)\nCoxGenElt_j \rightsquigarrow
	\nCoxGenElt_j\nCoxGenElt_i\nCoxGenElt_j\nCoxGenElt_i.
\]
Therefore, we cannot expect such words to be reduced.
However, we will still be able to control them quite well.

\begin{definition}
	\label{def:word_prop_B}
	Let $\Gamma$ be a Coxeter graph with all $m_{ij}\geq 5$.
	Let $w$ be a square-free Coxeter word in $W_\Gamma$, and let $a_1,\ldots,a_m$ be all maximal alternating subwords of $w$ of length at least $m_{ij}-1$, for all $i$ and $j$.
	We say that $w$ has \textit{property $\nWordPropB$} if:
	\begin{enumerate}[label={$\nWordPropB$}.\arabic*, ref={$\nWordPropB$}.\arabic*]
		\item Each of the subwords $a_1,\ldots,a_m$ has length at most $m_{ij}+1$,\label{prop:word_prop_B_alts_not_too_long}
		      \smallskip
		\item The factors $a_1,\ldots,a_m$ have property $\nFactPropF$.\label{prop:word_prop_B_has_F}
	\end{enumerate}
\end{definition}
\begin{definition}
	\label{def:b_word_operation}
	Consider the following two toggling operations, see \Cref{def:toggling_operation}.
	\begin{align*}
		\nPi{\nCoxGenElt_i}{\nCoxGenElt_j}{m_{ij} + 1} & \leadsto \nPi{\nCoxGenElt_j}{\nCoxGenElt_i}{m_{ij} - 1}, \\
		\nPi{\nCoxGenElt_i}{\nCoxGenElt_j}{m_{ij} - 1} & \leadsto \nPi{\nCoxGenElt_j}{\nCoxGenElt_i}{m_{ij} + 1}.
	\end{align*}
	We call operations of the first type \emph{$(m_{ij} +1)$-reductions}.
	Similarly, operations of the second type are called \emph{$(m_{ij} -1)$-expansions}.
\end{definition}
Suppose $\Gamma$ is a Coxeter graph with all $m_{ij} \geq 5$ and $w$ is a Coxeter word in $\nCoxG$ which has property $\nWordPropB$ and corresponding relevant subwords $a_1,\ldots,a_m$.
We could apply an $(m_{ij} +1)$-reduction to each $a_k$ factor of length $m_{ij} + 1$.
We call the resulting word, denoted $w^\prime$ in the following lemma, the \emph{obvious reduction of $w$}.
\begin{lemma}
	\label{lem:b_word_reduction_on_semi_sep_is_sep}
	Let $\Gamma$ be a Coxeter graph with all $m_{ij} \geq 5$.
	Let $w$ be a square-free Coxeter word with property $\nWordPropB$ in $\nCoxG$.
	Then its obvious reduction $w^\prime$ has property~$\nWordPropA$.
\end{lemma}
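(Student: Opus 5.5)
The plan is to treat the obvious reduction as a toggling operation applied simultaneously to the $a_k$ subwords of length $m_{ij}+1$, and then to transfer properties along this toggle via \Cref{lem:toggles_applied_to_F_remains_F}. Write $a_1,\ldots,a_m$ for the maximal alternating subwords of $w$ of length at least $m_{ij}-1$. By \cref{prop:word_prop_B_has_F}, this collection has property $\nFactPropF$.

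Each $(m_{ij}+1)$-reduction is a toggling operation with $k=1$. So \Cref{lem:toggles_applied_to_F_remains_F} gives at once that $w'$ is square-free. It also gives that the new collection $a_1',\ldots,a_m'$ is maximal in $w'$ and has property $\nFactPropF$. Here $a_k'=a_k$ if $\ell(a_k)\le m_{ij}$, and $\ell(a_k')=m_{ij}-1$ if $\ell(a_k)=m_{ij}+1$.

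By \cref{prop:word_prop_B_alts_not_too_long}, every $a_k'$ now has length at most $m_{ij}$. By \Cref{rmk:subset_preserves_F}, the subcollection $A=\Set{a_k' \given \ell(a_k')=m_{ij}}$ has property $\nFactPropF$ and consists of maximal subwords. It remains to show that $A$ is exactly the set of all alternating subwords of $w'$ of length at least $m_{ij}$. That would give property $\nWordPropA$.

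The main step is to rule out new long alternating subwords in $w'$. Suppose $u$ is an alternating subword of $w'$ with $\ell(u)\ge m_{ij}\ge 5$ that is not a subword of some $a_k'$. There are two cases.

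\emph{Case 1: $u$ overlaps some $a_k'$.} Statement (2) of \Cref{lem:sep_length_4_adjacent_shares_no_gens}, applied to $w'$ and the collection $a_1',\ldots,a_m'$ (which has property $\nFactPropF$), shows $u$ is a subword of $a_k'$. This is a contradiction.

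\emph{Case 2: $u$ is disjoint from all $a_k'$.} Then $u$ lies in a region of $w'$ that was untouched by the reductions, so $u$ is also an alternating subword of $w$. Extend $u$ to a maximal alternating subword of $w$. Since it has length at least $m_{ij}-1$, it is some $a_k$. The maximal extension of $u$ would then overlap the region where $a_k$ sat, contradicting disjointness. The exception is if $u$ is literally inside an unreduced $a_k$, but in that case $u$ is inside $a_k'=a_k$.

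The step I expect to need the most care is Case 2, specifically the bookkeeping of which letters of $w'$ are identified with letters of $w$. This is straightforward because the reductions replace each $a_k$ in place, but it must be stated carefully so that maximality in $w$ transfers. The remaining check is that subwords of length exactly $m_{ij}$ in $A$ are themselves maximal, which is part of the conclusion of \Cref{lem:toggles_applied_to_F_remains_F}.
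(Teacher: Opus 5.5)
Your proposal is correct and follows essentially the same route as the paper. You use \Cref{lem:toggles_applied_to_F_remains_F} to get square-freeness and property $\nFactPropF$ for the toggled collection, then statement (2) of \Cref{lem:sep_length_4_adjacent_shares_no_gens} to rule out new alternating subwords of length $m_{ij}$, and finally \Cref{rmk:subset_preserves_F} for the length-$m_{ij}$ subcollection. Your Case 2 just makes explicit the paper's one-line claim that any new such subword would have to overlap some $a_k'$; the ``exception'' you mention there cannot occur under the disjointness hypothesis, so it can be dropped.
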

\begin{proof}
	Let $a_1,\ldots,a_m$ be the maximal, alternating subwords with length at least $m_{ij}-1$ relevant to $w$ having property $\nWordPropB$.
	Let $a^\prime_1,\ldots,a^\prime_m$ be the collection of $a_k$ factors after applying the obvious reduction.
	By \Cref{lem:toggles_applied_to_F_remains_F}, this collection has property $\nFactPropF$ in $w^\prime$, which is square-free.
	If there were any new length $m_{ij} \geq 5$ alternating subword of $w^\prime$, which was not a subword of one of $a^\prime_1,\ldots,a^\prime_m$, then it would need to overlap one of those subwords.
	This would contradict statement (2) of \Cref{lem:sep_length_4_adjacent_shares_no_gens}, so all length $m_{ij}$ alternating subwords of $w^\prime$ are one of $a^\prime_1,\ldots,a^\prime_m$.
	Since each $a^\prime_k$ factor is maximal, it follows that each length $m_{ij}$ alternating subword of $w^\prime$ is maximal.
	The subset $A \subseteq \Set{a^\prime_1,\ldots,a^\prime_m}$ consisting of length $m_{ij}$ subwords of $w^\prime$ has property $\nFactPropF$ by \Cref{rmk:subset_preserves_F}.
\end{proof}
Let $\Gamma$ be a Coxeter graph with all $m_{ij} \geq 5$ and let $w$ be a Coxeter word in $\nCoxG$ with property $\nWordPropB$.
\Cref{lem:sep_length_4_adjacent_shares_no_gens} gives control on alternating subwords $u$ of $w$ with  $\ell(u) \geq 5$.
However, if $\ell(u) = m_{ij} +1$, then in the obvious reduction $w^\prime$, the resulting subword $u^\prime$ has $\ell(u^\prime) = m_{ij} - 1$, which could be~$4$.

Now suppose $w$ only has length $m_{ij} + 1$ alternating subwords for $m_{ij} \geq 6$.
In this case, we say that $w$ is \emph{$5$-tame}.
Then, all relevant length $m_{ij} - 1$ subwords will have length at least $5$, and we can use the theory we have already developed to control these.
It will turn out to be sufficient to only prove the following lemma for Coxeter words which are $5$-tame.
\begin{lemma}
	\label{lem:word_ops_relating_B_words}
	Let $\Gamma$ be a Coxeter graph such that $m_{ij} \geq 5$ for all $i,j$, let $w$ be a Coxeter word in $\nCoxG$ that has property $\nWordPropB$ and which is $5$-tame.
	Let $a_1,\ldots,a_m$ be the maximal alternating subwords of $w$ of length at least $m_{ij}-1$.
	Any Coxeter word for the same element of $W_\Gamma$ which also satisfies these properties can be obtained from~$w$ by toggling a subset of the subwords $a_1,\ldots,a_m$ in any order.
\end{lemma}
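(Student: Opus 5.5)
Let $v$ be a second Coxeter word for the same element of $\nCoxG$ that also has property $\nWordPropB$ and is $5$-tame, with relevant subwords $b_1,\ldots,b_{m'}$. The plan is to pass to obvious reductions, where \Cref{cor:sep_is_reduced_and_control_on_reduced_forms} applies. By \Cref{lem:b_word_reduction_on_semi_sep_is_sep}, the obvious reductions $w^\prime$ of $w$ and $v^\prime$ of $v$ both have property $\nWordPropA$. By \Cref{cor:sep_is_reduced_and_control_on_reduced_forms}(1), both are reduced, and they represent the same element. By part (2) of that corollary, $v^\prime$ is then obtained from $w^\prime$ by toggling (via Artin operations, in any order) a subset of the length-$m_{ij}$ alternating subwords of $w^\prime$. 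Write $a^\prime_k$ for the image of $a_k$ under the obvious reduction. The goal is to show that the images of the $b_l$ in $v^\prime$ are exactly obtained from the $a^\prime_k$ by toggles, and then lift this back to $w$.

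The key step is to identify the collections. I claim the length-$\geq m_{ij}-1$ maximal alternating subwords of $w^\prime$ are exactly the $a^\prime_k$. Each $a_k$ has length $m_{ij}-1$, $m_{ij}$ or $m_{ij}+1$. Those of length $m_{ij}\pm1$ stay of length $m_{ij}\pm1$ or become $m_{ij}-1$. By $5$-tameness, any $a_k$ of length $m_{ij}+1$ has $m_{ij}\geq 6$, so $\ell(a^\prime_k)\geq 5$. Hence every $a^\prime_k$ has length $\geq 4$, and those coming from reductions have length $\geq 5$. Using \Cref{lem:toggles_applied_to_F_remains_F}, the $a^\prime_k$ are maximal and have property $\nFactPropF$.

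Any new alternating subword of $w^\prime$ of length $\geq m_{ij}-1\geq 4$ must overlap some $a^\prime_k$. If the overlapped $a^\prime_k$ was reduced, I apply \Cref{lem:sep_length_4_adjacent_shares_no_gens}, using the remaining letters adjacent to $a^\prime_k$ and the fact that such a subword shares one generator with $a^\prime_k$. If it was not reduced, the neighbourhood of $a^\prime_k$ is unchanged from $w$. In both cases we get a contradiction with the $3$-neighbour condition, \cref{prop:F_3_neighbour}, as in \Cref{lem:sep_length_4_adjacent_shares_no_gens}(1).

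Toggling a length-$m_{ij}$ subword of $w^\prime$ by an Artin operation preserves everything, by \Cref{lem:prop_A_is_conserved}. The same argument then shows that the maximal alternating subwords of length $\geq m_{ij}-1$ in $v^\prime$ are the toggled-or-not images of the $a^\prime_k$. Applying the identical analysis to $v$ shows they are also the images $b^\prime_l$ of the $b_l$. So, after reindexing, $b^\prime_l$ is a toggle of $a^\prime_l$, positioned identically outside these subwords.

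To lift back, note that $w$, $w^\prime$, $v^\prime$ and $v$ agree letter-for-letter outside these disjoint blocks (\Cref{rmk:F_a_subwords_disjoint}). Each block of $v$ is an alternating word in the same two generators as the corresponding block of $w$, and it has length in $\{m_{ij}-1,m_{ij},m_{ij}+1\}$. Any two such words with the same pair of generators that are equal in the dihedral group $\langle s_i,s_j\rangle$ are related by a single toggling operation. Equality in the dihedral group holds because they become equal or Artin-related after the reductions. Toggles on disjoint blocks with property $\nFactPropF$ commute and preserve the setup by \Cref{lem:toggles_applied_to_F_remains_F}, so they can be applied in any order.

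I expect the main obstacle to be the identification step: ruling out a new length-$(m_{ij}-1)$ subword of $w^\prime$ that overlaps an unreduced length-$(m_{ij}-1)$ block in exactly one letter. This is where \cref{prop:F_3_neighbour} must be used carefully, since \Cref{lem:sep_length_4_adjacent_shares_no_gens}(2) only covers length $\geq 5$. It is also where $5$-tameness is needed, to avoid length-$4$ images of reduced blocks.
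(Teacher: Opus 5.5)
There is a genuine gap, and it sits exactly at the identification step you flagged. You claim that the maximal alternating subwords of length $\geq m_{ij}-1$ in the obvious reduction $w'$ are exactly the $a'_k$. This is false for general words with property $\nWordPropB$.

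\textbf{Counterexample.} Take $m_{12}=6$, $m_{13}=m_{34}=5$ (all other $m_{ij}\geq 5$), and
\[ w = s_4s_3s_4s_3\,s_1s_3\,s_2s_1s_2s_1s_2s_1s_2 . \]
\begin{enumerate}
\item The relevant subwords are $a_1=s_4s_3s_4s_3$ and $a_2=s_2s_1s_2s_1s_2s_1s_2$, which share no generators.
\item The only $3$-neighbour is $s_3s_1s_3$, a neighbour of $a_2$. It overlaps $a_1$, so \cref{prop:F_3_neighbour} holds, and $w$ is $5$-tame.
\item However, $w'=s_4s_3s_4s_3\,s_1s_3\,s_1s_2s_1s_2s_1$ contains a new maximal subword $s_3s_1s_3s_1$ of length $m_{13}-1=4$. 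It shares one generator with each of $a'_1$ and $a'_2$.
\end{enumerate}

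\textbf{Why your argument does not exclude this.} Your appeal to \cref{prop:F_3_neighbour} fails because the three leftover letters overlap the unreduced block $a_1$. \Cref{lem:sep_length_4_adjacent_shares_no_gens} needs an adjacent alternating subword of length $\geq 4$ in part (1), or an overlapping one of length $\geq 5$ in part (2), and neither covers this situation. (Such configurations are excluded in $Q_W$ by \Cref{cor:aba_cbc}, but the lemma and your argument concern arbitrary words.)

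The same issue undermines your step ``Artin operations preserve everything''. \Cref{lem:prop_A_is_conserved} only controls alternating subwords of length $\geq m_{ij}$. Toggling a length-$m_{ij}$ block can create or destroy length-$4$ subwords of this kind: replace $a_1$ by $s_3s_4s_3s_4s_3$ above and toggle it in $w'$, and the length-$4$ subword disappears. So matching the $b'_l$ with the $a'_k$ inside $v'$ is unjustified, and with it the dihedral-group lifting.

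\textbf{How the paper avoids this.} It never identifies length-$(m_{ij}-1)$ subwords in intermediate words.
\begin{enumerate}
\item It concatenates $w\rightsquigarrow w'\rightsquigarrow v'\rightsquigarrow v$. The last stage consists of $(m_{ij}-1)$-expansions that undo the obvious reduction of $v$.
\item By $5$-tameness of both ends, every subword toggled along this path has length $\geq 5$ at the moment it is toggled.
\item \Cref{lem:toggles_applied_to_F_remains_F} and \Cref{lem:sep_length_4_adjacent_shares_no_gens}(2) then show inductively that each toggle acts on the current image of some $a_k$. So the whole path only toggles the $a_k$-blocks.
\item The tracked collection is compared with the $b_l$ only in $v$ itself, where property $\nWordPropB$ of $v$ supplies property $\nFactPropF$.
\item Since a reduction followed by an expansion is the identity, the net effect on each block is either nothing or a single toggle.
\end{enumerate}

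You should restructure along these lines: use $5$-tameness to control the subwords being toggled, not the length-$(m_{ij}-1)$ subwords of $w'$.
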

\begin{proof}
	Let $w_1$ and $w_2$ be two such Coxeter words in $\nCoxG$ representing the same group element.
	Let $w^\prime_1$ be the obvious reduction of $w_1$, and let $w^\prime_2$ be the obvious reduction of $w_2$.
	By \Cref{lem:b_word_reduction_on_semi_sep_is_sep}, $w^\prime_1$ and $w^\prime_2$ have property $\nWordPropA$, so by \Cref{cor:sep_is_reduced_and_control_on_reduced_forms}, there exists the following sequences of word operations involving $w_1$ and $w_2$ as starting points.
	\begin{equation*}
		\label{eqn:prop_C_word_operation_two_way_sequence}
		\begin{tikzcd}
			w_1 \ar[rrr, rightsquigarrow, "(m_{ij} +1)\text{-reductions}"]         &  &  &
			w^\prime_1 \ar[rrr, leftrightsquigarrow, "\text{Artin operations}"]    &  &  &
			w^\prime_2 \ar[rrr, leftsquigarrow, "\:(m_{ij} +1)\text{-reductions}"] &  &  &
			w_2,
		\end{tikzcd}
	\end{equation*}
	Using this, we construct a one-directional sequence by undoing \emph{some} (maybe 0) of the $(m_{ij} + 1)$-reductions in the third arrow above.
	\begin{equation}
		\label{eqn:prop_C_word_operation_sequence}
		\begin{tikzcd}
			w_1 \ar[rrr, rightsquigarrow, "(m_{ij} +1)\text{-reductions}"]         &  &  &
			w^\prime_1 \ar[rrr, rightsquigarrow, "\text{Artin operations}"]        &  &  &
			w^\prime_2 \ar[rrr, rightsquigarrow, "(m_{ij} - 1)\text{-expansions}"] &  &  &
			w_2,
		\end{tikzcd}
	\end{equation}
	We have shown there is a sequence of toggles which starts at $w_1$ and ends at $w_2$.
	Let
	\[
		w_1 \eqqcolon z_1,\ldots,z_t \coloneq w_2
	\]
	denote all words obtained by sequentially applying these toggles one-by-one.
	Now, since~$w_2$ is~$5$-tame, at the third step of \eqref{eqn:prop_C_word_operation_sequence} we cannot toggle any alternating subwords~$u$ of~$w^\prime_2$ with $\ell(u) = m_{ij} - 1 = 4$, since this would result in an alternating subword $u^\prime$ with $\ell(u^\prime) = m_{ij} + 1 = 6$ in $w_2$.
	So, all subwords $u$ toggled at any step in \eqref{eqn:prop_C_word_operation_sequence} must satisfy $\ell(u) \geq 5$ before they were toggled.

	Let $a_1,\ldots,a_m$ be all maximal alternating subwords of $w_1$ with $\ell(a_k) \geq m_{ij} - 1$.
	Let $b_1,\ldots,b_p$ be the equivalent collection for $w_2$.
	Let $A_1 = \Set{a_1,\ldots,a_m}$, and let~$A_2$ denote the subwords of $z_2$ obtained by toggling one of $A_1$.
	By \Cref{lem:toggles_applied_to_F_remains_F},~$A_2$ has property $\nFactPropF$, and by statement (2) of \Cref{lem:sep_length_4_adjacent_shares_no_gens}, all subwords~$u$ of~$z_2$ with $\ell(u) \geq 5$ must be one of~$A_2$.
	So, we can get $z_3$ by toggling an element of~$A_2$.
	We can continue in this fashion, getting a sequence of sets of subwords
	\[
		\Set{a_1,\ldots,a_m} \eqqcolon A_1,\ldots,A_t = \Set{b_1,\ldots,b_p}.
	\]
	In particular, we see that $m = p$.

	The result follows by observing that successively applying $(m_{ij} + 1)$-reduction, then $(m_{ij} - 1)$-expansion to a subword is the identity on that subword.
	It is clear we can apply all the toggles considered in any order.
\end{proof}

%% file: sections/lemma/Q_coxeter_words.tex
\subsection{Coxeter words coming from Hurwitz words}
\label{sec:coxeter_words_coming_from_Q}
\Cref{lem:word_ops_relating_B_words}, the main result from the previous subsection, is only useful if property $\nWordPropB$ is attainable.
Specifically, we want to show that for certain $q \in Q$, the Coxeter word coming from~$q$ has property $\nWordPropB$.
We introduce some notation to make this precise.

Let $\Gamma$ be a Coxeter graph and let $W \coloneq W_\Gamma$ denote the corresponding Coxeter group.
Given a free word $w$, let $\nCoxWord{w}{W}$ denote the positive word which represents $\nPiCox(w)$.
This notation is determined by its action on generators, where $\nCoxWord{(\nFreeGenElt_i^\pm)}{W} \coloneq \nCoxGenElt_i$.
For example $\nCoxWord{(\nFreeGenElt_1\nFreeGenElt_2^{-1})}{W} = \nCoxGenElt_1\nCoxGenElt_2$.
Given a subword $u$ of $w$, we will use the notation $\nCoxWord{u}{W}$ to refer to the subword of $\nCoxWord{w}{W}$ corresponding to $u$.
Given a contextually established (or arbitrary) Coxeter graph $\Gamma$, the word $\nCoxWord{w}{W}$ is understood to be in $\nCoxG$.
Given a group element $q \in \nFree$, we can substitute $q$ for the freely reduced word corresponding to $q$, and thus define $\nCoxWord{q}{W}$.
Similarly, let $Q_W$ denote the set of all $q_W$ for $q \in Q$.

Using this notation, we state a simple corollary of \Cref{lem:aba_cbc}.
We then show that certain collections of alternating subwords in any $w \in Q_W$ have property $\nFactPropF$.
\begin{corollary}
	\label{cor:aba_cbc}
	Let $w \in Q_W$.
	If $w$ has subwords $\nCoxGenElt_i\nCoxGenElt_j\nCoxGenElt_i$ and $\nCoxGenElt_k\nCoxGenElt_j\nCoxGenElt_k$, then $k=i$.
\end{corollary}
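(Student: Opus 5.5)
The plan is to pull the statement back to the free word and apply \Cref{lem:aba_cbc}. Let $w = \nCoxWord{q}{W}$ for some $q \in Q$, where $q$ is taken as a freely reduced word. The map $u \mapsto \nCoxWord{u}{W}$ sends each letter $\nFreeGenElt_l^{\pm 1}$ of $q$ to $\nCoxGenElt_l$. It preserves positions, so subwords of $w$ correspond bijectively to subwords of $q$ of the same length and at the same position.

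Suppose $w$ contains the subwords $\nCoxGenElt_i\nCoxGenElt_j\nCoxGenElt_i$ and $\nCoxGenElt_k\nCoxGenElt_j\nCoxGenElt_k$. The corresponding length-three subwords of $q$ have the form $\nFreeGenElt_i^{\pm 1}\nFreeGenElt_j^{\pm 1}\nFreeGenElt_i^{\pm 1}$ and $\nFreeGenElt_k^{\pm 1}\nFreeGenElt_j^{\pm 1}\nFreeGenElt_k^{\pm 1}$, with independent signs. We then apply \Cref{lem:aba_cbc} with the middle generator playing the role of $\nFreeGenElt_k$ there, and with $i$ and $k$ here playing the roles of $i$ and $j$ there. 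That lemma allows arbitrary independent signs, as noted after \Cref{thm:summary_of_Q_restrictions}, so it yields $i = k$.

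The only point to check is that $\nCoxWord{q}{W}$ really is the letter-by-letter image of the reduced word of $q$. No cancellation happens in $W$ when forming this word: by definition, $\nCoxWord{q}{W}$ is computed from the freely reduced word with no further reduction. So the correspondence between subwords is exact, and there is no genuine obstacle. One could also remark that $i \neq j$ holds automatically, since $q$ is reduced and, by \Cref{lem:Q_square_free}, square-free, so no adjacent letters share a generator. This fact is not needed for the argument.
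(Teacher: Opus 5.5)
Your proposal is correct and is essentially the paper's approach: the paper states this as an immediate corollary of \Cref{lem:aba_cbc} with no separate proof. Your two observations are exactly what makes it immediate. First, $q_W$ is the letter-by-letter image of the freely reduced word of $q$, so the two Coxeter subwords lift to free subwords with independent signs. Second, the middle generator $j$ here plays the role of $k$ in the lemma.
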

\begin{lemma}
	\label{lem:w_in_Q_W_has_prop_F}
	Let $w \in Q_W$ and let $a_1,\ldots,a_m$ be any collection of maximal alternating subwords in $w$ such that $\ell(a_k) \geq 4$ for all $k$.
	This collection has property~$\nFactPropF$.
\end{lemma}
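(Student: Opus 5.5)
Fix $w = q_W$ with $q \in Q$. By \Cref{lem:Q_square_free}, $q$ is square-free and freely reduced, so $w$ is square-free: consecutive letters of $q$ correspond to distinct generators. Hence alternating subwords of $w$ correspond exactly to alternating subwords of $q$ with the same lengths and generators. The plan is to check \cref{prop:F_0_or_2_gens_in_common} and \cref{prop:F_3_neighbour} separately, each time reducing to a forbidden pattern of the form $\nCoxGenElt_i\nCoxGenElt_k\nCoxGenElt_i$, $\nCoxGenElt_j\nCoxGenElt_k\nCoxGenElt_j$ with $i \neq j$, which \Cref{cor:aba_cbc} rules out.

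For \cref{prop:F_0_or_2_gens_in_common}, suppose $a_p$ is $(\nCoxGenElt_i,\nCoxGenElt_k)$-alternating and $a_r$ is $(\nCoxGenElt_j,\nCoxGenElt_k)$-alternating with $i \neq j$, so they share exactly the generator $\nCoxGenElt_k$. Since $\ell(a_p) \geq 4$, the word $a_p$ contains $\nCoxGenElt_k$ at least twice, and some occurrence of $\nCoxGenElt_k$ is flanked on both sides by $\nCoxGenElt_i$. Concretely, in a length $4$ word $\nCoxGenElt_i\nCoxGenElt_k\nCoxGenElt_i\nCoxGenElt_k$ or $\nCoxGenElt_k\nCoxGenElt_i\nCoxGenElt_k\nCoxGenElt_i$, one of the two interior letters is $\nCoxGenElt_k$. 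So $a_p$ contains $\nCoxGenElt_i\nCoxGenElt_k\nCoxGenElt_i$, and likewise $a_r$ contains $\nCoxGenElt_j\nCoxGenElt_k\nCoxGenElt_j$. \Cref{cor:aba_cbc} then forces $i = j$, a contradiction. This is the reason for the hypothesis $\ell \geq 4$; with length $3$ the shared generator might never be interior.

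For \cref{prop:F_3_neighbour}, let $u$ be a $3$-neighbour of $a_p$: an alternating subword adjacent to $a_p$, with $\ell(u) \geq 3$, sharing a generator with $a_p$. We do not need disjointness from the other $a_r$; it suffices to show no $3$-neighbour exists at all.

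First, $u$ and $a_p$ cannot share both generators. If they did, then, since $u$ is adjacent to $a_p$ and $w$ is square-free, the last letter of $a_p$ and the first letter of $u$ would be the two distinct generators of the pair, so $a_p u$ would extend $a_p$, contradicting maximality. So they share exactly one generator. Write $a_p$ as $(\nCoxGenElt_i,\nCoxGenElt_k)$-alternating and $u$ as $(\nCoxGenElt_j,\nCoxGenElt_l)$-alternating, sharing one generator.

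By the argument above, $a_p$ contains both $\nCoxGenElt_i\nCoxGenElt_k\nCoxGenElt_i$ and $\nCoxGenElt_k\nCoxGenElt_i\nCoxGenElt_k$, since a length $4$ alternating word has each generator interior once. The word $u$ has length $\geq 3$, so it contains $\nCoxGenElt_l\nCoxGenElt_j\nCoxGenElt_l$, where $\nCoxGenElt_j$ is its middle letter.

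The main obstacle is that the shared generator might occur only at an end of $u$ when $\ell(u) = 3$. I would handle this with a case split.

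If the shared generator is the middle letter $\nCoxGenElt_j$ of $u$, say $\nCoxGenElt_j = \nCoxGenElt_k$ with $\nCoxGenElt_l \neq \nCoxGenElt_i$, then $\nCoxGenElt_l\nCoxGenElt_k\nCoxGenElt_l$ conflicts with $\nCoxGenElt_i\nCoxGenElt_k\nCoxGenElt_i$ via \Cref{cor:aba_cbc}.

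Otherwise the shared generator is $\nCoxGenElt_l$, which sits at both ends of $u = \nCoxGenElt_l\nCoxGenElt_j\nCoxGenElt_l\cdots$. Take $\nCoxGenElt_l = \nCoxGenElt_k$ without loss of generality. Adjacency and square-freeness then force the junction to read $\nCoxGenElt_i\,\nCoxGenElt_k\nCoxGenElt_j$ or $\nCoxGenElt_j\nCoxGenElt_k\,\nCoxGenElt_i$. Hence the end letter $\nCoxGenElt_k$ of $u$ is flanked by $\nCoxGenElt_i$ and $\nCoxGenElt_j$, giving a new length $3$ subword that alternates on neither pair. This alone is not a contradiction.

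Instead I would use $\nCoxGenElt_k\nCoxGenElt_j\nCoxGenElt_k \subseteq u$ together with $\nCoxGenElt_k\nCoxGenElt_i\nCoxGenElt_k \subseteq a_p$. These are patterns with common outer letter and different middles, not covered by \Cref{cor:aba_cbc}. So I would return to the loop picture of \Cref{lem:aba_cbc} and show that the regions cut out by $\lambda_k$ around $\nPoint_i$ and around $\nPoint_j$, joined by the adjacent arc crossing, force a crossing of $\gamma$ with itself or with some $\lambda$. This geometric step is where I expect the real work to lie.
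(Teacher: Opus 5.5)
Your treatment of \cref{prop:F_0_or_2_gens_in_common}, and of the first case of \cref{prop:F_3_neighbour}, is correct and coincides with the paper's argument. The gap is the second case. You leave it open and propose to settle it with a loop-picture argument that you never carry out.

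That case is in fact vacuous, by the maximality argument you used one paragraph earlier. Suppose $u$ immediately follows $a_p$; the other side is symmetric. The first letter of $u$ cannot be a generator of $a_p$ at all:
\begin{itemize}
\item it is not the last letter of $a_p$, because $w$ is square-free;
\item it is not the other generator of $a_p$, because then $a_p$ followed by this letter would be a longer alternating subword.
\end{itemize}
Your own junction $s_i\,s_ks_j$ already exhibits the contradiction: $a_p$ ends in $s_i$ and is followed by $s_k$. So a generator shared by $u$ and $a_p$ can only be the second letter of $u$. Since $\ell(u)\geq 3$, that letter is flanked on both sides within $u$ by the non-shared generator. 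You are therefore always in your first case, and \Cref{cor:aba_cbc} finishes. This is exactly the paper's proof: an adjacent length-$3$ alternating subword cannot share two generators with a maximal $a_p$ (by maximality), and cannot share one (by \Cref{cor:aba_cbc}).

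The two patterns you planned to separate geometrically are not contradictory on their own. For example,
\[
\sigma_1^{-1}\star(\sigma_2\star(\sigma_1^{-1}\star x_1)) = x_2^{-1}x_1x_2x_3x_2^{-1}x_1^{-1}x_2 \in Q,
\]
and its Coxeter word $s_2s_1s_2s_3s_2s_1s_2$ contains both $s_2s_1s_2$ and $s_2s_3s_2$. So any geometric argument would have to use the adjacency essentially. Once adjacency is used, square-freeness and maximality already give the contradiction, and no loop picture beyond \Cref{cor:aba_cbc} is needed.
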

\begin{proof}
	By \Cref{cor:aba_cbc}, any two subwords $\nCoxGenElt_i\nCoxGenElt_j\nCoxGenElt_i\nCoxGenElt_j$ and $\nCoxGenElt_k\nCoxGenElt_l\nCoxGenElt_k\nCoxGenElt_l$ of $w$ must have~$0$ or~$2$ generators in common, so the collection has \cref{prop:F_0_or_2_gens_in_common}.

	For any maximal alternating subword $u$ with $\ell(u) \geq 4$, any length $3$ alternating subword adjacent to $u$ cannot share 2 generators in common with $u$, and by \Cref{cor:aba_cbc} cannot share 1 generator in common with $u$, so the collection has \cref{prop:F_3_neighbour}.
\end{proof}
This lemma shows that any element of $Q_W$ where all alternating words have length at most $m_{ij} + 1$ will have property $\nWordPropB$.
Showing that such elements of $Q_W$ exist is tackled in \Cref{sec:word_reductions_in_Bhat}, where the $5$-tame restriction is also dealt with.

%% file: sections/theorem/theorem_main.tex
\section{The main theorem}
\input{sections/theorem/intro}
\input{sections/theorem/geometric_view}
\input{sections/theorem/controlling_w_reductions}
\input{sections/theorem/controlling_artin_operations}

%% file: sections/theorem/intro.tex
In this section we complete the proof of our main theorem, which involves the following steps.
Recall the free group $\nFree$ is generated by $\Set{\nTupleFree}$, and recall the definition of the Hurwitz words $Q \coloneq \hurelt(\nTupleFree)$.
\begin{enumerate}
	\item We first show that, given certain hypotheses on the Coxeter group, for all $q \in Q$ there exists some $r \in Q$ such that $\nCoxWord{r}{W}$ is $5$-tame, has property  $\nWordPropB$, and $\nPiArt(q) = \nPiArt(r)$.
	      \smallskip
	\item We then show that given two $r, r^\prime \in Q$, if $\nPiCox(r) = \nPiCox(r^\prime)$, and if $\nCoxWord{r}{W}$ and $\nCoxWord{r^\prime}{W}$ are both $5$-tame and both have property $\nWordPropB$, then $\nPiArt(r) = \nPiArt(r^\prime)$.
\end{enumerate}
Given proofs of the above and a Coxeter graph $\Gamma$ satisfying the required hypotheses, suppose $q,q^\prime \in Q$ were arbitrary elements such that $\nPiCox(q) = \nPiCox(q^\prime)$.
Then, by~$(1)$ we would have some $r, r^\prime$ with $\nPiArt(r) = \nPiArt(q)$ and $\nPiArt(r^\prime) = \nPiArt(q^\prime)$ where $\nCoxWord{r}{W}$ and $\nCoxWord{r^\prime}{W}$ both have property $\nWordPropB$.
Then, by $(2)$ we would have $\nPiArt(r) = \nPiArt(r^\prime)$, so $\nPiArt(q) = \nPiArt(q^\prime)$, so by \Cref{cor:equality_in_W_implies_equality_in_A_implies_theorem} we would have proved that $\nArtG$ is canonically isomorphic to $\nDualArtG$.

%% file: sections/theorem/geometric_view.tex
\subsection{A subgroup of the braid group acting on Artin group fibres}
\label{sec:B_hat}
In this subsection, let $\Gamma$ denote an arbitrary Coxeter graph.
In \Cref{cor:hurref_is_image_of_artin_action}, we showed that $Q = B_\nRank \star \nFreeGenElt_1$, so the action of $B_\nRank$ preserves $Q$ membership.
Here, we define a subgroup $\nBraidSubgrpG \leq B_\nRank$ that preserves fibres of $\nPiArt$, i.e.~given any $\hat{\beta} \in \nBraidSubgrpG$ and any $x \in \nFree$, we have $\nPiArt(\hat{\beta} \star x) = \nPiArt(x)$.
We will only consider the case where we act on some $q \in Q \subseteq \nFree$, but the arguments are general to all of $\nFree$.
We take the following definition from \cite{birman_etal_new_1998}.
\begin{definition}
	\label{def:generalised_braid_generators}
	Let $i,j \in \N$ with $1 \leq i < j \leq \nRank$.
	The \emph{Birman--Ko--Lee} generator $\sigma_{ij} \in B_\nRank$ is defined to be
	\[
		\sigma_{ij} \coloneq \sigma_i \cdots \sigma_{j-2} \sigma_{j-1} \sigma_{j-2}^{-1} \cdots \sigma_{i}^{-1}.
	\]
	By convention, $\sigma_{i,i+1} \coloneq \sigma_i$, and  $\sigma_{ji} \coloneq \sigma_{ij}$.
\end{definition}
For each line segment $\mu_{ij}$ (see \Cref{fig:upside_down_ice_cream_cone}) between the points $\nPoint_i$ and $\nPoint_j$, we define the loop $\hat{\mu}_{ij}$ which follows close to $\mu_{ij}$ and surrounds $\nPoint_i$ and $\nPoint_{j}$.
Each Birman--Ko--Lee generator $\sigma_{ij}$ is a half twist around the loop~$\hat{\mu}_{ij}$.
Suppose $q = q_1\cdots q_l \in Q$ and let $\gamma$ be a minimal simple loop representing $q$.
Since we can draw each $\hat{\mu}_{ij}$ as close to $\mu_{ij}$ as we like, the geometric action of $\sigma_{ij}$ on a loop $\gamma$ only affects the sub-arcs of $\gamma$ which pass through the segment $\mu_{ij}$, so the Artin action of $\sigma_{ij}$ on $q$ is on subwords $q_kq_{k+1}$ where $\gamma_{q_kq_{k+1}}$ crosses $\mu_{ij}$.

It will be useful to make the following definition to deal with alternating free words which have a single change of sign.
Given some $i,j \geq 0$ and letters $x^{\pm 1},y^{\pm 1}$ of equal exponent, define $\nPi{x}{y}{i,j}$ to be the alternating word which starts with~$x$ (if $i \neq 0$), is of total length $i + j$, and which changes sign after letter $i$.
For example, $\nPi{x}{y}{1,2} = xy^{-1}x^{-1}$ and $\nPi{y^{-1}}{x^{-1}}{3,3} = y^{-1}x^{-1}y^{-1}xyx$.

\begin{example}
	\label{ex:action_of_sigma_ij_m_ij}
	Let $q \in Q$ and let $\gamma$ be a minimal simple loop representing $q$.
	Suppose that $\gamma$ passes through the segment $\mu_{ij}$, and that $m_{ij} = 3$.
	We want to describe $q^\prime \coloneq \sigma_{ij}^3 \star q$, and do so by acting on $\gamma$ using the geometric action of $B_\nRank$ on $\nPuncturedD$.
	We isolate a particular crossing of $\gamma$ and $\mu_{ij}$ and assume the direction of $\gamma$ at this point is downwards.
	By doing half twists, at this crossing, the action of $\sigma_{ij}^3$ on $\gamma$ looks like the following picture.
	\[
	\begin{tikzpicture}[baseline=(text40.center),every node/.append style={scale=1}, inner sep=0pt, outer sep=0pt]
		\input{figs/crossing_action_pre.tex}

	\end{tikzpicture}%

		\qquad
		\begin{tikzcd}
			{} \ar[rr, rightsquigarrow,  "\sigma_{ij}^3"] &  & {}
		\end{tikzcd}
		\qquad
	\begin{tikzpicture}[baseline=(text40.center),every node/.append style={scale=1}, inner sep=0pt, outer sep=0pt]
		\input{figs/crossing_action_post.tex}
	\end{tikzpicture}%

	\]
	The sub-arc of $\gamma$ which crossed $\mu_{ij}$ has been replaced by an arc that contributes a subword $\nFreeGenElt_j\nFreeGenElt_i\nFreeGenElt_j\nFreeGenElt_i^{-1}\nFreeGenElt_j^{-1}\nFreeGenElt_i^{-1}$.
	In $q$, the action of $\sigma_{ij}^3$ is by inserting $\nFreeGenElt_j\nFreeGenElt_i\nFreeGenElt_j\nFreeGenElt_i^{-1}\nFreeGenElt_j^{-1}\nFreeGenElt_i^{-1}$, at all positions corresponding to crossings of $\mu_{ij}$.
	So, since $m_{ij} =3$, we have $\nPiArt(q) = \nPiArt(\sigma_{ij}^3 \star q)$.
	Had we acted by $\sigma_{ij}^{-3}$ instead, the insertion would have been $\nFreeGenElt_i^{-1}\nFreeGenElt_j^{-1}\nFreeGenElt_i^{-1}\nFreeGenElt_j\nFreeGenElt_i\nFreeGenElt_j$, so similarly $\nPiArt(q) = \nPiArt(\sigma_{ij}^{-3} \star q)$.
	The insertion also depends on the direction~$\gamma$ crosses~$\mu_{ij}$ but in every case the action preserves fibres of~$\nPiArt$.
\end{example}
Given a Coxeter graph $\Gamma$, let $\nBraidSubgrpG \leq B_\nRank$ be the subgroup generated by $\sigma_{ij}^{m_{ij}}$ for all $m_{ij} < \infty$.
This subgroup was initially introduced by Kluitmann \cite{kluitmann_isotropy_1991}.
\begin{lemma}
	\label{lem:artin_preserving_braid_action}
	Let $\Gamma$ be a Coxeter graph.
	If for every $q_1, q_2 \in Q$ such that $\nPiCox(q_1) = \nPiCox(q_2)$ we have some $\hat{\beta} \in \nBraidSubgrpG$ such that $\hat{\beta} \star q_1 = q_2$, then $\nPiArt(q_1) = \nPiArt(q_2)$ and $\nArtG$ is canonically isomorphic to $\nDualArtG$.
\end{lemma}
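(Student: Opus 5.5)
The statement has two parts. Once we know $\nPiArt(\hat{\beta} \star x) = \nPiArt(x)$ for every $\hat{\beta} \in \nBraidSubgrpG$ and $x \in \nFree$, the first conclusion $\nPiArt(q_1)=\nPiArt(q_2)$ is immediate. The second conclusion then follows directly from \Cref{cor:equality_in_W_implies_equality_in_A_implies_theorem}. So the real content is fibre preservation, and since $\nBraidSubgrpG$ is generated by the $\sigma_{ij}^{m_{ij}}$, it suffices to prove it for each generator $\sigma_{ij}^{\pm m_{ij}}$.

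\emph{Algebraic reduction.} I would not work loop by loop as in \Cref{ex:action_of_sigma_ij_m_ij}. Instead, view $\beta \mapsto (x \mapsto \beta \star x)$ as a homomorphism $B_\nRank \to \aut(\nFree)$. It is then enough to show that $\pi_A \circ \phi = \pi_A$, where $\phi$ is the automorphism of $\nFree$ given by $\sigma_{ij}^{m_{ij}}$. The set of automorphisms $\phi$ with $\pi_A\circ\phi=\pi_A$ is a subgroup, so the claim for $\sigma_{ij}^{-m_{ij}}$ follows from the one for $\sigma_{ij}^{m_{ij}}$. Moreover, it suffices to check $\pi_A(\phi(x_k)) = t_k$ for every generator $x_k$.

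\emph{Computation for $\sigma_i^{m}$.} First treat the adjacent case $\sigma_{i,i+1}=\sigma_i$ with $m=m_{i,i+1}$. By the defining formulas, $\sigma_i$ fixes $x_k$ for $k\ne i,i+1$. On the pair it acts as $(x_i,x_{i+1}) \mapsto (x_ix_{i+1}x_i^{-1}, x_i)$, which is exactly the Hurwitz-type move on a pair. Iterating, I expect $\sigma_i^{p}$ to send $(x_i,x_{i+1})$ to $(\nFp{x_i}{x_{i+1}}{p+1}', \nFp{x_i}{x_{i+1}}{p}')$, where the primes denote the analogue of the functions used in the proof of \Cref{lem:artin_group_is_hurwitz_group}, namely $\nPi{x_i}{x_{i+1}}{p+1}\nPi{x_i}{x_{i+1}}{p}^{-1}$ up to the correct letter order. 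This is verified by a short induction using the identity $\nFp{a}{b}{p}\nFp{a}{b}{p-1}\nFp{a}{b}{p}^{-1}=\nFp{a}{b}{p+1}$.

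Applying $\pi_A$ at $p=m$, the braid relation $\nPi{t_i}{t_{i+1}}{m}=\nPi{t_{i+1}}{t_i}{m}$ gives $\nPi{t_i}{t_{i+1}}{m+1}\nPi{t_i}{t_{i+1}}{m}^{-1} = t_i$ when $m$ is even. When $m$ is odd, the pair is swapped appropriately and again returns to $(t_i,t_{i+1})$. So $\pi_A\circ\sigma_i^{m}=\pi_A$.

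\emph{General $\sigma_{ij}$.} For non-adjacent $i<j$, write $\sigma_{ij} = \alpha\sigma_{j-1}\alpha^{-1}$ with $\alpha=\sigma_i\cdots\sigma_{j-2}$. The automorphism $\alpha^{-1}$ maps $x_j$ to a conjugate of itself, but it does not send $x_i$ to $x_{j-1}$ on the nose. So instead I would argue directly: $\sigma_{ij}$ fixes every $x_k$ with $k<i$ or $k>j$, and it acts on the free factor $\langle x_i,\ldots,x_j\rangle$ as a half twist exchanging $x_i$ with $x_j$ conjugated by $w=x_{i+1}\cdots x_{j-1}$ (with appropriate signs), and it acts on the intermediate generators by conjugation by a word $c=c(x_i,x_j)$. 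Concretely, the plan is to establish that $\sigma_{ij}$ acts on the pair $(x_i, wx_jw^{-1})$ exactly as $\sigma_i$ acts on $(x_i,x_{i+1})$. Then $\sigma_{ij}^{m}$ acts on that pair by $\nFp{}{}{}$-type words, and on the middle generators by conjugation by a word that is a power of a product of the pair.

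Applying $\pi_A$ and using the relation between $t_i$ and $t_j$ in $A_\Gamma$ kills everything: $wt_jw^{-1}$ maps back to itself, since $\pi_A(\nPi{x_i}{y}{m})$ commutes suitably when $y$ maps to $t_j$ conjugated by the middle letters. \textbf{This is the main obstacle}: the relation in $A_\Gamma$ is between $t_i$ and $t_j$, not between $t_i$ and $wt_jw^{-1}$.

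To handle it, I would instead use the geometric picture of \Cref{ex:action_of_sigma_ij_m_ij}, in which $\mu_{ij}$ passes \emph{below} the intermediate punctures. Reading off crossings with the $\lambda$ arcs, a loop near $\hat\mu_{ij}$ crosses only $\lambda_i$ and $\lambda_j$, since the $\lambda_k$ go upward from $p_k$. Hence $\sigma_{ij}^{m_{ij}}$ acts on any word by inserting, at each crossing of $\mu_{ij}$, an alternating word in $x_i^{\pm1},x_j^{\pm1}$ of the form $\nPi{x}{y}{m_{ij}}\nPi{y}{x}{m_{ij}}^{-1}$ (or an inverse or cyclic variant, depending on direction). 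Each such insertion maps to $1$ in $A_\Gamma$. I expect the only real work to be making this bookkeeping rigorous for all four crossing directions and both signs, by isotoping $\gamma$ near $\mu_{ij}$.
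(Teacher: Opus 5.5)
Your final argument is the paper's proof. The paper lists in \Cref{tab:action_of_sigma_ij} the words $\nPi{x}{y}{n,n}$ that $\sigma_{ij}^{\pm n}$ inserts at each crossing of $\mu_{ij}$ (generalising \Cref{ex:action_of_sigma_ij_m_ij}), notes that for $n=m_{ij}$ these are trivial in $\nArtG$, and concludes with \Cref{cor:equality_in_W_implies_equality_in_A_implies_theorem}.

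The obstacle that made you abandon the algebraic route does not arise in the paper's conventions, where $\mu_{ij}$ passes below the intermediate punctures:
\begin{itemize}
\item $\sigma_{ij}$ sends $x_i\mapsto x_ix_jx_i^{-1}$ and $x_j\mapsto x_i$, exactly as $\sigma_i$ acts on $(x_i,x_{i+1})$, with no conjugation by $x_{i+1}\cdots x_{j-1}$.
\item It fixes $x_k$ for $k<i$ or $k>j$, and conjugates $x_k$ for $i<k<j$ by $x_ix_j^{-1}$.
\item Hence $\sigma_{ij}^{m}$ conjugates each such intermediate $x_k$ by $\nPi{x_i}{x_j}{m}\nPi{x_j}{x_i}{m}^{-1}$, which is trivial in $\nArtG$ when $m=m_{ij}$, so the generator-by-generator check goes through directly.
\end{itemize}
Your parity case split is also unnecessary. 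For every $m=m_{ij}$, $\nPi{t_i}{t_j}{m+1}\nPi{t_i}{t_j}{m}^{-1}=t_i\nPi{t_j}{t_i}{m}\nPi{t_i}{t_j}{m}^{-1}=t_i$, regardless of parity.
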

\begin{proof}
	The word insertions seen in \Cref{ex:action_of_sigma_ij_m_ij} generalise.
	We list the word insertions due to the action of $\sigma_{ij}^{\pm n}$ in \Cref{tab:action_of_sigma_ij}, which depends on the direction of the crossing of $\mu_{ij}$.
	\begin{table}[ht]
		\centering
		{\renewcommand{\arraystretch}{1.4} 
			\begin{tabular}{l|l|l}
				\multicolumn{1}{c|}{Crossing direction}       &
				\multicolumn{1}{c|}{Act by $\sigma_{ij}^{n}$} &
				\multicolumn{1}{c}{Act by $\sigma_{ij}^{-n}$}                                                                                                       \\
				\hline
				$\uparrow$                                    & $\nPi{\nFreeGenElt_i}{\nFreeGenElt_j}{n,n}$ & $\nPi{\nFreeGenElt_j^{-1}}{\nFreeGenElt_i^{-1}}{n,n}$ \\
				$\downarrow$                                  & $\nPi{\nFreeGenElt_j}{\nFreeGenElt_i}{n,n}$ & $\nPi{\nFreeGenElt_i^{-1}}{\nFreeGenElt_j^{-1}}{n,n}$ \\
			\end{tabular}
		}
		\caption{Word insertions at all crossings of $\mu_{ij}$ in some $q \in Q$ due to action of~$\sigma_{ij}^{\pm n}$.
			The direction of the $\mu_{ij}$ crossing is specified in the left column and the element we act by is specified in the top row.
		}
		\label{tab:action_of_sigma_ij}
	\end{table}
	By \Cref{tab:action_of_sigma_ij} it is clear that $\nPiArt(q_1) = \nPiArt(q_2)$.
	The result follows by \Cref{cor:equality_in_W_implies_equality_in_A_implies_theorem}.
\end{proof}
The following corollary is not important for this paper, but provides a different approach to the problem.
The hypotheses of this corollary are proven in this paper when all $m_{ij} \geq 5$, and experimentally appear to hold for several other Coxeter groups we have considered.
\begin{corollary}
	\label{cor:artin_preserving_braid_action}
	Let $\Gamma$ be a Coxeter graph.
	If for every $\beta_1, \beta_2 \in B_\nRank$ such that $\nPiCox(\beta_1 \star \nFreeGenElt_1) = \nPiCox(\beta_2 \star \nFreeGenElt_1)$ we have
	\[
		\nBraidSubgrpG \cap \beta_2 \GroupPres{\sigma_2,\ldots,\sigma_{\nRank-1}} \beta_1^{-1} \neq \emptyset,
	\]
	then $\nArtG$ is canonically isomorphic to $\nDualArtG$.
\end{corollary}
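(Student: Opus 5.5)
The plan is to reduce directly to \Cref{lem:artin_preserving_braid_action}. Suppose $\beta_1,\beta_2 \in B_\nRank$ satisfy the hypothesis, and set $q_1 \coloneq \beta_1 \star \nFreeGenElt_1$ and $q_2 \coloneq \beta_2 \star \nFreeGenElt_1$. By \Cref{cor:hurref_is_image_of_artin_action}, every pair $q_1,q_2 \in Q$ with $\nPiCox(q_1)=\nPiCox(q_2)$ arises in this way. The hypothesis then provides an element
\[
	\hat{\beta} = \beta_2 \tau \beta_1^{-1} \in \nBraidSubgrpG, \qquad \tau \in \GroupPres{\sigma_2,\ldots,\sigma_{\nRank-1}}.
\]
Our aim is to show that $\hat{\beta} \star q_1 = q_2$. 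Once this holds, \Cref{lem:artin_preserving_braid_action} gives $\nPiArt(q_1)=\nPiArt(q_2)$, and \Cref{cor:equality_in_W_implies_equality_in_A_implies_theorem} completes the argument.

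The key computation is that the subgroup $\GroupPres{\sigma_2,\ldots,\sigma_{\nRank-1}}$ fixes $\nFreeGenElt_1$ under the Artin action. By the defining formulas for $\star$, each $\sigma_i$ with $i \geq 2$ moves only $\nFreeGenElt_i$ and $\nFreeGenElt_{i+1}$, and $i+1 \geq 3$. So $\sigma_i \star \nFreeGenElt_1 = \nFreeGenElt_1$ for all $i \geq 2$. The same holds for $\sigma_i^{-1}$, since $\star$ is a group action. Consequently $\tau \star \nFreeGenElt_1 = \nFreeGenElt_1$.

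We then compute
\[
	\hat{\beta} \star q_1 = \beta_2 \tau \beta_1^{-1} \beta_1 \star \nFreeGenElt_1 = \beta_2 \tau \star \nFreeGenElt_1 = \beta_2 \star \nFreeGenElt_1 = q_2.
\]
Here we use that $\star$ is a left action of $B_\nRank$, coming from post-composition of mapping classes, so composition is compatible with the action.

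The only step needing care is confirming that this compatibility holds with the paper's orientation conventions. In particular, we must check that $\star$ is a left action rather than a right action, so that the product $\beta_2\tau\beta_1^{-1}$ acts in the order written. This is routine: \Cref{lem:artin_vs_hurwitz_action} already treats $\star$ as a left action, since its proof uses $(\beta\cdot f)(x)=f(\beta^{-1}\star x)$. Hence there is no real obstacle, and the corollary is essentially a reformulation of \Cref{lem:artin_preserving_braid_action} via the stabiliser of $\nFreeGenElt_1$.
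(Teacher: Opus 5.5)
Your proof is correct and is essentially the paper's argument. You use $Q = B_n\star x_1$ to write the two Hurwitz words as $\beta_1\star x_1$ and $\beta_2\star x_1$, observe that any $\beta_2\tau\beta_1^{-1}$ with $\tau\in\langle\sigma_2,\ldots,\sigma_{n-1}\rangle$ carries $\beta_1\star x_1$ to $\beta_2\star x_1$, and then invoke \Cref{lem:artin_preserving_braid_action}. The only minor difference is that the paper asserts the stabiliser of $x_1$ is exactly $\langle\sigma_2,\ldots,\sigma_{n-1}\rangle$. You instead check only that this subgroup is contained in the stabiliser, directly from the Artin action formulas, which is all this direction needs.
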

\begin{proof}
	Recall by \Cref{cor:hurref_is_image_of_artin_action} that $Q = B_\nRank \star \nFreeGenElt_1$.
	Given two $\beta_1,\beta_2 \in B_\nRank$, the set of $\beta^\prime \in B_\nRank$ such that $\beta^\prime \star (\beta_1 \star \nFreeGenElt_1) = \beta_2 \star \nFreeGenElt_1$ is exactly $\beta_2\stab_\star(\nFreeGenElt_1)\beta_1^{-1}$, and $\stab_\star(\nFreeGenElt_1)$ is exactly $B_{\nRank - 1} \leq B_\nRank$ generated by $\sigma_2,\ldots,\sigma_{\nRank-1}$.
	The result follows by \Cref{lem:artin_preserving_braid_action}.
\end{proof}

%% file: figs/crossing_action_pre.tex
\definecolor{c819d43}{RGB}{129,157,67}
\definecolor{c979797}{RGB}{151,151,151}

  \begin{scope}[shift={(-5.6657, 5.6509)}]
    \path[draw=c819d43,line width=0.02cm] (10.4872, 10.9409) -- (10.4872, 10.2123);

    \node[text=black,line width=0.005cm,anchor=south west] (text17) at (10.1785, 11.0538){$\scriptscriptstyle\lambda_{i}$};

  \end{scope}
  \begin{scope}[shift={(-3.6287, 5.6509)}]
    \path[draw=c819d43,line width=0.02cm] (10.4872, 10.9409) -- (10.4872, 10.2123);

    \node[text=black,line width=0.005cm,anchor=south west] (text17-9) at (10.1785, 11.0538){$\scriptscriptstyle\lambda_{j}$};

  \end{scope}
  \path[draw=black,line width=0.015cm,dash pattern=on 0.015cm off 0.06cm] (4.8156, 15.8632) -- (6.8585, 15.8632);

  \path[draw=black,fill=c979797,line width=0.0487cm,cm={ 0.0,1.0,1.0,0.0,(-29.7, 29.7)}] (-13.8368, 36.5585) circle (0.0458cm);

  \path[draw=black,fill=c979797,line width=0.0487cm,cm={ 0.0,1.0,1.0,0.0,(-29.7, 29.7)}] (-13.8368, 34.5214) circle (0.0458cm);

  \path[draw=black,fill=black,line width=0.023cm,cm={ 0.559,0.1367,0.1367,-0.559,(-0.3086, 28.1092)}] (5.4435, 22.9858) -- (5.3196, 22.9566) -- (5.3563, 23.0785) -- cycle;

  \node[text=black,anchor=south west,line width=0.0185cm] (text40) at (5.8207, 15.7024){};

  \path[draw=black,line width=0.02cm,shift={(-0.0621, -0.0202)}] (5.9026, 16.4839) -- (5.9026, 15.4374);

%% file: figs/crossing_action_post.tex
\definecolor{c819d43}{RGB}{129,157,67}
\definecolor{c979797}{RGB}{151,151,151}

  \begin{scope}[shift={(-5.6632, 5.5942)}]
    \path[draw=c819d43,line width=0.02cm] (10.4872, 10.9409) -- (10.4872, 10.2123);

    \node[text=black,line width=0.005cm,anchor=south west] (text17) at (10.1785, 11.0538){$\scriptscriptstyle\lambda_{i}$};

  \end{scope}
  \begin{scope}[shift={(-3.6287, 5.5895)}]
    \path[draw=c819d43,line width=0.02cm] (10.4872, 10.9409) -- (10.4872, 10.2123);

    \node[text=black,line width=0.005cm,anchor=south west] (text17-2) at (10.1785, 11.0538){$\scriptscriptstyle\lambda_{j}$};

  \end{scope}
  \begin{scope}[shift={(0.0, -0.0614)}]
    \path[draw=black,line width=0.015cm,dash pattern=on 0.015cm off 0.06cm] (4.8156, 15.8632) -- (6.8585, 15.8632);

    \path[draw=black,fill=c979797,line width=0.0487cm,cm={ 0.0,1.0,1.0,0.0,(-29.7, 29.7)}] (-13.8368, 36.5585) circle (0.0458cm);

    \path[draw=black,fill=c979797,line width=0.0487cm,cm={ 0.0,1.0,1.0,0.0,(-29.7, 29.7)}] (-13.8368, 34.5214) circle (0.0458cm);

  \end{scope}
  \path[draw=black,fill=black,line width=0.023cm,cm={ 0.559,0.1367,0.1367,-0.559,(-0.3092, 28.6037)}] (5.4435, 22.9858) -- (5.3196, 22.9566) -- (5.3563, 23.0785) -- cycle;

  \path[draw=black,fill=black,line width=0.023cm,cm={ -0.559,-0.1367,-0.1367,0.559,(11.9858, 3.6754)}] (5.4435, 22.9858) -- (5.3196, 22.9566) -- (5.3563, 23.0785) -- cycle;

  \path[draw=black,line width=0.02cm] (5.84, 16.6101) -- (5.84, 16.4349).. controls (5.8402, 16.4029) and (5.8502, 16.3711) .. (5.8685, 16.3448).. controls (5.8976, 16.3029) and (5.9477, 16.2765) .. (5.9987, 16.2762) -- (6.8585, 16.2762).. controls (6.9142, 16.2762) and (6.9707, 16.2741) .. (7.0236, 16.2566).. controls (7.0765, 16.2391) and (7.1244, 16.2072) .. (7.1615, 16.1657).. controls (7.233, 16.0857) and (7.2629, 15.9755) .. (7.2662, 15.8682).. controls (7.2699, 15.7503) and (7.242, 15.628) .. (7.1685, 15.5358).. controls (7.1323, 15.4903) and (7.0847, 15.4534) .. (7.0303, 15.4326).. controls (6.976, 15.4119) and (6.9167, 15.4091) .. (6.8585, 15.4091) -- (4.8082, 15.4091).. controls (4.7789, 15.4091) and (4.7495, 15.4094) .. (4.7207, 15.4143).. controls (4.6919, 15.4191) and (4.6637, 15.4285) .. (4.6383, 15.443).. controls (4.5875, 15.472) and (4.5503, 15.5209) .. (4.5263, 15.5742).. controls (4.4799, 15.6769) and (4.4759, 15.7988) .. (4.5196, 15.9027).. controls (4.543, 15.9581) and (4.5813, 16.0084) .. (4.6336, 16.038).. controls (4.6598, 16.0527) and (4.6887, 16.0623) .. (4.7183, 16.0672).. controls (4.748, 16.0721) and (4.7781, 16.0724) .. (4.8082, 16.0724) -- (6.8318, 16.0724).. controls (6.8524, 16.0724) and (6.8732, 16.0721) .. (6.8934, 16.0682).. controls (6.9136, 16.0641) and (6.9331, 16.0564) .. (6.9501, 16.0449).. controls (6.9672, 16.0333) and (6.9816, 16.0181) .. (6.993, 16.0009).. controls (7.0044, 15.9837) and (7.0128, 15.9648) .. (7.0196, 15.9453).. controls (7.0442, 15.8756) and (7.0485, 15.7966) .. (7.0196, 15.7285).. controls (7.0044, 15.6927) and (6.9791, 15.6605) .. (6.9451, 15.6417).. controls (6.928, 15.6323) and (6.9092, 15.6263) .. (6.89, 15.6232).. controls (6.8708, 15.6201) and (6.8513, 15.62) .. (6.8318, 15.6201) -- (5.9429, 15.6238).. controls (5.9251, 15.6237) and (5.9073, 15.6284) .. (5.8919, 15.6372).. controls (5.8765, 15.646) and (5.8634, 15.6589) .. (5.8544, 15.6742).. controls (5.8445, 15.691) and (5.8397, 15.7103) .. (5.837, 15.7296).. controls (5.8308, 15.7751) and (5.8365, 15.8213) .. (5.837, 15.8672).. controls (5.8372, 15.8827) and (5.8368, 15.8983) .. (5.8331, 15.9134).. controls (5.8294, 15.9284) and (5.8222, 15.943) .. (5.8106, 15.9532).. controls (5.8001, 15.9624) and (5.7866, 15.9677) .. (5.7729, 15.9703).. controls (5.7592, 15.9729) and (5.7452, 15.973) .. (5.7312, 15.973) -- (4.8187, 15.977).. controls (4.7977, 15.977) and (4.7766, 15.9769) .. (4.7559, 15.9735).. controls (4.7352, 15.9701) and (4.7149, 15.9635) .. (4.6966, 15.9532).. controls (4.66, 15.9326) and (4.6331, 15.8976) .. (4.6161, 15.8592).. controls (4.5833, 15.7853) and (4.5832, 15.6975) .. (4.6161, 15.6237).. controls (4.6331, 15.5853) and (4.6601, 15.5505) .. (4.6967, 15.5299).. controls (4.715, 15.5196) and (4.7353, 15.513) .. (4.756, 15.5096).. controls (4.7767, 15.5061) and (4.7977, 15.5059) .. (4.8187, 15.5059) -- (6.8509, 15.5053).. controls (6.8932, 15.5053) and (6.9362, 15.5072) .. (6.9759, 15.5219).. controls (7.0155, 15.5366) and (7.0504, 15.5629) .. (7.0773, 15.5955).. controls (7.1304, 15.6597) and (7.1528, 15.7449) .. (7.1551, 15.8283).. controls (7.1574, 15.9143) and (7.1384, 16.0034) .. (7.0864, 16.0719).. controls (7.0595, 16.1074) and (7.0236, 16.1363) .. (6.9822, 16.1526).. controls (6.9408, 16.1689) and (6.8954, 16.1711) .. (6.8509, 16.1711) -- (4.824, 16.1708).. controls (4.7853, 16.1708) and (4.7465, 16.1704) .. (4.7083, 16.1642).. controls (4.6701, 16.1579) and (4.6328, 16.1458) .. (4.599, 16.127).. controls (4.5314, 16.0894) and (4.4812, 16.0255) .. (4.4494, 15.955).. controls (4.3886, 15.8198) and (4.3887, 15.6586) .. (4.4494, 15.5233).. controls (4.4811, 15.4527) and (4.5313, 15.3886) .. (4.5989, 15.351).. controls (4.6327, 15.3322) and (4.67, 15.3201) .. (4.7082, 15.3139).. controls (4.7464, 15.3077) and (4.7853, 15.3074) .. (4.824, 15.3075) -- (5.729, 15.3098).. controls (5.7503, 15.311) and (5.772, 15.3052) .. (5.7898, 15.2935).. controls (5.8044, 15.2839) and (5.8164, 15.2704) .. (5.8243, 15.2548).. controls (5.8321, 15.2391) and (5.8358, 15.2215) .. (5.8348, 15.204) -- (5.8348, 15.0159);

  \path[draw=black,fill=black,line width=0.023cm,cm={ 0.559,0.1367,0.1367,-0.559,(-0.314, 27.2905)}] (5.4435, 22.9858) -- (5.3196, 22.9566) -- (5.3563, 23.0785) -- cycle;

  \node[text=black,anchor=south west,line width=0.0185cm] (text40) at (5.7728, 15.6128){};

%% file: sections/theorem/controlling_w_reductions.tex
\subsection{Controlling Coxeter word reductions}
\label{sec:word_reductions_in_Bhat}
\Cref{lem:w_in_Q_W_has_prop_F} shows that for any $w \in Q_W$, collections of alternating words which are long enough have property $\nFactPropF$.
If $\Gamma$ is such that all $m_{ij}$ are large enough, then to show $w$ has property $\nWordPropB$, it is sufficient to show that there are no alternating subwords $a$ with $\ell(a) \geq m_{ij} + 2$, and that is our focus for this subsection.
We show that when the defining $m_{ij}$ for~$\Gamma$ are sufficiently large, for all $q \in Q$ there exists $\hat{\beta} \in \nBraidSubgrpG$ such that $(\hat{\beta} \star q)_W$ has property~$\nWordPropB$.
We will also deal with the $5$-tame requirement in \Cref{lem:word_ops_relating_B_words}.

Recall the notation $\nPi{x}{y}{k,l}$ from \Cref{sec:B_hat}.
Let $q \in Q$ and suppose that $u = \nPi[\big]{\nFreeGenElt_i^{\pm 1}}{\nFreeGenElt_j^{\pm 1}}{k,l}$ is a maximal alternating subword of $q$.
Then \Cref{lem:change_sign_close_to_middle} tells us that if $\ell(u) =k+l\geq 3$, then $\Abs{k-l} \leq 1$.
In particular, if $\ell(u) \geq 3$, then at least one of the following statements is true.
\begin{enumerate}
	\item  $k \geq m_{ij}$ and  $l \geq m_{ij}$.
	      \smallskip
	\item $k \leq m_{ij}$ and $l \leq m_{ij}$.
\end{enumerate}
Both statements are true only if $l=k=m_{ij}$.
Bearing this in mind, let us use \Cref{tab:action_of_sigma_ij} to compute how $\sigma_{ij}^{\pm m_{ij}}$ acts on certain $(\nFreeGenElt_i,\nFreeGenElt_j)$-alternating words that can appear in $q$.
\begin{flalign}
	\begin{aligned}
		\sigma_{ij}^{-m_{ij}} \star \nPi{\nFreeGenElt_i}{\nFreeGenElt_j}{k,l}          & =
		\begin{cases}
			\nPi{\nFreeGenElt_i}{\nFreeGenElt_j}{k-m_{ij}, l-m_{ij}},               & k,l \geq m_{ij} \\
			\nPi{\nFreeGenElt_j^{-1}}{\nFreeGenElt_i^{-1}}{m_{ij} - k, m_{ij} - l}, & k,l \leq m_{ij}
		\end{cases} \\
		\sigma_{ij}^{m_{ij}} \star \nPi{\nFreeGenElt_i^{-1}}{\nFreeGenElt_j^{-1}}{k,l} & =
		\begin{cases}
			\nPi{\nFreeGenElt_i^{-1}}{\nFreeGenElt_j^{-1}}{k-m_{ij}, l-m_{ij}}, & k,l \geq m_{ij} \\
			\nPi{\nFreeGenElt_j}{\nFreeGenElt_i}{m_{ij} - k, m_{ij} - l},       & k,l \leq m_{ij}
		\end{cases}
	\end{aligned}
	\label{eqn:action_of_sigma_ij_on_alternating_words}
\end{flalign}
These equations are well-defined because if $k=l=m_{ij}$ then the right-hand side is the empty word in either case.
These equations are valid regardless of whether $i < j$.
\begin{lemma}
	\label{lem:reduction_where_altlength_2_greater}
	Suppose $q \in Q$ has a maximal $(\nFreeGenElt_i,\nFreeGenElt_j)$-alternating subword $u$ with $\ell(u) \geq m_{ij} + 2$.
	Then there exists $\hat{\beta} \in \nBraidSubgrpG$ such that $\ell(\hat{\beta} \star q) < \ell(q)$.
\end{lemma}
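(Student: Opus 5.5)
Take $\hat{\beta} = \sigma_{ij}^{\mp m_{ij}}$, with the sign chosen to match $u$, and compare lengths word by word using \eqref{eqn:action_of_sigma_ij_on_alternating_words}.

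\textbf{Step 1: the shape of $u$.} Write $u = \nPi[\big]{\nFreeGenElt_i^{\pm 1}}{\nFreeGenElt_j^{\pm 1}}{k,l}$. This is possible because, by \Cref{lem:crossing_diagonals_and_one_change_of_sign}, $u$ has exactly one change of sign, and we may relabel $i,j$ so that $u$ starts with $\nFreeGenElt_i^{\pm1}$. By \Cref{lem:change_sign_close_to_middle}, $\Abs{k-l}\le 1$. Since $k+l\ge m_{ij}+2$, this forces $k,l\ge m_{ij}$, and in fact $k+l-2m_{ij}\geq 2-m_{ij}$ with both $k,l\geq m_{ij}$. Suppose the first letter has positive exponent; the negative case is symmetric with $\sigma_{ij}^{m_{ij}}$. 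Set $\hat{\beta}=\sigma_{ij}^{-m_{ij}}\in\nBraidSubgrpG$.

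\textbf{Step 2: the effect of $\hat{\beta}$ on $u$.} By \Cref{cor:single_crossing_of_line_in_alternating_subword}, $\gamma_u$ crosses $\mu_{ij}$ exactly once, at the change of sign. The first case of \eqref{eqn:action_of_sigma_ij_on_alternating_words} then shows that $u$ is replaced by $\nPi{\nFreeGenElt_i}{\nFreeGenElt_j}{k-m_{ij},l-m_{ij}}$. This shortens $u$ by $2m_{ij}$.

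\textbf{Step 3: the other crossings of $\mu_{ij}$.} By \Cref{lem:possible_lengths_other_crossings} (note $\ell(u)\ge 4$), every other crossing of $\gamma$ with $\mu_{ij}$ lies at the change of sign of a maximal $(\nFreeGenElt_i,\nFreeGenElt_j)$-alternating subword $v$ with $\ell(v)\ge 3$ and $\Abs{\ell(u)-\ell(v)}\le 2$. Hence $\ell(v)\ge m_{ij}$.

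By \Cref{lem:crossing_diagonals_and_one_change_of_sign}(1), the change of sign in $v$ has the same handedness as the one in $u$. So $v=\nPi{\nFreeGenElt_i}{\nFreeGenElt_j}{k',l'}$ or $v=\nPi{\nFreeGenElt_j}{\nFreeGenElt_i}{k',l'}$ with positive first letter, and $\Abs{k'-l'}\le1$. The formulas hold regardless of the order of $i,j$, so $\hat{\beta}$ acts on $v$ by one of the two cases of \eqref{eqn:action_of_sigma_ij_on_alternating_words}:
\begin{itemize}
\item if $k',l'\ge m_{ij}$, the length of $v$ drops by $2m_{ij}$;
\item if $k',l'\le m_{ij}$, the new length is $2m_{ij}-\ell(v)\le \ell(v)$, because $\ell(v)\ge m_{ij}$.
\end{itemize}
Thus no crossing increases the length, and the crossing at $u$ strictly decreases it.

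\textbf{Step 4: the final word is freely reduced, or only gets shorter.} The insertion at each crossing only alters letters inside the corresponding maximal alternating subword. Therefore $\hat{\beta}\star q$ is obtained from $q$ by replacing these disjoint subwords with the shorter words computed above, followed possibly by free cancellation, which can only shorten the word further. So $\ell(\hat{\beta}\star q)\le \ell(q)-2m_{ij}+\ldots<\ell(q)$.

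\textbf{Main obstacle.} The delicate points are Steps 3 and 4. Step 3 needs the handedness and sign bookkeeping: one must check that each $v$ really is in the form required by \eqref{eqn:action_of_sigma_ij_on_alternating_words} for the \emph{same} power $\sigma_{ij}^{-m_{ij}}$. Step 4 needs the justification that the Artin action is determined locally at the crossings, which comes from the half-twist picture around $\hat{\mu}_{ij}$ and \Cref{tab:action_of_sigma_ij}.
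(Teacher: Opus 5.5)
Your route is the paper's: choose $\sigma_{ij}^{\mp m_{ij}}$ by the handedness of the change of sign in $u$. Then use \Cref{lem:possible_lengths_other_crossings} to place every other crossing of $\mu_{ij}$ in a maximal alternating subword $v$ with $\ell(v)\ge m_{ij}$ and, by \Cref{lem:crossing_diagonals_and_one_change_of_sign}, the same handedness. Finally check with \eqref{eqn:action_of_sigma_ij_on_alternating_words} that no such $v$ gets longer. Steps 3 and 4 are fine.

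The problem is in Step 1. From $k+l\ge m_{ij}+2$ and $|k-l|\le 1$ you only get $\min(k,l)\ge (m_{ij}+1)/2$, not $k,l\ge m_{ij}$. In fact $k,l\ge m_{ij}$ holds only when $\ell(u)\ge 2m_{ij}$. So whenever $m_{ij}+2\le \ell(u)\le 2m_{ij}-1$ you are in the \emph{second} case of \eqref{eqn:action_of_sigma_ij_on_alternating_words}. There, Step 2's claim that $u$ becomes an alternating word of length $k+l-2m_{ij}$, shrinking by $2m_{ij}$, is wrong. A concrete instance: take $m_{12}=5$ and $q=\sigma_1^{3}\star x_1=x_1x_2x_1x_2x_1^{-1}x_2^{-1}x_1^{-1}\in Q$. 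Here $u=q$, $(k,l)=(4,3)$, and $\sigma_1^{-5}\star q=\sigma_1^{-2}\star x_1=x_2^{-1}x_1x_2$. The length drops from $7$ to $3$, not by $10$, and the first-case formula would have negative indices.

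The repair is to treat $u$ exactly as you already treat the $v$'s. If $k,l\ge m_{ij}$, the length of $u$ drops by $2m_{ij}$. If $k,l\le m_{ij}$, the new length is $2m_{ij}-\ell(u)\le m_{ij}-2<\ell(u)$. In both cases $u$ strictly shortens, and together with Steps 3 and 4 this gives $\ell(\hat{\beta}\star q)<\ell(q)$. The bound ``$\ell(q)-2m_{ij}+\ldots$'' should be dropped. This is how the paper phrases it: it only asserts that $\sigma_{ij}^{-m_{ij}}$ reduces the length of $u$ and does not increase the lengths of the $v_k$, without committing to which case applies to $u$.
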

\begin{proof}
	Let $\gamma$ be a minimal simple loop representing $q$.
	Let $\Set{v_1,\ldots,v_l}$ denote the set of other maximal $(\nFreeGenElt_i,\nFreeGenElt_j)$-alternating subwords in $q$ satisfying $\ell(v_k) \geq 3$.
	Since all $m_{ij} \geq 2$, we have $\ell(u) \geq 4$.
	So, \Cref{cor:single_crossing_of_line_in_alternating_subword} tells us there is a crossing of $\mu_{ij}$ in $\gamma_u$ and by \Cref{lem:possible_lengths_other_crossings}, all other crossings of $\mu_{ij}$ occur in $\gamma_{v_k}$ for some $k$.
	This tells us that the action of~$\hat{\beta}^\pm \coloneq \sigma_{ij}^{\pm m_{ij}}$ on $q$ is determined by the action on~$u$ and each of the~$v_k$.

	Furthermore, \Cref{lem:possible_lengths_other_crossings} tells us that all $v_k$ must satisfy $\Abs{\ell(v_k) - \ell(u)} \leq 2$ if $\ell(u)$ is odd, and $\Abs{\ell(v_k) - \ell(u)} \leq 1$ if $\ell(u)$ is even.
	In particular, all $v_k$ satisfy $\ell(v_k) \geq m_{ij}$.
	\Cref{lem:crossing_diagonals_and_one_change_of_sign} tells us that all of $u$ and $\Set{v_1,\ldots,v_l}$ have changes of sign of one type ($Z_+$ or $Z_-$).

	Let us assume all the changes of sign are of $Z_+$ type.
	Then \eqref{eqn:action_of_sigma_ij_on_alternating_words} tells us that acting by $\hat{\beta}^-$ will reduce the length of $u$ and will not increase the length of the $v_k$.
	So, $\ell(\hat{\beta}^- \star q) < \ell(q)$.
	If all the changes of sign were instead of $Z_-$ type, then by a similar argument $\ell(\hat{\beta}^+ \star q) < \ell(q)$.
\end{proof}
\begin{lemma}
	\label{lem:reduction_where_altlength_1_greater_and_even}
	Suppose $q \in Q$ has a maximal $(\nFreeGenElt_i,\nFreeGenElt_j)$-alternating subword $u$ with $\ell(u) \geq m_{ij} + 1 \geq 4$ and $\ell(u)$ even.
	Then there exists $\hat{\beta} \in \nBraidSubgrpG$ such that $\ell(\hat{\beta} \star q) < \ell(q)$.
\end{lemma}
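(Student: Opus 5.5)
The plan is to mirror the proof of \Cref{lem:reduction_where_altlength_2_greater}, using the even-length refinement of \Cref{lem:possible_lengths_other_crossings}. Let $\gamma$ be a minimal simple loop representing $q$. Let $v_1,\ldots,v_l$ be the other maximal $(\nFreeGenElt_i,\nFreeGenElt_j)$-alternating subwords of $q$ with $\ell(v_k)\geq 3$. Since $\ell(u)\geq 4$, \Cref{cor:single_crossing_of_line_in_alternating_subword} gives a unique crossing of $\mu_{ij}$ in $\gamma_u$. By \Cref{lem:possible_lengths_other_crossings}, every other crossing of $\mu_{ij}$ lies in some $\gamma_{v_k}$. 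Hence the action of $\sigma_{ij}^{\pm m_{ij}}$ on $q$ is determined by its action on $u$ and the $v_k$.

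Next we bound the lengths. Because $\ell(u)$ is even, \Cref{lem:possible_lengths_other_crossings} gives $\Abs{\ell(v_k)-\ell(u)}\leq 1$, so $\ell(v_k)\geq \ell(u)-1\geq m_{ij}$ for all $k$. By \Cref{lem:change_sign_close_to_middle}, write $u=\nPi{x}{y}{n,n}$ with $2n=\ell(u)\geq m_{ij}+1$. Thus $n\geq m_{ij}/2+1/2$, but this does not give $n\geq m_{ij}$, so the casework needs care.

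The case split in \eqref{eqn:action_of_sigma_ij_on_alternating_words} is between the regimes $k,l\geq m_{ij}$ and $k,l\leq m_{ij}$. In the first regime the action shortens the word by $2m_{ij}$. In the second it sends a word of length $k+l$ to one of length $2m_{ij}-(k+l)$, so the change in length is $2m_{ij}-2(k+l)$. By \Cref{lem:crossing_diagonals_and_one_change_of_sign}, $u$ and all the $v_k$ have changes of sign of the same handedness. Assume it is $Z_+$ and act by $\hat\beta=\sigma_{ij}^{-m_{ij}}$; the $Z_-$ case is symmetric with $\sigma_{ij}^{m_{ij}}$.

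For any alternating subword $v$ with $\ell(v)\geq m_{ij}+1$, each regime strictly decreases the length. In the first regime this is immediate. In the second, $2m_{ij}-\ell(v)\leq m_{ij}-1<\ell(v)$. For $u$ the decrease is strict. For a $v_k$ with $\ell(v_k)=m_{ij}$, its halves $k,l$ satisfy $\Abs{k-l}\leq 1$, so $k,l\leq m_{ij}$ and the image has length $2m_{ij}-m_{ij}=m_{ij}$, which is no increase. So every piece gets shorter or stays the same, and $u$ gets strictly shorter.

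The main obstacle is the bookkeeping at the boundaries of these subwords. After the action, the new alternating pieces might cancel freely with or merge into neighbouring letters, and the length count must include such effects. This is the same issue implicit in \Cref{lem:reduction_where_altlength_2_greater}. I would argue that maximality of $u$ and the $v_k$ means their neighbours are letters other than $\nFreeGenElt_i^{\pm1},\nFreeGenElt_j^{\pm1}$, or would extend the alternation. Since the replacements are again $(\nFreeGenElt_i,\nFreeGenElt_j)$-words, free reduction can only shorten the word further. Therefore $\ell(\hat\beta\star q)<\ell(q)$.
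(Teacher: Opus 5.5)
Your proof is correct and follows essentially the same route as the paper. The paper inherits the setup of \Cref{lem:reduction_where_altlength_2_greater}, uses the even-length case of \Cref{lem:possible_lengths_other_crossings} to get $\ell(v_k)\geq m_{ij}$ for every other piece containing a crossing of $\mu_{ij}$, and then acts by $\sigma_{ij}^{\mp m_{ij}}$ according to the common handedness of the changes of sign. Where the paper just says the proof concludes as before, you spell out the two regimes of \eqref{eqn:action_of_sigma_ij_on_alternating_words} and note that free reduction at the boundaries can only shorten the word further.
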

\begin{proof}
	Let us inherit the setup of the proof of \Cref{lem:reduction_where_altlength_2_greater}.
	We have the same hypotheses as in that lemma, except we also know $\ell(u)$ is even.
	The requirement that $\ell(u) \geq 4$ is so that we can still use \Cref{lem:possible_lengths_other_crossings} to show that all other crossings of $\mu_{ij}$ occur in the $v_k$.
	By \Cref{lem:possible_lengths_other_crossings}, all the $v_k$ satisfy $\Abs{\ell(u) - \ell(v_k)} \leq 1$.
	In particular, all $v_k$ satisfy $\ell(v_k) \geq m_{ij}$.
	The proof concludes in the same way.
\end{proof}
\begin{lemma}
	\label{lem:applying_B_hat_results_in_5_tame_B_word}
	Let $\Gamma$ be a Coxeter graph such that $m_{ij} \geq 5$ for all $i,j$.
	For all $q \in Q$, there exists a $\hat{\beta} \in \nBraidSubgrpG$ such that $\nCoxWord{(\hat{\beta} \star q)}{W}$ is $5$-tame and has property $\nWordPropB$.
\end{lemma}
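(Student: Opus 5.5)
We argue by choosing, within the $\nBraidSubgrpG$-orbit of $q$, an element of minimal length. Since $\nBraidSubgrpG \star q \subseteq Q$ by \Cref{cor:hurref_is_image_of_artin_action}, and word lengths are non-negative integers, there is some $\hat{\beta} \in \nBraidSubgrpG$ such that $r \coloneq \hat{\beta} \star q$ has $\ell(r) \leq \ell(\hat{\beta}^\prime \star q)$ for all $\hat{\beta}^\prime \in \nBraidSubgrpG$. Since $\nBraidSubgrpG$ is a group, for every $\hat{\beta}^{\prime\prime} \in \nBraidSubgrpG$ we have $\ell(\hat{\beta}^{\prime\prime} \star r) \geq \ell(r)$. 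We then show $\nCoxWord{r}{W}$ has the required properties.

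First, by \Cref{lem:Q_square_free}, $r$ is square-free as a free word. We need to check that $\nCoxWord{r}{W}$ is square-free and that its maximal alternating subwords correspond to maximal alternating subwords of $r$. Since $r$ is reduced and square-free, consecutive letters of $r$ involve distinct generators, so $\nCoxWord{r}{W}$ is square-free. Alternation depends only on the generator indices, not the signs, so maximal alternating subwords of $r$ and of $\nCoxWord{r}{W}$ correspond with equal lengths.

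By minimality, \Cref{lem:reduction_where_altlength_2_greater} rules out any maximal alternating subword $u$ of $r$ with $\ell(u) \geq m_{ij}+2$. This gives \cref{prop:word_prop_B_alts_not_too_long}. Next, every relevant subword has length at least $m_{ij}-1 \geq 4$, so \Cref{lem:w_in_Q_W_has_prop_F} applied to $\nCoxWord{r}{W} \in Q_W$ gives \cref{prop:word_prop_B_has_F}. Hence $\nCoxWord{r}{W}$ has property $\nWordPropB$.

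The remaining point, and the only delicate one, is $5$-tameness. We must exclude a maximal alternating subword $u$ with $\ell(u) = m_{ij}+1$ where $m_{ij}=5$, i.e.\ $\ell(u)=6$. Here $\ell(u)$ is even and $\ell(u) \geq 4$, so \Cref{lem:reduction_where_altlength_1_greater_and_even} applies and contradicts minimality. This settles the claim.

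In fact the same argument excludes every length-$(m_{ij}+1)$ subword with $m_{ij}$ odd, so the only surviving $(m_{ij}+1)$-subwords have $m_{ij}$ even, hence $m_{ij}\geq 6$. I expect the main obstacle to be not these lemma applications but the bookkeeping check above: that the Coxeter-word notions (maximal alternating subwords, square-freeness) faithfully reflect those of the reduced free word $r$, and that no free reduction occurs when passing to $r$. The latter holds because $r$ is by definition a reduced element of $Q$.
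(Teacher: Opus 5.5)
Your proposal is correct and takes the paper's approach. The paper repeatedly applies \Cref{lem:reduction_where_altlength_2_greater} and \Cref{lem:reduction_where_altlength_1_greater_and_even}, which terminates because length strictly drops, and then invokes \Cref{lem:w_in_Q_W_has_prop_F}; choosing a length-minimal element of the $\nBraidSubgrpG$-orbit is just the extremal form of that iteration. Your explicit check that square-freeness and maximal alternating subwords pass faithfully from the reduced word $r$ to $\nCoxWord{r}{W}$ is a detail the paper leaves implicit.
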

\begin{proof}
	If $m_{ij} = 5$, then $m_{ij} + 1 = 6$ is even.
	Therefore, we can repeatedly apply \Cref{lem:reduction_where_altlength_2_greater} (to eliminate length $m_{ij} + 2$ alternating subwords) and \Cref{lem:reduction_where_altlength_1_greater_and_even} (to eliminate length $m_{ij} + 1 = 6$ alternating subwords) where appropriate to obtain a~$\hat{\beta}$ such that $\hat{\beta} \star q$ has no length $m_{ij} + 2$ alternating subwords and no length $m_{ij} + 1$ alternating subwords when $m_{ij} = 5$.
	By \Cref{lem:w_in_Q_W_has_prop_F}, the relevant set of maximal alternating subwords of $\nCoxWord{(\hat{\beta} \star q)}{W}$, of length at least $m_{ij} - 1$, has property~$\nFactPropF$.
\end{proof}
\begin{remark}
	Given a Coxeter graph $\Gamma$ such that all $m_{ij} \geq 5$ and all $m_{ij}$ are odd, we can show there exists a $\hat{\beta}$ such that $\nCoxWord{(\hat{\beta} \star q)}{W}$ has property $\nWordPropA$ (and property $\nWordPropB$), and is thus reduced.
\end{remark}

%% file: sections/theorem/controlling_artin_operations.tex
\subsection{Performing toggling operations using a subgroup of the braid group}
\Cref{lem:word_ops_relating_B_words} tells us that two $5$-tame Coxeter words $w_1,w_2$ with property  $\nWordPropB$ are related by a sequence of Artin operations or  $(m_{ij} \pm 1)$-reduction/expansions on relevant subwords.
Recall that these operations are toggling operations as in \Cref{def:toggling_operation}.
We now want to show that such a sequence of toggles can be performed by correctly choosing elements of~$\nBraidSubgrpG$, a similar result to \Cref{lem:reduction_where_altlength_2_greater}.
We begin by showing this approach works on individual subwords.
\begin{lemma}
	\label{lem:C_word_operations_acheived_in_B_hat}
	Let $\Gamma$ be a Coxeter graph and let $q \in Q$.
	Let $u$ be a maximal $(\nFreeGenElt_i,\nFreeGenElt_j)$-alternating subword of $q$ with $\ell(u) \geq 3$.
	The following all hold.
	\begin{enumerate}
		\item If $\ell(u) = m_{ij}$, then there exists $\hat{\beta} \in \Set{\sigma_{ij}^{\pm m_{ij}}}$ such that $\nCoxWord{(\hat{\beta}\star u)}{W}$ is the result of applying the relevant Artin operation to $\nCoxWord{u}{W}$.
		      \smallskip
		\item If $\ell(u) = m_{ij}  + 1$, then there exists $\hat{\beta} \in \Set{\sigma_{ij}^{\pm m_{ij}}}$ such that $\nCoxWord{(\hat{\beta}\star u)}{W}$ is the result of applying $(m_{ij} + 1)$-reduction to $\nCoxWord{u}{W}$.
		      \smallskip
		\item If $\ell(u) = m_{ij}  - 1$, then there exists $\hat{\beta} \in \Set{\sigma_{ij}^{\pm m_{ij}}}$ such that $\nCoxWord{(\hat{\beta}\star u)}{W}$ is the result of applying $(m_{ij} - 1)$-expansion to $\nCoxWord{u}{W}$.
	\end{enumerate}
\end{lemma}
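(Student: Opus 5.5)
The proof is a direct computation with the displayed formulas \eqref{eqn:action_of_sigma_ij_on_alternating_words}. First I would put $u$ in normal form. By \Cref{lem:crossing_diagonals_and_one_change_of_sign}, $u$ has exactly one change of sign. By \Cref{lem:change_sign_close_to_middle}, we can therefore write $u=\nPi{x}{y}{k,l}$, where $\{x,y\}=\{\nFreeGenElt_i,\nFreeGenElt_j\}$ or $\{\nFreeGenElt_i^{-1},\nFreeGenElt_j^{-1}\}$ and $\Abs{k-l}\leq 1$. Since \eqref{eqn:action_of_sigma_ij_on_alternating_words} holds regardless of whether $i<j$, we may relabel $i,j$ so that $x=\nFreeGenElt_i^{\pm1}$. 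The two cases (first letter positive or negative) are handled by $\sigma_{ij}^{-m_{ij}}$ and $\sigma_{ij}^{m_{ij}}$ respectively, exactly as in the two lines of \eqref{eqn:action_of_sigma_ij_on_alternating_words}.

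Next I would justify that the action of $\sigma_{ij}^{\pm m_{ij}}$ on $q$, restricted to $u$, is the one read off from \eqref{eqn:action_of_sigma_ij_on_alternating_words}. By \Cref{cor:single_crossing_of_line_in_alternating_subword}, $\gamma_u$ crosses $\mu_{ij}$ exactly once, at the change of sign. By the discussion in \Cref{sec:B_hat}, the Artin action of $\sigma_{ij}^{\pm m_{ij}}$ inserts the word from \Cref{tab:action_of_sigma_ij} only at crossings of $\mu_{ij}$. So within $u$ the effect is a single insertion at the change of sign, followed by free reduction. That free reduction is what \eqref{eqn:action_of_sigma_ij_on_alternating_words} records. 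Since the statement only concerns $\nCoxWord{(\hat\beta\star u)}{W}$, I only need the letters replacing $u$, and I do not need to track what happens elsewhere in $q$.

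The three cases then come from the hypothesis $\ell(u)\leq m_{ij}+1$ together with $\Abs{k-l}\leq1$, which give $k,l\leq m_{ij}$ in all of them except when $\ell(u)=m_{ij}+1$ and one of $k,l$ equals $m_{ij}+1$. That exception is impossible unless $m_{ij}=0$, so in every case the second branch of \eqref{eqn:action_of_sigma_ij_on_alternating_words} applies. When $u=\nPi{\nFreeGenElt_i}{\nFreeGenElt_j}{k,l}$, we get $\hat\beta\star u=\nPi{\nFreeGenElt_j^{-1}}{\nFreeGenElt_i^{-1}}{m_{ij}-k,\,m_{ij}-l}$, of length $2m_{ij}-\ell(u)$. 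Its Coxeter image is an alternating word beginning with $\nCoxGenElt_j$. The original Coxeter word $\nCoxWord{u}{W}$ begins with $\nCoxGenElt_i$ and has length $m_{ij}+\epsilon$, where $\epsilon\in\{0,1,-1\}$. The new word begins with $\nCoxGenElt_j$ and has length $m_{ij}-\epsilon$, which is exactly a toggle in the sense of \Cref{def:toggling_operation}: the Artin operation for $\epsilon=0$, the $(m_{ij}+1)$-reduction for $\epsilon=1$, and the $(m_{ij}-1)$-expansion for $\epsilon=-1$. The negative-exponent case is symmetric, using the second line of \eqref{eqn:action_of_sigma_ij_on_alternating_words}.

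The main obstacle is the edge of case (2) where $\ell(u)=m_{ij}+1$ is odd. There we must check that $k,l\leq m_{ij}$, which holds because $k+l=m_{ij}+1$ with $\Abs{k-l}\leq1$ forces $k,l\leq \lceil (m_{ij}+1)/2\rceil\leq m_{ij}$. We must also ensure that the inserted word cancels fully into $u$ and does not eat into neighbouring letters of $q$. I would verify this from the explicit insertion in \Cref{tab:action_of_sigma_ij}. The inserted word has length $2m_{ij}$ and cancels only the $k$ letters of $u$ on one side and the $l$ letters on the other. Its remaining letters are $(\nFreeGenElt_i,\nFreeGenElt_j)$-letters. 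The neighbours of $u$ in $q$ are different generators by maximality, so no further cancellation occurs.
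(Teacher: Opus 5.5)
Your proposal is correct and follows the same route as the paper. Because $u$ has exactly one change of sign, we have $k,l\ge 1$ with $k+l\le m_{ij}+1$, which forces the branch $k,l\le m_{ij}$ of the displayed action formulas; acting by $\sigma_{ij}^{-m_{ij}}$ or $\sigma_{ij}^{m_{ij}}$, according to whether the change of sign is of $Z_+$ or $Z_-$ type, then yields an alternating word of length $2m_{ij}-\ell(u)$ beginning with the other generator, which is exactly the required toggle. Your additional checks (the explicit length and first-letter computation, and the observation that the leftover letters cannot cancel against the neighbours of $u$, since these involve neither $x_i$ nor $x_j$) fill in details that the paper's two-line proof leaves implicit.
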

\begin{proof}
	In each case, by \Cref{lem:crossing_diagonals_and_one_change_of_sign}, $u$ has one change of sign, so the actions of $\sigma_{ij}^{\pm m_{ij}}$ on $u$ are described by the cases $k,l \leq m_{ij}$ in \eqref{eqn:action_of_sigma_ij_on_alternating_words}.
	The result follows by making the correct choice in $\Set{\pm m_{ij}}$, which depends on the handedness of the change of sign in $u$.
	If $u$ has a $Z_+$ change of sign, act by $\sigma_{ij}^{-m_{ij}}$, and vice versa.
\end{proof}
Suppose we have some $w \in Q_W$, and that a toggling operation is applied to some maximal $(\nCoxGenElt_i,\nCoxGenElt_j)$-alternating subword of $w$.
We no longer know if the resulting word is in $Q_W$.
The argument we wish to make is that a toggling operation must also be applied to every other maximal $(\nCoxGenElt_i,\nCoxGenElt_j)$-alternating subword of $w$ in order to get a word in $Q_W$ (as would occur if we were acting in $Q$ by $\nBraidSubgrpG$).
The key to this argument is statement~$(1)$ in \Cref{lem:crossing_diagonals_and_one_change_of_sign}, and that the handedness of the change of sign in a maximal alternating subword can still be detected in the Coxeter word, proved in \Cref{lem:change_of_sign_given_by_Z}.
\begin{lemma}
	\label{lem:non_crossing_pairs}
	Let $q \in Q$.
	Let $u$ be a maximal $(\nFreeGenElt_i,\nFreeGenElt_j)$-alternating subword and let $v$ be a maximal $(\nFreeGenElt_k, \nFreeGenElt_l)$-alternating subword, such that $\ell(u),\ell(v) \geq 3$.
	We may assume $i < j$ and $k < l$.
	If $\Set{i,j,k,l}$ are all distinct, then one of the following holds:
	\begin{enumerate}
		\item $k,l < i$,
		      \smallskip
		\item $k,l > j$,
		      \smallskip
		\item $i < k < l < j\;$ or
		      \smallskip
		\item $k < i < j < l$.
	\end{enumerate}
\end{lemma}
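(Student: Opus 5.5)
The plan is to argue geometrically, as in \Cref{lem:aba_cbc}. Let $\gamma$ be a minimal simple loop representing $q$. The conclusion says exactly that the pairs $\{i,j\}$ and $\{k,l\}$ do not interleave on the real line. The failing configurations are $i<k<j<l$ and $k<i<l<j$. By symmetry it suffices to rule out $i<k<j<l$; the other case is identical with the roles of $u$ and $v$ swapped.

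The first step is to attach to each of $u$ and $v$ a closed curve. By \Cref{cor:single_crossing_of_line_in_alternating_subword}, $\gamma_u$ crosses $\mu_{ij}$ exactly once. Since $\ell(u)\ge 3$, $\gamma_u$ crosses one of the segments, say $\lambda_i$, twice. As in the proof of \Cref{lem:aba_cbc}, let $\rho_u$ be the portion of $\lambda_i$ between these two crossings. Then $\gamma_u\cup\rho_u$ bounds a region $R_u$. Every $\lambda$ crossed by the perimeter of $R_u$ is $\lambda_i$ or $\lambda_j$, and that perimeter winds around $p_j$ (or $p_i$) through the crossing with $\mu_{ij}$. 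I would choose the length-$3$ window of $u$ centred at the change of sign, using \Cref{lem:change_sign_close_to_middle}, which gives $\Pi$ of type $(1,2)$ or $(2,1)$. With this choice the region contains exactly one of $p_i,p_j$, and its boundary meets the arc $\mu_{ij}$ exactly once. Hence any path along $\mu_{ij}$ from $p_i$ to $p_j$ crosses the perimeter of $R_u$ an odd number of times. Build $R_v$ from $v$, $\lambda_k$ or $\lambda_l$, and $\mu_{kl}$ in the same way.

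The second step is a parity and intersection count. The perimeters of $R_u$ and $R_v$ are disjoint, by the same reasoning as in \Cref{lem:aba_cbc}: $\gamma$ is simple, distinct $\lambda$'s are disjoint, $\gamma_u$ meets only $\lambda_i,\lambda_j$ while $\gamma_v$ meets only $\lambda_k,\lambda_l$, and $\{i,j\}\cap\{k,l\}=\emptyset$. Now use the interleaving $i<k<j<l$. Here the arcs $\mu_{ij}$ and $\mu_{kl}$, both passing below intermediate points, must intersect an odd number of times; with the standard semicircle-like drawing, exactly once. Then argue via $\Z/2$ intersection numbers: the closed curve $\partial R_u$ separates $p_i$ from $p_j$, and $\partial R_v$ separates $p_k$ from $p_l$. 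Consider instead the closed curves $\mu_{ij}\cup\delta_u$ where $\delta_u$ is a short detour; cleaner still is to use the simple curve $\gamma$ together with the vertical segments. A more robust route is the following Jordan-curve argument. $R_u$ and $R_v$ are disjoint discs or nested, as in \Cref{lem:aba_cbc}. If they are nested, say $R_u\subset R_v$, then $\lambda_i$ must exit $R_v$ through $\gamma_v$, which is a contradiction. So they are disjoint. But $R_u$ contains a neighbourhood of the crossing point of $\gamma_u$ with $\mu_{ij}$ together with a subarc of $\mu_{ij}$ ending at a puncture. Tracking $\mu_{ij}$, which must cross $\mu_{kl}$, one shows the crossing point of $\mu_{ij}$ and $\mu_{kl}$ lies in both regions, a contradiction.

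The main obstacle is the last step: precisely controlling which part of $\mu_{ij}$ lies inside $R_u$, given only that $\gamma$ is minimal with respect to all $\lambda$ and $\mu$ arcs. I would first try to prove directly that the whole of $\mu_{ij}$ lies inside $R_u$, not just a subarc, when the window is chosen around the change of sign. The reason is that $\gamma_u$ winds fully around one puncture, crossing $\mu_{ij}$ once. Minimality (no bigons with $\mu_{ij}$) together with the fact that the perimeter meets no $\lambda_m$ with $m\notin\{i,j\}$ should force this. If $\mu_{ij}\subset \overline{R_u}$ and $\mu_{kl}\subset\overline{R_v}$ with disjoint perimeters and non-nested regions, the interleaving crossing of $\mu_{ij}$ with $\mu_{kl}$ gives the contradiction immediately.
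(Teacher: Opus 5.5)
Your first steps are sound. The length-$3$ window around the change of sign gives a region $R_u$ containing exactly one of $p_i,p_j$, the perimeters of $R_u$ and $R_v$ are disjoint, and the nested case is ruled out as in Lemma~\ref{lem:aba_cbc}.

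\textbf{The gap is in the last step.} What you actually know is that $\mu_{ij}\cap R_u$ is exactly the sub-arc of $\mu_{ij}$ from the single crossing point $c_u$ of $\gamma_u$ with $\mu_{ij}$ to the puncture of the middle letter; likewise for $v$. Whether $X=\mu_{ij}\cap\mu_{kl}$ lies on that sub-arc depends on the order of $c_u$ and $X$ along $\mu_{ij}$. Nothing in your argument controls this, and you need it simultaneously for $R_v$. So ``one shows'' is precisely the missing content.

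Disjointness of the regions plus the single crossings with $\mu_{ij}$ and $\mu_{kl}$ cannot locate $X$. Small discs around $p_j$ and around $p_l$ have all these properties, yet neither contains $X$.

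Your proposed remedy, $\mu_{ij}\subset\overline{R_u}$, is impossible by your own first step. Since $\partial R_u$ meets $\mu_{ij}$ once and $R_u$ contains only one of $p_i,p_j$, the other endpoint of $\mu_{ij}$ lies outside $\overline{R_u}$.

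\textbf{The missing idea is to close up with the $\lambda$-arcs at $+i$ rather than use the $\mu$-arcs.} Then changes of sign are not needed at all.
\begin{enumerate}
\item Take two consecutive letters $x_i^{\pm1}x_j^{\pm1}$ of $u$. The corresponding sub-arc $a$ of $\gamma$ runs from $P\in\lambda_i$ to $P'\in\lambda_j$ and meets no $\lambda$ in its interior.
\item The arc $a$, together with the segment of $\lambda_j$ from $P'$ to $+i$ and the segment of $\lambda_i$ from $+i$ to $P$, is a Jordan curve.
\item Suppose $i<k<j<l$. At $+i$, $\lambda_k$ leaves into the wedge between $\lambda_i$ and $\lambda_j$, while $\lambda_l$ leaves outside it. Neither meets the curve anywhere else. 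Hence $\lambda_k\setminus\{+i\}$ lies inside the curve and $\lambda_l\setminus\{+i\}$ lies outside.
\item The sub-arc of $\gamma$ for two consecutive letters $x_k^{\pm1}x_l^{\pm1}$ of $v$ joins these two sets.
\item That sub-arc cannot meet $\lambda_i$ or $\lambda_j$. It also cannot meet $a$, because $\gamma$ is simple and $u,v$ are disjoint subwords (they share no generator). This is a contradiction.
\end{enumerate}
The case $k<i<l<j$ follows by swapping $u$ and $v$. This argument only needs $\ell(u),\ell(v)\geq 2$. It is a rigorous version of the paper's picture argument that $\gamma_v$ cannot be drawn without crossing $\gamma_u$. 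Equivalently, cut the disc along every $\lambda_m$: its boundary then lists the two sides of $\lambda_1,\dots,\lambda_n$ in order, and two disjoint chords cannot have interleaved endpoints.
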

\begin{proof}
	The two possible other situations are
	\begin{enumerate}[label=(\roman*)]
		\item $i < k  < j < l\;$ and
		      \smallskip
		\item $k < i < l < j$.
	\end{enumerate}
	Let us assume (i) occurs.
	Below, we draw one of the possible pictures for $\gamma_u$ near the crossing of $\mu_{ij}$.
	\[
	\begin{tikzpicture}[,every node/.append style={scale=1}, inner sep=0pt, outer sep=0pt]
		\input{figs/non_crossing_mu_ij.tex}
	\end{tikzpicture}%

	\]
	We see that we cannot draw $\gamma_{v}$ without crossing $\gamma_u$.
	This occurs for all such possible pictures of $\gamma_u$ near the crossing of $\mu_{ij}$.
	We can make a similar argument to show~$(ii)$ cannot occur.
\end{proof}
Let $\Gamma$ be a Coxeter graph and let $w = w_1\cdots w_m$ be a square-free Coxeter word in $\nCoxG$.
Let $u$ be a maximal $(\nCoxGenElt_i,\nCoxGenElt_j)$-alternating subword of $w$.
Suppose that $u$ has length $l$ and starts at letter $k$, so $w$ has the following factorisation.
\[
	w = (w_1\cdots w_{k-1})u(w_{k+l} \cdots w_m).
\]
We define the following function which encodes how the point associated to the letter before $u$ is placed relative to  $\nPoint_i$ and  $\nPoint_j$.
\[
	\nPrevRegion(u) = \begin{cases}
		+1 & \text{if $k = 1$},                                \\
		-1 & \text{if $w_{k-1}\in\{s_{i+1},\ldots,s_{j-1}\}$}, \\
		+1 & \text{otherwise}
	\end{cases}
\]
We define $R_{i,j}$ to be the interior of the region of $\nPuncturedD$ bounded by $\lambda_i$, $\lambda_j$ and $\mu_{ij}$.
Below we show the region $R_{2,4}$ shaded in grey for the rank 5 case.
\[
	\begin{tikzpicture}[,every node/.append style={scale=1}, inner sep=0pt, outer sep=0pt]
		\input{figs/R_24.tex}
	\end{tikzpicture}%

\]
The notation stands for \emph{previous region}, which refers to whether the point associated to the letter previous to $u$ is in the region $R_{i,j}$ associated to $u$.
Since $u$ is maximal and $w$ is square-free, this point does not lie on the boundary of $R_{ij}$.
Recall the collection of alternating subwords associated to property $\nWordPropB$ in \Cref{def:word_prop_B}.
\begin{lemma}
	\label{lem:prev_region_constant}
	Let $w \in Q_W$ and $a_1,\ldots,a_m$ be a collection of maximal alternating subwords of $w$ such that all $\ell(a_k) \geq 4$.
	Let $u$ be a maximal alternating subword of $w$ with $\ell(u) \geq 3$, which is either an $a_k$ subword, or is disjoint from all $a_k$ subwords.
	Let $w^\prime$ be a Coxeter word obtained by applying toggling operations to any number of the $a_k$ factors, and let $u^\prime$ be the subword of $w^\prime$ coming from $u$.
	Then $\nPrevRegion(u^\prime) = \nPrevRegion(u)$.
\end{lemma}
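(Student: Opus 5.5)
The proof is a local bookkeeping argument: the value $\nPrevRegion(u)$ depends only on whether $u$ starts $w$ and, if not, on the generator index of the letter immediately preceding $u$. So I would show that toggling the $a_k$ factors leaves both of these unchanged. I would do it one toggle at a time and induct. By \Cref{lem:w_in_Q_W_has_prop_F}, the collection $a_1,\ldots,a_m$ has property $\nFactPropF$. By \Cref{lem:toggles_applied_to_F_remains_F}, after each toggle the word stays square-free and the toggled collection still has property $\nFactPropF$. By \Cref{rmk:F_a_subwords_disjoint}, the $a_k$ remain pairwise disjoint maximal subwords. This keeps the hypotheses of the lemma available at each stage, in the form actually used below: $\nFactPropF$ together with disjointness, not membership of $Q_W$.

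The first step is to check that positions are preserved. A toggle of $a_k$ replaces an alternating word of length $m_{ij}+r$ by one of length $m_{ij}-r$. The start index of every subword after $a_k$ shifts, but no letter outside $a_k$ changes. Since $u$ is either some $a_{k'}$ or disjoint from all $a_k$, the subword $u'$ is well defined, it is still maximal by \Cref{lem:square_free_maximal_is_conserved}, and $u'$ starts $w'$ exactly when $u$ starts $w$. Here one must check that $a_k$ is never toggled to the empty word, which holds since $\ell(a_k)\geq 4$ and we only consider toggles with nonempty output in this context. Even the empty case is harmless if $a_k$ is interior, because then the letter before $u$ comes from further left, but this is a point to be careful about.

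There are then two cases for the letter preceding $u$.

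\emph{Case 1: the letter before $u$ lies outside every $a_k$.} Then it is unchanged, and $\nPrevRegion(u')=\nPrevRegion(u)$ immediately. The same holds when $u$ itself is toggled: the letter before $u'$ is still the old letter before $u$, though the first letter of $u'$ may change. The function $\nPrevRegion$ depends only on the preceding letter and on $\{i,j\}$, and toggling does not change the generator pair.

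\emph{Case 2: the letter before $u$ is the last letter of some $a_k\neq u$, and $a_k$ is toggled.} This is the main obstacle. By \Cref{rmk:A_l_emptyword_implies_adjacent_as_have_0_gens_in_common} (when $u$ is also an $a$-factor), or by \cref{prop:F_3_neighbour} combined with \Cref{rmk:maximal_implies_overlap_at_most_1_and_share_exactly_1_letter} (when $u$ is disjoint from the $a$-factors with $\ell(u)\geq3$), the factor $a_k$, an $(s_k,s_l)$-word, shares no generators with $u$.

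Toggling $a_k$ can swap its last letter between $s_k$ and $s_l$. So I need $k$ and $l$ to lie on the same side of the interval $(i,j)$, meaning both in $\{i+1,\ldots,j-1\}$ or both outside it. This is exactly the non-crossing dichotomy of \Cref{lem:non_crossing_pairs}, applied to the free-word maximal alternating subwords of $q$ whose images are $u$ and $a_k$. Both have length at least $3$, and the indices are distinct by the zero-shared-generators statement. In all four allowed cases, $k$ and $l$ are either both inside $(i,j)$ or both outside. Hence $\nPrevRegion$ is the same whichever of $s_k,s_l$ ends $a_k'$.

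For the induction to proceed, I must apply \Cref{lem:non_crossing_pairs} only to the original $w\in Q_W$, since $w'$ need not lie in $Q_W$. I would therefore note that the generator pairs of all the subwords involved are fixed by toggling, so the index relations among them, established once in $w$, persist through every step.
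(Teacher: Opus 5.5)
This is correct and is essentially the paper's argument: $\nPrevRegion$ can only change if $u$ immediately follows a toggled factor $a_p$, which shares no generator with $u$, and \Cref{lem:non_crossing_pairs} then places both generators of $a_p$ on the same side of the interval between $i$ and $j$. You are right to insist that this lemma be applied in the original word $w\in Q_W$. The differences are cosmetic: you obtain the absence of shared generators from property $\nFactPropF$ (\Cref{rmk:A_l_emptyword_implies_adjacent_as_have_0_gens_in_common}, \cref{prop:F_3_neighbour}) where the paper cites \Cref{cor:aba_cbc} directly, and your one-toggle-at-a-time induction is unnecessary, since toggling fixes the generator pairs and the order of the disjoint factors.
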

\begin{proof}
	Since $u$ is either one of the $a_k$ factors, or is disjoint from each of the $a_k$ factors, we can unambiguously refer to the subword of $w^\prime$ which comes from $u$, i.e.~$u^\prime$ is well-defined.

	Either $u$ is the same word as $u^\prime$, or $u^\prime$ is the result of a toggling operation applied to $u$.
	In either case, $u$ and $u^\prime$ have the same appearing letters, so $\nPrevRegion(u^\prime) \neq \nPrevRegion(u)$ only if the letter preceding~$u^\prime$ is different to the letter preceding~$u$.
	Suppose the letter preceding $u$ is different to the letter preceding $u^\prime$.
	This only occurs if for some $p$, a toggling operation is applied to $a_p$, and $u$ occurs immediately after $a_p$.
	Let $u$ be such an $(\nCoxGenElt_i,\nCoxGenElt_j)$-alternating subword with $i < j$, and $a_p$ such an $(\nCoxGenElt_k,\nCoxGenElt_l)$-alternating subword with $k<l$.
	By \Cref{cor:aba_cbc}, $u$ and $a_p$ share no appearing letters.
	So, \Cref{lem:non_crossing_pairs} tells us one of the following cases occurs:
	\begin{enumerate}
		\item $k,l < i$,
		      \smallskip
		\item  $k,l > j$,
		      \smallskip
		\item  $i < k < l < j$ or
		      \smallskip
		\item $k < i < j < l$
	\end{enumerate}
	If the letter preceding $u$ is $\nCoxGenElt_k$, then the letter preceding $u^\prime$ is $\nCoxGenElt_l$, but in all the cases above, $\nPrevRegion(u) = \nPrevRegion(u^\prime)$.
	This still holds if the letter preceding $u$ is $\nCoxGenElt_l$.
\end{proof}
Let $\sgn \colon \Z \setminus \Set{0} \to \Set{\pm 1}$ be the sign function.
Let $\Gamma$ be a Coxeter graph and let $w$ be a Coxeter word in $\nCoxG$.
Let $u$ be an $(\nCoxGenElt_i,\nCoxGenElt_j)$-alternating subword of $w$ such that $\nCoxGenElt_i$ is the initial letter of $u$ (we can always choose $i$ and $j$ so that this holds).
The points $\nPoint_i$ and $\nPoint_j$ occur in some left-to-right ordering in $\nPuncturedD$, which is geometrically relevant.
With $\nCoxGenElt_i$ the initial letter of $u$, this relative positioning of~$\nPoint_i$ and~$\nPoint_j$ is encoded using a function, defined as follows.
\[
	\nOrdering(u) \coloneq \sgn(j-i).
\]
We denote the product of $\nPrevRegion$ and  $\nOrdering$ by the function $Z$, i.e.
\[
	Z(u) \coloneq \nPrevRegion(u)\nOrdering(u).
\]
\begin{lemma}
	\label{lem:change_of_sign_given_by_Z}
	Let $q \in Q$ and let $u$ be a maximal alternating subword of $q$ with $\ell(u) \geq 3$.
	The handedness of the change of sign in $u$ is given by $Z(\nCoxWord{u}{W})$, i.e.~if $Z(\nCoxWord{u}{W}) = \pm1$, then $u$ has a $Z_\pm$ change of sign.
\end{lemma}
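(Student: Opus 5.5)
We will prove the lemma by a local geometric analysis of a minimal simple loop $\gamma$ representing $q$ near the unique crossing of $\mu_{ij}$ in $\gamma_u$ (\Cref{cor:single_crossing_of_line_in_alternating_subword}). The approach is to reduce to a small number of normalised cases by symmetry, and to check each one using the region arguments already used in the proofs of \Cref{lem:exists_change_of_sign} and \Cref{lem:crossing_diagonals_and_one_change_of_sign}.

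First we normalise. Write $u$ as an $(\nFreeGenElt_i,\nFreeGenElt_j)$-alternating word whose initial letter is $\nFreeGenElt_i^{\pm1}$, so that $\nCoxWord{u}{W}$ begins with $\nCoxGenElt_i$ and $\nOrdering(\nCoxWord{u}{W})=\sgn(j-i)$. Reflecting $\nPuncturedD$ in the imaginary axis reverses the order of the punctures, and so flips $\nOrdering$. It also swaps left-to-right with right-to-left crossings of each $\lambda$, so it inverts every exponent. A $Z_+$ change of sign (an $\nFreeGenElt_a\nFreeGenElt_b^{-1}$ pattern) therefore becomes a $Z_-$ pattern. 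The reflection preserves $R_{i,j}$ up to relabelling, and hence preserves $\nPrevRegion$. Both sides of the claimed identity therefore flip together, so we may assume $i<j$, that is, $\nOrdering=+1$. It then remains to show that the change of sign is of $Z_+$ type exactly when the letter before $u$ does not correspond to a point of $R_{i,j}$ (or $u$ is a prefix), and of $Z_-$ type exactly when it lies among $\nCoxGenElt_{i+1},\ldots,\nCoxGenElt_{j-1}$.

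Next we follow $\gamma$ backwards from the beginning of $\gamma_u$. Because $u$ is maximal and $q$ is square-free (\Cref{lem:Q_square_free}), the previous crossing is of some $\lambda_c$ with $c\notin\{i,j\}$, or $\gamma$ reaches the base point $-i$ if $u$ is a prefix. The first letter of $u$ is a crossing of $\lambda_i$, and $\gamma_u$ spirals around $\mu_{ij}$ as in the pictures of \Cref{lem:change_sign_close_to_middle}. The key observation is that the arc of $\gamma_u$ before its first $\lambda_i$ crossing starts either in $R_{i,j}$ (between $\lambda_i$ and $\lambda_j$, above $\mu_{ij}$) or outside it.

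The sign of that first crossing is determined by the direction in which $\gamma$ enters. Entering from outside with $i<j$ means approaching $\lambda_i$ from its left, which gives $\nFreeGenElt_i$. Entering from inside $R_{i,j}$ means approaching from the right, which gives $\nFreeGenElt_i^{-1}$. Once the first exponent is fixed, the spiral picture shows that the letters keep the same sign until the crossing of $\mu_{ij}$ and then flip. So the change of sign is $\nFreeGenElt_i\nFreeGenElt_j^{-1}$ or $\nFreeGenElt_j\nFreeGenElt_i^{-1}$ (type $Z_+$) in the first case, and of type $Z_-$ in the second. The case where $u$ is a prefix behaves like the outside case, because $-i$ lies below all the punctures and outside $R_{i,j}$.

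The main obstacle is justifying that the region containing the previous letter is detected by the index $c$ alone. Concretely, we must show that the point on $\gamma$ just after crossing $\lambda_c$ lies in $R_{i,j}$ if and only if $i<c<j$. Crossing $\lambda_c$ does put $\gamma$ adjacent to $\lambda_c$. The difficulty is that for $i<c<j$ the crossing could occur below $\mu_{ij}$, since $\mu_{ij}$ passes beneath $\nPoint_c$, and for $c$ outside $[i,j]$ one must rule out that $\gamma$ subsequently passes into $R_{i,j}$ through $\mu_{ij}$. We plan to use minimality of $\gamma$ and \Cref{cor:single_crossing_of_line_in_alternating_subword} for this. Between the $\lambda_c$ crossing and the start of $u$, $\gamma$ crosses no $\lambda$, so it stays in a single complementary region of $\bigcup\lambda$. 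It can cross $\mu_{ij}$ only at changes of sign of maximal $(\nFreeGenElt_i,\nFreeGenElt_j)$-alternating words (\Cref{lem:possible_lengths_other_crossings}, or its analogue for length $3$), and no such word occurs there. Since each component of the complement of the $\lambda$ lines, cut along $\mu_{ij}$, lies on one side of $\partial R_{i,j}$, this settles the claim. We expect this region bookkeeping to need a careful picture rather than new ideas.
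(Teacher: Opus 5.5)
\textbf{Gap: the case $\ell(u)=3$.} Your argument is sound whenever $q$ contains some maximal $(x_i,x_j)$-alternating subword of length at least $4$. That is also the only situation in which the lemma is used, in \Cref{lem:word_prop_C_equality_acheived_in_B_hat}. The statement, however, also covers $\ell(u)=3$, and there the step that carries all the content fails.

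You need the sub-arc $\alpha$ of $\gamma$ between the previous letter and the first letter of $u$ not to cross $\mu_{ij}$. You justify this by \Cref{lem:possible_lengths_other_crossings} ``or its analogue for length $3$''. That lemma needs a maximal alternating subword of length at least $4$, and the length-$3$ analogue is false.

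Here is a counterexample. For $n=3$, take
$q=\sigma_{13}^{-2}\star x_2=x_3^{-1}x_1^{-1}x_3x_1x_2x_1^{-1}x_3^{-1}x_1x_3\in Q$.
\begin{itemize}
\item Its only maximal $(x_1,x_2)$-alternating subword of length $\ge 3$ is $x_1x_2x_1^{-1}$.
\item The sub-arc of $\gamma$ between the first two letters starts just left of $\lambda_3$, which is outside $R_{1,2}$.
\item It ends just right of $\lambda_1$, which is inside $R_{1,2}$, and it crosses no $\lambda$ on the way.
\item So it crosses $\mu_{12}$ at a point that is not a change of sign of any $(x_1,x_2)$-alternating subword.
\end{itemize}
The lemma's conclusion still holds for $x_1x_2x_1^{-1}$ here; what fails is the tool you cite. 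With no longer $(x_i,x_j)$-alternating subword present, nothing in your proposal prevents $\alpha$ from crossing $\mu_{ij}$. Such a crossing is exactly what would decouple the position of the previous letter from the side on which $\gamma$ meets $\lambda_i$.

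\textbf{The missing idea: use the second letter of $u$.} This is how the paper argues. Suppose $i<c<j$ and $u$ begins with $x_i$. Then $\alpha$ must leave $R_{i,j}$ through $\mu_{ij}$ and come round to the left side of $\lambda_i$. Now $\alpha$, the part of $\lambda_i$ above its crossing, and the part of $\lambda_c$ above its crossing together form a closed curve. This curve separates the point just after the $\lambda_i$-crossing from $\lambda_j$. Since $\gamma$ is simple, it cannot reach $\lambda_j$ without first crossing $\lambda_i$ or $\lambda_c$. Hence the second letter of $u$ cannot be $x_j^{\pm1}$, a contradiction. The cases $c\notin[i,j]$ and the prefix case are analogous; in the prefix case an arc of $\partial\D$ replaces $\lambda_c$. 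This local picture never mentions $\mu_{ij}$ and works for every $\ell(u)\ge 3$. Your route through \Cref{lem:possible_lengths_other_crossings} is a legitimate alternative where it applies, but it leans on a much heavier result and cannot reach the length-$3$ case.

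\textbf{Two minor points.}
\begin{itemize}
\item The ``difficulty'' that a crossing of $\lambda_c$ with $i<c<j$ might lie below $\mu_{ij}$ does not arise. $\lambda_c$ rises from $p_c$ while $\mu_{ij}$ passes beneath $p_c$, so $\lambda_c$ lies in $R_{i,j}$.
\item Reflection in the imaginary axis sends $Q$ to $Q^{-1}$, not to $Q$. Your normalisation should therefore be stated for words of arbitrary simple loops based at $-i$. The geometric lemmas you use hold in that generality.
\end{itemize}
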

\begin{proof}
	Let $\gamma$ be a minimal simple loop representing $q$.
	Suppose $u$ is a (maximal) $(\nFreeGenElt_i,\nFreeGenElt_j)$-alternating subword of $q$.
	Now suppose that $Z(\nCoxWord{u}{W}) = -1$ and that $u$ has a $Z_+$ change of sign, contradicting the lemma.
	So, the first letter in $u$ has positive exponent.
	There are two possibilities, either
	\begin{enumerate}
		\item $\nOrdering(\nCoxWord{u}{W}) = -1$ and  $\nPrevRegion(\nCoxWord{u}{W}) = 1$ or
		      \smallskip
		\item  $\nOrdering(\nCoxWord{u}{W}) = 1$ and  $\nPrevRegion(\nCoxWord{u}{W}) = -1$.
	\end{enumerate}
	Assume (1), so $u$ starts with $\nFreeGenElt_j^{+1}$, where $j>i$, and the letter preceding $\nCoxWord{u}{W}$ is outside the region $R_{i,j}$.
	Suppose the letter preceding $u$ is $\nFreeGenElt_k^{-1}$, and that $k>j$.
	Then there would be a sub-arc of $\gamma$ that looks like the following picture, which includes the beginning of $\gamma_u$ and a bit of $\gamma$ before $\gamma_u$.
	\[
	\begin{tikzpicture}[,every node/.append style={scale=1}, inner sep=0pt, outer sep=0pt]
		\input{figs/crossing_io_01.tex}
	\end{tikzpicture}%

	\]
	We see there is no way for $\gamma$ to next cross  $\lambda_i$ without crossing itself.
	This is the same if the preceding letter was instead $\nFreeGenElt_k^{+1}$, or if $k<i$.

	For case (2), we get a similar contradiction.
	Suppose the letter preceding $u$ is~$\nFreeGenElt_k^{-1}$, where $i < k < j$.
	So $u$ starts with $\nFreeGenElt_i^{+1}$ and there is a sub-arc of $\gamma$ that looks like in the following picture.
	\[
	\begin{tikzpicture}[,every node/.append style={scale=1}, inner sep=0pt, outer sep=0pt]
		\input{figs/crossing_io_02.tex}
	\end{tikzpicture}%

	\]
	We see there is no way for $\gamma$ to next cross  $\lambda_j$ without crossing itself.
	This is the same if the preceding letter was instead $\nFreeGenElt_k^{+1}$.
	We can construct similar contradictions if instead $Z(\nCoxWord{u}{W}) = +1$ and $u$ has a $Z_-$ change of sign.

	The function $\nPrevRegion$ is appropriately defined for this to still work when $u$ is at the beginning of $w$.
\end{proof}
If a word operation on $u$ changes $\nOrdering(\nCoxWord{u}{W})$ (which happens if we apply a toggling operation to $\nCoxWord{u}{W}$), but keeps $\nPrevRegion(\nCoxWord{u}{W})$ constant, then that word operation must swap $Z(\nCoxWord{u}{W})$.
\begin{lemma}
	\label{lem:word_prop_C_equality_acheived_in_B_hat}
	Let $\Gamma$ be a Coxeter graph such that $m_{ij} \geq 5$ for all $i,j$.
	Suppose $r,r^\prime \in Q$ are such that $\nPiCox(r) = \nPiCox(r^\prime)$, $\nCoxWord{r}{W}$ and $\nCoxWord{r^\prime}{W}$ both have property $\nWordPropB$ and are $5$-tame.
	Then there exists a $\hat{\beta} \in \nBraidSubgrpG$ such that $\hat{\beta} \star r = r^\prime$.
\end{lemma}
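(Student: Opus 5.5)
Let $w \coloneq \nCoxWord{r}{W}$ and $w^\prime \coloneq \nCoxWord{r^\prime}{W}$, and let $a_1,\ldots,a_m$ be the maximal alternating subwords of $w$ of length at least $m_{ij}-1$. By \Cref{lem:word_ops_relating_B_words}, $w^\prime$ is obtained from $w$ by toggling some subset $T \subseteq \Set{a_1,\ldots,a_m}$, in any order. The plan is to realise all the toggles in $T$ of a fixed generator pair $\Set{i,j}$ simultaneously by a single element $\sigma_{ij}^{\pm m_{ij}}$. Then $\hat{\beta}$ is the product of these elements over the pairs that occur, and we show $\hat{\beta}\star r = r^\prime$ by comparing the resulting free words.

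\emph{Step 1: for a fixed pair, either every $(i,j)$-subword is toggled or none is.} Fix $\Set{i,j}$. By \Cref{cor:single_crossing_of_line_in_alternating_subword} and \Cref{lem:possible_lengths_other_crossings}, when some $a_k$ is an $(\nCoxGenElt_i,\nCoxGenElt_j)$-subword of length $\geq 4$, every crossing of $\mu_{ij}$ by $\gamma$ sits at the change of sign of a maximal $(i,j)$-alternating subword of length $\geq 3$. These subwords all have lengths within $2$ of each other. Since $m_{ij}\geq 5$ and we are $5$-tame, all of them have length in $[m_{ij}-1,m_{ij}+1]$, so all of them are among the $a_k$. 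By \Cref{lem:crossing_diagonals_and_one_change_of_sign}(1), they all share one handedness, which by \Cref{lem:change_of_sign_given_by_Z} equals $Z$ of the Coxeter subword. Now compare with $r^\prime$. By \Cref{lem:prev_region_constant}, $\nPrevRegion$ is unchanged by the toggles, while toggling flips $\nOrdering$. So a toggled subword in $w^\prime$ has the opposite $Z$ to an untoggled one. Since $r^\prime \in Q$, its $(i,j)$-subwords must again share one handedness (again by \Cref{lem:crossing_diagonals_and_one_change_of_sign}(1) and \Cref{lem:change_of_sign_given_by_Z}). Hence $T$ contains either all of the $(i,j)$-subwords or none of them. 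Toggling cannot change which maximal subwords have length $\geq 3$, so the $(i,j)$-subwords of $w^\prime$ correspond exactly to those of $w$.

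\emph{Step 2: realise the toggles in $Q$.} For each pair with all of its subwords in $T$, act by $\sigma_{ij}^{-m_{ij}}$ if the common handedness is $Z_+$ and by $\sigma_{ij}^{m_{ij}}$ otherwise. By \Cref{lem:C_word_operations_acheived_in_B_hat} and \eqref{eqn:action_of_sigma_ij_on_alternating_words}, this performs precisely the required toggle on each $(i,j)$-subword, and since these subwords contain all crossings of $\mu_{ij}$, nothing else in the word changes. After the action, the word is still in $Q$, its Coxeter word still has property $\nWordPropB$ by \Cref{lem:toggles_applied_to_F_remains_F}, and the $\nPrevRegion$ values are unchanged. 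So we can process the pairs one at a time, each step re-establishing the hypotheses, and this yields $\hat{\beta}\in\nBraidSubgrpG$ with $\nCoxWord{(\hat{\beta}\star r)}{W} = w^\prime$.

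\emph{Step 3: lift to the free group.} It remains to show that $\hat{\beta}\star r$ and $r^\prime$ are equal as free words, not just as Coxeter words. Both lie in $Q$ and both are square-free, so each is determined by its Coxeter word together with its sign pattern. I would recover the signs from the Coxeter word using geometry. At each letter, the sign records the direction in which the simple loop crosses $\lambda_k$. For a simple loop starting from the boundary and crossing minimally, I would try to show that consecutive letters force the relative signs, in the same way that $Z$ forces the handedness in \Cref{lem:change_of_sign_given_by_Z}, and that the first letter's sign is fixed by the base point. This lifting step is the main obstacle. I would first try to prove directly that the map $Q\to Q_W$ is injective, by induction on letters using region-trapping pictures like those in \Cref{lem:exists_change_of_sign}. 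If that fails, I would compare the loops for $\hat{\beta}\star r$ and $r^\prime$, which have identical crossing sequences with the $\lambda$ arcs, and argue that such loops are isotopic. A further subtlety to check is when the only $(i,j)$-subword has length $3$, where \Cref{lem:possible_lengths_other_crossings} does not apply. Here the $5$-tame hypothesis should ensure that such pairs never need toggling.
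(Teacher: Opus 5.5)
Your Steps 1 and 2 are essentially the inductive step of the paper's proof. The paper inducts on the number of toggled subwords and, as you do, uses \Cref{lem:prev_region_constant}, the sign flip of the ordering function under a toggle, \Cref{lem:change_of_sign_given_by_Z} and statement (1) of \Cref{lem:crossing_diagonals_and_one_change_of_sign}. These force all $(x_i,x_j)$-subwords carrying crossings of $\mu_{ij}$ to be toggled together, and the toggle is then realised by $\sigma_{ij}^{\pm m_{ij}}$.

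\textbf{The gap is Step~3, which you leave open.} Your argument only produces $\hat\beta$ with $(\hat\beta\star r)_W = r'_W$. Without knowing that two elements of $Q$ with the same Coxeter word are equal in $F_n$, the lemma is not proved. This is exactly the base case $p=0$ of the paper's induction, and the paper does not recover signs letter by letter. It passes to the universal Coxeter group $U$ (all $m_{ij}=\infty$), in which square-free words are normal forms. By \Cref{lem:Q_square_free}, $r_W=r'_W$ then gives $\pi_U(r)=\pi_U(r')$. Then \cite[Corollary 4.7]{bessis_dual_2006}, which says $\pi_U$ is injective on $Q$, gives $r=r'$.

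Your suggested routes are not yet arguments, and one ingredient is false. The sign of the first letter is not fixed by the base point: $\sigma_1\star x_1 = x_1x_2x_1^{-1}$ and $\sigma_1^{-2}\star x_1 = x_2^{-1}x_1x_2$ both lie in $Q$, with first letters of opposite sign. Nor can simplicity of the loop plus the unsigned crossing sequence suffice, because $x_1$ and $x_1^{-1}$ are both non-crossing with Coxeter word $s_1$. A direct proof would have to use that $Q$ consists of conjugates of the positive generators.

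There is also a smaller error in Step~1. \Cref{lem:possible_lengths_other_crossings} allows two lengths to differ by $2$ when both are odd; for example, it permits lengths $5$ and $3$ when $m_{ij}=5$. So it does not place the other $(x_i,x_j)$-subwords in $[m_{ij}-1,m_{ij}+1]$, and they need not be among the $a_k$. The dichotomy still holds, but you must argue as the paper does:
\begin{enumerate}
\item Each such $v$ has the same two generators as $u$.
\item So by \cref{prop:F_0_or_2_gens_in_common} and \Cref{rmk:maximal_implies_overlap_at_most_1_and_share_exactly_1_letter}, $v$ is either some $a_k$ or disjoint from all of them.
\item This is exactly what licenses applying \Cref{lem:prev_region_constant} to $v$.
\item The handedness argument then shows $v$ is toggled whenever $u$ is, so in particular $v$ must be one of the $a_k$.
\end{enumerate}
Finally, your length-$3$ worry is moot: toggled subwords are among the $a_k$, so they have length at least $m_{ij}-1\geq 4$ by definition, with no appeal to $5$-tameness.
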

\begin{proof}
	Let $a_1,\ldots,a_m$ be all maximal alternating subwords of $\nCoxWord{r}{W}$ of length at least $m_{ij} - 1$.
	\Cref{lem:word_ops_relating_B_words} tells us that $\nCoxWord{r^\prime}{W}$ is the result of applying toggling operations (Artin operations, or $(m_{ij} \pm 1)$-reductions/expansions) to the $a_k$ factors in any order.
	So, the data that describes how to get $\nCoxWord{r^\prime}{W}$ from $\nCoxWord{r}{W}$ is a set of indices $\Set{\alpha_1,\ldots,\alpha_p}$ such that applying the relevant toggle at every $a_{\alpha_k}$ factor in $\nCoxWord{r}{W}$ results in $\nCoxWord{r^\prime}{W}$.
	We have that $\ell(a_{\alpha_i}) \geq 4$ for all $i$.

	To prove the result, it is sufficient to prove the following claim: For all $r,r^\prime \in Q$ where $\nCoxWord{r}{W}$ and $\nCoxWord{r^\prime}{W}$ are related by application of Artin operations or $(m_{ij} \pm 1)$-reductions/expansions on $p$ alternating subwords each of length $\geq 4$, there exists a $\hat{\beta} \in \nBraidSubgrpG$ such that $\hat{\beta} \star r = r^\prime$.

	We prove this claim by induction on $p$.
	For the base case $p=0$, we have $\nCoxWord{r}{W} = \nCoxWord{r^\prime}{W}$.
	Consider the Coxeter group $U$ of rank $\nRank$ for which all $m_{ij}=\infty$, and the canonical epimorphism $\nPiU \colon \nFree \to U$.
	Square-free words are a normal form for $U$, so by \Cref{lem:Q_square_free}, we have $\nPiU(r) = \nPiU(r^\prime)$.
	\cite[Corollary 4.7]{bessis_dual_2006} tells us that $\nPiU$ is injective on $Q$, so $r = r^\prime$ and the relevant element of $\nBraidSubgrpG$ is the identity.

	We now show the inductive step.
	Suppose $p \geq 1$.
	Let $u$ be the subword of $r$ such that $\nCoxWord{u}{W}$ is the subword $a_{\alpha_1}$ in $\nCoxWord{r}{W}$.
	Let $(\nCoxWord{u}{W})^\prime$ be the subword of $\nCoxWord{r^\prime}{W}$ which comes from $\nCoxWord{u}{W}$.
	By \Cref{lem:prev_region_constant}, we know $\nPrevRegion(\nCoxWord{u}{W}) = \nPrevRegion((\nCoxWord{u}{W})^\prime)$.
	Since $(\nCoxWord{u}{W})^\prime$ is the result of applying a toggling operation to $\nCoxWord{u}{W}$, we know that $\nOrdering(\nCoxWord{u}{W}) = -\nOrdering((\nCoxWord{u}{W})^\prime)$, so
	\[
		Z(\nCoxWord{u}{W}) = -Z((\nCoxWord{u}{W})^\prime).
	\]
	Let $\gamma$ be a minimal simple loop representing $r$.
	We know $\gamma_u$ crosses $\mu_{ij}$, and since $\ell(u) \geq 4$, by \Cref{lem:possible_lengths_other_crossings}, all other crossings of $\mu_{ij}$ occur in maximal $(\nCoxGenElt_i,\nCoxGenElt_j)$-alternating subwords of $\nCoxWord{r}{W}$ of length $\geq 3$.
	Let $\Set{v_1,\ldots,v_t}$ be the set of such maximal, length $\geq 3$, $(\nCoxGenElt_i,\nCoxGenElt_j)$-alternating subwords such that all crossings of $\mu_{ij}$ occur in $\gamma_{v_k}$ for some $v_k \in \Set{v_1 ,\ldots, v_t}$.
	Since $r \in Q$, by \Cref{lem:change_of_sign_given_by_Z} and \Cref{lem:crossing_diagonals_and_one_change_of_sign}, all $v_k$ satisfy
	\[
		Z(\nCoxWord{u}{W}) = Z(\nCoxWord{(v_k)}{W}).
	\]

	Since by construction, all of $\Set{v_1,\ldots,v_t}$ have 2 letters in common with $u_W$, each $a_{k^\prime}$ factor shares 0 or 2 appearing letters with each of $\Set{v_1,\ldots,v_t}$.
	We see that $v_k$ must be either an $a_{k^\prime}$ factor, or be disjoint from all $a_{k^\prime}$ factors by applying \Cref{rmk:maximal_implies_overlap_at_most_1_and_share_exactly_1_letter} and recalling that $v_k$ and each $a_{k^\prime}$ are maximal.

	So, for each $v_k \in \Set{v_1 ,\ldots,v_t}$ we can identify $\nCoxWord{(v_k)^\prime}{W}$ as the subword of $\nCoxWord{r^\prime}{W}$ coming from $\nCoxWord{(v_k)}{W}$.
	Then, by \Cref{lem:prev_region_constant}, we have that
	\begin{equation*}
		\label{eqn:v_k_prev_region}
		\nPrevRegion(\nCoxWord{(v_k)}{W}) = \nPrevRegion(\nCoxWord{(v_k)^\prime}{W})
	\end{equation*}
	for all $k$.
	Since $r^\prime \in Q$ and $Z(\nCoxWord{u}{W}) = -Z((\nCoxWord{u}{W})^\prime)$, by \Cref{lem:change_of_sign_given_by_Z} and \Cref{lem:crossing_diagonals_and_one_change_of_sign}, all $v^\prime_k \in \Set{v_1^\prime,\ldots,v_t^\prime}$ must satisfy $Z(\nCoxWord{(v_k)}{W}) = -Z(\nCoxWord{(v_k)}{W}^\prime)$ too.
	But, by the equation above, the only way this can happen is if
	\[
		\nOrdering(\nCoxWord{(v_k)}{W}) = -\nOrdering(\nCoxWord{(v_k)^\prime}{W})
	\]
	for all $k$.
	This is only possible if a toggling operation is applied to each $\nCoxWord{(v_k)}{W}$, i.e.~each $\nCoxWord{(v_k)}{W}$ is actually one of $\Set{a_{\alpha_1},\ldots,a_{\alpha_p}}$.

	Suppose that every $v_k$ has a $Z_+$ change of sign in $r$.
	Since every $\mu_{ij}$ crossing occurs in $\gamma_{v_k}$ for some $v_k \in \Set{v_1,\ldots,v_t}$, by \Cref{lem:C_word_operations_acheived_in_B_hat}, applying the relevant toggle to every $\Set{\nCoxWord{(v_1)}{W},\ldots,\nCoxWord{(v_t)}{W}}$ in $\nCoxWord{r}{W}$ results in the same word as
	\[
		\nCoxWord{\left(\sigma_{ij}^{- m_{ij}} \star r\right)}{W}.
	\]
	If every $v_k$ has a  $Z_-$ change of sign, then we swap  $\sigma_{ij}^{-m_{ij}}$ for  $\sigma_{ij}^{+m_{ij}}$ and get the same result.
	Let $\hat{r} \coloneq \sigma_{ij}^{\pm m_{ij}} \star r$ for the appropriate choice of sign.
	We know~$\nCoxWord{\hat{r}}{W}$ and~$\nCoxWord{r^\prime}{W}$ are related by toggles applied to every subword in $\Set{a_{\alpha_1} ,\ldots,a_{\alpha_p}}\setminus \Set{v_1,\ldots,v_t}$, which has cardinality strictly less than $p$.
	It is clear that $\hat{r} \in Q$.
	We have completed the inductive step.
\end{proof}
\begin{theorem}
	\label{thm:final_theorem}
	Let $\Gamma$ be a Coxeter graph such that $m_{ij} \geq 5$ for all $i,j$.
	Then for each $q_1,q_2 \in Q$ such that $\nPiCox(q_1) = \nPiCox(q_2)$, there exists a $\hat{\beta} \in \nBraidSubgrpG$ such that $q_2 = \hat{\beta} \star q_1$, thus $\nPiArt(q_1) = \nPiArt(q_2)$ and $\nArtG$ is canonically isomorphic to $\nDualArtG$.
\end{theorem}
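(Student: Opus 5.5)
The plan is to combine the two steps outlined at the start of this section, making sure every equality is realised inside $\nBraidSubgrpG$ rather than only in $\nArtG$. Let $q_1,q_2 \in Q$ satisfy $\nPiCox(q_1) = \nPiCox(q_2)$.

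First, apply \Cref{lem:applying_B_hat_results_in_5_tame_B_word} to each word. This gives $\hat{\beta}_1,\hat{\beta}_2 \in \nBraidSubgrpG$ such that, writing $r_k \coloneq \hat{\beta}_k \star q_k$, each $\nCoxWord{r_k}{W}$ is $5$-tame and has property $\nWordPropB$. Since $Q = B_\nRank \star \nFreeGenElt_1$ is preserved by the Artin action (\Cref{cor:hurref_is_image_of_artin_action}), we have $r_1,r_2 \in Q$.

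Next, check that $\nPiCox(r_1) = \nPiCox(r_2)$. \Cref{tab:action_of_sigma_ij} shows that each generator $\sigma_{ij}^{\pm m_{ij}}$ acts by inserting words $\nPi{\nFreeGenElt_i}{\nFreeGenElt_j}{m_{ij},m_{ij}}$ (or their analogues), which are trivial in $\nArtG$. Hence $\nPiArt(\hat{\beta}\star x) = \nPiArt(x)$ for every $\hat\beta \in \nBraidSubgrpG$, and composing with $\nArtG \to \nCoxG$ through the commuting triangle \eqref{eqn:commuting_triangle} gives the same statement for $\nPiCox$. Therefore
\[
\nPiCox(r_1) = \nPiCox(q_1) = \nPiCox(q_2) = \nPiCox(r_2).
\]

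Now \Cref{lem:word_prop_C_equality_acheived_in_B_hat} applies to the pair $r_1,r_2$ and yields $\hat{\beta}\in\nBraidSubgrpG$ with $\hat{\beta}\star r_1 = r_2$. Set
\[
\hat{\beta}^\prime \coloneq \hat{\beta}_2^{-1}\hat{\beta}\hat{\beta}_1 \in \nBraidSubgrpG .
\]
Because $\star$ is a group action, $\hat{\beta}^\prime \star q_1 = \hat{\beta}_2^{-1}\star r_2 = q_2$, which is the first claim of the theorem.

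Finally, $\nPiArt(q_1) = \nPiArt(q_2)$ follows from fibre-preservation by $\nBraidSubgrpG$. Since $q_1,q_2$ were arbitrary, \Cref{lem:artin_preserving_braid_action}, or equivalently \Cref{cor:equality_in_W_implies_equality_in_A_implies_theorem}, gives the canonical isomorphism $\nArtG\cong\nDualArtG$.

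I do not expect a genuine obstacle, because the heavy lifting is done in \Cref{lem:applying_B_hat_results_in_5_tame_B_word} and \Cref{lem:word_prop_C_equality_acheived_in_B_hat}. The point that needs care is the first step: the reduction process there must terminate. Each application of \Cref{lem:reduction_where_altlength_2_greater} or \Cref{lem:reduction_where_altlength_1_greater_and_even} strictly decreases the free length $\ell(q)$, so only finitely many steps occur. The other point to check is that the inverse elements $\hat\beta_2^{-1}$ genuinely lie in $\nBraidSubgrpG$; this holds because it is defined as a subgroup.
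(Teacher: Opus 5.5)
Your proposal is correct and follows the paper's proof essentially step for step. You normalise both words via \Cref{lem:applying_B_hat_results_in_5_tame_B_word}, note that $Q$-membership and the image under $\nPiCox$ are preserved, apply \Cref{lem:word_prop_C_equality_acheived_in_B_hat}, compose to $\hat{\beta}_2^{-1}\hat{\beta}\hat{\beta}_1$, and finish with \Cref{lem:artin_preserving_braid_action}; your added justifications (fibre preservation for $\nPiCox$, termination by strictly decreasing free length) only make explicit what the paper leaves implicit.
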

\begin{proof}
	Let $q_1, q_2 \in Q$ be such that $\nPiCox(q_1) = \nPiCox(q_2)$.
	By \Cref{lem:applying_B_hat_results_in_5_tame_B_word}, there exists $\hat{\delta}_1,\hat{\delta}_2 \in \nBraidSubgrpG$ such that $r_1 \coloneq \hat{\delta}_1 \star q_1$ and $r_2 \coloneq \hat{\delta}_2 \star q_2$ are such that $\nCoxWord{(r_1)}{W}$ and $\nCoxWord{(r_2)}{W}$ are $5$-tame and have property $\nWordPropB$.
	Equality in $\nCoxG$ and membership in $Q$ is preserved, i.e. $\nPiCox(r_1) = \nPiCox(q_1) = \nPiCox(q_2) = \nPiCox(r_2)$ and $r_1,r_2 \in Q$.
	Then, by \Cref{lem:word_prop_C_equality_acheived_in_B_hat}, there exists a $\hat{\beta} \in \nBraidSubgrpG$ such that $\hat{\beta} \star r_1 = r_2$.
	We conclude that $(\hat{\delta}_2)^{-1}\hat{\beta}\hat{\delta}_1 \star q_1 = q_2$.
	The result follows by \Cref{lem:artin_preserving_braid_action}.
\end{proof}

%% file: figs/non_crossing_mu_ij.tex
\definecolor{c819d43}{RGB}{129,157,67}
\definecolor{c979797}{RGB}{151,151,151}

  \path[draw=black,line width=0.0116cm,dash pattern=on 0.0116cm off 0.0464cm,cm={ 1.2928,-0.0,-0.0,1.295,(-1.8972, -4.568)}] (4.8222, 15.8642).. controls (4.8396, 15.6148) and (4.9412, 15.3721) .. (5.1068, 15.1848).. controls (5.2723, 14.9976) and (5.5007, 14.8669) .. (5.746, 14.8191).. controls (6.0641, 14.7571) and (6.4073, 14.838) .. (6.6641, 15.0356).. controls (6.9209, 15.2332) and (7.0871, 15.5442) .. (7.1086, 15.8675);

  \path[draw=black,line width=0.0116cm,dash pattern=on 0.0116cm off 0.0464cm,cm={ 1.2928,-0.0,-0.0,1.295,(-1.8972, -4.568)}] (5.9687, 15.8642).. controls (5.996, 15.5636) and (6.1475, 15.2763) .. (6.3799, 15.0839).. controls (6.6124, 14.8914) and (6.923, 14.7964) .. (7.2233, 14.8257).. controls (7.4871, 14.8514) and (7.7407, 14.972) .. (7.9273, 15.1603).. controls (8.1138, 15.3487) and (8.232, 15.6034) .. (8.2552, 15.8675);

  \node[text=black,line width=0.005cm,anchor=south west] (text57948-6-7) at (4.8283, 14.4769){$\scriptstyle{\mu_{i,j}}$};

  \node[text=black,line width=0.005cm,anchor=south west] (text16) at (7.3683, 14.3711){$\scriptstyle{\mu_{k,l}}$};

  \begin{scope}[shift={(-6.1503, 5.7644)}]
    \path[draw=c819d43,line width=0.02cm] (10.4872, 11.2055) -- (10.4872, 10.2123);

    \path[draw=black,fill=c979797,line width=0.0487cm] (10.4872, 10.2123) circle (0.0458cm);

    \node[text=black,line width=0.005cm,anchor=south west] (text57948-7) at (10.1785, 11.3184){$\scriptscriptstyle\lambda_{i}$};

  \end{scope}
  \begin{scope}[shift={(-4.6681, 5.7644)}]
    \path[draw=c819d43,line width=0.02cm] (10.4872, 11.2055) -- (10.4872, 10.2123);

    \path[draw=black,fill=c979797,line width=0.0487cm] (10.4872, 10.2123) circle (0.0458cm);

    \node[text=black,line width=0.005cm,anchor=south west] (text57948-7-2) at (10.1785, 11.3184){$\scriptscriptstyle\lambda_{k}$};

  \end{scope}
  \begin{scope}[shift={(-3.1945, 5.7686)}]
    \path[draw=c819d43,line width=0.02cm] (10.4872, 11.2055) -- (10.4872, 10.2123);

    \path[draw=black,fill=c979797,line width=0.0487cm] (10.4872, 10.2123) circle (0.0458cm);

    \node[text=black,line width=0.005cm,anchor=south west] (text57948-7-28) at (10.1785, 11.3184){$\scriptscriptstyle\lambda_{j}$};

  \end{scope}
  \begin{scope}[shift={(-1.7123, 5.7686)}]
    \path[draw=c819d43,line width=0.02cm] (10.4872, 11.2055) -- (10.4872, 10.2123);

    \path[draw=black,fill=c979797,line width=0.0487cm] (10.4872, 10.2123) circle (0.0458cm);

    \node[text=black,line width=0.005cm,anchor=south west] (text57948-7-1) at (10.1785, 11.3184){$\scriptscriptstyle\lambda_{l}$};

  \end{scope}
  \path[draw=black,line width=0.02cm] (4.0994, 16.3022) -- (4.4599, 16.3022).. controls (4.4769, 16.3021) and (4.4938, 16.2977) .. (4.5086, 16.2895).. controls (4.5235, 16.2813) and (4.5362, 16.2692) .. (4.5452, 16.2548).. controls (4.5559, 16.2377) and (4.5613, 16.2176) .. (4.5635, 16.1975).. controls (4.5658, 16.1774) and (4.5651, 16.1571) .. (4.5645, 16.1369).. controls (4.563, 16.0864) and (4.5623, 16.0358) .. (4.5624, 15.9852).. controls (4.548, 15.6718) and (4.6722, 15.3546) .. (4.8955, 15.1343).. controls (5.0604, 14.9716) and (5.2769, 14.8619) .. (5.5057, 14.8251).. controls (5.5552, 14.8171) and (5.6026, 14.7969) .. (5.6427, 14.7668).. controls (5.7002, 14.7236) and (5.7412, 14.6614) .. (5.7687, 14.595).. controls (5.7976, 14.5253) and (5.8133, 14.4491) .. (5.8559, 14.3868).. controls (5.8945, 14.3303) and (5.9562, 14.2883) .. (6.0242, 14.2803).. controls (6.0537, 14.2768) and (6.0839, 14.2797) .. (6.1122, 14.2886).. controls (6.5357, 14.3713) and (6.9279, 14.6057) .. (7.2015, 14.9394).. controls (7.444, 15.2352) and (7.5926, 15.6069) .. (7.6209, 15.9884).. controls (7.6278, 16.0807) and (7.5972, 16.1753) .. (7.5374, 16.246).. controls (7.4774, 16.3171) and (7.388, 16.3633) .. (7.295, 16.3682).. controls (7.2387, 16.3711) and (7.1819, 16.3592) .. (7.1309, 16.3352).. controls (7.0799, 16.3111) and (7.0348, 16.275) .. (6.9992, 16.2313).. controls (6.9444, 16.1641) and (6.9122, 16.0787) .. (6.9091, 15.992).. controls (6.9043, 15.8054) and (6.8508, 15.6202) .. (6.7552, 15.4598).. controls (6.6597, 15.2994) and (6.5224, 15.1641) .. (6.3605, 15.071).. controls (6.1946, 14.9756) and (6.0034, 14.9247) .. (5.812, 14.9249).. controls (5.6206, 14.925) and (5.4295, 14.976) .. (5.2634, 15.071).. controls (5.1011, 15.1638) and (4.9629, 15.2985) .. (4.8669, 15.4589).. controls (4.7709, 15.6193) and (4.7174, 15.8051) .. (4.7148, 15.992).. controls (4.7142, 16.0359) and (4.7163, 16.0797) .. (4.7185, 16.1235).. controls (4.7209, 16.1701) and (4.7234, 16.2172) .. (4.7161, 16.2634).. controls (4.7095, 16.3047) and (4.6949, 16.345) .. (4.6717, 16.3797).. controls (4.6253, 16.4491) and (4.5434, 16.4933) .. (4.4599, 16.494) -- (4.0994, 16.494);

%% file: figs/R_24.tex
\definecolor{c819d43}{RGB}{129,157,67}
\definecolor{ce6e6e6}{RGB}{230,230,230}
\definecolor{ca2a2a2}{RGB}{162,162,162}
\definecolor{c979797}{RGB}{151,151,151}

  \path[draw=c819d43,line width=0.02cm] (10.4534, 17.3068) -- (9.3255, 18.8354);

  \path[fill=ce6e6e6,line width=0.015cm] (9.2884, 16.7456).. controls (9.2848, 16.7461) and (9.2725, 16.7476) .. (9.2611, 16.7489).. controls (9.18, 16.7582) and (9.0859, 16.7934) .. (9.0166, 16.8405).. controls (8.9737, 16.8696) and (8.9266, 16.9133) .. (8.8952, 16.9529).. controls (8.8397, 17.0231) and (8.8015, 17.1062) .. (8.784, 17.1949).. controls (8.7766, 17.2319) and (8.7762, 17.2298) .. (8.7921, 17.2356).. controls (8.8091, 17.2418) and (8.8265, 17.2577) .. (8.8348, 17.2747).. controls (8.84, 17.2853) and (8.8408, 17.2895) .. (8.8408, 17.3066).. controls (8.8408, 17.3232) and (8.84, 17.3281) .. (8.8352, 17.3383).. controls (8.8294, 17.3505) and (8.818, 17.3638) .. (8.806, 17.3722) -- (8.7995, 17.3768) -- (9.0623, 18.0943).. controls (9.2069, 18.489) and (9.3254, 18.8116) .. (9.3258, 18.8112).. controls (9.3262, 18.8108) and (9.4459, 18.4878) .. (9.5918, 18.0935) -- (9.857, 17.3764) -- (9.8522, 17.3733).. controls (9.8495, 17.3715) and (9.8423, 17.3651) .. (9.8361, 17.3589).. controls (9.8027, 17.3252) and (9.81, 17.267) .. (9.8506, 17.2426).. controls (9.8572, 17.2387) and (9.8663, 17.2345) .. (9.8708, 17.2333).. controls (9.8804, 17.2307) and (9.8804, 17.2348) .. (9.871, 17.188).. controls (9.8417, 17.0417) and (9.7517, 16.9112) .. (9.6246, 16.8307).. controls (9.5989, 16.8144) and (9.5456, 16.7885) .. (9.5155, 16.7775).. controls (9.4866, 16.7671) and (9.4367, 16.7545) .. (9.4052, 16.7498).. controls (9.3835, 16.7466) and (9.3025, 16.7437) .. (9.2884, 16.7457) -- cycle;

  \path[draw=c819d43,line width=0.02cm] (8.2032, 17.3068) -- (9.3231, 18.8389);

  \path[draw=c819d43,line width=0.02cm] (8.7657, 17.3068) -- (9.3255, 18.8354);

  \path[draw=c819d43,line width=0.02cm] (9.3283, 17.3068) -- (9.3255, 18.8354);

  \path[draw=c819d43,line width=0.02cm] (9.8908, 17.3068) -- (9.3255, 18.8354);

  \path[draw=ca2a2a2,line width=0.02cm,dash pattern=on 0.06cm off 0.02cm,cm={ 0.0,1.0,1.0,0.0,(-29.7, 29.7)}] (-12.3901, 39.0255) circle (1.5255cm);

  \path[draw=black,line width=0.0161cm,dash pattern=on 0.0161cm off 0.0643cm,cm={ 0.9335,-0.0,-0.0,0.9335,(5.2462, -4.3524)}] (3.77, 23.2025).. controls (3.7679, 23.0439) and (3.8313, 22.8854) .. (3.9424, 22.7722).. controls (4.0534, 22.6589) and (4.2106, 22.5924) .. (4.3692, 22.5915).. controls (4.529, 22.5906) and (4.688, 22.6563) .. (4.8005, 22.7697).. controls (4.913, 22.8831) and (4.9775, 23.0427) .. (4.9753, 23.2025);

  \node[text=black,line width=0.005cm,anchor=south west] (text57948) at (9.0763, 16.9496){$\scriptscriptstyle R_{2,4}$};

  \path[draw=black,fill=c979797,line width=0.0487cm,cm={ 0.0,1.0,1.0,0.0,(-29.7, 29.7)}] (-13.9159, 39.0201) circle (0.0528cm);

  \path[draw=black,fill=c979797,line width=0.0487cm,cm={ 0.0,1.0,1.0,0.0,(-29.7, 29.7)}] (-12.3932, 40.1534) circle (0.0528cm);

  \path[draw=black,fill=c979797,line width=0.0487cm,cm={ 0.0,1.0,1.0,0.0,(-29.7, 29.7)}] (-12.3932, 37.9032) circle (0.0528cm);

  \path[draw=black,fill=c979797,line width=0.0487cm,cm={ 0.0,1.0,1.0,0.0,(-29.7, 29.7)}] (-12.3932, 38.4657) circle (0.0528cm);

  \path[draw=black,fill=c979797,line width=0.0487cm,cm={ 0.0,1.0,1.0,0.0,(-29.7, 29.7)}] (-12.3932, 39.0283) circle (0.0528cm);

  \path[draw=black,fill=c979797,line width=0.0487cm,cm={ 0.0,1.0,1.0,0.0,(-29.7, 29.7)}] (-12.3932, 39.5908) circle (0.0528cm);

  \node[text=black,line width=0.005cm,anchor=south west] (text57948-3) at (8.7165, 16.5066){$\scriptstyle \mu_{2,4}$};

%% file: figs/crossing_io_01.tex
\definecolor{c819d43}{RGB}{129,157,67}
\definecolor{c979797}{RGB}{151,151,151}

  \begin{scope}[shift={(0.8956, 6.6924)}]
    \path[draw=c819d43,line width=0.02cm] (10.4872, 11.6817) -- (10.4872, 10.2123);

    \path[draw=black,fill=c979797,line width=0.0487cm] (10.4872, 10.2123) circle (0.0458cm);

    \node[text=black,line width=0.005cm,anchor=south west] (text57948-7) at (10.1785, 11.7946){$\scriptscriptstyle\lambda_{j}$};

  \end{scope}
  \begin{scope}[shift={(-1.0844, 6.6924)}]
    \path[draw=c819d43,line width=0.02cm] (10.4872, 11.6817) -- (10.4872, 10.2123);

    \path[draw=black,fill=c979797,line width=0.0487cm] (10.4872, 10.2123) circle (0.0458cm);

    \node[text=black,line width=0.005cm,anchor=south west] (text17) at (10.1785, 11.7946){$\scriptscriptstyle\lambda_{i}$};

  \end{scope}
  \begin{scope}[shift={(2.9902, 6.6924)}]
    \path[draw=c819d43,line width=0.02cm] (10.4872, 11.6817) -- (10.4872, 10.2123);

    \path[draw=black,fill=c979797,line width=0.0487cm] (10.4872, 10.2123) circle (0.0458cm);

    \node[text=black,line width=0.005cm,anchor=south west] (text1) at (10.1785, 11.7946){$\scriptscriptstyle\lambda_{k}$};

  \end{scope}
  \path[draw=black,line width=0.02cm] (13.7786, 17.325) -- (13.2803, 17.325) -- (12.8733, 17.325).. controls (12.8046, 17.325) and (12.7351, 17.3235) .. (12.6684, 17.307).. controls (12.6017, 17.2904) and (12.5387, 17.259) .. (12.4854, 17.2155).. controls (12.4322, 17.1719) and (12.3891, 17.1173) .. (12.3515, 17.0597).. controls (12.3139, 17.0021) and (12.2812, 16.9414) .. (12.2421, 16.8848).. controls (12.1328, 16.7267) and (11.9686, 16.6034) .. (11.7814, 16.5597).. controls (11.6878, 16.5379) and (11.5893, 16.536) .. (11.4954, 16.5565).. controls (11.4014, 16.577) and (11.3122, 16.6201) .. (11.2393, 16.6827).. controls (11.1315, 16.7752) and (11.061, 16.9131) .. (11.0606, 17.0552).. controls (11.0604, 17.1262) and (11.0774, 17.1974) .. (11.1111, 17.2599).. controls (11.1448, 17.3225) and (11.1952, 17.3761) .. (11.2563, 17.4123).. controls (11.3319, 17.457) and (11.4236, 17.4742) .. (11.5101, 17.459).. controls (11.5967, 17.4438) and (11.6773, 17.3962) .. (11.7318, 17.3273).. controls (11.7811, 17.2648) and (11.8086, 17.1861) .. (11.8128, 17.1066).. controls (11.817, 17.0271) and (11.7984, 16.947) .. (11.7631, 16.8756).. controls (11.6926, 16.7329) and (11.5586, 16.6289) .. (11.4121, 16.5665).. controls (11.1902, 16.4721) and (10.9408, 16.4649) .. (10.7007, 16.4869).. controls (10.5991, 16.4962) and (10.4979, 16.5105) .. (10.3977, 16.5296);

  \path[draw=black,fill=black,line width=0.0169cm,cm={ -0.1845,0.4995,-0.4995,-0.1845,(22.0328, 15.792)}] (8.1902, 16.8198) -- (8.0991, 16.7825) -- (8.1124, 16.8801) -- cycle;

  \path[draw=black,fill=black,line width=0.0169cm,cm={ -0.5318,-0.0262,0.0262,-0.5318,(14.6291, 25.6449)}] (8.1902, 16.8198) -- (8.0991, 16.7825) -- (8.1124, 16.8801) -- cycle;

%% file: figs/crossing_io_02.tex
\definecolor{c819d43}{RGB}{129,157,67}
\definecolor{c979797}{RGB}{151,151,151}

  \begin{scope}[shift={(0.9529, 6.6924)}]
    \path[draw=c819d43,line width=0.02cm] (10.4872, 11.6817) -- (10.4872, 10.2123);

    \path[draw=black,fill=c979797,line width=0.0487cm] (10.4872, 10.2123) circle (0.0458cm);

    \node[text=black,line width=0.005cm,anchor=south west] (text57948-7) at (10.1785, 11.7946){$\scriptscriptstyle\lambda_{k}$};

  \end{scope}
  \begin{scope}[shift={(-1.0844, 6.6924)}]
    \path[draw=c819d43,line width=0.02cm] (10.4872, 11.6817) -- (10.4872, 10.2123);

    \path[draw=black,fill=c979797,line width=0.0487cm] (10.4872, 10.2123) circle (0.0458cm);

    \node[text=black,line width=0.005cm,anchor=south west] (text17) at (10.1785, 11.7946){$\scriptscriptstyle\lambda_{i}$};

  \end{scope}
  \begin{scope}[shift={(2.9902, 6.6924)}]
    \path[draw=c819d43,line width=0.02cm] (10.4872, 11.6817) -- (10.4872, 10.2123);

    \path[draw=black,fill=c979797,line width=0.0487cm] (10.4872, 10.2123) circle (0.0458cm);

    \node[text=black,line width=0.005cm,anchor=south west] (text1) at (10.1785, 11.7946){$\scriptscriptstyle\lambda_{j}$};

  \end{scope}
  \path[draw=black,line width=0.02cm] (11.7397, 17.2438) -- (11.4343, 17.2438) -- (11.2523, 17.2438).. controls (11.0792, 17.2438) and (10.9043, 17.2348) .. (10.7386, 17.1852).. controls (10.5728, 17.1355) and (10.42, 17.0507) .. (10.2715, 16.9619).. controls (10.0667, 16.8395) and (9.8629, 16.7105) .. (9.6372, 16.6331).. controls (9.5308, 16.5965) and (9.4174, 16.5716) .. (9.3059, 16.5868).. controls (9.2501, 16.5943) and (9.1954, 16.612) .. (9.1474, 16.6415).. controls (9.0995, 16.6709) and (9.0584, 16.7124) .. (9.0323, 16.7623).. controls (9.01, 16.8051) and (8.9988, 16.8536) .. (8.9997, 16.9019).. controls (9.0006, 16.9502) and (9.0134, 16.9981) .. (9.0361, 17.0407).. controls (9.0816, 17.1259) and (9.1671, 17.1876) .. (9.2614, 17.2086).. controls (9.3276, 17.2234) and (9.3973, 17.219) .. (9.4631, 17.202).. controls (9.5288, 17.185) and (9.5908, 17.1555) .. (9.649, 17.1205).. controls (9.8551, 16.9965) and (10.0115, 16.8061) .. (10.1952, 16.6507).. controls (10.2876, 16.5725) and (10.3883, 16.5024) .. (10.5005, 16.4569).. controls (10.6503, 16.3962) and (10.8183, 16.3817) .. (10.9762, 16.4158);

  \path[draw=black,fill=black,line width=0.0169cm,cm={ -0.4157,-0.3327,0.3327,-0.4157,(8.1975, 26.2001)}] (8.1902, 16.8198) -- (8.0991, 16.7825) -- (8.1124, 16.8801) -- cycle;

  \path[draw=black,fill=black,line width=0.0169cm,cm={ -0.4412,-0.2981,0.2981,-0.4412,(9.0914, 26.9487)}] (8.1902, 16.8198) -- (8.0991, 16.7825) -- (8.1124, 16.8801) -- cycle;